\numberwithin{equation}{section}    
\theoremstyle{plain}% default
\newtheorem{thm}{Theorem}[section]
\newtheorem*{thm*}{Theorem}
\newtheorem{lem}[thm]{Lemma}
\newtheorem{prop}[thm]{Proposition}
\newtheorem{cor}[thm]{Corollary}
\theoremstyle{definition}
\newtheorem{exmp}[thm]{Example}
\theoremstyle{remark}
\newtheorem{rem}[thm]{Remark}
\newtheorem*{rem*}{Remark}
\newcommand{\be}{\begin{equation}}    
\newcommand{\ee}{\end{equation}}    
\newcommand{\beu}{\begin{equation*}}    
\newcommand{\eeu}{\end{equation*}}    
\newcommand{\bea}{\begin{eqnarray}}    
\newcommand{\eea}{\end{eqnarray}}    
\newcommand{\beaa}{\begin{eqnarray*}}    
\newcommand{\eeaa}{\end{eqnarray*}}    
\newcommand{\bmx}{\begin{pmatrix}}    
\newcommand{\emx}{\end{pmatrix}}
\newcommand{\del}{\partial}    
\newcommand{\g}{{\mathfrak g}}    
\newcommand{\n}{{\mathfrak n}}    
\newcommand{\h}{{\mathfrak h}}
\newcommand{\mf}{\mathfrak}
\newcommand{\mc}{\mathcal}    
\newcommand{\gh}{{\widehat \g}}
\newcommand{\D}{{\mathcal D}}
\newcommand{\nn}{\nonumber}
\newcommand{\8}{{\infty}}
\newcommand{\Z}{{\mathbb Z}}
\newcommand{\C}{{\mathbb C}}
\newcommand{\id}{{\mathrm{id}}}
\newcommand{\MM}{\mathbb{M}}
\newcommand{\M}{M}
\newcommand{\im}{\textup{Im}\,}
\newcommand{\goi}[2]{=}    
\newcommand{\Hom}{\mathrm{Hom}}
\newcommand{\on}{.}    
\newcommand{\Cx}{\mathbb C^\times}
\newcommand{\btp}{\begin{tikzpicture}[baseline=0pt,scale=0.9,line width=0.25pt]}    
\newcommand{\etp}{\end{tikzpicture}}    
\newcommand{\Roff}{\color{black}}
\newcommand{\path}{\longrightarrow}
\newcommand{\at}[2]{\underset{\substack{$ $ \\ \tikz{\draw (0,0) -- (0,.13);\draw[black,fill=white] (0,.13) circle (1pt); } \\[-.2mm] {#1}}}{\smash{#2}}}
\newcommand{\atp}[2]{\underset{\substack{$ $ \\ \tikz{%\filldraw[black] (0,.13) circle (1pt); 
\draw[->] (0,0) -- (0,.13);} \\[-.2mm] {#1}}}{\smash{#2}}}
\DeclareMathOperator{\res}{res}
\DeclareMathOperator{\Span}{span}
\newcommand{\nord}[1]{:\!\! #1 \!\!:}
\newcommand*{\longhookrightarrow}{\ensuremath{\lhook\joinrel\relbar\joinrel\rightarrow}}
\newcommand*{\longtwoheadrightarrow}{\relbar\joinrel\twoheadrightarrow}
\newcommand{\va}{\mathscr V}
\newcommand{\vla}{\mathscr{L}}
\newcommand{\vma}{\mathscr M}
\newcommand{\ueva}{\VV(\vla)}
\newcommand{\ueval}{\VV(\vma)}
\newcommand{\uevaG}{\VV^\Gamma(\vla)}
\newcommand{\genvla}{L}
\newcommand{\algf}[1]{\mathsf{#1}}
\newcommand{\F}{\algf F}
\newcommand{\VV}{{\mathbb V}}
\newcommand{\WW}{{\mathbb W}}
\newcommand{\vac}{|0\rangle}
\newcommand{\cent}{\mathsf c} 
\newcommand{\ie}{\textit{i.e. }}
\DeclareMathOperator{\Ind}{Ind}
\DeclareMathOperator{\End}{End}
\DeclareMathOperator{\Lie}{Lie}
\DeclareMathOperator{\Aut}{Aut}
\newcommand{\U}{\mathsf L}
\begin{document}

\title{Vertex Lie algebras and cyclotomic coinvariants}

\author{Beno\^{\i}t Vicedo}
\author{Charles Young}
\address{School of Physics, Astronomy and Mathematics, University of Hertfordshire, College Lane, Hatfield AL10 9AB, UK.} \email{benoit.vicedo@gmail.com} \email{charlesyoung@cantab.net}

\begin{abstract}
Given a vertex Lie algebra $\vla$ equipped with an action by automorphisms of a cyclic group $\Gamma$, we define spaces of \emph{cyclotomic coinvariants} over the Riemann sphere. These are quotients of tensor products of smooth modules over `local' Lie algebras $\U(\vla)_{z_i}$ assigned to marked points $z_i$, by the action of a `global' Lie algebra $\U^{\Gamma}_{\{ z_i \}}(\vla)$ of $\Gamma$-equivariant functions.

On the other hand, the universal enveloping vertex algebra $\ueva$ of $\vla$ is itself a vertex Lie algebra with an induced action of $\Gamma$. This gives `big' analogs of the Lie algebras above. From these we construct the space of `big' cyclotomic coinvariants, \emph{i.e.} coinvariants with respect to $\U^{\Gamma}_{\{ z_i \}}(\ueva)$. We prove that these two definitions of cyclotomic coinvariants in fact coincide, provided the origin is included as a marked point. As a corollary we prove a result on the functoriality of cyclotomic coinvariants which we require for the solution of cyclotomic Gaudin models in \cite{VY}.

At the origin, which is fixed by $\Gamma$, one must assign a module over the stable subalgebra $\U(\vla)^{\Gamma}$ of $\U(\vla)$. This module becomes a $\ueva$-quasi-module in the sense of Li. As a bi-product we obtain an iterate formula for such quasi-modules.
\end{abstract}

\maketitle
\setcounter{tocdepth}{1}
\tableofcontents

\section{Introduction and overview}

The theory of vertex Lie algebras was introduced in \cite{KacVertex} under the name `conformal algebra' and further developed in \cite{Primc, Dong02vertexlie} (see also \cite[\S 16.1]{FB}). It provides a unifying framework for describing large families of infinite-dimensional Lie algebras with central extensions, including affine Kac-Moody algebras, Heisenberg Lie algebras, and the Virasoro algebra. 

Given a vertex Lie algebra $\vla$, there is a procedure which associates a (genuine) Lie algebra to every choice of commutative algebra $\mathcal{A}$ over $\C$.
For instance, if one takes $\mc A$ to be the algebra $\C((t))$ of formal Laurent series in a formal variable $t$ then one obtains a Lie algebra $\U(\vla)$ which is topologically generated by modes $a(n):= a\otimes t^n$, $a \in \vla$, $n \in \Z$. Its Lie brackets can be specified in terms of generating series of the form $a(x) := \sum_{n \in \Z} a(n) x^{-n-1}\in \U(\vla)[[x,x^{-1}]]$ as
\begin{equation*}
[a(x), b(y)] = \sum_{k \geq 0} \frac{1}{k!} (a_{(k)} b)(y) \partial_y^k \delta(x, y),
\end{equation*}
for any $a, b \in \vla$. 
Here $a_{(k)}b$ is the \emph{$k$-th product} of the elements $a,b\in \vla$: by definition,  a vertex Lie algebra is a vector space equipped with a collection of such products, $\cdot_{(n)}\cdot: \vla\otimes \vla \to \vla$, labelled by the non-negative integers $n \in \Z_{\geq 0}$ and obeying certain axioms -- see \S\ref{sec: VLAs} -- which ensure, in particular, that the sum on the right hand side is finite and that the resulting Lie brackets are skew-symmetric and satisfy the Jacobi identity.

This Lie algebra $\U(\vla)$ can be regarded as a `local' Lie algebra attached to the origin of the complex plane. More generally, we can construct a copy $\U(\vla)_z$ of $\U(\vla)$ attached to any point $z \in \C$ by letting $\mathcal{A}$ be the algebra $\C((t - z))$ of formal Laurent series in the formal local coordinate $t-z$ at $z$. The $\U(\vla)_z$ are all examples of local Lie algebras.
By contrast, consider taking $\mc A$ to be the commutative algebra $\C_{\bm z}^{\infty}(t)$ of formal rational functions in $t$ having a zero at infinity and poles at most in a given finite set of pairwise distinct \emph{marked points}, $\bm z = \{ z_i \}_{i=1}^N$. The resulting Lie algebra, denoted $\U_{\bm z}(\vla)$, is in a sense `global': it is associated to the entire Riemann sphere with marked points, whereas $\U(\vla)_z$ was attached to the formal punctured disc at the point $z$. 
There is a natural embedding of this global Lie algebra $\U_{\bm z}(\vla)$ into the direct sum $\bigoplus_{i=1}^N\U(\vla)_{z_i}$ of the local Lie algebras\footnote{up to a subtlety concerning the identification of central charges, which we ignore in this introduction; see \S\ref{sec: gla}.}, which comes essentially from taking Laurent expansions at the marked points. That means that a tensor product $\bigotimes_{i=1}^N\M_{z_i}$ of modules $\M_{z_i}$ over $\U(\vla)_{z_i}$, $i=1,\dots,N$, pulls back to become a module over the global Lie algebra $\U_{\bm z}(\vla)$. Quotienting by this action, we obtain a space of \emph{coinvariants},
\be \left.\bigotimes_{i=1}^N \M_{z_i} \right/ \U_{\bm z}(\vla).\nn\ee
Spaces of coinvariants (or equivalently their duals, \emph{conformal blocks}) are one of the main objects of study in conformal field theory \cite{BPZ, TUY}, as well as the study of quantum integrable models including the Gaudin model \cite{FFR} and the KZ equations \cite{KZ, EFKbook}. 

\bigskip

In the present paper we generalize $\U_{\bm z}(\vla)$ to a class of global algebras $\U^{\Gamma}_{\bm z}(\vla)$ that are \emph{cyclotomic} -- or, more precisely, \emph{$\Gamma$-equivariant}, where $\Gamma\simeq \Z/T\Z$ is  a copy of the cyclic group of finite order $T\in \Z_{\geq 1}$. Indeed, $\Gamma$ acts on $\C$ by multiplication by roots of unity, and we can make it act on $\vla$ by powers of some fixed automorphism $\sigma:\vla\to\vla$ whose order divides $T$. So we pick our set of marked points $\bm z = \{z_i\}_{i=1}^N$ in $\C$ as before, but now insist that they have disjoint $\Gamma$-orbits \ie $\Gamma z_i \cap \Gamma z_j = \emptyset$ whenever $i\neq j$. Then $\U^{\Gamma}_{\bm z}(\vla)$ will be the subalgebra of $\Gamma$-equivariant elements of the global Lie algebra $\U_{\Gamma \bm z}(\vla)$. It turns out that there is again an embedding of Lie algebras, global into local; the local Lie algebras remain as before, with the exception that if the fixed point $0$ is a marked point, the local Lie algebra attached to it is a copy of the subalgebra  $\U(\vla)^\Gamma$ of $\Gamma$-equivariant elements of $\U(\vla)$. So we obtain spaces of \emph{cyclotomic coinvariants}
\begin{equation}
\left.\bigotimes_{i=1}^N \M_{z_i} \right/ \U^\Gamma_{\bm z}(\vla), \qquad\text{and}\qquad
\left.\bigotimes_{i=1}^N \M_{z_i} \otimes \M_0 \right/ \U^\Gamma_{\bm z,0}(\vla), \label{introcycco}
\end{equation}
where $\M_0$ is a module over $\U(\vla)^\Gamma$. (It will prove helpful to distinguish the case where $0$ is a marked point.) Our specific motivation arises from a companion paper on cyclotomic generalizations of the quantum Gaudin model \cite{VY}.

\bigskip

To state the main result of the present paper, recall that to any vertex Lie algebra $\vla$ is associated its  \emph{universal enveloping vertex algebra},  $\ueva$. By definition $\ueva$ is, first of all, a module over $\U(\vla)$: namely, it is the \emph{vacuum Verma module} induced from a vacuum state $\vac$ which is annihilated by all non-negative modes. See \S\ref{sec: ueva}. But, in addition, it turns out  to have the structure of a \emph{vertex algebra}. The axiomatic definition of vertex algebras -- recalled in \S\ref{sec: VAs} below --  is similar to that of vertex Lie algebras, except that one has an $n$-th product for every integer $n\in \Z$, rather than merely every non-negative integer. A vertex algebra is thus, in particular, a vertex Lie algebra, simply by forgetting about the negative products.\footnote{Note that of course this is not to say that every vertex algebra is the universal envelope of a vertex Lie algebra, which is certainly not the case. For example, lattice vertex algebras are not of this type \cite{FB}.} Hence $\ueva$ is a vertex Lie algebra. 
That means one has analogs of all the Lie algebras discussed above, but with $\vla$ replaced by $\ueva$. For instance, in place of $\U(\vla)$, one has a Lie algebra $\U(\ueva)$ consisting of all modes of all states in $\ueva$. What is more, there is a natural embedding of vertex Lie algebras $\vla \hookrightarrow \ueva$, which in turn gives rise to embeddings of the Lie algebras: both the local Lie algebras, e.g. $\U(\vla)\hookrightarrow \U(\ueva)$,  and the global ones, e.g. $\U_{\bm z}^\Gamma(\vla) \hookrightarrow \U_{\bm z}^\Gamma(\ueva)$. For that reason, we refer to the Lie algebras associated to the original vertex Lie algebra $\vla$ as `little', and those associated to its envelope $\ueva$ as `big'. In this language, then, there are embeddings of Lie algebras given schematically by the following commutative diagram (see Proposition \ref{prop: gz Uz}):
\begin{equation} \label{se}
\begin{aligned}
\begin{tikzpicture}    
\matrix (m) [matrix of nodes, row sep=2em, column sep=3em]    
{
 {local, little} &  {local, big}  \\
{(cyclotomic) global, little} & {(cyclotomic) global, big.} \\
};
\path[right hook->]
(m-1-1) edge (m-1-2)
(m-2-1) edge (m-2-2)
(m-2-1) edge (m-1-1)
(m-2-2) edge (m-1-2);
\end{tikzpicture}
\end{aligned}
\begin{comment}
\begin{tikzpicture}
\node at (3-2,1) {local, little};
\node at (3+2,1) {local, big};
\node at (3-2,-1) {(cyclotomic) global, little};
\node at (3+2,-1) {(cyclotomic) global, big};

\draw (0,0) -- (6,0);
\draw (3,-2) -- (3,2);
\end{tikzpicture}
\end{comment}
\end{equation}
Next, modules over the little local Lie algebras $\U(\vla)_{z_i}$, provided they are \emph{smooth} (see \S\ref{sec: SM}), automatically have the structure of modules over the corresponding big local Lie algebras $\U(\ueva)_{z_i}$. Likewise, smooth modules over $\U(\vla)^\Gamma$ have the structure of modules over $\U(\ueva)^\Gamma$. The reasons for this are rather subtle -- we sketch them below, and the details are in \S\ref{sec: bigmod} --  but the upshot is that $\bigotimes_{i=1}^N \M_{z_i}\otimes \M_0$ becomes a module over the big global Lie algebra $\U^\Gamma_{\bm z,0}(\ueva)$, by pulling back by the right-hand vertical embedding in \eqref{se}. We are thus free to take coinvariants, to form the space $$\left.\bigotimes_{i=1}^N \M_{z_i}\otimes \M_0 \right/ \U^\Gamma_{\bm z,0}(\ueva).$$ One should then ask how the spaces of coinvariants with respect to the big and little global Lie algebras are related. Since $\U^\Gamma_{\bm z,0}(\vla)\hookrightarrow \U^\Gamma_{\bm z,0}(\ueva)$, as in \eqref{se}, there is certainly a surjective linear map  $\left.\bigotimes_{i=1}^N \M_{z_i}\otimes \M_0 \right/ \U^\Gamma_{\bm z,0}(\vla) \twoheadrightarrow\left.\bigotimes_{i=1}^N \M_{z_i} \otimes \M_0 \right/ \U^\Gamma_{\bm z,0}(\ueva)$. On the face of it, one might expect this map to have a large kernel, given that $\U^\Gamma_{\bm z,0}(\ueva)$ is `much bigger' than $\U^\Gamma_{\bm z,0}(\vla)$. The remarkable fact is that, on the contrary, it is \emph{injective}. Indeed, we shall establish the following in Theorem \ref{thm: big=little}.
\begin{thm*}
There is a linear isomorphism
\begin{align}
          \left.\bigotimes_{i=1}^N\M_{z_i}  \otimes \M_0\right/ \U^{\Gamma}_{\bm z, 0}(\ueva) 
&\quad\cong_{\C}\quad \left.\bigotimes_{i=1}^N\M_{z_i}  \otimes \M_0\right/ \U^{\Gamma}_{\bm z, 0}(\vla). \nn
\end{align}
\end{thm*}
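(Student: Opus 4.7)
The plan is as follows. The natural surjection from little to big coinvariants is immediate from the inclusion $\U^{\Gamma}_{\bm z, 0}(\vla) \hookrightarrow \U^{\Gamma}_{\bm z, 0}(\ueva)$ displayed in the diagram \eqref{se}, so the content of the theorem is the injectivity. This is equivalent to the containment
$$\U^{\Gamma}_{\bm z, 0}(\ueva) \cdot \Bigl( \bigotimes_{i=1}^N \M_{z_i} \otimes \M_0 \Bigr) \subseteq \U^{\Gamma}_{\bm z, 0}(\vla) \cdot \Bigl( \bigotimes_{i=1}^N \M_{z_i} \otimes \M_0 \Bigr),$$
the reverse inclusion being automatic. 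I would establish this by induction on a suitable complexity filtration on $\ueva$ --- either the PBW filtration arising from its description as the vacuum Verma module over $\U(\vla)$, or a grading by conformal weight if one is available.

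The base case is essentially vacuous: a global element of the big algebra whose values lie in $\vla \subset \ueva$ is by construction already a global element of the little algebra. For the inductive step, every state in $\ueva$ is a linear combination of iterates $a_{(n)}b$ with $a \in \vla$ and $b \in \ueva$ of strictly smaller complexity. The crucial tool is the iterate (Borcherds) formula, which expresses modes of $(a_{(n)}b)(z)$ on a smooth module as a sum of products and commutators of modes of $a(z)$ and $b(z)$. At each marked point $z_i \neq 0$ the ordinary smooth-module iterate formula applies; at the $\Gamma$-fixed origin, $\M_0$ is only a $\ueva$-quasi-module in Li's sense, so one invokes the iterate formula for quasi-modules --- the ``bi-product'' formula advertised in the abstract and presumably established earlier in the paper.

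Concretely, given $X \in \U^{\Gamma}_{\bm z, 0}(\ueva)$ whose values at generic $t$ involve the iterate $a_{(n)}b$, I would expand $X$ into its local Laurent series at each marked point, apply the appropriate iterate formula (twisted at $0$, untwisted elsewhere), and reassemble. The modes of $a$ that appear are bona fide elements of $\U(\vla)_{z_i}$ (respectively $\U(\vla)^{\Gamma}$ at the origin), and moving them past the tensor factors --- using the fact that they act as derivations of the tensor product up to little-algebra elements --- produces, modulo the action of $\U^{\Gamma}_{\bm z, 0}(\vla)$, an expression of the form $X' \cdot v'$ with $X' \in \U^{\Gamma}_{\bm z, 0}(\ueva)$ built from states of strictly smaller complexity. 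The induction hypothesis then closes the argument.

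The main obstacle, I expect, is reassembling the purely local iterate formulae into a well-defined global $\Gamma$-equivariant identity. An element of $\U^{\Gamma}_{\bm z, 0}(\ueva)$ is an equivalence class of $\Gamma$-equivariant $\ueva$-valued rational functions on $\C$ with prescribed poles on $\Gamma \bm z \cup \{0\}$, acting as a sum of residues at the marked points. One must check (i) that the pole orders generated by applying the iterate formula locally are compatible with the allowed pole structure of elements of $\U^{\Gamma}_{\bm z, 0}(\vla)$, and (ii) that the twists by powers of $\sigma$ coming from Li's quasi-module iterate formula at the origin mesh correctly with the $\sigma$-twists built into the definition of $\Gamma$-equivariance at the other marked points. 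Both compatibilities should be forced by the construction of the cyclotomic global algebras, but verifying them explicitly --- and tracking that the intermediate expressions $X'$ remain genuine global cyclotomic sections, not merely formal ones --- is where the cyclotomic bookkeeping becomes nontrivial.
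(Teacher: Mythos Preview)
Your reduction to the containment
\[
\U^{\Gamma}_{\bm z, 0}(\ueva) \cdot \Bigl( \bigotimes_{i=1}^N \M_{z_i} \otimes \M_0 \Bigr) \subseteq \U^{\Gamma}_{\bm z, 0}(\vla) \cdot \Bigl( \bigotimes_{i=1}^N \M_{z_i} \otimes \M_0 \Bigr)
\]
matches the paper exactly, but from that point on the two arguments diverge in an instructive way.

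The paper does \emph{not} induct on the complexity of the state $A\in\ueva$. Instead it uses a one-shot global argument: given a big element $\sum_{\alpha}\alpha.(Af)$ with $A\in\ueva$ and $f\in\C^\infty_{\bm z,0}(t)$, the paper inserts an extra copy of $\ueva$ at a variable point $u$, forming the class $[\atp{u}{A}\otimes\atp{\bm z}{\bm m}\otimes\atp{0}{m_0}]$ in little coinvariants. By Proposition~\ref{prop: VL rat} this is a rational function of $u$; multiplying by $f(u)$ and applying the residue theorem (with Lemma~\ref{lem: alpha u} to handle the points $\alpha^{-1}z_i$ in each $\Gamma$-orbit) yields exactly the identity \eqref{big Lie action} rewritten as the vanishing \eqref{big swapping 2}. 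The induction on PBW depth is hidden entirely inside the earlier proof of Proposition~\ref{prop: YMYW}, which defines $Y_M$ and $Y_W$ globally and already absorbs all the iterate/normal-ordering combinatorics.

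Your inductive scheme could in principle be made to work --- you only need the iterate formula with the outer state in $\vla$, which is Theorem~\ref{thm: quasi rec} and is indeed proved before the main theorem --- but the ``reassembly'' step you flag as the main obstacle is precisely what the paper's approach sidesteps. In the paper's argument there is nothing to reassemble: the residue theorem is applied to a single rational function valued in little coinvariants, so the global $\Gamma$-equivariance and the compatibility of pole structures are automatic rather than something to be checked by hand. Your route would require tracking, at each inductive step, that the ``leftover'' pieces after applying the local iterate formulas at all marked points organize themselves back into the action of a genuine global cyclotomic section of lower complexity, and that the extra $u^{-1}$-type poles appearing in the quasi-module iterate formula \eqref{YW rec} at the origin are absorbed correctly. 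This is plausible but not obviously clean, and is exactly the bookkeeping the paper's global-coinvariant machinery was built to avoid.
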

It should be stressed that, when $\Gamma\neq \{1\}$, the presence of the module at the origin here is crucial. As we show by way of an explicit example (Example \ref{exmp: need0}), if $\Gamma\neq \{1\}$ then generically
\be \left.\bigotimes_{i=1}^N\M_{z_i}  \right/ \U^{\Gamma}_{\bm z}(\ueva) 
\quad\not\cong_{\C}\quad \left.\bigotimes_{i=1}^N\M_{z_i}  \right/ \U^{\Gamma}_{\bm z}(\vla).\nn\ee 
(This isomorphism does hold when $\Gamma=\{1\}$.)

\bigskip

Now let us describe in more detail our perspective on the vertex algebra structure of $\ueva$, and on ($\Gamma$-coherent quasi-) modules over this vertex algebra. The subject of vertex algebras is of course a large one and there exist many different approaches. The reader is referred to \cite{Borcherds,FLM,FHL,KacVertex,LLbook,FB}. 
In this paper we are greatly influenced by \cite{FB}, in that we stress the global/geometrical meaning of the vertex algebra structure on $\ueva$. In \cite{FB} the authors work over an algebraic curve of arbitrary genus, whereas we restrict to genus zero, \ie to the Riemann sphere.
Furthermore, we work in a fixed global coordinate on the complex plane in contrast to the coordinate independent approach of \cite{FB}.
By paying this price in generality, we are able work in a more elementary language (essentially just that of rational functions and the residue theorem, with no mention of for example vertex algebra bundles, connections, and sheaves). 
Since for many integrable systems of interest the underlying curve is, in fact, just a copy of the complex plane, we hope that the more explicit treatment here may be useful to others, independently of the specifics of the $\Gamma$-equivariant case.

For us, then, spaces of coinvariants as in \eqref{introcycco} are the central objects. 
Starting with the space of coinvariants \eqref{introcycco} -- for definiteness, with a module assigned to the origin -- consider adding an additional non-zero marked point $u$ and assigning to it the $\U(\vla)$-module $\ueva$. This module has the special property that there is a linear isomorphism 
\be \left. \ueva \otimes \bigotimes_{i=1}^N \M_{z_i} \otimes \M_0 \right/ \U_{u,\bm z,0}^\Gamma(\vla)
\cong_\C \left.  \bigotimes_{i=1}^N \M_{z_i} \otimes \M_0 \right/ \U_{\bm z,0}^\Gamma(\vla)\label{VLremoveintro}\ee
-- see Proposition \ref{prop: VL remove}. That means  it is possible to add one, or in fact arbitrarily many, additional non-zero marked points with copies of the module $\ueva$ assigned to them, without actually altering the space of coinvariants. We use the notation 
\be \big[ \atp u A \otimes \atp {z_1}{m_1} \otimes \dots \atp {z_N}{m_N} \otimes \atp {0}{m_0}  \big] \label{introrat}\ee
for classes in the space \eqref{VLremoveintro}. One can regard such a class as a function of $u$, \ie one can consider varying the point $u$ while holding fixed the other marked points and the tensor factors $A$, $m_1,\dots,m_N$, $m_0$. The result is a rational function of $u$, valued in $\left.  \bigotimes_{i=1}^N \M_{z_i} \otimes \M_0 \right/ \U_{\bm z,0}^\Gamma(\vla)$, with poles at the points $\Gamma \bm z \cup \{0\}$. See Proposition \ref{prop: VL rat}. By Laurent expanding this rational function when $u$ is close to one of the points $z_i$, one obtains a Laurent series in $u-z_i$ with coefficients in $\left.  \bigotimes_{i=1}^N \M_{z_i} \otimes \M_0 \right/ \U_{\bm z,0}^\Gamma(\vla)$. It turns out --  Proposition \ref{prop: YMYW} -- that this Laurent series can always be expressed in the form
\be \big[ \atp{z_1}{m_1}\otimes \dots \otimes \atp{z_{i-1}}{m_{i-1}} \otimes Y_M(A,u-z_i)  \on \atp{z_i}{m_i} \otimes \atp{z_{i+1}}{m_{i+1}} \otimes 
\dots \atp{z_N}{m_N} \otimes \atp{0}{m_0} \big] \nn
\ee
for a certain linear map $Y_M(\cdot,u-z_i):\ueva \to \Hom(\M_{z_i}, \M_{z_i}((u-z_i)))$, which depends only on the choice of module $\M_{z_i}$ and not on the other modules or their positions. It is also independent of $\Gamma$ and of the automorphism $\sigma:\vla\to\vla$.  As a special case, one can take $\M_{z_i}$ to be a copy of $\ueva$ itself, to obtain a linear map $Y(\cdot,u-z_i):\ueva \to \Hom(\ueva, \ueva((u-z_i)))$. This map $Y$ is precisely the usual \emph{state-field correspondence} or \emph{vertex operator} map which encodes all the $n$-th products that define the vertex algebra structure of $\ueva$:
\be Y(A,x) B = \sum_{n\in \Z} A_{(n)} B x^{-n-1}. \nn\ee
Meanwhile the linear maps $Y_M$ are the \emph{module maps} that endow each smooth module $\M_{z_i}$ with the structure of a module over the vertex algebra $\ueva$ -- and thence with the structure of a module over the big Lie algebra $\U(\ueva)$; see Proposition \ref{prop: com2}. 

These global definitions of $Y$ and $Y_M$ should be contrasted with the standard approach to defining the vertex algebra structure on $\ueva$ and its module maps. Recall that one usually defines the state-field map or module map first on elementary states $a(-1)\vac$, $a\in \vla$, and then extends it to the whole of $\ueva$ by requiring consistency with the vertex algebra axioms, principally the Borcherds identity and its corollaries (which include the usual iterate formula involving normal ordering of fields). It is a well-known piece of intuition that the three terms in the Borcherds identity -- see e.g. \cite[\S2.3.1]{Fre07} or \cite[Remark 3.1.16]{LLbook} -- should be thought of as the expansions in different asymptotic regimes of some single `global object'. From the present perspective this is manifest: the `global object' in question is the coinvariant
\begin{equation} \label{Bor-intro}
f(u,v) \big[ \atp u A \otimes \atp v B \otimes \atp {z_1}{m_1} \otimes \dots \atp {z_N}{m_N} \otimes \atp {0}{m_0}  \big],
\end{equation}
where $f(u,v)$ is a rational function in $u$ and $v$ with poles only at $u=z_i$, $v=z_i$ and $u=v$. As above, \eqref{Bor-intro} can be regarded as a rational function of $u$ valued in $\left. \ueva \otimes \bigotimes_{i=1}^N \M_{z_i} \otimes \M_0 \right/ \U_{v,\bm z,0}^\Gamma(\vla)$. Then the Borcherds identity is nothing but the Laurent expansion, in $v-z_i$, of the statement of the residue theorem for this rational function. See Proposition \ref{prop: borcherds} and its proof in \S\ref{sec: borcherdsproof}.

Now, in \eqref{introrat} we should also consider taking the point $u$ near the origin. The result is a Laurent series in $u$ of the form 
\be \big[ \atp{z_1}{m_1}\otimes \dots \otimes 
\dots \atp{z_N}{m_N} \otimes Y_W(A,u)\atp{0}{m_0} \big]\nn\ee
for a certain map $Y_W(\cdot,u) : \ueva \to \Hom(\M_0,\M_0((u)))$, which depends on the choice of the $\U(\vla)^\Gamma$-module $\M_0$. This map $Y_W$ turns out to obey the axioms of a \emph{$\Gamma$-coherent quasi-module map}. These were introduced by Li in a series of papers \cite{Li4,Li3,Li5} under the name $(G, \phi)$-coherent quasi-modules.
We will find, as a by-product of this definition of $Y_W$, an \emph{iterate formula} for such quasi-modules: see Theorem \ref{thm: quasi rec} and Corollary \ref{cor: quasi rec}.

\bigskip

Our Theorem \ref{thm: big=little} here has some overlap with results in \cite{FS}, where coinvariants/conformal blocks for \emph{twisted modules} were introduced. Twisted modules and $\Gamma$-coherent quasi-modules are closely related \cite{Li5} and so in a sense our Theorem \ref{thm: big=little} is the special case of Theorem 7.1/8.1 from \cite{FS} in which one takes the algebraic curve to be (a quotient of) $\C$. (See also \cite{FB} Theorem 9.3.3 and Remark 9.3.10.) On the other hand, in \cite{FS} the theorem is proved only for the special case of the Heisenberg vertex algebra and an automorphism of order 2, whereas here we work with the universal envelope of an arbitrary vertex Lie algebra $\vla$ ($\vla$ need not be finitely generated) and with an arbitrary automorphism of finite order. 

\bigskip

This paper is structured as follows. In \S\ref{sec: vlasandllas} we recall the definition of vertex Lie algebras. Coinvariants are defined in \S\ref{sec: mtc}, which leads up to the definitions of the maps $Y$, $Y_M$ and $Y_W$. After the axioms of vertex algebras are reviewed in \S\ref{sec: VAs}, the `big' Lie algebras are introduced in \S\ref{sec: uevaandbigla}. The main results are stated in \S\ref{sec: mr}. Some of the longer proofs are postponed until \S\ref{sec: proofs}.

%\begin{ack}
%\end{ack}

\section{Vertex Lie algebras and `little' Lie algebras} \label{sec: vlasandllas}

In this section we begin by recalling the definition and basic properties of a vertex Lie algebra following \cite{KacVertex, Primc}.

\subsection{Vertex Lie algebras}\label{sec: VLAs}

Suppose that $\genvla$ is a complex vector space and $\D$ is a partially defined linear map $\genvla\to \genvla$, \ie suppose that $\D$ is a linear map $\genvla'\to \genvla$ for some subspace $\genvla'\subseteq \genvla$. Let $\vla$ denote the quotient of the free left $\C[\D]$-module $\C[\D]\otimes \genvla$ by the ideal generated by
\be \D \otimes a - 1 \otimes \D a, \qquad a\in \genvla' .\nn\ee
Henceforth we shall often abbreviate $\D^k \otimes a$ as $\D^k a$, for all $a\in \genvla$.
  
We assume that the kernel of the map $\D$ is one-dimensional and not contained in its image. So we can pick a vector $\cent\in \ker \D$ and a subspace $\genvla^o\subset \genvla$ such that $\genvla = \genvla^o \oplus \C\cent \oplus \im \D$. Then 
\be \vla = \vla^o \oplus \C\cent \oplus \im \D ,\label{Lodef}\ee
where $\vla^o:=1\otimes \genvla^o$ (and where $\im \D$ is now the image of $\D$ in $\vla$, rather than $\genvla$).

A \emph{vertex Lie algebra} structure on the $\C[\D]$-module $\vla$ is a collection of \emph{$n^{\rm th}$-products} $\vla \otimes \vla \to \vla$, $(a, b) \mapsto a_{(n)} b$ labelled by $n \in \mathbb{Z}_{\geq 0}$ with the property that for any $a, b \in \vla$ we have $a_{(n)} b = 0$ for $n \gg 0$.\footnote{meaning that for any given $a,b$ there is an $n$ such that $a_{(m)}b=0$ for all $m\geq n$.} 
Moreover, these $n^{\rm th}$-products must satisfy the following set of axioms
\begin{enumerate}[$(i)$]
  \item \emph{Translation} \--- For any $a, b \in \vla$ and $n \in \Z_{\geq 0}$, $(\D a)_{(n)} b = - n a_{(n-1)} b$,
  \item \emph{Skew-symmetry} \--- For any $a, b \in \vla$ and $n \in \Z_{\geq 0}$, we have $\displaystyle a_{(n)} b = - \sum_{k \geq 0} \frac{(-1)^{n + k}}{k!} \D^k \big( b_{(n + k)} a \big)$,
  \item \emph{Commutator} \--- For any $a, b, c \in \vla$ and $m, n \in \Z_{\geq 0}$,
\begin{equation*}
\displaystyle a_{(m)} \big( b_{(n)} c \big) - b_{(n)} \big( a_{(m)} c \big) = \sum_{k \geq 0} \bigg( \!\!\! \begin{array}{c} m\\ k \end{array} \!\!\! \bigg) \big( a_{(k)} b \big)_{(m + n - k)} c.\end{equation*}
\end{enumerate}
It follows from axioms $(i)$ and $(ii)$ that $\cent$ is central with respect to all of the $n^{\rm th}$-products. That is, $\cent_{(n)} b = b_{(n)} \cent = 0$ for any $b \in \vla$ and $n \in \Z_{\geq 0}$. Moreover, it also follows that each $n^{\rm th}$-product $\vla \otimes \vla \to \vla$, $(a, b) \mapsto a_{(n)} b$ is completely determined by its restriction $\vla^o \otimes \vla^o \to \vla$, $(a, b) \mapsto a_{(n)} b$ to the subspace $\vla^o$.

We say a vertex Lie algebra $\vla$ is \emph{$\Z_{\geq 0}$-graded} if $\vla = \bigoplus_{n \in \Z_{\geq 0}} \vla_{(n)}$ as a $\C[\D]$-module with the operator $\D$ of degree $1$, \ie $\D a \in \vla_{(n+1)}$ for any $a \in \vla_{(n)}$, and if, moreover,
\begin{subequations}\label{deg VLA}
\begin{equation} 
\deg \big( a_{(n)} b \big) = \deg a + \deg b - n - 1
\end{equation}
for all homogeneous elements $a,b\in \vla$, and
\be \deg \cent = 0.\ee \end{subequations}
Here $\deg a := m$ for any $a \in \vla_{(m)}$. We shall henceforth always assume that an element $a \in \vla$ is homogeneous when writing $\deg a$. 
Let $L(0)$ denote the degree operator on $\vla$, namely the linear map defined by $L(0) a = \deg (a) \, a$.

Unless otherwise specified we shall restrict attention to $\Z_{\geq 0}$-graded vertex Lie algebras.
Note that for $\vla$ to be $\Z_{\geq 0}$-graded it is sufficient that the underlying vector space $\genvla$ be $\Z_{\geq 0}$-graded, $\D$ be of degree 1 with respect to this grading, and \eqref{deg VLA} hold for any homogeneous $a, b \in \genvla$.

The collection of $n^{\rm th}$-products of a vertex Lie algebra may be conveniently combined into a single linear map
\begin{equation} \label{Y map VLA}
\begin{split}
Y_-(\cdot, x) : \vla &\to \Hom\big( \vla, x^{-1} \vla[x^{-1}] \big),\\
a &\mapsto Y_-(a, x) = \sum_{n \geq 0} a_{(n)} x^{-n-1}, \qquad a_{(n)} \in \End \vla.
\end{split}
\end{equation}
Individual $n^{\rm th}$-products are extracted as
$a_{(n)} b = \res_x x^n Y_-(a, x) b$.
In terms of this map, the above axioms may be rewritten more succinctly, for any $a, b, c \in \vla$, as
\begin{enumerate}[$(i)$]
  \item $Y_-(\D a, x) b = \partial_x Y_-(a, x) b$,
  \item $Y_-(a, x) b = \big( e^{x \D} Y_-(b, -x) a \big)_-$,
  \item $\big[ Y_-(a, x_1), Y_-(b, x_2) \big] c = \big( Y_-( Y_-(a, x_1 - x_2) b, x_2 ) \big)_- c$.
\end{enumerate}
Here we use the notation $F(x)_- = \sum_{n \geq 0} F_n x^{-n-1}$ for the pole part of any $F(x) = \sum_{n \in \Z} F_n x^{-n-1}$. Furthermore, in axiom $(iii)$ we used the standard convention that $(x_1 - x_2)^{-n}$ for $n > 0$ is understood to mean its expansion in small $x_2$.  Thus we have $(x_1-x_2)^{-n}\in \C[x_1^{-1}][[x_2]] \subset \C((x_1))((x_2))$, whereas $(x_2-x_1)^{-n}\in \C[x_2^{-1}][[x_1]] \subset \C((x_2))((x_1))$.
Written in terms of the linear map \eqref{Y map VLA}, the centrality of $\cent$ reads
\begin{equation*}
Y_-(\cent, x) b = Y_-(b, x) \cent = 0,
\end{equation*}
for any $b \in \vla$.

\begin{rem}\label{kerrem}
The reader should note that our definitions of vertex Lie algebras and, below, of vertex algebras are  slightly more restrictive than is standard in that we insist that the kernel of the translation operator $\D$ must be one-dimensional. 
\end{rem}

\subsection{From vertex Lie algebras to Lie algebras}\label{sec: LtoLie}

To any vertex Lie algebra $\vla$ we can associate a genuine Lie algebra as follows \cite{KacVertex}. By the translation and skew-symmetry axioms of a vertex Lie algebra we have $(\D a)_{(0)} b = 0$ and $b_{(0)} (\D a) \in \im \D$ for any $a, b \in \vla$. Thus $\im \D$ is a 2-sided ideal of $\vla$ for the $0^{\rm th}$-product. Moreover, for any $a, b, c \in \vla$ we have $a_{(0)} b + b_{(0)} a \in \im \D$ by the skew-symmetry axiom and $a_{(0)} \big( b_{(0)} c \big) - b_{(0)} \big( a_{(0)} c \big) = \big( a_{(0)} b \big)_{(0)} c$ by the commutator axiom. It follows that the quotient module $\vla / \im \D$ is a Lie algebra with Lie bracket given by
\begin{equation*}
\big[ a + \im \D, b + \im \D \big] := a_{(0)} b + \im \D.
\end{equation*}
As a vector space we have $\vla / \im \D \cong_{\C} \vla^o \oplus \C\cent$. 
More generally, we can associate to a given vertex Lie algebra $\vla$ various infinite-dimensional Lie algebras using the following lemma.

\begin{lem}[{\cite[remark 2.7d]{KacVertex}}] \label{lem: VLA}
Let $\vla$ be a vertex Lie algebra and $\mathcal{A}$ a commutative associative algebra with derivation $\delta$. Then
\begin{equation*}
\Lie_{\mathcal{A}} \vla := \big( \vla \otimes \mathcal{A} \big) \big/ \im \partial
\end{equation*}
where $\partial = \D \otimes 1 + 1 \otimes \delta$, is a Lie algebra. Letting $\rho$ denote the quotient map from $\vla \otimes \mathcal{A}$ to $\Lie_{\mathcal{A}} \vla$, the Lie bracket is given explicitly by
\begin{equation} \label{VLA bracket}
\big[ \rho(a \otimes f), \rho(b \otimes g) \big] = \sum_{n \geq 0} \frac{1}{n!} \rho\big(a_{(n)} b \otimes (\delta^n f) g \big).
\end{equation}
There is a derivation $\D : \Lie_{\mathcal{A}} \vla \to \Lie_{\mathcal{A}} \vla$ defined by
\begin{equation} \label{D lie alg}
\D \rho(a \otimes f) := \rho(\D a \otimes f) = - \rho(a \otimes \delta f).
\end{equation}
The element $\rho(\cent \otimes f)$ is central in $\Lie_{\mathcal{A}} \vla$ for any $f \in \mathcal{A}$.
\qed\end{lem}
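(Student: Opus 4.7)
The plan is to verify, in order: (a) the bracket \eqref{VLA bracket} descends to a well-defined bilinear map on the quotient $\Lie_{\mathcal{A}}\vla = (\vla\otimes\mathcal{A})/\im\partial$; (b) antisymmetry; (c) the Jacobi identity; and finally (d) the claims about $\D$ and centrality of $\rho(\cent\otimes f)$. Finiteness of the sum is automatic from $a_{(n)}b=0$ for $n\gg 0$, so on $\vla\otimes \mathcal{A}$ the formula unambiguously defines a bilinear map, and the real content of (a) is descent to the quotient.

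For (a) I will check that the bracket vanishes when either slot is replaced by $\partial(a\otimes f)=\D a\otimes f+a\otimes \delta f$. In the first slot, the translation axiom $(\D a)_{(n)}b=-n\,a_{(n-1)}b$ together with a single shift of summation index reduces the two resulting sums to each other with opposite sign. In the second slot I first record the identity $a_{(n)}\D b=\D(a_{(n)}b)+n\,a_{(n-1)}b$, a consequence of translation and skew-symmetry which just says that $\D$ acts as a derivation of every $n^{\rm th}$-product; then, after using $\rho\circ\partial=0$ to trade the $\D$ on the vertex factor for a $-\delta$ on the algebra factor and expanding $\delta((\delta^n f)g)$ by Leibniz, the four emerging sums cancel in pairs.

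Antisymmetry will follow by substituting the skew-symmetry axiom $a_{(n)}b=-\sum_{k\geq 0}\tfrac{(-1)^{n+k}}{k!}\D^k(b_{(n+k)}a)$ into \eqref{VLA bracket}, converting each $\D^k$ into $(-\delta)^k$ on $\mathcal{A}$ via $\rho\circ\partial=0$, and then collapsing the double sum by Leibniz for $\delta^{n+k}$. The main obstacle is the Jacobi identity. My strategy is to expand all three nested brackets via \eqref{VLA bracket} and then apply the commutator axiom $a_{(m)}(b_{(n)}c)-b_{(n)}(a_{(m)}c)=\sum_{k\geq 0}\binom{m}{k}(a_{(k)}b)_{(m+n-k)}c$. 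The delicate step is the combinatorial bookkeeping on the algebra side: after reindexing $(m,n)\mapsto (k,p)$ with $p=m+n-k$, the coefficient of $\rho((a_{(k)}b)_{(p)}c\otimes \cdot)$ becomes $\sum_{n=0}^{p}\tfrac{1}{k!\,n!\,(p-n)!}(\delta^{k+p-n}f)(\delta^n g)$, which I will recognize as $\tfrac{1}{k!\,p!}\delta^p((\delta^k f)g)$ via the Leibniz expansion of $\delta^p$; this is precisely the algebra factor produced by $[[\rho(a\otimes f),\rho(b\otimes g)],\rho(c\otimes h)]$, giving Jacobi.

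For (d), $\D\rho(a\otimes f):=\rho(\D a\otimes f)$ is well-defined on $\Lie_{\mathcal{A}}\vla$ because $\D\otimes 1$ commutes with $\partial = \D\otimes 1 + 1\otimes\delta$ on $\vla\otimes\mathcal{A}$, and the alternative expression $-\rho(a\otimes\delta f)$ is immediate from $\rho\circ\partial=0$. That $\D$ is a derivation of \eqref{VLA bracket} is a short term-by-term check reusing the translation axiom, essentially reversing the slot-1 calculation from (a). Centrality of $\rho(\cent\otimes f)$ is immediate from $\cent_{(n)}b=b_{(n)}\cent=0$ for all $b\in\vla$ and $n\geq 0$, as noted after the vertex Lie algebra axioms.
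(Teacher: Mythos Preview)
Your proof is correct; each of the four steps (well-definedness, antisymmetry, Jacobi, and the claims about $\D$ and centrality of $\rho(\cent\otimes f)$) goes through as you describe, and in particular your reindexing in the Jacobi verification and your use of the derivation identity $a_{(n)}\D b=\D(a_{(n)}b)+n\,a_{(n-1)}b$ (derived from translation and skew-symmetry) are both valid.

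Note, however, that the paper does not actually give a proof of this lemma: it is stated with a citation to Kac's \emph{Vertex Algebras for Beginners} and closed immediately with \qed. So there is no ``paper's own proof'' to compare against; your argument is simply a full direct verification of a result the authors quote from the literature.
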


An important special case of Lemma \ref{lem: VLA} is when $\mathcal{A} = \C((t))$, the field of formal Laurent series in $t$, with derivation $\delta = \partial_t$. Let
\begin{equation} \label{loop algebra}
\U(\vla) := \Lie_{\C((t))} \vla.
\end{equation}
We use the notation $a(n) := \rho(a \otimes t^n)$ and call this the \emph{$n^{\rm th}$-mode} of $a \in \vla$. When $a$ is homogeneous we will sometimes also use the notation $a[n] := \rho(a \otimes t^{n + \deg a - 1})$. As a vector space $\U(\vla)$ is spanned by formal sums $\sum_{n \geq N} f_n a(n)$ with $a \in \vla$, $f_n \in \C$ and $N \in \mathbb{Z}$ modulo the relation
\begin{equation} \label{D rel VLA}
(\D a)(n) = - n \, a(n - 1).
\end{equation}
If $a \in \vla$ is homogeneous then this relation can also be written as $(\D a)[n] = - (n + \deg a) a[n]$.
Moreover, the Lie bracket \eqref{VLA bracket} on $\U(\vla)$ may be written explicitly as
\begin{equation} \label{VLA bracket loop}
\big[ a(m), b(n) \big] = \sum_{k \geq 0} \bigg( \!\!\! \begin{array}{c} m\\ k \end{array} \!\!\! \bigg) \big(a_{(k)} b \big)(m+n-k).
\end{equation}
In the present case, the linear map $\D : \U(\vla) \to \U(\vla)$ defined by \eqref{D lie alg} reads
\begin{equation} \label{D Lvla}
\D \big( a(n) \big) = (\D a)(n).
\end{equation}

Define a $\Z$-grading on the vector space $\U(\vla)$ by letting
\begin{equation} \label{deg Lvla}
\deg \big( a(n) \big) := \deg a - n - 1,
\end{equation}
for any homogeneous $a \in \vla$ and $n \in \Z$. This does indeed define a grading on $\U(\vla)$ since \eqref{deg Lvla} is compatible with the relations \eqref{D rel VLA}. Moreover, it follows from definition \eqref{deg Lvla} and the explicit form of the Lie bracket \eqref{VLA bracket loop} that $\U(\vla)$ is in fact $\Z$-graded as a Lie algebra. Definition \eqref{deg Lvla} may be equally written as $\deg \big( a[n] \big) = - n$. Note that $\U(\vla)$ is $\Z$-graded even though $\vla$ is $\Z_{\geq 0}$-graded by assumption. Explicitly we have
\begin{equation} \label{Lvla Zgrad}
\U(\vla) = \bigoplus_{n \in \mathbb{Z}} \U_{(n)}(\vla), \qquad
\U_{(n)}(\vla) := \text{span}\, \{ a[- n] \,|\, a \in \vla \; \text{homogeneous} \; \}.
\end{equation}

The following lemma gives a more precise description of $\Lie_{\mathcal{A}} \vla$ as a vector space.
\begin{lem} \label{lem: VLA central}
Let $\mathcal{A}^o$ be a complementary subspace to $\im \delta$ in $\mathcal{A}$, that is $\mathcal{A} = \mathcal{A}^o \oplus \im\, \delta$. Then the quotient map $\rho$ provides a linear isomorphism
\begin{equation*}
\Lie_{\mathcal{A}} \vla \cong_{\C} \vla^o \otimes \mathcal{A}\,\, \oplus\,\, \cent \otimes \mathcal{A}^o,
\end{equation*}
where $\vla^o$ is as in \eqref{Lodef}.
\end{lem}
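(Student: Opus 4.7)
My approach is to reduce the lemma to an explicit vector-space decomposition of $\vla$ induced by its $\C[\D]$-module structure, and then compute the cokernel of $\partial$ on each resulting summand.

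\textbf{Step 1: structure of $\vla$.} I would first show that, using the $\Z_{\geq 0}$-grading together with $\ker \D = \C\cent$, one has the vector space decomposition
\[
\vla \;=\; \C\cent \,\oplus\, \bigoplus_{k \geq 0} \D^k \vla^o
\]
inside $\vla$. The inclusion $\supseteq$ follows by iterating the hypothesis $\vla = \vla^o \oplus \C\cent \oplus \im\D$: writing $v = v^o + \lambda\cent + \D u$ and recursing on $u$, the process terminates since $\deg u < \deg v$. For directness of the sum, I would restrict the relation $\lambda\cent + \sum_k \D^k a_k = 0$ (with $a_k \in \vla^o$) to each degree $n$ and use the graded decomposition $\vla_{(n)} = \vla^o_{(n)} \oplus \C\cent\,\delta_{n,0} \oplus \bigoplus_{k \geq 1}\D^k\vla^o_{(n-k)}$ to force $\lambda = 0$ and the degree-$(n-k)$ component of each $a_k$ to vanish; injectivity of $\D^k$ on $\vla^o$ (a consequence of $\C\cent \cap \vla^o = 0 = \C\cent \cap \im\D$) then yields $a_k = 0$ for all $k$.

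\textbf{Step 2: cokernel of $\partial$.} Tensoring Step 1 with $\mathcal{A}$ yields a $\partial$-stable decomposition
\[
\vla \otimes \mathcal{A} \;=\; (\C\cent \otimes \mathcal{A}) \,\oplus\, \bigoplus_{k \geq 0} (\D^k \vla^o \otimes \mathcal{A}).
\]
On the first summand $\partial$ reduces to $1 \otimes \delta$, so its cokernel is canonically $\cent \otimes (\mathcal{A}/\im\delta) \cong \cent \otimes \mathcal{A}^o$. On the second family of summands, the identity $\partial(\D^{k-1} a \otimes f) = \D^k a \otimes f + \D^{k-1} a \otimes \delta f$ produces the telescoping congruence $\D^k a \otimes f \equiv (-1)^k a \otimes \delta^k f \pmod{\im\partial}$ for $a \in \vla^o$, showing that every class has a representative in the $k=0$ summand $\vla^o \otimes \mathcal{A}$. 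For uniqueness, I would suppose $y \in \vla^o \otimes \mathcal{A}$ lies in $\im\partial$, say $y = \partial\bigl(\sum_{k \geq 0} \D^k \eta_k\bigr)$ with $\eta_k \in \vla^o \otimes \mathcal{A}$ of finite support. Comparing $\D^n$ coefficients: if $N$ is the largest $n$ with $\eta_n \neq 0$, the $\D^{N+1}$ coefficient is $\eta_N$, which must vanish (contradiction); the recursion $\eta_{n-1} + (1 \otimes \delta)\eta_n = 0$ for $n \geq 1$ then forces each $\eta_k = 0$ in turn, and finally $y = (1 \otimes \delta)\eta_0 = 0$.

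\textbf{Main obstacle.} The delicate ingredient is Step 1, especially the directness of the sum, where one must cleanly separate the degree-$0$ layer (containing $\C\cent$) from positive degrees and track the injectivity of $\D^k$ on $\vla^o$; once this decomposition is established, Step 2 is essentially a coefficient-matching computation on the resulting filtered tensor product.
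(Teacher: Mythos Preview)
Your proof is correct and takes a genuinely different route from the paper's.

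You first establish the structural decomposition $\vla = \C\cent \oplus \bigoplus_{k \geq 0} \D^k\vla^o$ and then compute $\operatorname{coker}\partial$ by coefficient matching on the induced decomposition of $\vla\otimes\mathcal A$. The paper instead works directly: it shows surjectivity of $\rho$ restricted to $\vla^o\otimes\mathcal A \oplus \cent\otimes\mathcal A^o$ by reducing modulo $\im\D$, and for injectivity it takes a putative nonzero element of $\im\partial \cap (\vla^o\otimes\mathcal A \oplus \cent\otimes\mathcal A^o)$, organises it by degree in $\vla$ and by the splitting $\mathcal A = \mathcal A^o \oplus \im\delta$, and extracts a contradiction from the top-degree piece. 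Your structural decomposition makes the cokernel computation transparent and in fact subsumes Lemma~\ref{vla Lm iso} (take $\mathcal A=\C[t^{-1}]t^{-1}$); the paper's argument avoids setting up that decomposition but pays for it with a longer case analysis mixing the $\vla$-grading and the $\mathcal A^o\oplus\im\delta$ splitting.

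Two minor points to tighten. First, in Step~2 the \emph{finer} decomposition into the individual summands $\D^k\vla^o\otimes\mathcal A$ is not $\partial$-stable (since $\partial(\D^k\vla^o\otimes\mathcal A)\subset \D^{k+1}\vla^o\otimes\mathcal A + \D^k\vla^o\otimes\mathcal A$); only the two-piece split $\C\cent\otimes\mathcal A$ versus $\bigoplus_k\D^k\vla^o\otimes\mathcal A$ is, and that is all your argument actually uses. Second, your directness argument in Step~1 reads as circular as written, since you invoke the very graded decomposition you are proving; make it an induction on the degree $n$, using $\vla_{(n)} = \vla^o_{(n)}\oplus \C\cent\,\delta_{n,0}\oplus \D(\vla_{(n-1)})$ together with injectivity of $\D$ on $\vla_{(n-1)}$ for $n\geq 2$ (and on $\vla^o_{(0)}$ for $n=1$).
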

\begin{proof}
A proof is given in \S\ref{sec: proofVLAcentral}.
\end{proof}

In the particular example $\mathcal{A} = \C((t))$ with $\delta = \partial_t$ considered above, the subspace $\im \partial_t$ may be characterised as the set of $f \in \C((t))$ such that $\res_t f = 0$.
We can therefore take $\mathcal{A}^o = \C t^{-1}$ as a complementary subspace. Applying Lemma \ref{lem: VLA central} to the case \eqref{loop algebra} we have
\begin{equation} \label{L(vla) decomp}
\U(\vla) \cong_{\C} \vla^o \otimes \C((t)) \,\,\oplus\,\, \C \cent \otimes t^{-1}.
\end{equation}
Note in particular that $\cent(n) = 0$ unless $n = -1$. Moreover, $\cent(-1)$ is central in $\U(\vla)$. 

The Lie algebra $\U(\vla)$ admits a canonical polar decomposition
\begin{equation} \label{polar decomp}
\U(\vla) = \U^-(\vla) \dotplus \U^+(\vla),
\end{equation}
where $\dotplus$ means sum of complementary subspaces and where the subalgebras $\U^-(\vla)$ and $\U^+(\vla)$ are defined as
\begin{equation*}
\U^-(\vla) := \rho\big( \vla^o \otimes t^{-1} \C[t^{-1}] \big), \qquad
\U^+(\vla) := \rho\big( \vla^o \otimes \C[[t]] \,\, \oplus \,\, \C \cent \otimes t^{-1} \big).
\end{equation*}
It is clear from the form of the Lie bracket \eqref{VLA bracket loop} that $\U^-(\vla)$ and $\U^+(\vla)$ are both Lie subalgebras.
The subalgebra $\U^-(\vla)$ will play an important role later in characterising the vertex algebra which is canonically associated to the vertex Lie algebra $\vla$.

It follows from the definition \eqref{deg Lvla} of the $\Z$-grading on $\U(\vla)$ that the subspace $\U^-(\vla)$ is in fact $\Z_{\geq 0}$-graded since $\vla$ is. Moreover, we have the following result.
\begin{lem}[{\cite[Theorem 4.6]{Primc}}]      \label{vla Lm iso}
The linear map
\begin{equation*}
i_{\vla} : \vla \longrightarrow \U^-(\vla) \oplus \C \cent(-1), \qquad
a \longmapsto a(-1)
\end{equation*}
is an isomorphism of $\Z_{\geq 0}$-graded vector spaces such that $i_{\vla} \D = \D \, i_{\vla}$.
\begin{proof}
Using \eqref{D rel VLA} we may write any element of $\U^-(\vla) \oplus \C \cent(-1)$ as $a(- n - 1) = \frac{1}{n!} (\D^n a)(-1)$ for $a \in \vla$ and $n \geq 0$. Thus $i_{\vla}$ is surjective. To show injectivity, suppose $a(-1) = 0$ for some $a \in \vla$. That is, $a \otimes t^{-1} = \sum_i \D a_i \otimes f_i + a_i \otimes \partial_t f_i$ for some $f_i \in \C((t))$, $a_i \in \vla$ and where the sum is finite. Writing $f_i = f_i^- + f_i^+$ where $f_i^- \in t^{-1} \C[t^{-1}]$ and $f_i^+ \in \C[[t]]$ we clearly have $\sum_i \D a_i \otimes f_i^+ + a_i \otimes \partial_t f_i^+ = 0$.
By rearranging the finite sum $a \otimes t^{-1} = \sum_i \D a_i \otimes f_i^- + a_i \otimes \partial_t f_i^-$ it can be rewritten as
\begin{equation*}
a \otimes t^{-1} = \sum_{k = 1}^n \D b_k \otimes t^{- k} - k \, b_k \otimes t^{- k - 1},
\end{equation*}
for some $b_k \in \vla$. Comparing lowest powers of $t$ on both sides it follows recursively that $b_k = 0$ for each $1 \leq k \leq n$. Therefore $a = 0$ and so $i_{\vla}$ is injective.

From \eqref{deg Lvla} we get $\deg \big( a(-1) \big) = \deg a$, so that $i_{\vla}$ is degree-preserving. Finally, it follows from \eqref{D Lvla} that $i_{\vla}$ commutes with the respective actions of $\D$ on $\vla$ and $\U^-(\vla) \oplus \C \cent(-1)$.
\end{proof}
\end{lem}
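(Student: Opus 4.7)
My plan is to verify the four constituent claims—surjectivity, injectivity, degree preservation, and $\D$-equivariance—in sequence, only the second of which requires real work.

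For surjectivity, I would iterate \eqref{D rel VLA} to obtain $(\D^n a)(-1) = n!\, a(-n-1)$ for any $a \in \vla$ and $n \geq 0$. Since $\U^-(\vla) \oplus \C\cent(-1)$ is spanned by the elements $a(-n-1)$ with $a \in \vla^o$ and $n \geq 0$ together with $\cent(-1)$, each such generator lies in the image of $i_\vla$, as either $i_\vla\bigl(\tfrac{1}{n!}\D^n a\bigr)$ or $i_\vla(\cent)$.

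For injectivity, I would suppose $a(-1) = 0$, which by the definition of $\U(\vla)$ as $\vla \otimes \C((t))/\im\partial$ means $a \otimes t^{-1}$ equals $\sum_i (\D c_i \otimes g_i + c_i \otimes \partial_t g_i)$ for some finite collection $c_i \in \vla$, $g_i \in \C((t))$. Splitting each $g_i$ as $g_i^- + g_i^+$ with $g_i^- \in t^{-1}\C[t^{-1}]$ and $g_i^+ \in \C[[t]]$, and using that $\partial_t$ preserves both subspaces, I would note that the part of the sum lying in $\vla \otimes \C[[t]]$ must vanish separately since the left-hand side lives in $\vla \otimes t^{-1}\C[t^{-1}]$. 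Collecting the surviving terms gives
$$a \otimes t^{-1} = \sum_{k=1}^n \bigl( \D b_k \otimes t^{-k} - k\, b_k \otimes t^{-k-1} \bigr)$$
for some $b_k \in \vla$ and integer $n \geq 1$. A descending recursion then finishes the job: comparing coefficients of $t^{-n-1}$ gives $b_n = 0$; comparing coefficients of $t^{-k}$ for $2 \leq k \leq n$ gives $(k-1) b_{k-1} = \D b_k$, whence $b_k = 0$ for all $k \geq 1$ by induction from $b_n = 0$; and finally the coefficient of $t^{-1}$ yields $a = \D b_1 = 0$.

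Degree preservation and the intertwining with $\D$ are then immediate: by \eqref{deg Lvla}, $\deg a(-1) = \deg a - (-1) - 1 = \deg a$, so $i_\vla$ respects the $\Z_{\geq 0}$-gradings on $\vla$ and on $\U^-(\vla) \oplus \C\cent(-1)$; and by \eqref{D Lvla}, $\D\, i_\vla(a) = \D a(-1) = (\D a)(-1) = i_\vla(\D a)$. The only genuine obstacle is the descending recursion in the injectivity step, along with the bookkeeping required to cleanly separate the $t^{-1}\C[t^{-1}]$ and $\C[[t]]$ parts; everything else reduces to formal manipulation using the two identities \eqref{D rel VLA} and \eqref{D Lvla}.
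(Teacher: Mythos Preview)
Your proof is correct and follows essentially the same approach as the paper's: surjectivity via the identity $(\D^n a)(-1)=n!\,a(-n-1)$, injectivity by splitting into polar and regular parts and then running the descending recursion on the coefficients of $t^{-k}$, and the grading and $\D$-compatibility read off directly from \eqref{deg Lvla} and \eqref{D Lvla}. If anything, you have made the recursion in the injectivity step more explicit than the paper does.
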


\begin{exmp}[Centrally extended loop algebras] \label{ex: ghat VLA}
Let $\g$ be a simple Lie algebra over $\C$ and $\langle \cdot, \cdot \rangle : \g \times \g \to \C$  a non-degenerate symmetric invariant bilinear form. Let $\genvla = \g \oplus \C K$, $\genvla' = \C K$ and $\D K = 0$. Set $\genvla^o = \g$. Then we have $\vla = \C[\D] \otimes \g \oplus \C K$.
The non-trivial $n^{\rm th}$-products between elements of $\genvla$ are
\begin{equation*}
a_{(0)} b = [a, b], \qquad
a_{(1)} b = \langle a, b \rangle K.
\end{equation*}
for $a, b \in \g$. There is a $\Z_{\geq 0}$-grading given by $\genvla = \genvla_{(0)} \oplus \genvla_{(1)}$ with $\genvla_{(0)} = \C K$ and $\genvla_{(1)} = \g$.
Here the Lie algebra $\vla / \im \D \cong_{\C} \genvla$ is the direct sum of $\g$ and the one-dimensional abelian Lie algebra $\C K$.
The Lie algebra $\U(\vla)$, on the other hand, is a copy of the untwisted affine Lie algebra $\gh$. Indeed, from \eqref{L(vla) decomp} we have $\U(\vla) \cong_{\C} \g \otimes \C((t)) \oplus \C K$ and the Lie bracket relations \eqref{VLA bracket} read
\begin{equation*}
\big[ a[m], b[n] \big] = [a, b][m + n] + m \, \delta_{m + n, 0} \langle a, b \rangle K,
\end{equation*}
where in this equation $K$ is understood to mean $\rho(K \otimes t^{-1}) = K(-1) = K[0]$.
\end{exmp}

\begin{exmp}[Heisenberg Lie algebras] \label{ex: Heis VLA}
Let $\mathfrak{b} = \h \oplus \n$ be a Borel subalgebra of a simple Lie algebra $\g$ over $\C$. 
Consider the vector space $\genvla = \n \oplus \n^{\ast} \oplus \h \oplus \C {\bf 1}$ and the subspace $\genvla' = \C {\bf 1}$, and define $\D {\bf 1} = 0$. Let $\genvla^o = \n \oplus \n^{\ast} \oplus \h$. Then $\vla = \C[\D] \otimes (\n \oplus \n^{\ast} \oplus \h) \oplus \C {\bf 1}$. The only non-trivial $n^{\rm th}$-products between elements of $\genvla$ are given by
\begin{equation*}
a_{(0)} b = (a, b) {\bf 1},
\end{equation*}
for any $a, b \in \n \oplus \n^{\ast}$ where $(\cdot, \cdot)$ is the standard skew-symmetric form on $\n \oplus \n^{\ast}$ defined as follows. If $a$ and $b$ both belong to $\n$ or $\n^{\ast}$ then $(a, b) = 0$ and if $a \in \n$, $b \in \n^{\ast}$ we set $(a, b) = - (b, a) = \langle a, b \rangle$ where $\langle \cdot, \cdot \rangle : \n \otimes \n^{\ast} \to \C$ is the natural pairing.
We define a $\Z_{\geq 0}$-grading by letting $\genvla = \genvla_{(0)} \oplus \genvla_{(1)}$ where $\genvla_{(0)} = \n^{\ast} \oplus \C {\bf 1}$ and $\genvla_{(1)} = \n \oplus \h$.
The Lie algebra $\vla / \im \D \cong_{\C} \genvla$ is the direct sum of the abelian Lie algebra $\h$ and the Heisenberg Lie algebra $\n \oplus \n^{\ast} \oplus \C {\bf 1}$ with relations $[a, b] = (a, b) {\bf 1}$ for any $a, b \in \n \oplus \n^{\ast}$.
On the other hand, the Lie algebra $\U(\vla)$ is the direct sum of the abelian Lie algebra $\h((t))$ and an infinite-dimensional Heisenberg Lie algebra with relations
\begin{equation*}
\big[ a[m], b[n] \big] = \delta_{m + n, 0} (a, b) {\bf 1},
\end{equation*}
for $a, b \in \n \oplus \n^{\ast}$, where by convention here ${\bf 1}$ stands for $\rho\big( {\bf 1} \otimes t^{-1} \big) = {\bf 1}(-1) = {\bf 1}[0]$.
\end{exmp}

\begin{exmp}[Virasoro algebra]
Consider the vector space $\genvla = \C \omega \oplus \C c$ with subspace $\genvla' = \C c$ and define $\D c = 0$. We take $\genvla^o = \C \omega$. Then $\vla = \C[\D] \otimes \omega \oplus \C c$. The non-zero $n^{\rm th}$-products are given by
\begin{equation*}
\omega_{(0)} \omega = \D \omega, \qquad
\omega_{(1)} \omega = 2 \omega, \qquad
\omega_{(3)} \omega = \frac{c}{2}.
\end{equation*}
A $\Z_{\geq 0}$-grading is given by $\genvla = \genvla_{(0)} \oplus \genvla_{(2)}$ where $\genvla_{(0)} = \C c$ and $\genvla_{(2)} = \C \omega$. The two dimensional Lie algebra $\vla / \im \D \cong_{\C} \genvla$ is abelian and the Lie algebra $\U(\vla)$ coincides with the Virasoro algebra whose relations read
\begin{equation*}
\big[ \omega[m], \omega[n] \big] = (m - n) \, \omega[m+n] + \frac{c}{12} (m^3 - m) \delta_{m+n, 0}.
\end{equation*}
\end{exmp}

\subsection{Local Lie algebras $\U(\vla)_{x}$}
Pick a $z\in \C$. By applying Lemma \ref{lem: VLA} to the commutative associative algebra $\mathcal{A} = \C(( t - z ))$ with derivation $\partial_t$, we obtain a Lie algebra
\begin{equation*}
\U(\vla)_{z} := \Lie_{\C((t - z))} \vla = \big( \vla \otimes \C((t - z)) \big) \big/ \im \partial,
\end{equation*}
where $\partial = \D \otimes 1 + 1 \otimes \partial_t$. Given an element $a \in \genvla$ we shall denote the class of $a \otimes (t-z)^n$ in $\U(\vla)_{z}$ as $a(n)_{z}$, or simply $a(n)$ when there is no risk of ambiguity. If $a \in \vla$ is homogeneous we also use the notation $a[n]_{z}$, or simply $a[n]$, to denote the class of $a \otimes (t-z)^{n + \deg a - 1}$ in $\U(\vla)_{z}$. 

Since $t - z$ is a formal coordinate at $z \in \C$, we may regard the Lie algebra $\U(\vla)_{z}$ as a local copy of $\U(\vla)$ attached to the point $z$. In particular, it admits a polar decomposition 
\begin{equation} \label{polar decomp i}
\U(\vla)_{z} = \U^-(\vla)_{z} \dotplus \U^+(\vla)_{z}
\end{equation}
as in \eqref{polar decomp}, where the subalgebras $\U^-(\vla)_{z}$ and $\U^+(\vla)_{z}$ are defined as
\begin{align*}
\U^-(\vla)_{z} := \; &\rho\big( \vla^o \otimes (t - z)^{-1} \C[(t - z)^{-1}] \big),\\
\U^+(\vla)_{z} := \; &\rho\big( \vla^o \otimes \C[[t - z]] \,\, \oplus \,\, \C \cent \otimes (t - z)^{-1} \big).
\end{align*}

\subsection{Global Lie algebra $\U_{\bm z}(\vla)$}\label{sec: gla}

Suppose that $\bm z=\{z_1,\dots,z_N\}$ is a collection of $N\in \Z_{\geq 1}$ pairwise distinct points in $\C$. They are called \emph{marked points}; they will be the points to which we assign modules in \S\ref{sec: mtc} below.
 
In Lemma \ref{lem: VLA} we can take $\mathcal{A}$ to be the algebra $\C^{\infty}_{\bm z}(t)$ of rational functions of $t$ that vanish at infinity and that have poles at most at the points $z_i$.
We use the notation $a f$ for the class of $a \otimes f \in \vla \otimes \C^{\infty}_{\bm z}(t)$ in $\Lie_{\C^{\infty}_{\bm z}(t)} \vla$. There is a natural embedding of Lie algebras
\begin{equation} \label{Lpz embed}
\Lie_{\C^{\infty}_{\bm z}(t)} \vla \longhookrightarrow \bigoplus_{i = 1}^N \U(\vla)_{z_i}  ; \quad a f \longmapsto \big( \rho(a \otimes \iota_{t - z_i} f) \big)_{i = 1}^N.
\end{equation}
where $\iota_{t-z_i} :\C^{\infty}_{\bm z}(t) \to \C((t-z_i)) $ denotes the formal Laurent expansion at $z_i$.

Let $I_{\bm z}$ be the ideal in $\Lie_{\C^{\infty}_{\bm z}(t)} \vla$ defined by 
\begin{equation} \label{Lz ideal I}
I_{\bm z} := \textup{span}_{\C} \left\{ \frac{\cent}{t - z_i} - \frac{\cent}{t - z_j}  
\; \bigg| \;  1\leq i<j\leq N\right\}.
\end{equation}
In Lemma \ref{lem: VLA central} we may choose $\mathcal{A}^o$ to be the span of the rational functions $(t - z_i)^{-1}$ for $i = 1, \ldots, N$, and then consider the subspace $\rho \big( \vla^o \otimes \C^{\infty}_{\bm z}(t) \big)$ of $\Lie_{\C^{\infty}_{\bm z}(t)} \vla$. Let $\U_{\bm z}(\vla)$ denote the image of this subspace under the quotient map 
\be \Lie_{\C^{\infty}_{\bm z}(t)} \vla \to \big( \Lie_{\C^{\infty}_{\bm z}(t)} \vla \big) \big/ I_{\bm z}.\nn\ee
That is,
\begin{equation} \label{vlaz def}
\U_{\bm z}(\vla) := \rho \big( \vla^o \otimes \C^{\infty}_{\bm z}(t) \big) \big/ I_{\bm z} \subset \big( \Lie_{\C^{\infty}_{\bm z}(t)} \vla \big) \big/ I_{\bm z}.
\end{equation}

In the Lie algebra direct sum $\bigoplus_{i = 1}^N \U(\vla)_{z_i}$ there is an ideal $I_N$ spanned by
\begin{equation} \label{LN ideal I}
\cent(-1)_{z_i} - \cent(-1)_{z_j} ,\quad 1\leq i< j\leq N.
\end{equation}
Let $\U(\vla)_{\bm z}$ be the quotient by this ideal:
\begin{equation} \label{LN def}
\U(\vla)_{\bm z} := \left.\bigoplus_{i = 1}^N \U(\vla)_{z_i} \right/ I_P.
\end{equation}
\begin{prop} 
$\U_{\bm z}(\vla)$ is a Lie subalgebra of $\big( \Lie_{\C^{\infty}_{\bm z}(t)} \vla \big) \big/ I_{\bm z}$. Moreover, there is an embedding of Lie algebras 
\begin{equation*}
\U_{\bm z}(\vla) \longhookrightarrow \U(\vla)_{\bm z}.
\end{equation*}
\end{prop}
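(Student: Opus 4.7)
The plan is to work with the decomposition provided by Lemma \ref{lem: VLA central}, taking $\mathcal A^o = \textup{span}_{\C}\{(t-z_i)^{-1}\}_{i=1}^N$ as a complement to $\im\,\partial_t$ in $\C^\infty_{\bm z}(t)$. (Indeed $\partial_t$ on $\C^\infty_{\bm z}(t)$ maps $(t-z_i)^{-k}$ to $-k(t-z_i)^{-k-1}$, so its cokernel is spanned by the simple poles.) This identifies
$\Lie_{\C^\infty_{\bm z}(t)}\vla \cong_{\C} \vla^o \otimes \C^\infty_{\bm z}(t) \,\oplus\, \cent \otimes \mathcal A^o$, with $I_{\bm z}$ entirely contained in the central summand and $\U_{\bm z}(\vla)$ equal to the image of the $\vla^o$-summand modulo $I_{\bm z}$.

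To show closure under the bracket modulo $I_{\bm z}$, I would pick representatives $a,b\in\vla^o$ and $f,g\in \C^\infty_{\bm z}(t)$ and expand, using \eqref{VLA bracket},
\begin{equation*}
[\rho(a\otimes f),\rho(b\otimes g)] = \sum_{n\geq 0}\frac{1}{n!}\rho\big(a_{(n)}b\otimes (\partial_t^n f)g\big).
\end{equation*}
Decompose $a_{(n)}b = v_n + \gamma_n\cent + \D w_n$ with $v_n\in \vla^o$, $\gamma_n\in\C$, $w_n\in\vla$. Since $\rho(\D w_n\otimes h)=-\rho(w_n\otimes \partial_t h)$ and $\rho(\cent\otimes \partial_t h)=0$ (as $\partial_t h\in \im\,\partial_t$), iterating the decomposition on $w_n$ shows that the only central contribution of the $n$-th summand is $\gamma_n\rho(\cent\otimes (\partial_t^n f)g)$, which under the isomorphism of Lemma \ref{lem: VLA central} equals $\gamma_n\sum_i \res_{z_i}\!\big((\partial_t^n f)g\big)\,\rho(\cent\otimes (t-z_i)^{-1})$. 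Modulo $I_{\bm z}$ these basis vectors become identified, so the total central part becomes the scalar $\sum_{n,i}\frac{\gamma_n}{n!}\res_{z_i}\!\big((\partial_t^n f)g\big)$ times a fixed class. By the residue theorem, $\sum_i \res_{z_i}\!\big((\partial_t^n f)g\big) = -\res_\infty\!\big((\partial_t^n f)g\big) = 0$, since $f,g = O(t^{-1})$ at infinity forces $(\partial_t^n f)g = O(t^{-n-2})$ (and hence no $t^{-1}$ term). So the central contribution vanishes in the quotient, and $\U_{\bm z}(\vla)$ is closed under the bracket.

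For the embedding, I would first show \eqref{Lpz embed} carries $I_{\bm z}$ into $I_N$, so that it descends to the quotients. The generator $\cent/(t-z_i)-\cent/(t-z_j)$ has $k$-th image component $\rho\big(\cent\otimes \iota_{t-z_k}((t-z_i)^{-1}-(t-z_j)^{-1})\big)$. For $k\notin\{i,j\}$ both Laurent expansions are regular power series, so $\rho(\cent\otimes\cdot)$ annihilates them; for $k=i$ the surviving term yields $\cent(-1)_{z_i}$, and for $k=j$ it yields $-\cent(-1)_{z_j}$. The image is therefore $\cent(-1)_{z_i}-\cent(-1)_{z_j}\in I_N$, as required. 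The induced map on quotients is a Lie algebra homomorphism; restrict it to $\U_{\bm z}(\vla)$.

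For injectivity, suppose the class of $\rho(a^o)\in \U_{\bm z}(\vla)$, with $a^o\in \vla^o\otimes \C^\infty_{\bm z}(t)$, maps to zero in $\U(\vla)_{\bm z}$, so that $\big(\rho(\iota_{t-z_k}a^o)\big)_k\in I_N$. By the local analogue of Lemma \ref{lem: VLA central} applied to each $\U(\vla)_{z_k}$, $\rho(\iota_{t-z_k}a^o)$ lies in the $\vla^o$-summand, which is disjoint from the purely central ideal $I_N$; hence each component vanishes. Injectivity of \eqref{Lpz embed} then gives $\rho(a^o)=0$, so the class is zero. The main obstacle is the bracket computation: correctly tracking that every iterated $\D$-reduction contributes only derivative-valued functions to the central slot, so that a single application of the residue theorem on the Riemann sphere (for the undifferentiated factors $f$ and $g$) suffices to kill the central part.
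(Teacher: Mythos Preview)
Your proof is correct and follows essentially the same approach as the paper. Both arguments hinge on the residue theorem applied to $(\partial_t^n f)g$, which has a double zero at infinity, to kill the central component of the bracket modulo $I_{\bm z}$; the paper simply invokes Lemma \ref{lem: VLA central} for the statement you spell out via the $\D$-iteration. For the embedding, the paper packages the injectivity step into Lemma \ref{simple lem} (checking $j(I_{\bm z}) = j(\Lie_{\C^\infty_{\bm z}(t)}\vla)\cap I_N$), whereas you verify injectivity directly using the local decomposition---both are equivalent and equally short.
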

\begin{proof}
For the first part, we must show that $\U_{\bm z}(\vla)$ closes under the Lie bracket. Suppose $a,b\in \vla^o$ and $f,g\in \mc A=\C_{\bm z}^\8(t)$. In view of \eqref{VLA bracket} we should consider $(a_{(n)} b) (\partial_t^n f) g$.  Note that $(\del_t^nf)g$ has at least a double zero at infinity. Now for any $h \in \C^{\infty}_{\bm z}(t)$ with a double zero at infinity, we have in $\big( \Lie_{\C^{\infty}_{\bm z}(t)} \vla \big) \big/ I_{\bm z}$ that
\begin{equation*}
\cent h = \sum_{i = 1}^N \big( \! \res_{t - z_i} \iota_{t - z_i} h \big) \, \frac{\cent}{t - z_i} = \left( \sum_{i = 1}^N \res_{t - z_i} \iota_{t - z_i} h \right) \, \frac{\cent}{t - z_1} = 0,
\end{equation*}
where in the last equality we used the residue theorem.
In particular this shows, \emph{cf.} Lemma \ref{lem: VLA central}, that the class in $\big( \Lie_{\C^{\infty}_{\bm z}(t)} \vla \big) \big/ I_{\bm z}$ of $\left[ af,bg\right]$ has vanishing component in $\rho(\cent\otimes \mc A^o)/I_{\bm z}$, \ie that it is actually in $\rho \big( \vla^o \otimes \C^{\infty}_{\bm z}(t) \big) \big/ I_{\bm z}  =\U_{\bm z}(\vla)$, as required.

For the `moreover' part it is enough to show that there is an embedding of Lie algebras 
\begin{equation*}
\big( \Lie_{\C^{\infty}_{\bm z}(t)} \vla \big) \big/ I_{\bm z} \longhookrightarrow \U(\vla)_{\bm z}.
\end{equation*}
Now, the ideal $I_N$ is precisely the image of the ideal $I_{\bm z}$, \eqref{Lz ideal I}. Therefore the result follows from the following elementary lemma.
\end{proof}

\begin{lem} \label{simple lem}
Let $U$ and $V$ be Lie algebras and $j : U \rightarrow V$ a Lie algebra homomorphism. Furthermore, suppose $I$ is an ideal in $U$ and $J$ an ideal in $V$ such that $j(U) \cap J = j(I)$. Then there is a natural homomorphism of the quotient Lie algebras, $\bar{\jmath} : U/I \rightarrow V/J$.

Moreover, if $j$ is an embedding, then so is $\bar{\jmath}$.
\begin{proof}
By assumption we have in particular $j(I) \subset J$. Therefore the homomorphism $j : U \hookrightarrow V$ induces a well defined map
of the quotient Lie algebras $\bar{\jmath} : U/I \to V/J$ given by $u + I \mapsto j(u) + J$ for $u \in U$.
This is a homomorphism using the fact that $j$ is,
\begin{equation*}
\big[ \bar{\jmath}(u + I), \bar{\jmath}(v + I) \big] = \big[ j(u) + J, j(v) + J \big] = [j(u), j(v)] + J = j([u, v]) + J = \bar{\jmath}([u, v] + I) = \bar{\jmath}([u + I, v + I]).
\end{equation*}
It remains to show that if $j$ is injective then $\bar{\jmath}$ is injective. So suppose $j(u) + J = J$, which means $j(u) \in J$, then we deduce that $j(u) \in j(I)$ using $j(U) \cap J = j(I)$. If $j$ is injective this implies $u \in I$ so that $u + I = I$, as required.
\end{proof}
\end{lem}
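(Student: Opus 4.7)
The plan is to handle the two assertions separately: existence of the induced homomorphism $\bar{\jmath}$ and, assuming $j$ is an embedding, injectivity of $\bar{\jmath}$. Both parts rest on the hypothesis $j(U)\cap J = j(I)$, but they use different consequences of it.

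For the first assertion, I would begin by extracting from the hypothesis the inclusion $j(I) \subseteq J$. This is immediate: $j(I) \subseteq j(U)$ tautologically, and the displayed equality forces $j(I) = j(U)\cap J \subseteq J$. With this in hand, the map $\bar{\jmath}(u+I) := j(u)+J$ is well defined, since any change of coset representative $u \mapsto u+i$ with $i\in I$ shifts $j(u)$ by $j(i)\in J$. That $\bar{\jmath}$ preserves brackets is then a one-line verification using that $j$ is a homomorphism and that Lie brackets on quotients are computed coset-by-coset; I would include this only briefly since it is routine.

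For the second assertion, the full strength of $j(U)\cap J = j(I)$ is required (the inclusion $j(I)\subseteq J$ used above would not suffice). Suppose $\bar{\jmath}(u+I) = 0$, that is $j(u)\in J$. Then $j(u)\in j(U)\cap J = j(I)$, so there exists $i\in I$ with $j(u)=j(i)$. Injectivity of $j$ now yields $u=i\in I$, hence $u+I=0$ in $U/I$, giving injectivity of $\bar{\jmath}$.

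There is no real obstacle here; this is a standard diagram-chase type result and the only subtle point worth emphasising is precisely the role of the hypothesis $j(U)\cap J = j(I)$ as a sharper replacement for the more obvious condition $j(I)\subseteq J$ that would merely produce the induced map. Calling this out explicitly makes clear why the lemma is invoked in the preceding proposition, where one needs to promote an embedding (not just a homomorphism) through the quotient by the central ideals $I_{\bm z}$ and $I_N$.
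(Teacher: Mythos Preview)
Your proposal is correct and follows essentially the same approach as the paper's proof: both extract $j(I)\subseteq J$ from the hypothesis to get well-definedness and the homomorphism property, then use the full equality $j(U)\cap J = j(I)$ together with injectivity of $j$ to conclude injectivity of $\bar{\jmath}$. Your added remark explaining why the sharper hypothesis is needed for the embedding part is a helpful gloss but does not alter the argument.
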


\begin{exmp}
With $\genvla = \g \oplus \C K$ as in example \ref{ex: ghat VLA}, $\vla^o = \g$
so that $\U_{\bm z}(\vla) = \g \otimes \C^{\infty}_{\bm z}(t)$, the Lie algebra of $\g$-valued formal rational functions with poles at the $z_i$ and vanishing at infinity.
\end{exmp}

\subsection{Compatible $\Gamma$-action}

We are interested in vertex Lie algebras equipped with an action of the group $\Gamma$ by automorphisms. 

Recall from \cite{Primc} that, given two vertex Lie algebras $\vla$ and $\vma$, a \emph{homomorphism of vertex Lie algebras} $\varphi : \vla \to \vma$ is a linear map
such that
\begin{equation*}
\varphi \D = \D \varphi, \qquad
\varphi \big( a_{(n)} b \big) = \big( \varphi(a) \big)_{(n)} \big( \varphi(b) \big),
\end{equation*}
for any $n \geq 0$ and $a, b \in \vla$.
In terms of the linear map \eqref{Y map VLA}, the latter property reads
\begin{equation} \label{hom VLA}
\varphi \big( Y_-(a, x) b \big) = Y_- \big( \varphi (a), x \big) \varphi (b),
\end{equation}
for any $a, b \in \vla$. On the left hand side of \eqref{hom VLA}, the map $\varphi$ has been extended component-wise to a linear map $x^{-1} \vla [x^{-1}] \to x^{-1} \vma [x^{-1}]$. If $\vla$ and $\vma$ are graded then $\varphi$ is required to be degree preserving, \ie $\text{deg}\, \varphi(a) = \text{deg}\, a$.

Since $\varphi$ commutes with $\D$, it follows that $\varphi(\cent)\in \C\widetilde\cent$, where $\C\widetilde\cent$ is the kernel of the translation map $\widetilde\D$ of $\vma$. It also follows that $\varphi$ is uniquely specified by its restriction $\varphi : \vla^o \oplus \C\cent \to \vma$ to $\vla^o \oplus \C\cent$. 

For us, an \emph{automorphism of a vertex Lie algebra} $\vla$ is an isomorphism 
\be \varphi : \vla \to \vla\nn\ee 
such that, in addition, 
\be \varphi(\cent) = \cent.\nn\ee 
We denote by $\Aut \vla$ the group of automorphisms of $\vla$.

Let $\vla$ be a $\mathbb{Z}_{\geq 0}$-graded vertex Lie algebra and $\sigma : \vla \to \vla$ an automorphism whose order divides $T$, that is $\sigma^T = \text{id}$. We can define an action of $\Gamma$ on $\vla$ through automorphisms by letting $\omega$ act as $\sigma$. In other words, we have a group homomorphism
\begin{equation*}
\check{R} : \Gamma \longrightarrow \Aut \vla; \quad \alpha \longmapsto \check{R}_{\alpha},
\end{equation*}
given by $\check{R}_{\omega} = \sigma$. It is convenient to introduce a slightly modified action of $\Gamma$ which acts differently on elements of different degrees in $\vla$. Specifically, we define the group homomorphism
\begin{equation} \label{Gamma act VLA}
R : \Gamma \longrightarrow GL(\vla); \quad \alpha \longmapsto R_{\alpha} := \alpha^{L(0)} \check{R}_{\alpha}.
\end{equation}
To show this defines a group homomorphism one uses the fact that $\check{R}_{\alpha} \in \Aut \vla$ is degree preserving.
Unlike $\check{R}_{\alpha} \in \Aut \vla$, however, the map $R_{\alpha}$ is not an automorphism of $\vla$. Instead, it satisfies
\begin{equation} \label{Gamma eq VLA}
R_{\alpha} \big( Y_-(a, x) b \big) = Y_- \big( R_{\alpha} (a), \alpha x \big) R_{\alpha} (b),
\end{equation}
for any $a, b \in \vla$. This follows from the fact that $\check{R}_{\alpha}$ is an automorphism and \eqref{deg VLA}.

Following the terminology introduced in \cite{Li4, Li3} in the context of vertex algebras (which we recall below in \S\ref{sec: Gamma VA}), we shall sometimes refer to a vertex Lie algebra equipped with an action of $\Gamma$ satisfying \eqref{Gamma eq VLA} as a \emph{$\Gamma$-vertex Lie algebra}. \Roff

\begin{exmp}
Let $\vla$ be the vertex Lie algebra of Example \ref{ex: ghat VLA}. Let $\sigma : \g \to \g$ be an automorphism of the simple Lie algebra $\g$ whose order divides $T$ and with respect to which the non-degenerate symmetric invariant bilinear form $\langle \cdot, \cdot \rangle : \g \times \g \to \C$ is invariant, namely $\langle \sigma x, \sigma y \rangle = \langle x, y \rangle$ for any $x, y \in \g$. Extend this to a linear map $\sigma : \genvla \to \genvla$ by letting $\sigma(K) = K$. By construction this map satisfies $\sigma \big( a_{(n)} b \big) = (\sigma a)_{(n)} (\sigma b)$ for any $a, b \in \genvla$ and $n \geq 0$ and extends uniquely to a vertex Lie algebra automorphism $\sigma \in \Aut \vla$.
\end{exmp}

\begin{exmp}
Let $\g$ be a simple Lie algebra over $\C$ and $\sigma : \g \to \g$ an automorphism of $\g$ whose order divides $T$. Then there exists a Cartan decomposition $\g = \n_- \oplus \h \oplus \n$ with the property that $\sigma(\h) = \h$, $\sigma(\n) = \n$ and $\sigma(\n_-) = \n_-$.
Now consider the corresponding vertex Lie algebra $\vla$ constructed as in Example \ref{ex: Heis VLA}. Extend the definition of $\sigma$ to a linear map $\sigma : \genvla \to \genvla$ by letting $\sigma({\bf 1}) = {\bf 1}$ and $\langle \sigma b, a \rangle = \langle b, \sigma^{-1} a \rangle$ for any $a \in \n$ and $b \in \n^{\ast}$. We then have $\sigma \big( a_{(n)} b \big) = (\sigma a)_{(n)} (\sigma b)$ for any $a, b \in \genvla$ and $n \geq 0$ and thus $\sigma$ extends uniquely to a vertex Lie algebra automorphism $\sigma \in \Aut \vla$.
\end{exmp}

\subsection{Cyclotomic global Lie algebra $\U_{\bm z}^\Gamma(\vla)$}\label{sec: tgla}

Let $\sigma \in \Aut \vla$ be an automorphism of the vertex Lie algebra $\vla$ such that $\sigma^T = \text{id}$. Let $R : \Gamma \to GL(\vla)$ be the corresponding group homomorphism defined in \eqref{Gamma act VLA}.

Now, and for the rest of the paper, we require that the points $\bm z$ from \S\ref{sec: gla} are nonzero and have disjoint orbits under the action of $\Gamma$. That is, we insist $z_i\neq 0$ and $\Gamma z_i\cap \Gamma z_j = \emptyset$ for all $1\leq i<j\leq N$.

Consider the algebra $\C^{\infty}_{\Gamma {\bm z}}(t)$ of formal rational functions with poles at most at the points in $\Gamma {\bm z} = \{ \alpha z_i \,|\, \alpha \in \Gamma, i = 1, \ldots, N \}$ and vanishing at infinity.
Applying Lemma \ref{lem: VLA} in the case $\mathcal{A} = \C^{\infty}_{\Gamma {\bm z}}(t)$ with derivation $\partial_t$ we obtain the Lie algebra $\Lie_{\C^{\infty}_{\Gamma {\bm z}}(t)} \vla$. We denote by $a f := \rho(a \otimes f)$ the class in $\Lie_{\C^{\infty}_{\Gamma {\bm z}}(t)} \vla$ of an element $a \otimes f \in \vla \otimes \C^{\infty}_{\Gamma {\bm z}}(t)$.

There is an action of the group $\Gamma$ on $\vla \otimes \C^{\infty}_{\Gamma {\bm z}}(t)$ given for any $\alpha \in \Gamma$ by
\begin{equation} \label{Gamma act LCz}
\alpha . (a \otimes f(t)) := \alpha^{-1} R_{\alpha} a \otimes f(\alpha^{-1} t).
\end{equation}

\begin{lem} \label{lem: twist action}
The action \eqref{Gamma act LCz} descends to an action of $\Gamma$ by automorphisms on $\Lie_{\C^{\infty}_{\Gamma {\bm z}}(t)} \vla$.
\begin{proof}
Recall that $R_{\alpha} = \alpha^{L(0)} \check{R}_{\alpha}$ where $\check{R}_{\alpha} \in \Aut \genvla$. By definition of an automorphism of $\vla$ we have $\D \, \check{R}_{\alpha} = \check{R}_{\alpha} \D$. Moreover, since $\D$ is an operator of degree 1 we have $\D L(0) = (L(0) - 1) \D$, from which it follows that $\D \, R_{\alpha} = \alpha^{-1} \, R_{\alpha} \D$. From the definition $\partial = \D \otimes 1 + 1 \otimes \partial_t$ we find
\begin{align*}
\alpha . \partial (a \otimes f(t)) &= \alpha . (\D a \otimes f(t) + a \otimes \partial_t f(t)) = \alpha^{-1} R_{\alpha} \D a \otimes f(\alpha^{-1} t) + \alpha^{-1} R_{\alpha} a \otimes f'(\alpha^{-1} t)\\
&= \D \, R_{\alpha} a \otimes f(\alpha^{-1} t) + R_{\alpha} a \otimes \partial_t f(\alpha^{-1} t) = \alpha \, \partial \big( \alpha . (a \otimes f(t)) \big).
\end{align*}
In particular it follows that $\Gamma . \im \partial = \im \partial$ and hence the action of $\Gamma$ on $\vla \otimes \C^{\infty}_{\Gamma {\bm z}}(t)$ descends to the quotient $\Lie_{\C^{\infty}_{\Gamma {\bm z}}(t)} \vla$, given explicitly by
$\alpha . \rho(a \otimes f) = \rho\big( \alpha . (a \otimes f) \big)$, \ie
\begin{equation*}
\alpha . (a f(t)) = (\alpha^{-1} R_{\alpha} a) f(\alpha^{-1} t).
\end{equation*}

Finally, consider the action of $\alpha \in \Gamma$ on the Lie bracket of two elements in $\Lie_{\C^{\infty}_{\Gamma {\bm z}}(t)} \vla$ given by \eqref{VLA bracket}. 
A straightforward calculation shows that
\begin{align*}
\alpha . \big[ a f, b g \big] = \big[ \alpha . (a f), \alpha . (b g) \big].
\end{align*}
In other words, $\Gamma$ acts on $\Lie_{\C^{\infty}_{\Gamma {\bm z}}(t)} \vla$ by automorphisms, as required.
\end{proof}
\end{lem}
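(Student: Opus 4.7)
The plan is to prove this in two steps, corresponding to the two claims: first that the action descends to the quotient $\Lie_{\C^{\infty}_{\Gamma {\bm z}}(t)} \vla = (\vla \otimes \C^{\infty}_{\Gamma {\bm z}}(t))/\im \partial$, and second that the descended action is by Lie algebra homomorphisms. Before either, I would briefly verify that \eqref{Gamma act LCz} is actually a group action on $\vla \otimes \C^{\infty}_{\Gamma {\bm z}}(t)$, which is immediate from $R$ being a group homomorphism and the composition $f(t) \mapsto f(\alpha^{-1}t)$ being a right action of $\Gamma$ on $\C^{\infty}_{\Gamma {\bm z}}(t)$.

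For the descent to the quotient, the goal is to show $\Gamma.\im \partial \subseteq \im \partial$. The central identity I would establish first is the twisted commutation $\D R_{\alpha} = \alpha^{-1} R_{\alpha} \D$. This follows by writing $R_{\alpha} = \alpha^{L(0)} \check{R}_{\alpha}$, using that $\check{R}_{\alpha}$ commutes with $\D$ because it is a vertex Lie algebra automorphism, and that $\D$ has degree $+1$ so $\D \alpha^{L(0)} = \alpha^{L(0)-1}\D$. Then computing $\alpha.\partial(a \otimes f)$ using $\partial = \D \otimes 1 + 1 \otimes \partial_t$, together with the chain rule $\partial_t[f(\alpha^{-1}t)] = \alpha^{-1}(\partial_t f)(\alpha^{-1}t)$, both terms pick up a common factor of $\alpha^{-1}$, yielding $\alpha.\partial(a \otimes f) = \alpha\, \partial\bigl(\alpha.(a \otimes f)\bigr)$. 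Hence $\im \partial$ is preserved, and the action descends to a well-defined action $\alpha.(af(t)) = (\alpha^{-1} R_\alpha a) f(\alpha^{-1}t)$ on the quotient.

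For the bracket-preservation step, the key observation I need is the twisted intertwining of $n$-th products: extracting the coefficient of $x^{-n-1}$ in \eqref{Gamma eq VLA} gives
\begin{equation*}
R_{\alpha}(a_{(n)} b) = \alpha^{-n-1}(R_{\alpha} a)_{(n)}(R_{\alpha} b),
\end{equation*}
or equivalently $(R_\alpha a)_{(n)}(R_\alpha b) = \alpha^{n+1} R_\alpha(a_{(n)} b)$. I would then substitute $\alpha.(af) = \alpha^{-1} R_\alpha a \cdot f(\alpha^{-1}t)$ into the explicit bracket formula \eqref{VLA bracket} for $[\alpha.(af), \alpha.(bg)]$. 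The $\alpha^{-1}$'s from the two tensor factors combine with the chain-rule factor $\alpha^{-n}$ from $\partial_t^n[f(\alpha^{-1}t)]$ and the factor $\alpha^{n+1}$ from the intertwining identity to give exactly one overall $\alpha^{-1}$, matching $\alpha.[af, bg]$.

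The main obstacle is purely the bookkeeping of powers of $\alpha$: the modified action $R_\alpha$ is deliberately not an automorphism (it involves $\alpha^{L(0)}$), and each of the three places where $\alpha$ appears --- the action on the Lie algebra element, the substitution in the rational function, and the derivatives applied by the bracket formula --- produces a different power. The nontrivial cancellation between these is what makes the definition of the $\Gamma$-action work, and the $\alpha^{L(0)}$ twist in $R_\alpha$ relative to $\check R_\alpha$ is precisely what is needed to make the twisted product formula absorb the chain-rule factor.
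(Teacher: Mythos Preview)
Your proposal is correct and follows essentially the same approach as the paper: both establish $\D R_\alpha = \alpha^{-1} R_\alpha \D$ from the decomposition $R_\alpha = \alpha^{L(0)}\check R_\alpha$, use this to show $\alpha.\partial = \alpha\,\partial\circ(\alpha.\,\cdot\,)$ so that $\im\partial$ is preserved, and then check bracket-preservation via the formula \eqref{VLA bracket}. Your write-up is in fact slightly more explicit than the paper's, which dismisses the bracket step as ``a straightforward calculation''; your use of $R_\alpha(a_{(n)}b)=\alpha^{-n-1}(R_\alpha a)_{(n)}(R_\alpha b)$ and the power-of-$\alpha$ bookkeeping is exactly what that calculation amounts to.
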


Let $I_{\Gamma\bm z}$ be the ideal in $\Lie_{\C^{\infty}_{\Gamma {\bm z}}(t)} \vla$ defined by
\begin{equation} \label{LGz ideal I}
I_{\Gamma\bm z} := \textup{span}_{\C} \left\{ \sum_{\alpha \in \Gamma} \frac{\cent}{ t - \alpha z_i} 
   - \sum_{\alpha \in \Gamma} \frac{\cent}{t - \alpha z_j} \; \bigg| \;  1\leq i < j \leq N\right\}.
\end{equation}
Consider the restriction of the quotient map $\Lie_{\C^{\infty}_{\Gamma {\bm z}}(t)} \vla \to \big( \Lie_{\C^{\infty}_{\Gamma {\bm z}}(t)} \vla \big) / I_{\Gamma\bm z}$ to the subspace $\rho\big( \vla^o \otimes \C^{\infty}_{\Gamma {\bm z}}(t) \big)$. We denote its image by $\U_{\Gamma\bm z}(\vla)$, \ie
\begin{equation} \label{quotient VLA}
\U_{\Gamma\bm z}(\vla) := \rho\big( \vla^o \otimes \C^{\infty}_{\Gamma {\bm z}}(t) \big) \big/ I_{\Gamma\bm z}.
\end{equation}
The ideal $I_{\Gamma\bm z}$ is clearly invariant under the action of $\Gamma$, \ie $\Gamma . I_{\Gamma\bm z} = I_{\Gamma\bm z}$, and hence there is a well-defined induced action of $\Gamma$ on the quotient \eqref{quotient VLA}.
We can therefore introduce the subspace of $\Gamma$-invariants in $\U_{\Gamma\bm z}(\vla)$, namely
\begin{equation} \label{gsdef}
\U_{\bm z}^\Gamma(\vla) := \big( \U_{\Gamma\bm z}(\vla) \big)^{\Gamma}.
\end{equation}
We have the following useful linear isomorphism
\begin{equation} \label{Lz LGz}
\U_{\bm z}(\vla) \longrightarrow \U_{\bm z}^\Gamma(\vla) ; \quad a f \longmapsto \sum_{\alpha \in \Gamma} \alpha . (a f),
\end{equation}
which is defined by regarding $a f \in \U_{\bm z}(\vla)$ with $a \in \vla^o$ and $f \in \C^{\infty}_{\bm z}(t) \subset \C^{\infty}_{\Gamma {\bm z}}(t)$ as an element in $\U_{\Gamma\bm z}(\vla)$ and constructing the $\Gamma$-invariant element $\sum_{\alpha \in \Gamma} \alpha . (a f)$.

\begin{prop}\label{prop: LG embed} 
$\U_{\bm z}^\Gamma(\vla)$ is a Lie subalgebra of $\big( \Lie_{\C^{\infty}_{\Gamma\bm z}(t)} \vla \big) \big/ I_{\Gamma\bm z}$. Moreover, there is an embedding of Lie algebras 
\be \U_{\bm z}^\Gamma(\vla) \longhookrightarrow \U(\vla)_{\bm z}. \label{LG embed}\ee
\end{prop}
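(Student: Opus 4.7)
The plan has two parts. For Part 1 I show closure of $\U_{\bm z}^\Gamma(\vla)$ under the Lie bracket by analyzing the central component of a bracket via a residue argument that exploits $\Gamma$-invariance. For Part 2 I build the embedding by applying the Laurent expansion map \eqref{Lpz embed} at the full orbit $\Gamma\bm z$, restricting to $\Gamma$-invariants, and passing to quotients via Lemma \ref{simple lem}. Throughout, one may assume $\vla^o$ has been chosen $\Gamma$-invariantly (a Maschke average is possible since $\Gamma$ is finite), so that the decomposition of Lemma \ref{lem: VLA central} is $\Gamma$-equivariant.

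For Part 1, take two elements of $\U_{\bm z}^\Gamma(\vla)$ which, by \eqref{Lz LGz}, can be written as $x=\sum_{\alpha\in\Gamma}\alpha.(af)$ and $y=\sum_{\beta\in\Gamma}\beta.(bg)$ with $a,b\in\vla^o$, $f,g\in\C^\infty_{\bm z}(t)$. Since $\Gamma$ acts by automorphisms (Lemma \ref{lem: twist action}), $[x,y]$ is automatically $\Gamma$-invariant, so the substantive point is that $[x,y]$ represents an element of $\U_{\Gamma\bm z}(\vla)$. Expanding via \eqref{VLA bracket}, its central component under Lemma \ref{lem: VLA central} is $\cent\otimes H$ for some $H\in\C^\infty_{\Gamma\bm z}(t)$ with a double zero at infinity, and is itself $\Gamma$-invariant. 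Combined with $R_\alpha\cent=\cent$, this yields the functional equation $H(\omega t)=\omega^{-1}H(t)$; comparing residues at $\beta z_i$ on both sides forces the residue $r_{i,\beta}$ of $H$ at $\beta z_i$ to be independent of $\beta\in\Gamma$, so that $r_{i,\beta}=R_i$ for some constants $R_i$. The residue theorem then gives $|\Gamma|\sum_iR_i=0$, whence, modulo $\im\partial$,
\[
\cent\otimes H \;=\; \sum_{i=1}^N R_i\sum_{\alpha\in\Gamma}\frac{\cent}{t-\alpha z_i} \;\in\; I_{\Gamma\bm z}
\]
by \eqref{LGz ideal I}. Therefore $[x,y]$ lies in $\U_{\bm z}^\Gamma(\vla)$.

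For Part 2, the Laurent expansion map \eqref{Lpz embed} applied to the marked points $\Gamma\bm z$ yields a Lie algebra embedding
\[
j:\Lie_{\C^\infty_{\Gamma\bm z}(t)}\vla \longhookrightarrow \bigoplus_{\tilde z\in\Gamma\bm z}\U(\vla)_{\tilde z}.
\]
The target carries a natural $\Gamma$-action that permutes summands ($\alpha:\U(\vla)_{\tilde z}\to\U(\vla)_{\alpha\tilde z}$ via the twisted coordinate change $t\mapsto\alpha^{-1}t$ together with $R_\alpha$) and makes $j$ equivariant; under the disjoint-orbit hypothesis $\Gamma z_i\cap\Gamma z_j=\emptyset$, projection onto the $z_i$-summands provides a Lie algebra isomorphism
\[
\Big(\bigoplus_{\tilde z\in\Gamma\bm z}\U(\vla)_{\tilde z}\Big)^{\!\Gamma} \;\xrightarrow{\,\sim\,}\; \bigoplus_{i=1}^N\U(\vla)_{z_i}.
\]
Composing yields an embedding $\bar{j}:(\Lie_{\C^\infty_{\Gamma\bm z}(t)}\vla)^\Gamma\hookrightarrow\bigoplus_{i=1}^N\U(\vla)_{z_i}$. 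Because $\sum_\alpha(t-\alpha z_i)^{-1}$ has principal part at $\beta z_k$ only when $k=i$ (by disjointness), $\bar{j}$ sends a generator of $I_{\Gamma\bm z}$ to the generator $\cent(-1)_{z_i}-\cent(-1)_{z_j}$ of $I_N$; conversely, the residue analysis of Part 1 shows that any $u\in(\Lie_{\C^\infty_{\Gamma\bm z}(t)}\vla)^\Gamma$ with $\bar{j}(u)\in I_N$ must lie in $I_{\Gamma\bm z}$. Hence $\bar{j}((\Lie_{\C^\infty_{\Gamma\bm z}(t)}\vla)^\Gamma)\cap I_N=\bar{j}(I_{\Gamma\bm z})$, and Lemma \ref{simple lem} produces the desired embedding $\U_{\bm z}^\Gamma(\vla)\hookrightarrow\U(\vla)_{\bm z}$.

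The main technical obstacle will be establishing the twisted functional equation $H(\omega t)=\omega^{-1}H(t)$ and tracking the $\Gamma$-equivariance of $j$ carefully; the remaining steps reduce to direct applications of the residue theorem and the disjoint-orbit hypothesis.
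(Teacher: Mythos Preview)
Your proposal is correct and shares the paper's core strategy---the residue theorem for the vanishing of the central component in Part~1, and Laurent expansion together with Lemma~\ref{simple lem} in Part~2---but the execution differs in two respects worth noting.

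For Part~1, the paper rewrites the bracket as $\sum_\alpha \alpha.\big[af,\,\sum_\gamma \gamma.(bg)\big]$ and then shows directly, via partial fractions and the residue theorem, that $\sum_\alpha \alpha.(\cent\, h)=0$ in the quotient for any $h\in\C^\infty_{\Gamma\bm z}(t)$ with a double zero at infinity. This sidesteps your functional equation $H(\omega t)=\omega^{-1}H(t)$ and, in particular, the need to choose $\vla^o$ $\Gamma$-invariantly; your argument is equally valid but slightly more indirect.

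For Part~2, the paper expands only at the representative points $\bm z$ (not the whole orbit), noting that this already gives an embedding $\Lie_{\C^\infty_{\Gamma\bm z}(t)}\vla\hookrightarrow\bigoplus_i\U(\vla)_{z_i}$, and checks that the image of $I_{\Gamma\bm z}$ is \emph{exactly} $I_N$, so Lemma~\ref{simple lem} applies immediately; the restriction to $\Gamma$-invariants is then a restriction of an already-established embedding. Your route via the full orbit and the identification $\big(\bigoplus_{\tilde z}\U(\vla)_{\tilde z}\big)^\Gamma\cong\bigoplus_i\U(\vla)_{z_i}$ works too, but is a detour: once one sees that expanding at the $z_i$ alone is injective, the $\Gamma$-equivariance bookkeeping and the appeal back to Part~1 for the intersection condition become unnecessary.
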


\begin{proof}
Using the linear isomorphism \eqref{Lz LGz} we write elements of $\U_{\bm z}^\Gamma(\vla)$ in the form $\sum_{\alpha \in \Gamma} \alpha . (a f)$ for some $a \in \vla^o$ and $f \in \C^{\infty}_{\bm z}(t)$. The bracket of two such elements takes the form
\begin{equation} \label{bracket LGz}
\bigg[ \sum_{\alpha \in \Gamma} \alpha . (a f), \sum_{\beta \in \Gamma} \beta . (b g) \bigg] = \sum_{\alpha \in \Gamma} \alpha . \bigg[ a f, \sum_{\beta \in \Gamma} (\alpha^{-1} \beta) . (b g) \bigg].
\end{equation}
Furthermore, for any $h \in \C^{\infty}_{\Gamma \bm z}(t)$ with second order zero at infinity, we have
\begin{align}\label{resuse}
\sum_{\alpha \in \Gamma} &\alpha . (\cent h) = \sum_{\alpha \in \Gamma} \alpha . \left( \sum_{i = 1}^N \sum_{\beta \in \Gamma} \big( \! \res_{t - \beta z_i} \iota_{t - \beta z_i} h \big) \, \frac{\cent}{t - \beta z_i} \right)
= \sum_{i = 1}^N \sum_{\beta \in \Gamma} \big( \! \res_{t - \beta z_i} \iota_{t - \beta z_i} h \big) \, \sum_{\alpha \in \Gamma} \frac{\cent}{t - \alpha \beta z_i} \notag\\
&= \sum_{i = 1}^N \sum_{\beta \in \Gamma} \big( \! \res_{t - \beta z_i} \iota_{t - \beta z_i} h \big) \, \sum_{\alpha \in \Gamma} \frac{\cent}{t - \alpha z_i} = \left( \sum_{i = 1}^N \sum_{\beta \in \Gamma} \res_{t - \beta z_i} \iota_{t - \beta z_i} h \right) \, \sum_{\alpha \in \Gamma} \frac{\cent}{t - \alpha z_1} = 0.
\end{align}
In the first equality we used the fact that $\cent (t - \beta z_i)^n = 0$ unless $n = -1$. In the second last equality we used the definition \eqref{LGz ideal I} of the ideal $I_{\Gamma\bm z}$ and in the last equality the residue theorem for the function $h \in \C^{\infty}_{\Gamma \bm z}(t)$. It follows that the right hand side of \eqref{bracket LGz} lives in $\U_{\bm z}^\Gamma(\vla)$, as required.

As in the usual case ($\Gamma=\{1\}$) there is an embedding of Lie algebras
\begin{equation*}
\Lie_{\C^{\infty}_{\Gamma {\bm z}}(t)} \vla \longhookrightarrow \bigoplus_{i = 1}^N \U(\vla)_{z_i}; \quad
a f \longmapsto \big( \rho(a \otimes \iota_{t - z_i} f) \big)_{i = 1}^N.
\end{equation*}
Note that we take the Laurent expansions only at the points in $\bm z$, not at their images in $\alpha \bm z$, $\alpha\in \Gamma\setminus \{1\}$. Now the image under this embedding of the ideal $I_{\Gamma\bm z}\subset\Lie_{\C^{\infty}_{\Gamma {\bm z}}(t)} \vla$ defined in \eqref{LGz ideal I} coincides with the ideal $I_N \subset \bigoplus_{i = 1}^N \U(\vla)_{z_i}$ spanned by \eqref{LN ideal I}.
(Recall that $\cent(n) = 0$ for all $n\geq 0$.)
Hence, applying Lemma \ref{simple lem} we obtain an embedding of Lie algebras 
\be \big( \Lie_{\C^{\infty}_{\Gamma {\bm z}}(t)} \vla \big) \big/ I_{\Gamma\bm z} \longhookrightarrow \U(\vla)_{\bm z}.\nn\ee 
Restricting the latter first to $\U_{\Gamma\bm z}(\vla)$ and subsequently to the subalgebra of $\Gamma$-invariants \eqref{gsdef} we have an embedding of Lie algebras
$\U_{\bm z}^\Gamma(\vla) \longhookrightarrow \U(\vla)_{\bm z}$, as required.
\end{proof}

The Lie algebra $\U_{\bm z}^{\Gamma}(\vla)$ consists of  `cyclotomic' (or, more precisely, $\Gamma$-equivariant) $\vla^o$-valued formal rational functions $\vla^o \otimes \C^{\infty}_{\Gamma \bm z}(t)$ with poles at $\alpha z_i$ for $i = 1, \ldots, N$, $\alpha \in \Gamma$ and vanishing at infinity. The condition of $\Gamma$-equivariance on a function $F(t) = a \otimes f(t) \in \vla^o \otimes \C^{\infty}_{\Gamma \bm z}(t)$ means that if $a \in \vla^o$ is homogeneous of degree $k + 1$, then $\sigma(F(t)) = \omega^{-k} F(\omega t)$.

\begin{exmp}
In the case of the vertex Lie algebra of example \ref{ex: ghat VLA} generated by $L = \g \oplus \C K$, all elements in $\vla^o = \g$ are of degree one. Therefore $L_{\bm z}^{\Gamma}(\vla)$ is the Lie algebra of $\g$-valued rational functions $F(t) \in \g \otimes \C^{\infty}_{\Gamma \bm z}(t)$ such that $\sigma(F(t)) = F(\omega t)$.
\end{exmp}

\begin{exmp}
For the vertex Lie algebra of example \ref{ex: Heis VLA} generated by $L = \n \oplus \n^{\ast} \oplus \h \oplus \C {\bf 1}$ we have $(\vla^o)_{(0)} = \n^{\ast}$ and $(\vla^o)_{(1)} = \n \oplus \h$. In this case $L_{\bm z}^{\Gamma}(\vla)$ is the commutative Lie algebra consisting or rational functions in $(\n \otimes \n^{\ast} \otimes \h) \otimes \C^{\infty}_{\Gamma \bm z}(t)$ such that $\sigma(F(t)) = F(\omega t)$ for $F(t) \in (\n \otimes \h) \otimes \C^{\infty}_{\Gamma \bm z}(t)$ and $\sigma(F(t)) = \omega F(\omega t)$ for $F(t) \in \n^{\ast} \otimes \C^{\infty}_{\Gamma \bm z}(t)$.
\end{exmp}

\subsection{Poles at the origin, and the Lie algebra $\U_{\bm z,0}^\Gamma(\vla)$}\label{sec: tgla}
In the preceding subsection we assumed that the marked points $\bm z=\{z_1,\dots,z_P\}$ were nonzero. But we shall also need the case in which the origin is a marked point. 

The group $\Gamma$ acts on $\vla\otimes \C^\8_{\Gamma\bm z\cup\{0\}}(t)$ as in \eqref{Gamma act LCz}, and one checks as in Lemma \ref{lem: twist action} that this action descends to an action by automorphisms on $\Lie_{\C^\8_{\Gamma\bm z\cup\{0\}}(t)}\vla$. We then define $\U_{\Gamma\bm z, 0}(\vla)$ to be
\be \U_{\Gamma\bm z, 0}(\vla) := \rho(\vla^o\otimes \C^\8_{\Gamma\bm z\cup\{0\}}(t))\big/ I_{\Gamma\bm z,0},\nn\ee
where $I_{\Gamma\bm z,0}$ is the ideal in $\Lie_{\C^\8_{\Gamma\bm z\cup\{0\}}(t)}\vla$ spanned by
\be
\sum_{\alpha \in \Gamma} \frac{\cent}{ t - \alpha z_i} 
                     - \sum_{\alpha\in \Gamma} \frac{\cent}{t}
=  \sum_{\alpha \in \Gamma} \frac{\cent}{ t - \alpha z_i} 
                     -  \frac{T\cent}{t}, \qquad   1\leq i \leq N,\label{LGz ideal Ip}\ee
and finally define 
\begin{equation} \label{gs0def}
\U_{\bm z,0}^\Gamma(\vla) := \big( \U_{\Gamma\bm z,0}(\vla) \big)^{\Gamma}.
\end{equation}

We now want the analog, for $\U_{\bm z,0}^\Gamma(\vla)$, of the embedding of Lie algebras \eqref{LG embed}. 
Because the origin is distinguished by being the fixed point of the action of $\Gamma$ on $\C$, the appropriate local Lie algebra to introduce is not another copy of $\U(\vla)$ but rather the subalgebra of $\Gamma$-invariant elements,
\begin{equation} \label{Gamma local}
\U(\vla)^\Gamma
\end{equation}
where the action of $\alpha\in \Gamma$ on an element  $a\otimes f(t)$ is given exactly as in \eqref{Gamma act LCz} but with $f(t)$ in $\C((t))$ rather than $\C_{\Gamma\bm z}(t)$. (Again, one checks as in Lemma \ref{lem: twist action} that this action of $\Gamma$ on $\vla\otimes \C((t))$ descends to a well-defined action on $\U(\vla) = \Lie_{\C((t))} \vla$ by automorphism.)
Define
\begin{equation} \label{Lvla z0}
\U(\vla)_{\bm z,0} := \left(\bigoplus_{i = 1}^N \U(\vla)_{z_i} \oplus \U(\vla)^{\Gamma} \right)\Big/ I_{N,0}
\end{equation}
where $I_{N,0}$ is the ideal spanned by the elements $\cent(-1)_{z_i} - T\cent(-1)_0$, $i=1,\dots, N$.

\begin{prop}\label{prop: LG embed prime}
$\U_{\bm z,0}^\Gamma(\vla)$ is a Lie subalgebra of $\big( \Lie_{\C^{\infty}_{\Gamma\bm z\cup\{0\}}(t)} \vla \big) \big/ I_{\Gamma\bm z,0}$. Moreover there is an embedding of Lie algebras 
\be \U_{\bm z,0}^\Gamma(\vla) \longhookrightarrow \U(\vla)_{\bm z,0}. \label{LG embed prime}\ee
\end{prop}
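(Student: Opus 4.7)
The plan is to mirror the proof of Proposition \ref{prop: LG embed}, with additional care for the fact that $0$ is the unique fixed point of $\Gamma$ and therefore deserves the $\Gamma$-invariant local Lie algebra $\U(\vla)^\Gamma$ rather than a free copy of $\U(\vla)$.

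For the subalgebra claim, I would first write a generic element of $\U^\Gamma_{\bm z,0}(\vla)$ as an average $\sum_{\alpha\in\Gamma}\alpha.(af)$ with $a\in\vla^o$ and $f\in\C^{\infty}_{\bm z\cup\{0\}}(t)$, in direct analogy with \eqref{Lz LGz}. The bracket of two such elements factors as in \eqref{bracket LGz}, so the only thing to check is that the potentially central contribution vanishes modulo $I_{\Gamma\bm z,0}$. Concretely, for $h\in\C^{\infty}_{\Gamma\bm z\cup\{0\}}(t)$ with a second-order zero at infinity, I would repeat \eqref{resuse} while keeping the residue at the origin, namely
\[
\sum_{\alpha\in\Gamma}\alpha.(\cent h)
= \sum_{i=1}^N\sum_{\beta\in\Gamma}\bigl(\res_{t-\beta z_i}\iota_{t-\beta z_i}h\bigr)\sum_{\alpha\in\Gamma}\frac{\cent}{t-\alpha z_i}
+ \bigl(\res_t \iota_t h\bigr)\sum_{\alpha\in\Gamma}\frac{\cent}{t}.
\]
Since $\cent$ has degree $0$ and $0$ is fixed by $\Gamma$ one has $\sum_\alpha \cent/t = T\cent/t$, while by the new ideal relations \eqref{LGz ideal Ip} each $\sum_\alpha \cent/(t-\alpha z_i)$ also reduces to $T\cent/t$ in the quotient. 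Factoring out this common central vector leaves $T\cent/t$ times the total sum of residues of $h$ on the Riemann sphere, which vanishes by the residue theorem.

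For the embedding, I would construct the Laurent expansion map
\[
j:\Lie_{\C^{\infty}_{\Gamma\bm z\cup\{0\}}(t)}\vla \longhookrightarrow \bigoplus_{i=1}^N\U(\vla)_{z_i}\oplus \U(\vla)_0,\qquad af\longmapsto \bigl(\rho(a\otimes\iota_{t-z_i}f)\bigr)_{i=1}^N\oplus\rho(a\otimes\iota_t f),
\]
expanding only at the representatives $\bm z\cup\{0\}$ and not at the other points of the $\Gamma$-orbits, exactly as in the untwisted setting. Because $0$ is the unique fixed point of $\Gamma$, the action \eqref{Gamma act LCz} commutes with $\iota_t$, so the $0$-component of $j(F)$ is $\Gamma$-invariant whenever $F$ is, and therefore lies in $\U(\vla)^\Gamma$. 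A short direct computation then shows that $j$ sends the generator $\sum_\alpha \cent/(t-\alpha z_i)-T\cent/t$ of $I_{\Gamma\bm z,0}$ to the generator $\cent(-1)_{z_i}-T\cent(-1)_0$ of $I_{N,0}$: at $z_j$ with $j\neq i$ the expansion is regular because $\Gamma$-orbits are disjoint; at $z_i$ only the $\alpha=1$ summand contributes, giving $\cent(-1)_{z_i}$; and at $0$ the sum $\sum_\alpha(t-\alpha z_i)^{-1}$ is regular while $-T\cent/t$ contributes $-T\cent(-1)_0$. Hence $j(I_{\Gamma\bm z,0})=I_{N,0}$, and applying Lemma \ref{simple lem} to the restriction of $j$ to the $\Gamma$-invariant subspace produces the desired embedding \eqref{LG embed prime}.

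The main obstacle is the combinatorial bookkeeping around the factor $T$ and the contribution from the origin: this factor must be tracked consistently through the residue identity, the ideal \eqref{LGz ideal Ip}, and the ideal $I_{N,0}$, and one must also verify that the Laurent expansion of a $\Gamma$-equivariant rational function at $0$ really assembles into a $\Gamma$-invariant element of $\U(\vla)$. Once these points are pinned down, the remaining steps are a direct adaptation of the argument for Proposition \ref{prop: LG embed}.
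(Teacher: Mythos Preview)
Your proposal is correct and follows essentially the same route as the paper: write elements as $\Gamma$-averages, use the residue theorem (now including the contribution at $0$, with the factor $T$ coming from $\sum_\alpha \alpha.(\cent/t)=T\cent/t$) to kill the central piece of the bracket, then Laurent-expand at $z_1,\dots,z_N,0$ and invoke Lemma~\ref{simple lem}. The only organizational difference is that you first observe that the $0$-component of a $\Gamma$-invariant element already lands in $\U(\vla)^\Gamma$ and then apply Lemma~\ref{simple lem} to the restricted map, whereas the paper applies the lemma to the full map into $\bigoplus_i \U(\vla)_{z_i}\oplus \U(\vla)$ and only afterwards restricts to $\Gamma$-invariants; your order makes the landing in $\U(\vla)_{\bm z,0}$ slightly more transparent, but the content is the same.
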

\begin{proof}
The proof is essentially the same as that of Proposition \ref{prop: LG embed}. We still have a surjective linear map
\be \U_{\bm z,0}(\vla) \to \U_{\bm z,0}^\Gamma(\vla);\qquad a f \mapsto \sum_{\alpha\in \Gamma} \alpha \on (a f) ; \nn\ee
though, \emph{cf.} \eqref{Lz LGz}, this map is no longer injective in general. So we may continue to write elements of $\U_{\bm z,0}^\Gamma(\vla)$ in the form $\sum_{\alpha\in \Gamma}\alpha\on(af)$, for some $a f \in \U_{\bm z, 0}(\vla)$. The step \eqref{resuse} becomes
\begin{align}\label{resuseprime}
\sum_{\alpha \in \Gamma} \alpha . (\cent h) &= \sum_{\alpha \in \Gamma} \alpha . \left( \sum_{i = 1}^N \sum_{\beta \in \Gamma} \big( \! \res_{t - \beta z_i} \iota_{t - \beta z_i} h \big) \, \frac{\cent}{t - \beta z_i} +  \big(\res_t h\big) \frac\cent t \right) \notag\\
&= \sum_{i = 1}^N \sum_{\beta \in \Gamma} \big( \! \res_{t - \beta z_i} \iota_{t - \beta z_i} h \big) \, \sum_{\alpha \in \Gamma} \frac{\cent}{t - \alpha z_i} + T\big(\res_t h \big)\frac\cent t \notag\\ 
&= \left( \sum_{i = 1}^N \sum_{\beta \in \Gamma} \res_{t - \beta z_i} \iota_{t - \beta z_i} h + \res_t h \right) \, T\frac\cent t = 0,
\end{align}
where in the final equality we used the residue theorem for $h \in \C^\8_{\Gamma \bm z\cup\{0\}}(t)$.

There is an embedding of Lie algebras 
$\Lie_{\C^{\infty}_{\Gamma \bm z\cup\{0\} }(t)} \vla \hookrightarrow \bigoplus_{i = 1}^N \U(\vla)_{z_i} \oplus \U(\vla)$
by taking Laurent expansions at the points $z_1,\dots,z_N$ and $0$. As before one checks that the conditions of Lemma \ref{simple lem} are satisfied, so that we obtain an embedding of Lie algebras $\big( \Lie_{\C^{\infty}_{\Gamma {\bm z\cup\{0\}}}(t)} \vla \big) \big/ I_{\Gamma\bm z,0} \hookrightarrow 
\bigoplus_{i = 1}^N \U(\vla)_{z_i}  \oplus \U(\vla)\big/ I_{N,0}$.
Restricting the latter first to $\U_{\Gamma\bm z,0}(\vla)$ and subsequently to the subalgebra \eqref{gs0def} of $\Gamma$-invariants we have an embedding of Lie algebras
$\U_{\bm z,0}^\Gamma(\vla) \longhookrightarrow \U(\vla)_{\bm z,0}$, as required.
\end{proof}

To describe the subalgebra \eqref{Gamma local} of $\Gamma$-invariant elements in $\U(\vla)$ more explicitly, recall that the Lie algebra $\U(\vla)$ is spanned by formal sums $\sum_{n \geq N} f_n a(n)$ with $a \in \vla$, $f_n \in \C$ and $N \in \Z$. Note that we have a natural surjective map
\begin{equation*}
\U(\vla) \longtwoheadrightarrow \U(\vla)^{\Gamma}
\end{equation*}
given by $\sum_{n \geq N} f_n \, a(n) \mapsto \sum_{n \geq N} f_n \, a^{\Gamma}(n)$, where
\begin{equation*}
a^{\Gamma}(n) := \sum_{\alpha \in \Gamma} \alpha . (a(n)) = \sum_{\alpha \in \Gamma} \alpha^{-n-1} (R_{\alpha} a)(n).
\end{equation*}
As a vector space, $\U(\vla)^{\Gamma}$ is therefore spanned by formal sums $\sum_{n \geq N} f_n \, a^{\Gamma}(n)$ with $a \in \vla$, $f_n \in \C$ and $N \in \Z$.

\begin{exmp}
Consider the vertex Lie algebra of example \ref{ex: ghat VLA}. Then $\U(\vla)^{\Gamma}$ is the twisted loop algebra, spanned by formal sums $\sum_{n \geq N} f_n \, a^{\sigma}(n)$ of elements of the form $a^{\sigma}(n) = \sum_{k \in \mathbb{Z}_T} \omega^{- k n} (\sigma^k a)(n)$.
\end{exmp}

\section{Modules and cyclotomic coinvariants}\label{sec: mtc}

A module $\M$ over $\U(\vla)$ has \emph{level} $k\in \C$ if $\cent(-1) \on v = k v$ for all $v\in \M$. \emph{In what follows, we assume without further comment that all modules over $\U(\vla)$ are of level 1}. That is we assume that 
\begin{equation} \label{l1}
\cent(-1) \on v = v \quad\text{for all} \quad v\in \M.
\end{equation}
On the other hand, \emph{we assume that all modules over $\U(\vla)^{\Gamma}$ are of level $\frac{1}{T}$} so that a module over the direct sum $\bigoplus_{i=1}^N \U(\vla)_{z_i} \oplus \U(\vla)^{\Gamma}$ pulls back to a module over $\U(\vla)_{\bm z, 0}$ defined in \eqref{Lvla z0}. That is, for any module $M_0$ over $\U(\vla)^{\Gamma}$ we assume that
\begin{equation} \label{l0}
\cent(-1) \on v = \frac{1}{T} v \quad\text{for all} \quad v\in \M_0.
\end{equation}

\subsection{Spaces of cyclotomic coinvariants}\label{sec: indg}
Let $\M_{z_i}$ be a module over $\U(\vla)_{z_i}$ for each $i = 1, \ldots, N$.
Then the tensor product
$\bigotimes_{i=1}^N\M_{z_i}$
is naturally a module over the Lie algebra $\U(\vla)_{\bm z}$ defined in \eqref{LN def}, because condition \eqref{l1} ensures that the action of $\bigoplus_{i = 1}^N \U(\vla)_{z_i}$ descends to an action of the quotient $\U(\vla)_{\bm z} := \bigoplus_{i = 1}^N \U(\vla)_{z_i}\big/ I_N.$ After pull-back by the embedding of Proposition \ref{prop: LG embed}, it becomes a module over $\U^\Gamma_{\bm z}(\vla)$.
The space of \emph{coinvariants with respect to $\U_{\bm z}^\Gamma(\vla)$} (or \emph{cyclotomic coinvariants}) is by definition the quotient
\be\label{tc1} 
\left.\bigotimes_{i=1}^N\M_{z_i}  \right/ \U_{\bm z}^\Gamma(\vla) :=\left.\bigotimes_{i=1}^N\M_{z_i}  \right/ \U_{\bm z}^\Gamma(\vla) \on \left(\bigotimes_{i=1}^N\M_{z_i} \right),
\ee
where $\U_{\bm z}^\Gamma(\vla) \on \M := \Span_\C \big\{ f \on m \, \big| \, f \in \U_{\bm z}^\Gamma(\vla), m \in \M \big\}$ for any $\U_{\bm z}^\Gamma(\vla)$-module $\M$.
Similarly, if $\M_{0}$ is a module over $\U(\vla)^\Gamma$ then in view of Proposition \ref{prop: LG embed prime} we have a space of coinvariants with respect to $\U^\Gamma_{\bm z,0}(\vla)$, 
\be\label{tc2}
\left.\bigotimes_{i=1}^N\M_{z_i} \otimes \M_{0}\right/ \U^\Gamma_{\bm z,0}(\vla).
\ee

The space \eqref{tc2}, for example, is spanned by classes of the form 
\be [  \atp{z_1}{m_1} \otimes \atp{z_2}{m_2} \otimes \dots \otimes \atp{z_N}{m_N} 
 \otimes \atp 0 {m_0}]\nn
\ee
with $m_i\in \M_{z_i}$, $i=1,\dots,N$ and $m_0\in \M_0$. 
Here, and in what follows, we decorate such classes with arrows as a reminder of the points to which the tensor factors are assigned.

\subsection{Vacuum Verma module $\ueva$}\label{sec: ueva}
Recall the polar decomposition $\U(\vla) = \U^-(\vla) \dotplus \U^+(\vla)$
given in \eqref{polar decomp}. Let $\C\vac$ be the one-dimensional $\U^+(\vla)$-module in which 
\begin{equation} \label{vacdef}
\cent(-1) \vac = \vac \quad\text{and}\quad
a(n) \vac = 0 \quad\text{for all $a\in \vla$ and all $n\geq 0$}.
\end{equation}
Then the \emph{vacuum Verma module} over $\U(\vla)$, denoted $\ueva$, is defined to be the $\U(\vla)$-module induced from $\C\vac$:
\begin{equation} \label{Vvla def}
\ueva := \Ind^{\U(\vla)}_{\U^+(\vla)} \C \vac = U(\U(\vla)) \otimes_{U(\U^+(\vla))} \C \vac.
\end{equation}
Given the polar decomposition \eqref{polar decomp} and using the Poincar\'e-Birkhoff-Witt theorem we have $U(\U(\vla)) \cong_{\C} U(\U^-(\vla)) \otimes U(\U^+(\vla))$, so that as a vector space
\begin{equation} \label{Vvla vect}
\ueva \cong_{\C} U(\U^-(\vla)) \vac \cong_{\C} U\big( \rho(\vla^o \otimes t^{-1} \C[t^{-1}]) \big).
\end{equation}
By this identification, $\ueva$ as a vector space inherits the natural ascending filtration on $U(\U^-(\vla))$. We use the word \emph{depth} to refer to this filtration. So the following vector has depth $j$:
\begin{equation*}
 a_1(- n_1) \ldots a_j(- n_j) \vac ,
\end{equation*}
with $a_i \in \vla^o$ and $n_i \in \Z_{> 0}$ for $i = 1, \ldots, j$. Moreover $\ueva$ inherits the $\Z_{\geq 0}$-grading of $\U^-(\vla)$ if we let $\deg \vac := 0$. The state above has grade $\deg a_1 + \ldots + \deg a_j + n_1 + \ldots + n_j - j$, for example.

There is an injection
\begin{equation} \label{vla in VV}
\vla \longhookrightarrow \ueva,
\end{equation}
sending $a \mapsto a(-1) \vac$ for all $a\in \vla$ (so in particular $\cent \mapsto \vac$, using \eqref{vacdef}).

\begin{comment}
Let now $\M_{(x)}$ and $\M_{(y)}$ be modules over $U(\vla)$ assigned to points $x,y\in \Cx\setminus \Gamma\bm z$, $x\notin \Gamma y$, and consider the space of coinvariants
\be \left( \M_{(x)} \otimes \M_{(y)} \otimes \bigotimes_{i=1}^N \M_{x_i} \otimes \M_0 \right)\big/ \U^\Gamma_{x,y,\bm z,0}(\vla). \ee
This space is spanned by classes of the form $$\big[ m_x\otimes m_y \otimes \bm m \otimes m_0\big]_{x,y,\bm z,0}.$$  A key observation is that  by definition of $\U^\Gamma_{x,y,\bm z,0}(\vla)$ the dependence on $x$ is \emph{rational}, with poles at most at points in the set $\Gamma y\cup \Gamma \bm z\cup 0$. 
We can take the Laurent expansion of this rational function about $x=y$.  The following proposition is very standard:  
\begin{prop}
For all $m_x\in \M_{(x)}$ and all $m_y\in \M_{(y)}$ there exists an $\M_{(y)}$-valued  Laurent series $L_{m_x,m_y}(x-y)\in \M_{(y)}((x-y))$
\iota_{x-y} \big[ m_x\otimes m_y \otimes \bm m \otimes m_0\big]_{x,y,\bm z,0}

\end{prop}
\end{comment}

In the above construction of cyclotomic coinvariants we may assign this module to a point $u \in \C^{\times} \setminus \Gamma \bm z$. That is, we make $\ueva$ into a module over the local copy $\U(\vla)_u$ of $\U(\vla)$ at $u$ by means of the  isomorphism 
\be \U(\vla)_u\xrightarrow\sim \U(\vla); \quad \rho(a\otimes (t-u)^n) \mapsto \rho(a\otimes t^n).\label{lgi}\ee
We may then take the space of coinvariants with respect to $\U^{\Gamma}_{u, \bm z}(\vla)$ (meaning, strictly, $\U^\Gamma_{\{u\}\cup \bm z}(\vla)$). 
The following important proposition says that adding the module $\ueva$ in this way does not modify the space of coinvariants.
\begin{prop}\label{prop: VL remove}
For each $u \in \Cx \setminus  \Gamma \bm z $ we have a linear isomorphism
\begin{equation*}
\left.\ueva \otimes \bigotimes_{i = 1}^N \M_{z_i} \right/ \U^\Gamma_{u,\bm z}(\vla) \xrightarrow{\sim_\C} \bigotimes_{i = 1}^N \M_{z_i} \bigg/ \U^\Gamma_{\bm z}(\vla)
\end{equation*}
(together with the obvious analogs for the space of coinvariants \eqref{tc2}). 
\end{prop}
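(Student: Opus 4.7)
The plan is to realize the claimed isomorphism as the "vacuum insertion" map $\Phi: \bigotimes_{i=1}^N M_{z_i} \to \ueva \otimes \bigotimes_{i=1}^N M_{z_i}$, $\bm m \mapsto \vac \otimes \bm m$, and verify it induces a linear bijection on coinvariants. The case with $M_0$ assigned to the origin is entirely analogous. Well-definedness of the induced map $\bar\Phi$ follows from the natural Lie algebra embedding $\U^\Gamma_{\bm z}(\vla) \hookrightarrow \U^\Gamma_{u,\bm z}(\vla)$: any $f \in \U^\Gamma_{\bm z}(\vla)$ has no pole at $u$, so its Laurent expansion $f^u$ at $u$ consists of non-negative modes of elements of $\vla^o$, which annihilate $\vac$ by \eqref{vacdef}. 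Hence $f \on (\vac \otimes \bm m) = \vac \otimes (f \on \bm m)$, and $[\vac \otimes (f \on \bm m)] = 0$ in the target.

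The central tool is the family of $\Gamma$-equivariant functions
$$F_{a,n}(t) := \sum_{\alpha \in \Gamma} \alpha^{n-1} R_\alpha a \otimes (t - \alpha u)^{-n} \in \U^\Gamma_{u,\bm z}(\vla), \qquad a \in \vla^o,\ n \geq 1.$$
Since $u \in \Cx$ gives $\alpha u \neq u$ for $\alpha \neq 1$, the Laurent expansion at $u$ reads $F_{a,n}^u = a(-n)_u + P_{a,n}$, where $P_{a,n} \in \U^+(\vla)_u$ is a sum of non-negative modes of elements of $\vla^o$ each of strictly lower Lie algebra degree than $a(-n)$; in particular $P_{a,n}\vac = 0$. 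Applying the relation $F_{a,n} \on (A' \vac \otimes \bm m) = 0$ in the target coinvariants yields the recursion
$$[a(-n) A' \vac \otimes \bm m] = -[[P_{a,n}, A']\vac \otimes \bm m] - [A'\vac \otimes \sum_i F_{a,n}^{z_i} \on_i \bm m],$$
whose right-hand terms have strictly lower $\Z_{\geq 0}$-degree in $\ueva$ than the left-hand side. Induction on this degree reduces any $[A \vac \otimes \bm m]$ to the form $[\vac \otimes \bm m']$, establishing surjectivity of $\bar\Phi$.

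The main obstacle is injectivity. I will invert $\bar\Phi$ by promoting the surjectivity recursion to a definition of a linear map $\Psi: \ueva \otimes \bigotimes M_{z_i} \to \bigotimes M_{z_i}/\U^\Gamma_{\bm z}(\vla)$: on the identification $\ueva \cong U(\U^-(\vla)_u)\vac$ of \eqref{Vvla vect}, set $\Psi(\vac \otimes \bm m) := [\bm m]$ and
$$\Psi(a(-n) X \vac \otimes \bm m) := -\Psi([P_{a,n}, X]\vac \otimes \bm m) - \Psi(X\vac \otimes \sum_i F_{a,n}^{z_i} \on_i \bm m).$$
Well-definedness of $\Psi$ as a linear map on $\ueva$ requires that it respect the commutation relations of $\U^-(\vla)_u$, which reduces, using the Lie bracket formula \eqref{VLA bracket loop}, to the identity $[F_{a,n}, F_{b,m}] = \sum_{k \geq 0}\binom{-n}{k} F_{a_{(k)} b, n+m+k} + r_{a,n;b,m}$ in $\U^\Gamma_{u,\bm z}(\vla)$, where $r_{a,n;b,m}$ is the "cross" contribution arising from distinct group elements; a residue calculation shows its $z_i$-expansion contribution to $\Psi$ cancels appropriately.

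Invariance of $\Psi$ under the $\U^\Gamma_{u,\bm z}(\vla)$-action, required for $\Psi$ to descend to $\bar\Psi$, reduces via the vector-space splitting $\U^\Gamma_{u,\bm z}(\vla) = s(\U^-(\vla)_u) \oplus \U^\Gamma_{\bm z}(\vla)$ with section $s(a(-n)_u) := F_{a,n}$ to two independent verifications: vanishing on $F_{a,n}$-actions is built into the recursive definition of $\Psi$, while vanishing on $\U^\Gamma_{\bm z}(\vla)$-actions follows inductively from the fact that any $g \in \U^\Gamma_{\bm z}(\vla)$ satisfies $g^u \vac = 0$ (so $g$-action doesn't raise $\ueva$-depth) and that $[g \on \bm m] = 0$ in the target $\bigotimes M_{z_i}/\U^\Gamma_{\bm z}(\vla)$. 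Finally $\bar\Psi \circ \bar\Phi = \id$ is immediate from $\Psi(\vac \otimes \bm m) = [\bm m]$, and $\bar\Phi \circ \bar\Psi = \id$ follows from the surjectivity recursion, completing the proof.
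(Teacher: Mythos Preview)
Your overall strategy---vacuum insertion $\Phi$, surjectivity by swapping with the $\Gamma$-equivariant functions $F_{a,n}$, injectivity by constructing an explicit inverse $\Psi$---is sound and close in spirit to the paper's argument. Surjectivity is fine. The genuine gap is in the well-definedness of $\Psi$. Your recursion defines $\Psi$ on ordered monomials $a_1(-n_1)\cdots a_k(-n_k)\vac\otimes\bm m$, and you correctly note that descending to $\ueva$ requires checking compatibility with the commutation relations of $\U^-(\vla)_u$. But the sentence ``a residue calculation shows its $z_i$-expansion contribution to $\Psi$ cancels appropriately'' is not a proof: you have not exhibited the calculation, and in fact the cross term $r_{a,n;b,m}$ does not lie in $\U^\Gamma_{\bm z}(\vla)$ at all, so framing the cancellation as happening ``at the $z_i$'' is misleading.

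The paper avoids this difficulty by a structural observation you are one step away from: the span of your $F_{a,n}$'s is exactly $\mf a:=\U^\Gamma_u(\vla)$, and this is a Lie \emph{subalgebra} of $\U^\Gamma_{u,\bm z}(\vla)$, not merely the image of a vector-space section. Combined with the decomposition $\U(\vla)_u=\U^\Gamma_u(\vla)\dotplus\U^+(\vla)_u$, this gives $\ueva\cong_\C U(\mf a)\vac$. Now $V:=\ueva\otimes M$ is a \emph{free} $U(\mf a)$-module, so $V=\mf a\cdot V\dotplus\vac\otimes M$ holds automatically and your $\Psi$ is simply the projection onto the second summand followed by $M\to M/\mf b$; well-definedness is then immediate with no consistency check required. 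The paper finishes with a short filtration argument showing $\mf b\cdot V\subset\mf a\cdot V+\vac\otimes\mf b\cdot M$ (where $\mf b=\U^\Gamma_{\bm z}(\vla)$), whence the third isomorphism theorem gives the result. In short: recognizing that your $F_{a,n}$'s generate a genuine Lie subalgebra, and rewriting $\ueva$ as its vacuum module, is the missing idea that collapses your recursive well-definedness check into a one-line freeness statement.
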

\begin{proof} 
We first observe that
\begin{equation} \label{L LG decomp}
\U(\vla)_u = \U_u^{\Gamma}(\vla) \dotplus \U^+(\vla)_u,
\end{equation}
where we identify $\U_u^{\Gamma}(\vla)$ with its image $\iota_{t - u} \U_u^{\Gamma}(\vla)$ in $\U(\vla)_u$ under the embedding $\U_u^{\Gamma}(\vla) \hookrightarrow \U(\vla)_u$ of Proposition \ref{prop: LG embed}.
To see this, given any $\rho(a \otimes f(t-u)) \in \U(\vla)_u$, with $a \in \vla$ and $f(t-u) \in \C((t- u))$, let $\rho(a^o \otimes f_-(t-u)) \in \U^-(\vla)_u$, with $a^o \in \vla^o$ and $f_-(t-u) \in (t-u)^{-1} \C[(t-u)^{-1}]$, denote its component along $\U^-(\vla)_{u}$ with respect to the polar decomposition $\U(\vla)_u = \U^-(\vla)_u \dotplus \U^+(\vla)_u$ defined in \S\ref{sec: LtoLie}. Then $\rho(a \otimes f(t-u)) \in \U(\vla)_u$ can be uniquely written as a sum of $\iota_{t- u} F(t)$ where $F(t) = \sum_{\alpha \in \Gamma}(\alpha^{-1} R_{\alpha} a^o) f_-(\alpha^{-1} t-u) \in \U^{\Gamma}_u(\vla)$ and $\rho(a \otimes f(t-u)) - \iota_{t - u} F(t) \in \U^+(\vla)_u$.

By \eqref{lgi} and \eqref{Vvla def} we have $\ueva =  U(\U(\vla)_u) \otimes_{U(\U^+(\vla)_u)} \C \vac$, so \eqref{L LG decomp} implies that as a vector space
\begin{equation} \label{vli}
\ueva \cong_{\C} U(\U_u^{\Gamma}(\vla)) \vac.
\end{equation}

Now, let $V$ be a module over a Lie algebra $\mf c$, and suppose that $\mf c$ decomposes as a vector space into the direct sum $\mf c = \mf a \dotplus \mf b$ of two Lie subalgebras $\mf a$ and $\mf b$.
We have $(\mf a \dotplus \mf b)\on V = \mf a\on V + \mf b\on V$ and the natural inclusions of vector spaces $\mf a \on V \subseteq \mf a \on V + \mf b\on V \subseteq V$. 
Hence, using the third isomorphism theorem, 
\be V \big/ \mf c \on V = V\big/ (\mf a \on V + \mf b\on V) \cong_{\C} \big( V / \mf a\on V \big) \big/ \big( (\mf a \on V + \mf b\on V)/ \mf a\on V \big).\label{visoms}\ee 
In the case at hand  $\mf a = \U^\Gamma_u(\vla)$, $\mf b = \U^\Gamma_{\bm z}(\vla)$ and the decomposition $\mf c = \U^\Gamma_{u,\bm z}(\vla)$, and $\mf c = \mf a \dotplus \mf b$ is given by the partial fraction expansion. 
By \eqref{vli} we have
\begin{equation*} \ueva \otimes \bigotimes_{i=1}^N\M_{z_i} 
      \cong_\C U(\mf a)\vac \otimes M =: V \end{equation*}
where for brevity we introduce $M :=  \bigotimes_{i=1}^N\M_{z_i}$. This defines a $\mf c$-module structure on $V$, and with respect to this structure
it is enough to show that 
\begin{align} 
V  &= \mf a \on V \dotplus \vac \otimes M \label{c1}\\
\mf a \on V + \mf b \on V &= \mf a \on V \dotplus \vac\otimes \mf b\on M \label{c2}
\end{align}
for then \eqref{visoms} gives $V/\mf c \on V \cong_{\C} M / \mf b \on M$, which is the required result. 

Note that as a module over $U(\mf a)$, $U(\mf a)\vac\otimes M$ is free with basis $\{\vac\otimes m_i\}$, where $\{m_i\}$ is any $\C$-basis of $M$. It follows that 
\be \label{decVLM}
U(\mf a) \vac \otimes M  \\
= \mf a \on \left(U(\mf a)\vac\otimes M\right) \dotplus \vac \otimes M\ee
which is \eqref{c1}. It remains to show \eqref{c2}. 
Let $\{0\} = \F_{-1} \subset \C = \F_0 \subset \mf a = \F_1 \subset \F_2 \subset \dots$ be the natural ascending filtration of $U(\mf a)$. 
Note that in our setting $\mf b \on \vac = 0$ (elements of $\mf b$ have purely Taylor expansions at $u$ and hence annihilate $\vac$). 
Hence $\mf b\on (\vac\otimes M) = \vac\otimes \mf b \on M\subset \vac\otimes M$, which has zero intersection with $\mf a\on V$ by \eqref{c1}. 
So to show \eqref{c2} it is enough to show that 
\be \mf a\on V + \mf b \on V \subset \mf a \on V + \mf b \on (\vac\otimes M)\label{summ}\ee
(the reverse containment being obvious). Now, for each $k\in \Z_{\geq 0}$,
\begin{align*} \mf b \on (\F_{k+1}\vac \otimes M) = \mf b \on (\mf a \F_{k}\vac \otimes M)
                         &\subset \mf b \on \mf a \on (\F_k \vac\otimes M) + \mf b \on (\F_k \vac\otimes \mf a\on M)\\
           &\subset \mf b \on \mf a \on (\F_k \vac\otimes M) + \mf b \on (\F_k\vac \otimes M)\\
           &\subset \mf a \on \mf b \on (\F_k \vac\otimes M) + \mf a \on (\F_k\vac\otimes M) + \mf b \on (\F_k\vac \otimes M)\\
           &\subset \mf a \on V + \mf b \on (\F_k \vac\otimes M)
\end{align*}
where we used the fact that $[\mf b, \mf a] \subset \mf a + \mf b$. Hence by induction  $\mf b\on (\F_k \vac\otimes M) \subset \mf a \on V + \mf b \on (\vac \otimes M)$ for all $k$. Thus $\mf b\on V  \subset \mf a \on V + \mf b \on (\vac \otimes M)$ and we have \eqref{summ} as required.  
\end{proof}

\subsection{Smooth modules, and rational behaviour of coinvariants}\label{sec: SM}
A module $\M$ over the Lie algebra $\U^+(\vla)$ of \eqref{polar decomp} is called \emph{smooth} if for all $v\in\M$ and all $a\in \vla$,
\be a(n) \on v = 0 \qquad\text{for}\quad n \gg 0.\nn\ee

All modules over $\U(\vla)$ are modules over $\U^+(\vla)\hookrightarrow \U(\vla)$, so the definition of smooth makes sense in particular for $\U(\vla)$-modules. 

By construction, $\ueva$ is a smooth module over $\U(\vla)$. 

Now, for each $u\in \Cx \setminus \Gamma\bm z$ we have a linear map
\be F_u : \ueva \otimes \bigotimes_{i = 1}^N \M_{z_i}  \twoheadrightarrow 
   \left.\bigotimes_{i = 1}^N \M_{z_i}  \right/ \U^\Gamma_{\bm z}(\vla)\nn\ee
given by first taking coinvariants with respect to $\U^\Gamma_{u,\bm z}(\vla)$ and then using the linear isomorphism of Proposition \ref{prop: VL remove}. We can consider varying the point $u$ while holding fixed the points $\{z_i\}_{i=1}^N$.
That is, we can think of $F_u$ as a function of $u$.

\begin{prop}\label{prop: VL rat}
Suppose the modules $\M_{z_i}$ are all smooth. 
Then $F_u$ is a \emph{rational} function of $u$ with poles at most at the points in $\Gamma \bm z \cup \{ 0 \}$.
\end{prop}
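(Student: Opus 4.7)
The plan is to prove rationality by induction on $A \in \ueva$, using a well-chosen $\Gamma$-equivariant rational function to trade the action of a negative mode of $\ueva$ at $u$ for actions on the other tensor factors and on elements of $\ueva$ that are strictly simpler. By the isomorphism \eqref{Vvla vect}, a spanning set of $\ueva$ is given by vectors $A = a_1(-n_1) \cdots a_j(-n_j)\vac$ with $a_i \in \vla^o$ and $n_i \geq 1$, and it is natural to proceed by induction on the pair $(\deg A,\, j)$ ordered lexicographically. The base case $A = \vac$ is immediate: the isomorphism of Proposition \ref{prop: VL remove} sends $F_u(\vac\otimes \bm m)$ to the class $[\bm m]$, which is independent of $u$.

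For the inductive step, write $A = a(-n)A'$ with $a \in \vla^o$, $n \geq 1$, and $A' \in \ueva$ of depth $j-1$. The key construction is the $\vla$-valued rational function
\begin{equation*}
G(t) \;=\; \sum_{\alpha\in\Gamma} \alpha^{n-1}\, R_\alpha a \otimes (t-\alpha u)^{-n}.
\end{equation*}
By inspection, $G(t)$ is $\Gamma$-equivariant for the action \eqref{Gamma act LCz}, vanishes at infinity (since $n\geq 1$), and has poles only on the orbit $\Gamma u$. Hence $G(t)$ represents an element of $\U^{\Gamma}_{u,\bm z}(\vla)$ and therefore annihilates every class in the space of coinvariants.

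Expanding $G(t)$ at $u$ and at each $z_i$ via the embedding of Proposition \ref{prop: LG embed} and applying the resulting relation to $[A'\otimes \bm m]$ yields an identity in coinvariants of the schematic form
\begin{equation*}
\bigl[a(-n)A'\otimes \bm m\bigr] \;=\; -\sum_{\alpha\neq 1,\, k\geq 0} c_{k,\alpha}(u)\,\bigl[(R_\alpha a)(k) A'\otimes \bm m\bigr] \;-\; \sum_{i,\,\alpha,\,k} d^{(i)}_{k,\alpha}(u)\,\bigl[A'\otimes \cdots\otimes (R_\alpha a)(k)m_i\otimes\cdots\bigr].
\end{equation*}
The coefficients $c_{k,\alpha}(u)$ come from Taylor-expanding $(t-\alpha u)^{-n}$ at $t=u$ for $\alpha\ne 1$, hence are rational in $u$ with poles only at $u=0$; the coefficients $d^{(i)}_{k,\alpha}(u)$ come from Taylor-expanding at $t=z_i$, hence are rational in $u$ with poles only at $u \in \alpha^{-1}z_i \subset \Gamma \bm z$. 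Smoothness of $\ueva$ and of each $\M_{z_i}$ ensures that every sum over $k$ is finite. A direct grade computation using \eqref{deg Lvla} gives $\deg\bigl((R_\alpha a)(k)A'\bigr) = \deg A - n - k < \deg A$ for the first family, whereas in the second family the $\ueva$-tensor factor is $A'$, whose pair $(\deg A',\, j-1)$ is lexicographically strictly smaller than $(\deg A,\,j)$. Hence the inductive hypothesis applies to every class on the right, and combined with the explicit rational coefficients it shows that $F_u(A\otimes \bm m)$ is rational in $u$ with poles in $\Gamma \bm z \cup \{0\}$.

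The main obstacle is the choice of $G(t)$: one must exhibit a $\Gamma$-equivariant rational function whose principal part at $u$ supplies exactly the mode $a(-n)$ to be traded, while confining its $u$-dependence after Laurent expansion at each marked point to coefficients with poles only in $\Gamma \bm z \cup \{0\}$. The pole at the origin appears unavoidably through the cross-terms $(t-\alpha u)^{-n}$ with $\alpha\neq 1$, whose coefficients at $t=u$ develop singularities precisely at $u=0$, reflecting the fact that $0$ is the unique fixed point of $\Gamma$ acting on $\C$.
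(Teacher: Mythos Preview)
Your argument is correct. The underlying mechanism is the same as the paper's: trade a negative mode at $u$ using the $\Gamma$-equivariant function $\sum_{\alpha\in\Gamma}\alpha\cdot\big(a\otimes(t-u)^{-n}\big)$, and use smoothness to truncate the resulting sums. The paper organizes the computation a little differently: instead of a single induction on $A$, it factors $F_u$ through the commutative diagram coming from the proof of Proposition~\ref{prop: VL remove}, writing $F_u$ as a composite in which one map (the re-expression of $\ueva$ as $U(\U^\Gamma_u(\vla))\vac$) contributes only the pole at $0$ and the other (the ``swapping'' of $\Gamma$-equivariant functions onto the modules at the $z_i$) contributes only poles at $\Gamma\bm z$. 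This separation makes the origin of each pole transparent, whereas your direct induction handles both at once; your argument is slightly more elementary and avoids referring back to the structure of that diagram. Either way the same identity and the same smoothness argument do the work.
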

\begin{proof}
In the proof of Proposition \ref{prop: VL remove}, we established existence of the following commutative diagram, where the notation $V, \mf a, \mf b, M$ is as in that proof, $\theta$ is the  isomorphism \eqref{vli} and $\psi$ is the isomorphism defined through the decomposition \eqref{c1}.
\be
\begin{tikzpicture}    
\matrix (m) [matrix of math nodes, row sep=2em, column sep=3em,text height=1.5ex, text depth=0.25ex]    
{
\ueva\otimes M & V & V/\mf a\on V& M \\
 & V\big/(\mf a \on V + \mf b \on V) & V/\mf a \on V \Big/ (\mf a \on V + \mf b \on V)/\mf a \on V & M/\mf b \on M \\
};
\path[->>,shorten <= 0,shorten >= 0]  (m-1-2) edge node[below] {$\pi$} (m-1-3);
\path[->] (m-1-1) edge node[above] {$\sim$} node[below] {$\theta\otimes \id$}(m-1-2);
\path[->] (m-1-3) edge node[above] {$\sim$} node[below] {$\psi$} (m-1-4);
\path[->] (m-2-3) edge node[above] {$\sim$} (m-2-4);
\path[->] (m-2-2) edge node[above] {$\sim$} (m-2-3);
\path[->>] (m-1-2) edge (m-2-2);
\path[->>] (m-1-3) edge (m-2-3);
\path[->>] (m-1-4) edge (m-2-4);
\end{tikzpicture}
\nn\ee
The map $F_u$ is the resulting map $\ueva\otimes M  \to M/\mf b\on M$. Consider the route right-right-right-down through the diagram. The final projection involves objects that do not depend on $u$ at all. So we are done if we can show that the map $\theta$ is a rational function of $u$ with pole at most at $0$ and that $\psi\circ \pi$ is a rational function of $u$ with poles at most in $\Gamma\bm z$. 

Consider $\theta$.
It is enough to consider states of the form $a_1(-1)a_2(-1)\dots a_K(-1)\vac$, $a_k\in \vla$, since these span $\ueva$. We are to re-express these as elements of $U(\U^\Gamma_u(\vla))\vac$ \ie as linear combinations of states of the form $(\iota_{t-u} f_1)(\iota_{t-u} f_2)\dots (\iota_{t-u} f_{K'})\vac$ with each $f_k$ of the form $\sum_{\alpha \in \Gamma} \frac{\alpha^{-1} R_{\alpha} a}{\alpha^{-1} t - u}$, $a\in \vla$. To do so we make recursive use of the identity 
\be a(-1) = \iota_{t-u} \left(\sum_{\alpha \in \Gamma} \frac{\alpha^{-1} R_{\alpha} a}{\alpha^{-1} t - u}\right)
+ \sum_{n\geq 0} \sum_{\alpha\in \Gamma\setminus \{1\}} \frac{(R_\alpha a)(n)}{\left((\alpha-1)u\right)^{n+1}},\nn\ee working from the outside inwards. Thus the coefficients are indeed rational functions of $u$ with poles at most at $u=0$.

Next, consider $\psi\circ \pi$. What \eqref{c1} asserts is that an $A\otimes\bm m \in U(\mf a) \vac \otimes M$ decomposes uniquely into a term of the form $\vac\otimes \tilde{\bm m}$ plus a term in $\mf a \on V$. Concretely, this is achieved by recursively applying the identity $((\iota_{t - u} f)\on A) \otimes \bm m = f\on (A \otimes \bm m) - \sum_{i=1}^P A \otimes (\iota_{t - z_i} f) \on \bm m$ where $f$ is one the $f_k$ above.
At each step in this recursive procedure, $\sum_{i=1}^P A \otimes (\iota_{t - z_i} f) \bm m$ is a rational function of $u$ with poles at most at points in $\Gamma \bm z$. This follows from smoothness of the modules $\M_{z_i}$ (which ensures that only finitely many terms in  each $(\iota_{t - z_i} f) \on \bm m$ are non-zero).
\end{proof}

The same statement holds if we include also a module at the origin.

Our real interest is in smooth modules. However, there is a technical difficulty: in what follows we shall need Proposition \ref{prop: wiiz} below, which says that if some equality holds in every space of coinvariants, then it is just true full stop. The authors do not know how, or whether, this proposition could be proved if one allowed only smooth modules. To get a straightforward proof, we have to allow also inverse limits of inverse systems of smooth modules, as follows. 

Suppose $\left((\M^k)_{k=1}^\8, (\pi^k_l)_{k\geq l}\right)$ is an \emph{inverse system} (see e.g. \cite[\S5.2]{Rotman}) of smooth modules over $\U(\vla)$. That is, suppose each $M^k$ is a smooth module over $\U(\vla)$ and $\pi^k_l: M^k \to M^l$ are module maps such that $\pi^k_k = \id_{M_k}$ for all $k$ and $\pi^k_l \circ \pi^m_k = \pi^m_l$ whenever $m\geq k\geq l$. The \emph{inverse limit} $\M^\8 := \varprojlim \M^k$ is the set of all sequences $(x_1,x_2,x_3,\dots)$ such that $x_k\in \M^k$ for all $k$ and $\pi^k_l(x_k) = x_l$ whenever $k\geq l$. $\M^\8$ is a module over $\U(\vla)$ and so we can define spaces of coinvariants including it as a tensor factor. 

By a rational function $g(u)$ of $u$ valued in $\M^\8$, we mean a sequence $(g_1(u),g_2(u),\dots)$ with each $g_k(u)$ a rational function of $u$ valued in $\M^k$, such that $\pi^k_l(g_k(u)) = g_l(u)$ whenever $k\geq l\geq 1$.\footnote{This is slightly subtle because the strength of the poles of these rational functions $g_k(u)$ is allowed to increase without bound as $k$ increases.} 
With this understanding, the proof of Proposition \ref{prop: VL rat} goes through also for inverse limits of inverse systems of smooth modules and so we have the following.
\begin{prop} Proposition \ref{prop: VL rat} holds also if the modules $\M_{z_i}$ are inverse limits of inverse systems of smooth modules, or submodules thereof.  \qed
\end{prop}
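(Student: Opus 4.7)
The plan is a direct reduction to Proposition \ref{prop: VL rat} by running the construction of $F_u$ level-by-level in the inverse system. For concreteness I would begin with the case in which a single tensor factor, say $\M_{z_1}$, is the inverse limit $\M^\infty = \varprojlim \M^k$ of an inverse system $(\M^k, \pi^k_l)$ of smooth $\U(\vla)$-modules, while the remaining $\M_{z_i}$ for $i\geq 2$ are already smooth. For each $k \geq 1$ the canonical projection $\pi_k : \M^\infty \to \M^k$ is a $\U(\vla)_{z_1}$-module homomorphism and hence descends to a linear map
\[
\hat\pi_k \, : \, \bigg( \M^\infty \otimes \bigotimes_{i=2}^N \M_{z_i}\bigg) \bigg/ \U^\Gamma_{\bm z}(\vla) \, \longrightarrow \, \bigg( \M^k \otimes \bigotimes_{i=2}^N \M_{z_i}\bigg) \bigg/ \U^\Gamma_{\bm z}(\vla).
\]
The composite $F_u^{(k)} := \hat\pi_k \circ F_u$ agrees, by the functoriality of the construction in Proposition \ref{prop: VL remove}, with the map produced by Proposition \ref{prop: VL rat} applied to the smooth module $\M^k$ at $z_1$. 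Hence each $F_u^{(k)}$ is rational in $u$ with poles at most in $\Gamma \bm z \cup \{0\}$.

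I would next verify compatibility with the inverse system. For $k \geq l$ the map $\pi^k_l : \M^k \to \M^l$ is $\U(\vla)_{z_1}$-linear and so induces a map $\bar\pi^k_l$ on coinvariants. An inspection of the recursive procedures in the proof of Proposition \ref{prop: VL rat} (both the rewriting of $a(-1)$-strings inside $\theta$ and the partial-fraction step inside $\psi \circ \pi$) shows that $\bar\pi^k_l \circ F_u^{(k)} = F_u^{(l)}$: every step involves only applying Lie algebra actions and passing to coinvariants, operations which commute with $\pi^k_l$. By the definition given just above the statement, the family $\bigl(F_u^{(k)}\bigr)_{k\geq 1}$ is therefore, by definition, a rational function of $u$ valued in the inverse-limit coinvariant space, with pole locations contained in $\Gamma \bm z \cup \{0\}$; the order of the pole at a given point is allowed to grow with $k$, exactly as flagged in the footnote of the paper.

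When several tensor factors are inverse limits, the same argument applies after replacing the single inverse system by the product inverse system indexed by tuples $(k_1, \ldots, k_N)$: project each slot in turn and assemble the compatible family. For a submodule $N \hookrightarrow \M^\infty$ (respectively $N \hookrightarrow \M$ for a smooth $\M$), the inclusion is $\U(\vla)$-linear and therefore induces a map on coinvariants, so the restriction of $F_u$ to the sub-tensor-product is the pullback of the rational function already produced on the ambient object, preserving both rationality and the pole locations.

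The only real obstacle is the bookkeeping when several inverse-limit factors are present, together with the possibly unbounded growth of pole orders as one moves along the system. Both are ultimately harmless: the definition of rational function valued in an inverse limit accommodates arbitrary growth of pole orders at fixed pole locations, and the pole locations $\Gamma \bm z \cup \{0\}$ are independent of the level $k$. The smoothness hypothesis is used, level-by-level, exactly where it was used in the proof of Proposition \ref{prop: VL rat}: to truncate the sum in $(\iota_{t-z_i} f) \on \bm m$ to a finite one inside each $\M^k$.
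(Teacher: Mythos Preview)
Your proposal is correct and is essentially a careful spelling-out of the paper's one-line argument: the paper simply remarks that, once ``rational function valued in $\M^\infty$'' is defined as a compatible family of rational functions in the $\M^k$, the proof of Proposition~\ref{prop: VL rat} goes through, and marks the statement \qed. Your level-by-level projection, together with the observation that the recursive steps in $\theta$ and $\psi\circ\pi$ are functorial in the module, is exactly how one makes that remark precise.
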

 
What allowing such modules buys us is a simple proof of the following important statement. 
\begin{prop}$ $\label{prop: wiiz} 
\begin{enumerate}
\item
Let $x\in \Cx$. Let $\M_x$ be any smooth module over $\U(\vla)$, and let $m \in \M_x$.\\ If $[ \dots \otimes \atp x m ] = 0$ then $m = 0$. 
\item
Let $\M_0$ be any smooth module over $\U(\vla)^\Gamma$ and let $m_0\in \M_0$. \\ If $[ \dots \otimes \atp 0 {m_0} ] = 0$
then $m_0 = 0$. 
\end{enumerate}
Here, and in what follows, the use of `$\,\cdots$' in the context of equivalence classes in spaces of coinvariants  means that the equality holds for all allowed choices of the remaining marked points (including none at all) and all allowed choices of modules assigned to them (smooth modules, inverse limits of smooth modules, or submodules thereof).
\end{prop}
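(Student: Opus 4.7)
I would prove the contrapositive: given $0 \neq m \in \M_x$ (part~(1)) or $0 \neq m_0 \in \M_0$ (part~(2)), I exhibit explicit auxiliary marked points and assigned modules, drawn from inverse limits of smooth modules as permitted by the remark preceding the proposition, such that the corresponding coinvariant class is nonzero. The two parts are structurally parallel; I focus on part~(1) and indicate the symmetric construction for part~(2) at the end.

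Pick an auxiliary point $y \in \C^\times$ with $\Gamma y \cap \Gamma\{x\} = \emptyset$. Attach at $y$ an inverse-limit module $\widehat{\M}_y = \varprojlim_n \M_y^{(n)}$, constructed so as to come equipped with a natural bilinear pairing $\Phi \colon \widehat{\M}_y \otimes \M_x \to \C$ together with a distinguished vector $\widehat{m}_y \in \widehat{\M}_y$ satisfying $\Phi(\widehat{m}_y \otimes m) \neq 0$. Concretely, the $\M_y^{(n)}$ are smooth $\U(\vla)$-modules that play the role of truncated ``dual'' modules to $\M_x$ with respect to some exhausting filtration of the latter, and $\Phi$ is the limit of the compatible system of evaluation pairings $\M_y^{(n)} \otimes \M_x \to \C$.

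The crucial step is to verify that $\Phi$ is invariant under the diagonal action of $\U^\Gamma_{\{x,y\}}(\vla)$, namely
\[
  \Phi\bigl((\xi \cdot v_y) \otimes v_x\bigr) \,+\, \Phi\bigl(v_y \otimes (\xi \cdot v_x)\bigr) \;=\; 0
  \qquad \text{for all } \xi \in \U^\Gamma_{\{x,y\}}(\vla),\; v_y \in \widehat{\M}_y,\; v_x \in \M_x,
\]
so that $\Phi$ descends to a linear functional $\overline{\Phi}$ on the coinvariant space $\widehat{\M}_y \otimes \M_x \big/ \U^\Gamma_{\{x,y\}}(\vla)$. This identity should be forced by a residue-theorem argument directly analogous to the computations \eqref{resuse} and \eqref{resuseprime}: the Laurent expansions of a global $\Gamma$-equivariant rational element at $x$ and at $y$ are related by the vanishing of the sum of residues, which translates into the cancellation of the two terms above once the action on $\widehat{\M}_y$ is defined in the appropriate contragredient-type manner. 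Granting invariance, one concludes $\overline{\Phi}([\widehat{m}_y \otimes m]) = \Phi(\widehat{m}_y \otimes m) \neq 0$, whence $[\widehat{m}_y \otimes m] \neq 0$, as required.

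The main obstacle is the explicit construction of the inverse system $\{\M_y^{(n)}\}$ as genuine smooth $\U(\vla)$-modules whose inverse limit simultaneously admits a contragredient-type action of $\U(\vla)_y$ (so that the module structure on $\widehat{\M}_y$ is defined in terms of Laurent expansions at $y$, as required by the framework) and a non-degenerate pairing with $\M_x$ (so that the Laurent expansions at $x$ intervene on the other tensor factor). The naive restricted dual of a smooth module is typically not itself smooth, which is precisely why the inverse-limit formalism of Section~\ref{sec: SM} is indispensable---one must realise this dual as a pro-object in the category of smooth modules, leveraging the polar decomposition $\U(\vla)_y = \U^-(\vla)_y \dotplus \U^+(\vla)_y$ to filter by positive-mode depth. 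A technical but routine subsidiary point is to ensure the level-$1$ condition on $\widehat{\M}_y$ (or level-$\tfrac{1}{T}$ in part~(2)). Part~(2) follows by the symmetric construction at the origin, replacing $\U(\vla)_y$ by $\U(\vla)^\Gamma$ throughout and using Proposition~\ref{prop: LG embed prime} in place of Proposition~\ref{prop: LG embed}.
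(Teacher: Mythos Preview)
Your strategy has a conceptual circularity. An invariant pairing $\Phi \colon \widehat{\M}_y \otimes \M_x \to \C$ satisfying your displayed condition is, by definition, precisely a linear functional on the coinvariant space $\widehat{\M}_y \otimes \M_x \big/ \U^\Gamma_{\{x,y\}}(\vla)$---i.e., a conformal block. Exhibiting one that does not vanish on $\widehat{m}_y \otimes m$ is tautologically equivalent to showing $[\widehat{m}_y \otimes m] \neq 0$. So the entire content of the argument rests on the explicit construction of $\widehat{\M}_y$, $\widehat{m}_y$, and $\Phi$, which you have left as the ``main obstacle''. Worse, the notion of a contragredient-type module to $\M_x$ that lives at a \emph{different} point $y$ is not standard: the restricted dual of a smooth $\U(\vla)_x$-module is naturally again an $\U(\vla)_x$-module, not an $\U(\vla)_y$-module, and there is no evident mechanism by which a residue-theorem computation alone would manufacture the required invariance between actions at $x$ and at $y$ unless that relationship has already been built into the module structures by hand.

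The paper avoids this entirely by a direct algebraic route. It assigns to $y$ the universal enveloping algebra $U := U(\U(\vla))/\langle \cent(-1) - 1\rangle$ itself, acting on itself by left multiplication, realized as a submodule of the inverse limit $\varprojlim U/F_k$ where $F_k$ is the two-sided ideal generated by modes $a(n)$ with $n>k$ (each $U/F_k$ is smooth). One then shows directly that $[\atp y 1 \otimes \atp x m] \neq 0$ whenever $m \neq 0$. The key is that the Laurent-expansion map $\iota_{t-y} \colon \U^\Gamma_{y,x}(\vla) \to L := \rho(\vla^o \otimes \C((t-y)))$ is injective, so choosing a linear complement $L^\perp$ gives $U \cong U(L) = U(\U^\Gamma_{y,x}(\vla)) \dotplus U(L)\,L^\perp\,U(L)$ as $\U^\Gamma_{y,x}(\vla)$-modules. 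Then $U(\U^\Gamma_{y,x}(\vla)) \otimes \M_x$ is a \emph{free} $\U^\Gamma_{y,x}(\vla)$-module, so $1 \otimes m$ cannot lie in $\U^\Gamma_{y,x}(\vla) \cdot (U(\U^\Gamma_{y,x}(\vla)) \otimes \M_x)$ unless $m = 0$. No pairing, duality, or residue argument is needed; the inverse-limit formalism enters only to make the non-smooth module $U$ admissible.
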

\begin{proof}
Consider the universal enveloping algebra 
\be U := U(\U(\vla))\big/ \langle c(-1) - 1\rangle \cong 
U(\rho(\vla^o \otimes \C((t))))\nn\ee
cf. \eqref{L(vla) decomp}.
We let $\U(\vla)$ act on $U$ by letting the central element $c(-1)$ act as $1$ and letting $\rho(\vla^o\otimes \C((t)))$ act by left multiplication. This turns $U$ into a module over $\U(\vla)$, but not a smooth one. Let $F_k$, $k\geq 1$, be the two-sided ideal in $U$ generated by $\{a(n): a\in \vla^o, n > k\}$. Each quotient $U/ F_k$, $k\geq 1$, \emph{is} a smooth module over $\U(\vla)$. These quotients, together with the natural projections $\pi^k_l: U/F_k \twoheadrightarrow U/F_l$ for all $k\geq l$, form an inverse system  of smooth modules over $\U(\vla)$. The inverse limit $\varprojlim U/F_k$ has $U$ as a submodule, since any element $x\in U$ is uniquely specified by the sequence $(\pi^1(x),\pi^2(x),\pi^3(x),\dots)\in\varprojlim U/F_k $ where $\pi^k:U\twoheadrightarrow U/F_k$, $k\geq 1$, are the projection maps from $U$ itself.\footnote{$U$ is a proper submodule: for example $(a(1), a(1) + a(1) b(2), a(1) + a(1) b(2) + a(1) b(2) c(3), \dots)\in \varprojlim U/F_k$ is not the image of any element of $U$.}

Now let $\M$ be any smooth module over $\U(\vla)$, and let $m \in \M$. Suppose $\M$ is assigned to any $x\in \Cx$ and $U$ to any  $y\in \Cx\setminus\Gamma x$. We claim that if
\begin{equation*}
[ \atp y 1 \otimes \atp x m ] = 0
\end{equation*}
in the space of coinvariants $\left.U\otimes \M\right/ \U^\Gamma_{y,x}(\vla)$, then $m = 0$. Proving this claim is sufficient to prove part (1). For part (2) the argument is very similar but with $\U^\Gamma_{y,x}(\vla)$ replaced by $\U^\Gamma_{y,0}(\vla)$. 

Let $L= \rho(\vla^o\otimes \C((t-y)))$. 
The linear map
\be \iota_{t- y} : \U^\Gamma_{y, x}(\vla) \longrightarrow L \nn\ee
is an injection. (No two distinct rational functions have the same Laurent series at $y$).  Let $L^\perp\subset L$ be any complementary vector subspace to the image of $\U^\Gamma_{y,x}(\vla)$ in  $L$, so that
\be L = \U^\Gamma_{y,x}(\vla) \dotplus L^\perp.\nn\ee
Then 
\be U \cong U(L) = U(\U^\Gamma_{y,x}(\vla)) \dotplus U^\perp \quad\text{where}\quad U^\perp:=U(L) L^\perp U(L) .\nn\ee 
This is an isomorphism not only of vector spaces but also of $\U^\Gamma_{y,x}(\vla)$-modules. 
Hence we also have a decomposition of $U\otimes \M$ as an $\U^\Gamma_{y,x}(\vla)$-module:
\begin{equation} \label{UM decomp}
U \otimes M \,\,\cong_{\U^\Gamma_{y,x}(\vla)}\,\,  U(\U^\Gamma_{y,x}(\vla)) \otimes M \dotplus U^\perp \otimes M .
\end{equation}
To prove the claim we must show that if $m \neq 0$ then $1 \otimes m \not \in \U^\Gamma_{y,x}(\vla) \on ( U \otimes M )$. However, since $1\otimes m \in U(\U^\Gamma_{y,x}(\vla)) \otimes M$, by \eqref{UM decomp} it is enough to show that $1\otimes m \not \in 
\U^\Gamma_{y,x}(\vla) \on ( U(\U^\Gamma_{y,x}(\vla)) \otimes M )$.
%Note that $1\otimes m \in  U(\U^\Gamma_{y,x}(\vla)) \otimes M$. So to prove the claim it is enough to show that if $1\otimes m \in 
%\U^\Gamma_{y,x}(\vla) \on ( U(\U^\Gamma_{y,x}(\vla)) \otimes M )$ then $m=0$.
But this is clear because $U(\U^\Gamma_{y,x}(\vla)) \otimes M$ is free as an $\U^\Gamma_{y,x}(\vla)$-module.
\end{proof}

\begin{rem}
Note how the very last step in this argument fails if we attempt to replace $U$ by the smooth module $U/F_k$ for any given finite $k$: the image of $U(\U^\Gamma_{y,x}(\vla)) \subset U$ under the projection $\pi^k:U\twoheadrightarrow U/F_k$ is not free as an $\U^\Gamma_{y,x}(\vla)$-module. Any rational function in $\U^\Gamma_{y,x}(\vla)$ with a zero of order $k$ at $y$ acts as zero.
\end{rem}

\subsection{Definitions of $Y$, $Y_M$ and $Y_W$}
In the space of coinvariants
\begin{equation} \label{tc}
\left.\ueva \otimes \bigotimes_{i = 1}^N \M_{z_i} \right/ \U^\Gamma_{u,\bm z}(\vla)
\end{equation}
consider the class
\be  [    \atp u A 
 \otimes \atp{z_1}{m_1} \otimes \atp{z_2}{m_2} \otimes \dots \otimes \atp{z_N}{m_N} 
 ]\label{class}
\ee
with $A\in \ueva$, $m_i\in \M_{z_i}$, $i=1,\dots,N$.
By Proposition \ref{prop: VL rat}, this is a rational function of $u$ valued in $\left.\bigotimes_{i = 1}^N \M_{z_i}  \right/ \U^\Gamma_{\bm z}(\vla)$. As with any rational function, it is possible to take its Laurent expansion as $u$ approaches, for example, the point $z_1$. The result is a Laurent series in $(u-z_1)$ with coefficients in $\left.\bigotimes_{i = 1}^N \M_{z_i} \right/ \U^\Gamma_{\bm z}(\vla)$. 
Proposition \ref{prop: YMYW} below then says, in particular, that this Laurent series can be written in the form
\be \sum_{n=-K}^\8 (u-z_1)^n [ 
  A^M_{(n)} \atp{z_1}{m_1} \otimes  \atp{z_2}{m_2} \otimes \dots \otimes \atp{z_N}{m_N}  ]\nn
\ee
for some linear maps $A^M_{(n)}\in \End(\M_{z_1})$, $n\in \Z$ which depend on the module $\M_{z_1}$ and (linearly) on $A$, and some integer $K\in \Z$ which may depend on $m_1$; the important point is that neither $K$ nor the maps $A^M_{(n)}$ depend at all on the modules assigned to other marked points, or on the positions of the marked points.

\begin{prop}[Definition of $Y_M$ and $Y_W$]$ $\label{prop: YMYW}
\begin{enumerate}[(a)]
\item 
For each smooth $\U(\vla)$-module $\M$ there exists a unique linear map 
\be \ueva\longrightarrow \Hom\left(\M,\M((v))\right),\quad A\longmapsto Y_M(A,v) ,\nn\ee 
such that for all $m\in \M$ and all $x\in \Cx$,
\be \iota_{u-x} \big[ \atp u A\otimes \atp x m \otimes \cdots \big] = 
    \big[Y_M(A,u-x) \atp x m \otimes \cdots \big]. \nn\ee
\item \label{YWdef}
For each smooth $\U(\vla)^\Gamma$-module $\M_0$ there exists a unique linear map 
\be \ueva\longrightarrow \Hom\left(\M_0,\M_0((v))\right),\quad A\longmapsto Y_W(A,v), \nn\ee 
such that for all $m_0\in \M_0$,
\be \iota_u \big[ \atp u A \otimes \cdots \otimes \atp 0 {m_0}\big] = 
    \big[ \cdots \otimes Y_W(A,u) \atp 0 {m_0}\big]. \nn\ee
\end{enumerate}
\end{prop}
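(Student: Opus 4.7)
If two linear maps $Y_M^{(1)}, Y_M^{(2)} : \ueva \to \Hom(\M, \M((v)))$ both satisfy the defining property, then for every $A \in \ueva$, $m \in \M$ and $x \in \Cx$ the Laurent series $(Y_M^{(1)}(A, u-x) - Y_M^{(2)}(A, u-x))m$ has each of its coefficients producing the zero class in every space of coinvariants as the remaining marked points and tensor factors vary. Proposition \ref{prop: wiiz}(1) then forces every such coefficient to vanish in $\M$. The argument for $Y_W$ is identical, now using Proposition \ref{prop: wiiz}(2).

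\textbf{Existence.} By Proposition \ref{prop: VL rat}, extended to inverse limits of smooth modules (which is sufficient here because $\ueva$ is itself smooth), the class $[\atp u A \otimes \atp x m \otimes \cdots]$ is a rational function of $u$ with poles at most in $\Gamma\bm z \cup \Gamma x \cup \{0\}$, and the analogous statement holds in the setting of (b). Its formal Laurent expansion at $u = x$ (resp.\ $u = 0$) is therefore well-defined and produces a Laurent series with coefficients lying in the space of coinvariants with $u$ removed. The content of the proposition is that each such coefficient is the class obtained by applying to $m$ (resp.\ $m_0$) a linear operator depending only on $A$ and on the module assigned to $x$ (resp.\ $0$). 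I plan to establish this by induction on the depth of $A$ in $\ueva \cong_\C U(\U^-(\vla))\vac$. The base case $A = \vac$ follows immediately from Proposition \ref{prop: VL remove}, giving $Y_M(\vac, v) = \id_\M$ and $Y_W(\vac, v) = \id_{\M_0}$. For the inductive step, write $A = a(-n)A'$ with $a \in \vla^o$, $n \geq 1$, and $A'$ of strictly lower depth, and construct a $\Gamma$-equivariant global rational function $g_{a,n}(t) \in \U^\Gamma_{u,x,\bm z,0}(\vla)$ (or its analogue in the (b) setting) whose formal expansion at $u$ equals $a(-n)_u$ modulo non-negative modes at $u$, just as in the proof of Proposition \ref{prop: VL rat}. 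Since $g_{a,n}(t)$ acts trivially in coinvariants, replacing $\iota_{t-u}g_{a,n}$ by minus the sum of its Laurent expansions at the remaining marked points expresses the class as a combination of two types of terms: those in which a mode of $R_\alpha a$ is applied at $x$, $z_i$ or $0$ while $A'$ remains at $u$; and those in which a non-negative $u$-mode has acted on $A'$, producing a state of strictly lower depth. The inductive hypothesis applies to each summand, and Laurent-expanding at $u = x$ (resp.\ $u = 0$) and reassembling yields the required form. The operator $Y_M(A, v)$ (resp.\ $Y_W(A, v)$) extracted this way is a concrete normal-ordered product of the fields $a^M(v) := \sum_n a(n)v^{-n-1}$ acting on $\M$ (resp.\ of the $\Gamma$-symmetrised fields acting on $\M_0$), which coincides with the standard vertex-algebra iterate formula (resp.\ the quasi-module iterate formula formulated later in Theorem \ref{thm: quasi rec}).

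\textbf{Main obstacle.} The delicate point in the inductive step is to check that the contributions produced by the Laurent expansions of $g_{a,n}(t)$ at marked points \emph{other} than $x$ (resp.\ other than $0$) do not introduce any residual dependence on the modules or positions there into the coefficients of the expansion at $u = x$ (resp.\ $u = 0$). The rationality supplied by Proposition \ref{prop: VL rat} guarantees this must hold in the end, but verifying it explicitly requires carefully tracking the cyclotomic partial-fraction decomposition: only the $\alpha = 1$ summand of $g_{a,n}$ develops a pole at $u = x$ in (a), and only the summand contributing to the origin produces the pole at $u = 0$ in (b); the other summands contribute only regular, Taylor-type corrections that, under the inductive hypothesis, reorganise into modes acting purely at $x$ (resp.\ $0$).
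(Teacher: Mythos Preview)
Your proposal is correct and follows essentially the same approach as the paper: induction on depth, base case via Proposition \ref{prop: VL remove}, inductive step by swapping with the $\Gamma$-equivariant function $\sum_{\alpha}\frac{\alpha^{-1}R_\alpha a}{\alpha^{-1}t-u}$, and uniqueness via Proposition \ref{prop: wiiz}. The paper reduces to $A=a(-1)B$ using $a(-n-1)=\tfrac{1}{n!}(\D^n a)(-1)$ and treats $Y_W$ first because there the ``reassembly'' is cleaner; for $Y_M$ the obstacle you flag is resolved by an explicit commutator cancellation (between the $\alpha\neq 1$ terms at $u$ and the commutators arising when reordering $(R_\alpha a)(n)_{z_i}$ past $Y_M(B,u-z_i)_{z_i}$), followed by a second swap using $g_m(t)=\sum_{\alpha}\frac{\alpha^{-1}R_\alpha a}{(\alpha^{-1}t-z_i)^{m+1}}$ to recover the pure normal-ordered product.
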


\begin{proof} 
The proof is given in \S\ref{proof: YMYW} below. 
\end{proof}

By taking the special case $\M=\ueva$ in Proposition \ref{prop: YMYW} one has the following important corollary.

\begin{cor}[Definition of the state-field correspondence $Y$]\label{Ydef}
There exists a unique linear map
\be \ueva\to \Hom\left(\ueva,\ueva((v))\right),\quad A\longmapsto Y(A,v), \nn\ee 
such that for all $B\in \ueva$,
\begin{equation*}
\iota_{u-x} \big[ \atp uA\otimes \atp xB \otimes \cdots \big] = 
    \big[ Y(A,u-x) \atp x B \otimes \cdots\big].
\end{equation*}
\qed
\end{cor}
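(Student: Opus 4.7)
The plan is to apply Proposition \ref{prop: YMYW}(a) directly, with $\M = \ueva$. Two observations are needed. First, $\ueva$ is a smooth module over $\U(\vla)$: this was established in \S\ref{sec: ueva} (every element of $\ueva$ has finite depth and bounded grade, and $a(n)$ for $n$ sufficiently large annihilates any fixed state). Second, via the canonical isomorphism $\U(\vla)_x \xrightarrow{\sim} \U(\vla)$ of \eqref{lgi}, we may assign $\ueva$ to any point $x \in \Cx \setminus \Gamma\bm z$ as a module over the local Lie algebra at $x$.

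Granted these, existence is immediate: we define
\[
Y(A, v) := Y_{\ueva}(A, v) \in \Hom\bigl(\ueva, \ueva((v))\bigr),
\]
and the identity of Proposition \ref{prop: YMYW}(a) specialised to $\M = \ueva$ is precisely the displayed identity of the corollary.

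For uniqueness, suppose $Y$ and $Y'$ both satisfy the displayed identity. Fix $A, B \in \ueva$ and a point $x \in \Cx$, and pick an arbitrary admissible collection of further marked points with (smooth or inverse-limit-of-smooth) modules assigned to them. Comparing the two Laurent expansions coefficient-by-coefficient in powers of $u-x$, we obtain
\[
\bigl[ \cdots \otimes \bigl(Y(A,u-x) - Y'(A,u-x)\bigr) \atp{x}{B} \otimes \cdots \bigr] = 0
\]
at every order. Since this holds for all admissible choices of the remaining marked points and modules, Proposition \ref{prop: wiiz}(1) applied to the smooth $\U(\vla)$-module $\ueva$ at $x$ forces each coefficient $\bigl(Y(A,u-x) - Y'(A,u-x)\bigr) B$ to vanish. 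As $A$ and $B$ were arbitrary, this gives $Y = Y'$.

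There is no real obstacle: the content of the corollary is entirely contained in Proposition \ref{prop: YMYW}(a) once one notes that $\ueva$ itself qualifies as one of the smooth modules $\M$ to which that proposition applies. The only point worth recording is that the uniqueness assertion relies on Proposition \ref{prop: wiiz}(1), whose hypothesis is again satisfied by $\ueva$.
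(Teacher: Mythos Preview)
Your proof is correct and follows exactly the paper's approach: the corollary is simply the special case $\M = \ueva$ of Proposition \ref{prop: YMYW}(a), and the paper records this with nothing more than a \qed. Your additional remarks on smoothness of $\ueva$ and on uniqueness via Proposition \ref{prop: wiiz}(1) just make explicit what is already implicit in the proof of Proposition \ref{prop: YMYW} itself.
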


\begin{rem} 
Let $Y(A,x)_-$ denote the part of $Y(A,x)$ in $\Hom(\ueva,x^{-1}\ueva[x^{-1}])$. Then for all $a,b\in \vla \hookrightarrow \ueva$, $Y(a,x)_-b \in x^{-1}\vla[x^{-1}]$, and this is nothing but the map \eqref{Y map VLA} which encodes the vertex Lie algebra structure on $\vla$. See \eqref{Y def VA} below.
\end{rem}

\begin{prop}[Borcherds and quasi-Borcherds identities]\label{prop: borcherds}$ $
\begin{enumerate}
\item Let $\M$ be a smooth $\U(\vla)$-module.
For any $A, B \in \ueva$ and any $m\in \M$, and for all rational functions $f(x,y) \in \C[x^{\pm 1}, y^{\pm 1}, (x-y)^{-1}]$, we have
\begin{multline*} 
\res_{x - y}  \iota_{y, x - y} f(x,y) Y_M\big( Y(A, x - y) B, y \big) m  \notag\\
= \res_x \iota_{x, y} f(x,y)  Y_M(A, x) Y_M(B, y) m  - \res_x \iota_{y, x} f(x,y) Y_M(B, y) Y_M(A, x) m .
\end{multline*}
\item Let $\M_0$ be a smooth $\U(\vla)^\Gamma$-module
and let
\be p(x,y) := \prod_{\alpha \in \Gamma \setminus \{ 1 \}} (x - \alpha y) = (x^T - y^T)/(x - y)\in \C[x,y].\nn\ee 
For any $A, B \in \ueva$ and any $m_0\in \M_0$ there exists $k\in \Z_{\geq 0}$ such that  for all rational functions $f(x,y) \in \C[x^{\pm 1}, y^{\pm 1}, (x-y)^{-1}]$, we have
\begin{multline*} 
\res_{x - y}  p(x,y)^k \iota_{y, x - y} f(x,y) Y_W\big( Y(A, x - y) B, y \big) m_0  \notag\\
=\res_x p(x,y)^k \iota_{x, y} f(x,y) Y_W(A, x) Y_W(B, y) m_0  - \res_x p(x,y)^k \iota_{y, x} f(x,y) Y_W(B, y) Y_W(A, x) m_0  .
\end{multline*}
\end{enumerate}
\end{prop}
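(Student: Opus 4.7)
The plan is to realize the three terms of the quasi-Borcherds identity as distinct iterated Laurent expansions of a single rational function of two complex variables, and to conclude by a purely formal residue calculation. I describe Part~(2); Part~(1) is the analogous argument with $\M$ attached to a nonzero point $x_{\rm pt}\in\C^\times$, for which the local polar divisors near $(u,v)=(x_{\rm pt},x_{\rm pt})$ are already only $u=x_{\rm pt}$, $v=x_{\rm pt}$ and $u=v$, so no factor of $p^k$ is needed and one uses Proposition~\ref{prop: wiiz}(1) in place of Proposition~\ref{prop: wiiz}(2).

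Consider the coinvariant
\[
G(u,v) := \bigl[\atp u A \otimes \atp v B \otimes \cdots \otimes \atp 0 {m_0}\bigr].
\]
By two applications of (the variant with a module at the origin of) Proposition~\ref{prop: VL rat}, $G$ is rational in $(u,v)$ with local polar divisors near $(0,0)$ contained in $\{u=0,\ v=0,\ u=\alpha v:\alpha\in\Gamma\}$. Iterating Proposition~\ref{prop: YMYW}(b), once near $v=0$ and once near $u=0$, in either order identifies
\[
\iota_{u,v}G = \bigl[\cdots Y_W(A,u)Y_W(B,v)\,m_0\bigr],\qquad \iota_{v,u}G = \bigl[\cdots Y_W(B,v)Y_W(A,u)\,m_0\bigr];
\]
while combining Corollary~\ref{Ydef} near $u=v$ with Proposition~\ref{prop: YMYW}(b) near $v=0$ identifies
\[
\iota_{v,u-v}G = \bigl[\cdots Y_W\bigl(Y(A,u-v)B,\,v\bigr)\,m_0\bigr].
\]

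Inspection of the recursive construction underlying Proposition~\ref{prop: VL rat} should show that, thanks to smoothness of $\M_0$ and the universal-object nature of the $\ueva$-factors at $u$ and $v$, the orders of the poles of $G$ along $u=\alpha v$ for $\alpha\neq 1$ are bounded by an integer depending only on $A$, $B$ and $m_0$, uniformly in the auxiliary modules. Choose $k$ exceeding all such orders; then $p(u,v)^k G(u,v)$ has, locally near the origin, only the three pole divisors $u=0$, $v=0$, $u=v$. Since $p(u,v)$ is polynomial, multiplication by $p^k$ commutes with each of the three expansions.

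The required identity now reduces to the formal residue identity: for any rational $H(x,y)$ whose local poles near $(0,0)$ lie only at $x=0$, $y=0$ and $x=y$, and for any $f\in\C[x^{\pm1},y^{\pm1},(x-y)^{-1}]$,
\[
\res_x\iota_{x,y}f(x,y)H - \res_x\iota_{y,x}f(x,y)H = \res_{x-y}\iota_{y,x-y}f(x,y)H,
\]
which follows by linearity in $f$ from the standard formal distribution identity $\iota_{x,y}(x-y)^{-1}-\iota_{y,x}(x-y)^{-1}=\delta(x,y)$ together with the substitution property of $\delta$ on $H$. Applying this with $H=p^k G$ yields the identity at the level of the coinvariant, valid for every choice of auxiliary marked points and modules; Proposition~\ref{prop: wiiz}(2) then promotes it to an identity of elements of $\M_0$, giving the statement. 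The main obstacle I anticipate is the uniform pole-order bound in the preceding paragraph---this is precisely the point at which smoothness of $\M_0$ enters and is the source of the ``quasi'' character of the identity.
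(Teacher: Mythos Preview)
Your overall strategy coincides with the paper's: start from the two–marked–point coinvariant $G(u,v)$, use rationality (Proposition~\ref{prop: VL rat}) and the defining properties of $Y$, $Y_W$ (Proposition~\ref{prop: YMYW}, Corollary~\ref{Ydef}) to identify the three iterated expansions, introduce $p(u,v)^k$ to kill the poles along $u=\alpha v$ for $\alpha\neq 1$, and conclude via Proposition~\ref{prop: wiiz}. The paper does exactly this.

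The organizational difference is in how you handle the residue step. You package it as a single ``standard'' three-term identity for a rational $H$ with local poles near the origin only along $x=0$, $y=0$, $x=y$. The paper instead applies the one-variable residue theorem in $x$ (valued in the coinvariants at $y,z,0$), which produces, besides the contributions at $x=0$ and $x=y$, a remainder $R$ collecting the residues at $x=\alpha z$ and $x=\infty$; it then applies $\iota_y$, uses Lemma~\ref{lem: iotacom} to commute $\iota_y$ past the expansions at the far poles, and applies a second residue theorem to identify $\iota_y R$ with the $Y_W(A,x)Y_W(B,y)$ term. Your compressed identity is correct, but note that your justification via the $\delta$-function substitution is only immediate when $fH\in\C[x^{\pm1},y^{\pm1},(x-y)^{-1}]$; since $p^kG$ retains poles at the auxiliary marked points (away from the origin), you still need the observation that those factors are regular near $(0,0)$ and hence expand identically under $\iota_{x,y}$ and $\iota_{y,x}$ --- this is precisely the content of the paper's Lemma~\ref{lem: iotacom}, and is what makes the far-pole contributions cancel.

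Your flagged ``main obstacle'' (the uniform bound on the pole order along $u=\alpha v$) is handled in the paper in the same casual way you suggest: it is simply the statement that a rational function has finite pole order, and the independence from the auxiliary data follows because the recursive construction in Proposition~\ref{prop: VL rat} shows these poles arise only from swapping modes between the $\ueva$-factors at $u$ and $v$.
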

\begin{proof}
We give a proof of part (2) in \S\ref{sec: borcherdsproof}. Part (1) is very similar.
\end{proof}

The usual way to express the content of these propositions is to say that 
\begin{enumerate}[(1)]
\item $\ueva$ has the structure of a \emph{vertex algebra}, 
\item smooth $\U(\vla)$-modules have the structure of \emph{modules} over this vertex algebra $\ueva$, and 
\item smooth $\U(\vla)^\Gamma$-modules have the structure of \emph{quasi-modules} over $\ueva$. 
\end{enumerate}
To explain what these statements mean, let us now digress from $\ueva$ to recall some facts about vertex algebras in general. 

We should stress, however, that for us Proposition \ref{prop: YMYW} and Corollary \ref{Ydef} are what define the maps $Y_M$, $Y_W$ and $Y$. Their properties follow from these `global' definitions, and the proofs in \S\ref{sec: proofs} will involve applying the residue theorem to `global' objects like \eqref{class}, rather than using the axioms of vertex algebras and their (quasi-)modules.

\section{Vertex algebras}\label{sec: VAs}
A \emph{vertex algebra} is a vector space $\va$ over $\C$ with a distinguished vector $\vac \in \va$ called the \emph{vacuum} and equipped with a linear map, referred to as the \emph{state-field correspondence} or \emph{vertex operator map},
\begin{equation} \label{Y map}
\begin{split}
Y(\cdot, x) : \va &\to \Hom(\va, \va((x)) ) \subset \End \va [[x, x^{-1}]],\\
A &\mapsto Y(A, x) = \sum_{n \in \mathbb{Z}} A_{(n)} x^{-n-1}, \qquad A_{(n)} \in \End \va,
\end{split}
\end{equation}
satisfing the following axioms: 
\begin{enumerate}[($i$)]
  \item \emph{Vacuum axiom} \--- $Y(\vac, x) = \text{id}_\va$.
  \item \emph{Creation axiom} \--- For any $A \in \va$ we have $Y(A, x) \vac \in A + x \va[[x]]$.
  \item \emph{Borcherds identity} \--- For any $A, B, C \in \va$ and any $f(x,y) \in \C[x^{\pm 1}, y^{\pm 1}, (x-y)^{-1}]$, we have
\begin{multline} \label{Borcherds V}
\res_{x - y} \iota_{y, x - y} f(x,y) Y\big( Y(A, x - y) B, y \big)C\\
= \res_x \iota_{x, y} f(x,y) Y(A, x) Y(B, y)C - \res_x \iota_{y, x} f(x,y) Y(B, y) Y(A, x)C.
\end{multline}
\end{enumerate}

Elements of $\va$ are called \emph{states} and we say that $Y(A, x)$ is the \emph{vertex operator} or \emph{field} associated to a given state $A \in \va$. More generally, an $(\End \va)$-valued formal distribution $F(x) \in \End \va [[x, x^{-1}]]$ with the property that $F(x) A \in \va((x))$ for all $A \in \va$, \emph{i.e.} $F(x) \in \Hom(\va, \va((x)) )$, is called a \emph{field}.

One may equally regard the state-field correspondence \eqref{Y map} as defining a linear map $\va \otimes \va \to \va((x))$. This, in turn, can be thought of as describing a collection of products $\va \otimes \va \to \va$, $(A, B) \mapsto A_{(n)} B$ labelled by $n \in \mathbb{Z}$ and with the property that for any $A, B \in \va$ we have $A_{(n)} B = 0$ for sufficiently large $n$. In particular, the $n^{\rm th}$-product may be extracted as
\begin{equation} \label{nth prod}
A_{(n)} B = \res_x \, x^n Y(A, x) B.
\end{equation}

It is also convenient to introduce an endomorphism $\D : \va \to \va$, called the \emph{translation operator}, defined as $\D A = A_{(-2)} \vac$ on any $A \in \va$. It has the property that $\D \vac = 0$. We shall require (somewhat non-standardly, cf. Remark \ref{kerrem}), that in fact
\be \ker \D = \C\vac. \nn\ee
One also has \cite[Prop 3.1.21]{LLbook}
\begin{equation} \label{T rel}
Y(\D A, x) = [\D, Y(A, x)] = \partial_x Y(A, x),
\end{equation}
which, written in terms of modes, says that for any $n \in \mathbb{Z}$
\begin{equation} \label{TA V}
(\D A)_{(n)} = - n A_{(n-1)}.
\end{equation}
It also follows from \eqref{T rel} that $\D$ acts as a derivation of all $n^{\rm th}$-products. Indeed, for any $A, B \in \va$ we have
$\D \big( Y(A, x) B \big) = Y(\D A, x) B + Y(A, x) \D B$.
Multiplying this relation by $x^n$ for $n \in \Z$ and taking the residue in $x$ to extract the $n^{\rm th}$-product as in \eqref{nth prod} we obtain
\begin{equation*}
\D \big( A_{(n)} B \big) = (\D A)_{(n)} B + A_{(n)} (\D B).
\end{equation*}

A vertex algebra $\va$ is called $\mathbb{Z}$-graded (resp. $\mathbb{Z}_{\geq 0}$-graded) if $\va$ is $\mathbb{Z}$-graded (resp. $\mathbb{Z}_{\geq 0}$-graded) as a vector space, $\vac$ is of degree $0$ and for any two homogeneous states $A, B \in \va$ and $n \in \mathbb{Z}$ we have
\begin{equation} \label{degree rel}
\deg\big( A_{(n)} B \big) = \deg A + \deg B - n - 1.
\end{equation}
We denote the subspace of states $A \in \va$ with $\deg A = n$ for $n \in \mathbb{Z}_{\geq 0}$ as $\va_{(n)}$.
It follows from its definition that $\D$ is a linear operator of degree $1$, using \eqref{degree rel}. Introduce also the degree operator, denoted $L(0)$, which acts on any homogeneous $A \in \va$ as
\begin{equation} \label{L0 def}
L(0) A = \deg (A) \, A.
\end{equation}

\begin{exmp} \label{com VA}
Let $\mathcal{A}$ be a commutative associative unital algebra with a derivation $\delta \in \text{Der} \mathcal{A}$. We can endow $\mathcal{A}$ with a vertex algebra structure by taking the identity element $1 \in \mathcal{A}$ as vacuum vector and defining the state-field correspondence for any $f, g \in \mathcal{A}$ as
\begin{equation*}
Y(1, x) g = g, \qquad
Y(f, x) g = \sum_{n \geq 0} \frac{x^n}{n!} (\delta^n f) g = (e^{x \delta} f) g.
\end{equation*}
The translation operator is $\delta$ since the coefficient of $x$ in $Y(f, x) 1$ is precisely $\delta f$.
\end{exmp}

\subsection{Compatible $\Gamma$-action} \label{sec: Gamma VA}
We are concerned with vertex algebras equipped with a compatible action of the group $\Gamma$. We start by recalling the definition of a homomorphism of vertex algebras \cite{LLbook}.

Let $\va$ and $\va'$ be vertex algebras with state-field correspondences $Y$ and $Y'$ and vacuum vectors $\vac$ and $\vac'$, respectively. A homomorphism $\varphi : \va \to \va'$ is by definition a linear map such that $\varphi \big( \vac \big) = \vac'$ and
\begin{equation} \label{VA hom}
\varphi \big( Y(A, x) B \big) = Y' \big( \varphi(A), x \big) \varphi(B),
\end{equation}
for any $A, B \in \va$, where on the left hand side the map $\varphi$ has been canonically extended to a linear map $\va[[x, x^{-1}]] \to \va'[[x, x^{-1}]]$. The condition \eqref{VA hom} expresses the fact that $\varphi$ is a homomorphism of all $n^{\rm th}$-products \eqref{nth prod}. It follows from the definition of the translation operator $\D$ that a homomorphism $\varphi : \va \to \va'$ commutes with the action of $\D$ on $\va$ and $\va'$. Indeed, for any $A \in \va$ we have
\begin{equation*}
\D \big( \varphi(A) \big) = \big( \varphi(A) \big)_{(-2)} \vac' = \big( \varphi(A) \big)_{(-2)} \varphi(\vac) = \varphi\big( A_{(-2)} \vac \big) = \varphi(\D A).
\end{equation*}
An automorphism of a vertex algebra $\va$ is an isomorphism $\varphi : \va \to \va$. The group of automorphisms of $\va$ will be denoted $\Aut \va$.

Let $\va$ be a vertex algebra and $\sigma : \va \to \va$ an automorphism whose order divides $T$. We define an action of $\Gamma$ on $\va$ through automorphisms by letting $\omega$ act as $\sigma$. Specifically, we introduce the group homomorphism
\begin{equation*}
\check{R} : \Gamma \longrightarrow \Aut \va; \quad \alpha \longmapsto \check{R}_{\alpha}
\end{equation*}
defined by $\check{R}_{\omega} = \sigma$. We shall, however, be interested in a slight variant of this action under which homogeneous states of different degrees transform differently. In analogy with the vertex Lie algebra setup, \emph{cf.} \eqref{Gamma act VLA}, we therefore introduce the group homomorphism
\begin{equation} \label{Gamma act VA}
R : \Gamma \longrightarrow GL(\va); \quad \alpha \longmapsto R_{\alpha} := \alpha^{L(0)} \check{R}_{\alpha}.
\end{equation}
The map $R_{\alpha}$ for $\alpha \in \Gamma$ is not an automorphism of $\va$. Instead of satisfying a relation of the type \eqref{VA hom}, it obeys
\begin{equation} \label{Gamma eq VA}
R_{\alpha} \big( Y(A, x) B \big) = Y(R_{\alpha} A, \alpha x) R_{\alpha} B,
\end{equation}
for any $A, B \in \va$. Vertex algebras equipped with an action of a group $\Gamma$ satisfying such a relation were studied in \cite{Li4, Li3} and were referred to as \emph{$\Gamma$-vertex algebras}. In the present context such an action is completely specified by an automorphism $\sigma \in \Aut \va$ whose order divides $T$.

\subsection{Consequences of the Borcherds identity}\label{sec: cofb}
There are a number of useful consequences of the Borcherds identity \eqref{Borcherds V} obtained by making different choices for the rational function $f(x,y)$. Taking $f = (x - y)^{-1}$ yields the vertex operator associated to the $(-1)^{\rm st}$-product of any two states $A, B \in \va$, namely
\begin{equation} \label{norm prod}
Y\big( A_{(-1)} B, x \big) = \; \nord{Y(A, x) Y(B, x)},
\end{equation}
where we make use the following standard notation. For any $\End \va$-valued formal distribution $F(x) = \sum_{n \in \mathbb{Z}} F_n x^{-n-1}$ with $F_n \in \End \va$ we define
\begin{equation} \label{subpm}
F(x)_+ = \sum_{n < 0} F_n x^{-n-1}, \qquad
F(x)_- = \sum_{n \geq 0} F_n x^{-n-1}.
\end{equation}
Then given any two fields $F(x)$ and $G(x)$ we may defined their \emph{normal ordered product} as
\begin{equation*}
\nord{F(x) G(x)} \; = F(x)_+ G(x) + G(x) F(x)_- = \sum_{n \in \mathbb{Z}} \left( \sum_{k < 0} F_k G_{n - k - 1} + \sum_{k \geq 0} G_{n - k - 1} F_k \right) x^{-n-1}.
\end{equation*}
Note first of all that this a well defined element of $\End \va [[x, x^{-1}]]$ by virtue of the fact that $F(x)$ and $G(x)$ are both fields, since the expression in brackets is a finite sum when acting on any $A \in \va$. Moreover, it also clearly defines a field since $\nord{F(x) G(x)} \! A \in \va((x))$ using again the fact that $G(x)$ is a field.
More generally, the vertex operator associated to a negative product $A_{(-n-1)} B$, $n \geq 0$ between two states $A, B \in \va$ reads
\begin{align} \label{norm prod der}
Y\big( A_{(-n-1)} B, x \big) = \frac{1}{n!} \nord{\big( \partial_x^n Y(A, x) \big) Y(B, x)},
\end{align}
which is obtained by applying the Borcherds identity with $f = (x - y)^{-n-1}$ for $n \in \mathbb{Z}_+$.

The non-negative products $A_{(n)} B$, $n \geq 0$ are related instead to the commutator of the vertex operators associated to the states $A, B \in \va$. 
Specifically, by applying the Borcherds identity with $f = x^m$, $m \in \mathbb{Z}$ we find
\begin{equation} \label{com Am YB}
\big[ A_{(m)}, Y(B, y) \big] = \sum_{k \geq 0} \binom{m}{k} Y\big( A_{(k)} B, y \big) y^{m - k}.
\end{equation}
Using the fact that the $k^{\rm th}$-product $A_{(k)} B$ vanishes for sufficiently large $k$ we see that the above sum is finite.
Multiplying this identity by $y^n$, $n \in \mathbb{Z}$ and taking the residue in $y$ we obtain the commutation relation between modes of the fields associated to $A$ and $B$:
\begin{equation} \label{com Am Bn}
\big[ A_{(m)}, B_{(n)} \big] = \sum_{k \geq 0} \binom{m}{k} \big( A_{(k)} B \big)_{(m + n - k)}.
\end{equation}
This important relation shows that the modes of all vertex operators $Y(A, x)$ for $A \in \va$ form a Lie subalgebra of $\End \va$. Alternatively, multiplying \eqref{com Am YB} by $x^{-m-1}$ and summing over $m \in \mathbb{Z}$ yields
\begin{equation} \label{commutator formula}
\big[ Y(A, x), Y(B, y) \big] = \sum_{k \geq 0} \frac{1}{k!} Y\big( A_{(k)} B, y \big) \partial_y^k \delta(x, y).
\end{equation}
This immediately implies the crucial property of locality for vertex algebras. That is, for any two states $A, B \in \va$, there exists an $r \in \mathbb{Z}_+$ such that
\begin{equation} \label{locality}
(x - y)^r \big[ Y(A, x), Y(B, y) \big] = 0.
\end{equation}

Finally, another consequence of the Borcherds identity and the definition of the translation operator is the following skew-symmetry property \cite[Proposition 3.1.19]{LLbook}
\begin{equation*}
Y(A, x) B = e^{x \D} Y(B, -x) A,
\end{equation*}
for any $A, B \in \va$. Multiplying both sides by $x^n$ for any $n \in \Z_{\geq 0}$ and taking the residue in $x$, we have
\begin{equation} \label{vaskew}
A_{(n)} B = - \sum_{k \geq 0} \frac{(-1)^{n + k}}{k!} \D^k \big( B_{(n + k)} A \big).
\end{equation}

\subsection{Modules and quasi-modules over vertex algebras}\label{sec: vmods}
The notion of a quasi-module was introduced by Li in \cite{Li3,Li4}. By definition, a \emph{quasi-module} over a vertex algebra $\va$, or \emph{$\va$-quasi-module} for short, is a vector space $W$ equipped with a linear map 
\begin{align*}
Y_W(\cdot, x) : \va &\to \Hom(W, W((x))) \subset \End W [[x, x^{-1}]],\\
A &\mapsto Y_W(A, x) = \sum_{n \in \mathbb{Z}} A^W_{(n)} x^{-n-1}, \qquad A^W_{(n)} \in \End W,
\end{align*}
which must satisfy axioms similar to those of the state-field correspondence of a vertex algebra, namely
\begin{enumerate}[($i$)]
  \item \emph{Vacuum axiom} -- $Y_W(\vac, x) = \text{id}_W$.
  \item \emph{quasi-Borcherds identity} -- For any $A, B \in \va$ and any $w\in W$ there exists a nonzero polynomial $p(x,y)\in \C[x,y]$ such that for all $f(x, y) \in \C[x^{\pm 1}, y^{\pm 1}, (x-y)^{-1}]$, we have
\begin{align} \label{Borcherds W}
&\res_{x - y} p(x,y) \iota_{y, x - y} f(x,y) Y_W\big( Y(A, x - y) B, y \big) w \notag\\
&\quad = \res_x \big( p(x,y) \iota_{x, y} f(x,y) Y_W(A, x) Y_W(B, y) w - p(x,y) \iota_{y, x} f(x,y) Y_W(B, y) Y_W(A, x) w \big),
\end{align}
\end{enumerate}
Formal distributions $F(x) \in \Hom(W, W((x)))$ are sometimes referred to as \emph{$(\End W)$-valued fields}, or simply \emph{fields} when the context is clear.

A \emph{module} over a vertex algebra $\va$, or \emph{$\va$-module} for short, is a quasi-module $M$ such that the identity \eqref{Borcherds W} holds with $p(x,y) = 1$; that is, one in which we have  the Borcherds identity
\begin{align} \label{Borcherds M}
\res_{x - y} & \iota_{y, x - y} f(x,y) Y_M\big( Y(A, x - y) B, y \big) m \notag\\
&= \res_x \big( \iota_{x, y} f(x,y) Y_M(A, x) Y_M(B, y) m - \iota_{y, x} f(x,y) Y_M(B, y) Y_M(A, x) m \big),
\end{align}
for all $A, B \in \va$, all $m \in M$ and all rational functions $f(x,y)$ with poles at most at $x=0$, $y=0$ and $x=y$. 
(Every vertex algebra is a module over itself, with $Y_M=Y$.)

\subsection{From vertex algebras to Lie algebras}\label{sec: vaLA}
Comparing the definition of a vertex Lie algebra, \S\ref{sec: VLAs}, with \eqref{TA V}, \eqref{vaskew} and \eqref{com Am Bn}, we see that if one starts with a vertex algebra and forgets about the negative products $\cdot_{(-n)}\cdot: \va\otimes \va \to \va$, $n\in \Z_{\geq 1}$, then the resulting object is nothing but a vertex Lie algebra. (From one perspective this requirement is essentially what fixes the definition of a vertex Lie algebra.) That is, we have the following. 
\begin{lem} Every ($\Gamma$-)vertex algebra has the structure of a ($\Gamma$-)vertex Lie algebra.
\label{forgetful} \qed
\end{lem}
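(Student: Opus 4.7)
The proof plan is essentially a matter of extracting the vertex Lie algebra structure from the richer vertex algebra structure by restricting attention to non-negative products. Given a vertex algebra $\va$ with vacuum $\vac$, translation operator $\D$, and state-field correspondence $Y$, I would first identify the underlying vector space data: set $\cent := \vac$, which lies in $\ker \D = \C\vac$ by the non-standard hypothesis on vertex algebras (Remark \ref{kerrem}), and choose any complementary subspace $\vla^o$ to $\C\vac \oplus \im \D$ in $\va$. This yields the decomposition $\va = \vla^o \oplus \C\cent \oplus \im \D$ required by the vertex Lie algebra axioms. The $n^{\rm th}$-products for $n\in \Z_{\geq 0}$ are those already present in $\va$, and the finiteness condition $a_{(n)} b = 0$ for $n\gg 0$ is immediate from the fact that $Y(A,x) B \in \va((x))$.

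Next I would verify the three vertex Lie algebra axioms term by term. The translation axiom $(\D a)_{(n)} b = - n a_{(n-1)} b$ is exactly \eqref{TA V} restricted to $n\in \Z_{\geq 0}$. The skew-symmetry axiom is exactly \eqref{vaskew}, which was derived from the Borcherds identity together with the definition $\D A = A_{(-2)}\vac$. The commutator axiom is the specialization of \eqref{com Am Bn}, read as the identity $a_{(m)}(b_{(n)} c) - b_{(n)}(a_{(m)} c) = \sum_{k\geq 0}\binom{m}{k} (a_{(k)} b)_{(m+n-k)} c$ on any $c\in \va$, for $m,n \geq 0$. All three axioms thus hold without any additional calculation.

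For the $\Gamma$-equivariant refinement, the extraction is equally direct. The compatible $\Gamma$-action on $\va$ satisfies \eqref{Gamma eq VA}, namely $R_\alpha\bigl(Y(A,x) B\bigr) = Y(R_\alpha A,\alpha x) R_\alpha B$, as an identity of formal Laurent series in $x$. Taking the pole part at $x=0$ on both sides, and observing that $Y(R_\alpha A,\alpha x) B$ has pole part $\sum_{n\geq 0}(R_\alpha A)_{(n)}\alpha^{-n-1}x^{-n-1} B = Y_-(R_\alpha A,\alpha x) B$, gives precisely the relation \eqref{Gamma eq VLA} required of a $\Gamma$-vertex Lie algebra. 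Finally, if $\va$ is $\Z_{\geq 0}$-graded as a vertex algebra with $\deg \vac = 0$ and $\deg(A_{(n)} B) = \deg A + \deg B - n - 1$, then exactly the same grading turns the underlying vertex Lie algebra into a $\Z_{\geq 0}$-graded one in the sense of \eqref{deg VLA}.

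There is really no obstacle here: the vertex Lie algebra axioms have been designed to be the consequences of the Borcherds identity and translation covariance that involve only the non-negative modes, and the $\Gamma$-compatibility conditions \eqref{Gamma eq VA} and \eqref{Gamma eq VLA} have exactly the same form. The only point requiring even a moment's attention is the matching of the non-standard kernel hypothesis (Remark \ref{kerrem}), which has been imposed symmetrically for both classes of algebras in this paper.
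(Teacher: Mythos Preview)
Your proposal is correct and follows exactly the approach the paper intends: the lemma is stated with an immediate \qed, and the sentence preceding it simply points to \eqref{TA V}, \eqref{vaskew}, and \eqref{com Am Bn} as supplying the translation, skew-symmetry, and commutator axioms respectively. Your write-up is a faithful and more explicit unpacking of that one-line justification, including the $\Gamma$-compatibility and the matching of the kernel hypothesis from Remark~\ref{kerrem}.
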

By virtue of Lemma \ref{forgetful}, all the statements from \S\ref{sec: LtoLie} onwards about going from vertex Lie algebras to (genuine) Lie algebras carry over and apply to vertex algebras. 
First, associated to any vertex algebra $\va$ one has the Lie algebra $\U(\va)$, the local copies of this Lie algebra, $\U(\va)_{z_i}$, associated to the points $\{z_i\}_{i=1}^N$, and 
\be \U(\va)_{\bm z} := \label{UVN def}
 \left.\bigoplus_{i = 1}^N \U(\va)_{z_i} \right/ J_N
\ee
where $J_N$ is the ideal spanned by states of the form
\begin{equation} \label{UNV ideal J}
\vac(-1)_{z_i} - \vac(-1)_{z_j}, \qquad 1 \leq i < j \leq N.
\end{equation}
Second, we have the global Lie algebra $\U_{\bm z}^\Gamma(\va)$. Its definition is entirely parallel to that of $\U_{\bm z}^\Gamma(\vla)$, but for clarity let us go through the details. Given an automorphism $\sigma\in \Aut\va$ whose order divides $T=|\Gamma|$, let $R : \Gamma \to \text{GL}(\va)$ be the group homomorphism defined in \eqref{Gamma act VA}. We consider the action of $\Gamma$ on $\va \otimes \C_{\Gamma \bm z}^{\infty}(t)$ defined for any $\alpha \in \Gamma$ by
\begin{equation} \label{Gamma act LVz}
\alpha . (A \otimes f(t)) := \alpha^{-1} R_{\alpha} A \otimes f(\alpha^{-1} t),
\end{equation}
and then, applying Lemma \ref{lem: VLA} with $\mathcal{A} = \C_{\Gamma {\bm z}}^{\infty}(t)$ we have the Lie algebra
\begin{equation} \label{quotient VAp}
\Lie_{\C_{\Gamma {\bm z}}^{\infty}(t)} \va = \big( \va \otimes \C_{\Gamma {\bm z}}^{\infty}(t) \big) \big/ \im \partial.
\end{equation}
By Lemma \ref{lem: twist action} the action \eqref{Gamma act LVz} descends to an action of $\Gamma$ by automorphisms on $\Lie_{\C_{\Gamma {\bm z}}^{\infty}(t)} \va$. 
We define the subspace $J_{\Gamma {\bm z}}$ of $\Lie_{\C_{\Gamma {\bm z}}^{\infty}(t)} \va$ as
\begin{equation} \label{UGzV ideal J}
J_{\Gamma {\bm z}} = \textup{span} \left\{ \sum_{\alpha \in \Gamma} \frac{\vac}{t - \alpha z_i} - \sum_{\alpha \in \Gamma} \frac{\vac}{t - \alpha z_j} \; \bigg| \; 1 \leq i < j \leq N \right\},
\end{equation}
The elements of \eqref{UGzV ideal J} are all central in $\Lie_{\C_{\Gamma {\bm z}}^{\infty}(t)} \va$. Thus $J_{\Gamma {\bm z}}$ is an ideal and we have the quotient map $\Lie_{\C_{\Gamma {\bm z}}^{\infty}(t)} \va \to \big( \Lie_{\C_{\Gamma {\bm z}}^{\infty}(t)} \va \big) \big/ J_{\Gamma {\bm z}}$. Let $\U_{\Gamma {\bm z}}(\va)$ denote the image of $\rho\big( \va^o \otimes \C^{\infty}_{\Gamma {\bm z}}(t) \big)$ under this map, \ie
\begin{equation} \label{quotient VA}
\U_{\Gamma {\bm z}}(\va) := \rho\big( \va^o \otimes \C^{\infty}_{\Gamma {\bm z}}(t) \big) \big/ J_{\Gamma {\bm z}},
\end{equation}
where $\va^o$ is a choice of complement of $\im \D \oplus \C \vac$ in $\va$. 
Since the ideal $J_{\Gamma {\bm z}}$ is also invariant under the action of $\Gamma$, we have an induced action of $\Gamma$ on the quotient \eqref{quotient VA} and we may consider the subspace of $\Gamma$-invariants
\begin{equation} \label{UGdef}
\U^{\Gamma}_{\bm z}(\va) := \big( \U_{\Gamma {\bm z}}(\va) \big)^{\Gamma}.
\end{equation}
Similarly, we have vertex algebra analogs of $\U(\vla)_{\bm z,0}$ and $\U^\Gamma_{\bm z, 0}(\vla)$, namely $\U(\va)_{\bm z,0}$ and $\U^\Gamma_{\bm z, 0}(\va)$.

\section{Universal enveloping vertex algebras and `big' Lie algebras} \label{sec: uevaandbigla}

\subsection{$\ueva$ as a vertex algebra}
Recall the definition of $\ueva$ from \S\ref{sec: ueva}.
\begin{prop}\label{va struc}
$\ueva$ is a $\Z_{\geq 0}$-graded vertex algebra, with vacuum $\vac$ and vertex operator map $Y: \ueva \to \Hom(\ueva,\ueva((x)))$ as defined in Corollary \ref{Ydef}. 
\end{prop}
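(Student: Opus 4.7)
The plan is to verify the vertex algebra axioms one by one, using the global definition of $Y$ from Corollary \ref{Ydef} together with the results already established in the excerpt. Rationality and well-definedness of $Y(A,x) \in \Hom(\ueva,\ueva((x)))$ are essentially immediate: for $A,B\in\ueva$ the class $[\,\atp u A\otimes \atp x B\,]$ is a rational function of $u$ with at most a finite-order pole at $u=x$ (by Proposition \ref{prop: VL rat} applied with $\M_x=\ueva$, which is smooth), so its Laurent expansion at $x$ lies in $\ueva((u-x))$, and $Y(A,x)B$ is exactly the image of this expansion under the identification with $\ueva$ assigned at $x$.

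For the vacuum axiom $Y(\vac,x)=\id_\ueva$, I would consider the class $[\,\atp u \vac\otimes \atp x B \otimes \cdots\,]$. Proposition \ref{prop: VL remove} (with $u$ playing the role of the extra marked point and $A=\vac$) tells us this equals $[\,\atp x B\otimes\cdots\,]$, independently of $u$. Unpacking the Laurent expansion at $x$ via Proposition \ref{prop: YMYW}, this gives $[\,Y(\vac,u-x)B\otimes\cdots\,]=[\,B\otimes\cdots\,]$ in every space of cyclotomic coinvariants. Applying Proposition \ref{prop: wiiz} (and the fact that $\ueva$ is smooth) then forces $Y(\vac,u-x)B=B$ in $\ueva$, and since $B$ is arbitrary, $Y(\vac,x)=\id_\ueva$.

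For the creation axiom, I take the class $[\,\atp u A\otimes \atp x \vac\otimes\cdots\,]$; by Proposition \ref{prop: VL remove} applied to the $\vac$-tensor factor at $x$, this equals $[\,\atp u A\otimes\cdots\,]$. The latter is rational in $u$ with poles only in $\Gamma\bm z\cup\{0\}$, so since $x\in\Cx\setminus(\Gamma\bm z\cup\{0\})$ it is regular at $u=x$ and its Laurent expansion there contains only non-negative powers of $u-x$, with value $[\,\atp x A\otimes\cdots\,]$ at $u=x$. Matching against $[\,Y(A,u-x)\vac\otimes\cdots\,]=\sum_n [\,A_{(n)}\vac\otimes\cdots\,](u-x)^{-n-1}$ gives $[A_{(n)}\vac\otimes\cdots]=0$ for $n\geq 0$ and $[A_{(-1)}\vac\otimes\cdots]=[A\otimes\cdots]$ in every space of coinvariants; Proposition \ref{prop: wiiz} again upgrades these equalities from coinvariants to $\ueva$ itself, yielding $Y(A,x)\vac \in A+x\ueva[[x]]$.

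The Borcherds identity is then just Proposition \ref{prop: borcherds}(1) specialised to the smooth $\U(\vla)$-module $\M=\ueva$: since $Y_\ueva=Y$ by construction, the stated identity is exactly the vertex algebra Borcherds identity. Finally, for the $\Z_{\geq 0}$-grading, $\ueva$ is already graded as in \eqref{Vvla vect} with $\deg\vac=0$; the mode $A_{(n)}$ is of degree $\deg A-n-1$ with respect to the $\Z$-grading of $\U(\vla)$ and its extension to $\U(\ueva)$ (compare \eqref{deg Lvla}), and the global definition of $Y$ via residues respects this, so $\deg(A_{(n)}B)=\deg A+\deg B-n-1$ is immediate. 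The only mildly delicate step is the creation axiom, where one must be careful to use regularity of the rational function at $u=x$ in both directions (no negative powers, and correct constant term); everything else is a direct appeal to the earlier global results, which is the point of the framework developed in \S\ref{sec: mtc}.
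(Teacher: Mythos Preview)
Your verification of the three axioms is correct and follows the same route as the paper: Borcherds is Proposition~\ref{prop: borcherds}(1) with $\M=\ueva$, and for both vacuum and creation you remove the extra copy of $\ueva$ via Proposition~\ref{prop: VL remove} and then invoke Proposition~\ref{prop: wiiz} to pass from an equality in every space of coinvariants to an equality in $\ueva$. (The paper leaves the vacuum axiom implicit and phrases creation slightly more tersely, but the arguments are the same.)

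There are two small gaps. First, your grading argument is not actually ``immediate'' from the global definition: nothing in Corollary~\ref{Ydef} by itself tells you that $A_{(n)}$ shifts degree by $\deg A-n-1$. The paper handles this by first establishing \eqref{Y def VA}, i.e.\ $Y(a(-1)\vac,x)=\sum_n a(n)x^{-n-1}$, and then noting (Remark~\ref{rem: tostandard}) that this forces $Y$ to agree with the standard state-field correspondence on $\ueva$ built via the iterate formula; the $\Z_{\geq 0}$-grading then comes for free from the known grading of the standard construction. You should either reproduce that identification or give an inductive argument on depth that the global $Y$ respects degree. Second, in this paper the definition of a vertex algebra includes the requirement $\ker\D=\C\vac$ (Remark~\ref{kerrem}), and you have not addressed it; the paper checks it from the explicit formula \eqref{D act V} for $\D$ on $\ueva$.
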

\begin{proof}
First we must check that $Y$ satisfies axioms $(i)$--$(iii)$. Axiom $(iii)$, namely Borcherds identity, follows from part $(1)$ of Proposition \ref{prop: borcherds} in the case $\M=\ueva$. Axiom $(i)$ is clear. Now consider axiom $(ii)$. By Proposition \ref{prop: VL remove}, we have
\begin{equation*}
[ \atp u A \otimes \atp x \vac \otimes \atp z m] = [ \atp u A \otimes \atp z m]
\end{equation*}
under the natural isomorphism $(\ueva\otimes \ueva \otimes \M)/\U^\Gamma_{u,x,z}(\vla) \cong_\C (\ueva \otimes \M)/\U^\Gamma_{u,z}(\vla)$, which is independent of $x$ and in particular has no pole at $x$ when viewed as a function of $u$. Thus
\begin{align*} \iota_{u-x} [\atp u A\otimes \atp x \vac \otimes \atp z m] = \iota_{u-x} [\atp u A \otimes \atp z m] &= [\atp x A\otimes \atp z m] + \mc O(u-x)
\end{align*}
which, according to the definition of $Y$ in Corollary \ref{Ydef}, says that
$Y(A,x) \vac = A + x \ueva [[x]]$.

The map $Y$ defined in Corollary \ref{Ydef} therefore endows $\ueva$ with the structure of a vertex algebra. By equation \eqref{Y def VA} and remark \ref{rem: tostandard} below, the latter agrees with the standard vertex algebra structure on $\ueva$ and is therefore $\Z_{\geq 0}$-graded.
Finally, the fact that $\ker \D = \C \vac$ (which, recall is part of the definition of a vertex algebra for us)  follows from the explicit form of the action of $\D$ on $\ueva$ given below in \eqref{D act V}.
\end{proof}

Following Primc, \cite{Primc}, we sometimes refer to the vacuum Verma module $\ueva$  as the \emph{universal enveloping vertex algebra} of $\vla$.

For states of the form $a(-1) \vac$ with $a \in \vla$, we have
\begin{equation} \label{Y def VA}
Y(a(-1) \vac, x) = \sum_{n \in \mathbb{Z}} a(n) x^{-n-1},
\end{equation}
and in particular $(a(-1) \vac)_ {(n)} = a(n)$. (Here the action of $a(n) \in \U(\vla)$ on $\ueva$ is given by the $\U(\vla)$-module structure on $\vla$.) 
Indeed, let $\M_{z_i}$, $i = 1, \ldots, K$ be any collection of $\U(\vla)$-modules attached to the set of points $\bm z = \{ z_i \}_{i=1}^K \subset \Cx \setminus \Gamma\{ u, x \}$ with $\Gamma z_i \cap \Gamma z_j = \emptyset$ for $i \neq j$, and consider the class of $a(-1)\vac \otimes B \otimes \bm m$ in the space of coinvariants $\VV(\vla) \otimes \VV(\vla) \otimes \bigotimes_{i = 1}^K \M_{z_i} \big/ \U^{\Gamma}_{u, x, \bm z}(\vla)$. Using the $\Gamma$-equivariant function
\begin{equation*}
\sum_{\alpha\in \Gamma} \alpha\on \left( \frac{ a}{t-u} \right)
= \sum_{\alpha \in \Gamma} \frac{\alpha^{-1} R_{\alpha} a}{\alpha^{-1} t - u} \in  \U^{\Gamma}_{u, x, \bm z}(\vla)
\end{equation*}
we may `swap' $a(-1)$ to obtain
\begin{equation*}
\big[ a(-1) \atp u {\vac} \otimes \atp x B \otimes \atp {\bm z}{\bm m} \big] = \sum_{n \in \Z_{\geq 0}} \sum_{\alpha \in \Gamma} \bigg[ \atp u {\vac} \otimes \frac{(R_\alpha a)(n)}{(\alpha u - x)^{n+1}} \atp x B \otimes \atp {\bm z}{\bm m} \bigg] + \sum_{i = 1}^K \sum_{n \in \Z_{\geq 0}} \sum_{\alpha \in \Gamma} \bigg[ \atp u {\vac} \otimes \atp x B \otimes \frac{(R_\alpha a)(n)}{(\alpha u - z_i)^{n+1}} \atp {\bm z}{\bm m} \bigg].
\end{equation*}
Next, applying the map $\iota_{u - x}$ to both sides yields
\begin{multline*}
\iota_{u - x} \big[ a(-1) \atp u {\vac} \otimes \atp x B \otimes \atp {\bm z}{\bm m} \big] = \sum_{n \in \Z_{\geq 0}} \bigg[ \atp u {\vac} \otimes (u - x)^{-n-1} a(n) \atp x B \otimes \atp {\bm z}{\bm m} \bigg]\\
+ \sum_{n \in \Z_{\geq 0}} \sum_{\alpha \in \Gamma \setminus \{ 1 \}} \iota_{u - x} \bigg[ \atp u {\vac} \otimes \frac{(R_\alpha a)(n)}{(\alpha u - x)^{n+1}} \atp x B \otimes \atp {\bm y}{\bm m} \bigg]
+ \sum_{i = 1}^K \sum_{n \in \Z_{\geq 0}} \sum_{\alpha \in \Gamma} \iota_{u - x} \bigg[ \atp u {\vac} \otimes \atp x B \otimes \frac{(R_\alpha a)(n)}{(\alpha u - y_i)^{n+1}} \atp {\bm y}{\bm m} \bigg].
\end{multline*}
The two terms on the second line are Taylor series in $(u - x)$ and can be shown to be equal to
\begin{equation*}
\sum_{n \in \Z_{\geq 0}} \bigg[ \atp u {\vac} \otimes (u - x)^n a(-n-1) \atp x B \otimes \atp {\bm z}{\bm m} \bigg]
\end{equation*}
upon swapping the mode $a(-n-1)$. It follows that
\begin{equation*}
\iota_{u - x} \big[ a(-1) \atp u {\vac} \otimes \atp x B \otimes \atp {\bm y}{\bm m} \big] = \sum_{n \in \Z} \bigg[ \atp u {\vac} \otimes (u - x)^{-n-1} a(n) \atp x B \otimes \atp {\bm y}{\bm m} \bigg],
\end{equation*}
which by Corollary \ref{Ydef} implies equation \eqref{Y def VA}.

\begin{rem}\label{rem: tostandard}
For us, Corollary \ref{Ydef} is what defines the state-field correspondence $Y$ on $\ueva$. 
The more standard approach is to define $Y$ by first requiring that \eqref{Y def VA} hold and then proving a `reconstruction theorem' to show that $Y$ extends uniquely to a well-defined state-field correspondence on all of $\ueva$. In this form, the result that a unique such vertex algebra structure on $\ueva$ exists for the general vertex Lie algebra $\vla$ is due to Primc, \cite{Primc}.  See also \cite[\S 2.4]{FB} and \cite[\S4.5]{KacVertex}.
To see why equation \eqref{Y def VA} is enough to define the vertex operator map on the whole of $\ueva$, note that by \eqref{Vvla vect} a general state in $\ueva$ can be written as a finite linear combination of states of the form $a_1(- n_1) \ldots a_j(- n_j) \vac$ for some $a_1, \ldots, a_j \in \vla$ and $f_{n_1, \ldots, n_j} \in \C$. The vertex operator associated to the latter state can be obtained by writing it as
\begin{equation*}
(a_1(-1) \vac)_{(- n_1)} \ldots (a_j(-1) \vac)_{(- n_j)} \vac.
\end{equation*}
Then, applying the relation \eqref{norm prod der} recursively, one finds
\begin{align}\label{iterate}
Y(a_1(- n_1) \ldots a_j(- n_j) &\vac, x)\\
&= \frac{1}{(n_1 - 1)!} \ldots \frac{1}{(n_j - 1)!} \nord{\partial^{n_1 - 1}_x Y(a_1(-1) \vac, x) \ldots \partial^{n_j - 1}_x Y(a_j(-1) \vac, x)},\nn
\end{align}
where the normal ordered product of more than two operators is defined to be right associative. For example $\nord{F(x) G(x) H(x)}$ is shorthand for $\nord{F(x) (\nord{G(x) H(x)})}$, which in general will be different from $\nord{( \nord{F(x) G(x)}) H(x)}$. The formula \eqref{iterate} is sometimes called the \emph{iterate formula}. 
Such an iterate formula also holds for modules over vertex algebras, but \emph{not} in general for quasi-modules. This is where the `global'/`geometric' definitions of $Y_M$ and in particular $Y_W$ of Proposition \ref{prop: YMYW} will come into their own; see \S\ref{sec: quasiiterate}, below.
\end{rem}

The action of the operator $\D$ on $\ueva$ is given by
\begin{equation} \label{D act V}
\D \big( a_1(- n_1) \ldots a_j(- n_j) \vac \big) = \sum_{i = 1}^j a_1(- n_1) \ldots (\D a_i)(- n_i) \ldots a_j(- n_j) \vac
\end{equation}
for any $a_1, \ldots, a_j \in \vla$ and $n_1, \ldots, n_j \in \Z_{> 0}$. Indeed, we first note that for $a \in \vla$ we have
\begin{equation} \label{D act V 1}
\D( a(-1) \vac ) = ( a(-1) \vac )_{(-2)} \vac = a(-2) \vac = (\D a)(-1)\vac.
\end{equation}
The relation \eqref{D act V} then follows by writing the state as $(a_1(-1) \vac)_{(- n_1)} \ldots (a_j(-1) \vac)_{(- n_j)} \vac$ and using the fact that $\D$ is a derivation with respect to all $n^{\rm th}$-products.

\subsection{$\ueva$ as a $\Gamma$-vertex algebra}

\begin{prop} \label{prop: Aut V}
Any $\sigma\in \Aut \vla$ extends to a degree-preserving automorphism of the vertex algebra $\ueva$.
\end{prop}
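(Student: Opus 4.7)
The plan is to build the extension $\tilde\sigma:\ueva \to \ueva$ in stages: first induce an automorphism of the `little' Lie algebra $\U(\vla)$ from $\sigma$, then induce from this an action on the Verma module $\ueva$, and finally verify that this action respects the vertex algebra structure as defined by Corollary \ref{Ydef} together with the iterate formula from Remark \ref{rem: tostandard}.

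First I would define $\hat\sigma:\U(\vla) \to \U(\vla)$ by $\hat\sigma(a(n)) := (\sigma a)(n)$ for $a\in \vla$, $n\in \Z$. This is well-defined on the defining relation \eqref{D rel VLA} because $\sigma$ commutes with $\D$: indeed $\hat\sigma((\D a)(n)) = (\sigma \D a)(n) = (\D \sigma a)(n) = -n\,(\sigma a)(n-1) = -n\,\hat\sigma(a(n-1))$. Using \eqref{VLA bracket loop} together with the vertex Lie algebra homomorphism property $\sigma(a_{(k)}b) = (\sigma a)_{(k)}(\sigma b)$, one checks directly that $\hat\sigma$ is a Lie algebra automorphism. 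Since $\hat\sigma$ preserves the subspaces $\rho(\vla^o\otimes t^n\C)$ (as $\sigma$ preserves $\vla^o$ up to the central line, which $\hat\sigma$ also fixes because $\sigma(\cent) = \cent$ by the definition of a vertex Lie algebra automorphism), it preserves the polar decomposition \eqref{polar decomp} and restricts to automorphisms of $\U^\pm(\vla)$; in particular it fixes the central element $\cent(-1)$.

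Next I would define $\tilde\sigma : \ueva \to \ueva$ as the map induced on the induced module \eqref{Vvla def}, \ie by $\tilde\sigma(\vac) = \vac$ and $\tilde\sigma(u\vac) = \hat\sigma(u)\vac$ for $u\in U(\U(\vla))$. This is well-defined because $\hat\sigma$ stabilises the annihilator of $\vac$: on generators of $\U^+(\vla)$ one has $\hat\sigma(a(n))\vac = (\sigma a)(n)\vac = 0$ for $n\geq 0$, and $\hat\sigma(\cent(-1))\vac = \cent(-1)\vac = \vac$. In terms of the monomial basis \eqref{Vvla vect},
\begin{equation*}
\tilde\sigma\big(a_1(-n_1)\cdots a_j(-n_j)\vac\big) = (\sigma a_1)(-n_1)\cdots(\sigma a_j)(-n_j)\vac.
\end{equation*}
Bijectivity follows from $\sigma\in \Aut\vla$ being invertible, with inverse $\widetilde{\sigma^{-1}}$. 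Since $\sigma$ preserves degree on $\vla$ and the grading of $\ueva$ is determined via \eqref{deg Lvla} and \eqref{Vvla vect}, the map $\tilde\sigma$ is degree-preserving.

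It remains to verify the vertex algebra homomorphism condition \eqref{VA hom}: $\tilde\sigma(Y(A,x)B) = Y(\tilde\sigma A, x)\tilde\sigma B$. For states of the form $A = a(-1)\vac$, this follows directly from \eqref{Y def VA}:
\begin{equation*}
\tilde\sigma\big(Y(a(-1)\vac,x)B\big) = \sum_{n\in\Z}\hat\sigma(a(n))\tilde\sigma(B)\,x^{-n-1} = Y((\sigma a)(-1)\vac,x)\tilde\sigma(B),
\end{equation*}
and $(\sigma a)(-1)\vac = \tilde\sigma(a(-1)\vac)$. For arbitrary states I would proceed by induction on depth using the iterate formula \eqref{iterate}: since $\tilde\sigma$ is a linear map that commutes with $\D$ (as $\hat\sigma\D = \D\hat\sigma$ at the level of $\U(\vla)$, by the computation above) and hence with formal $x$-derivatives, and since it intertwines the fields $Y(a_i(-1)\vac,x)$ by the base case, it also intertwines their normally ordered products. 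The main (essentially bookkeeping) point to handle carefully is that applying $\tilde\sigma$ to a normally ordered product $\nord{F_1(x)\cdots F_j(x)}\!B$ produces $\nord{(\tilde\sigma F_1 \tilde\sigma^{-1})(x)\cdots(\tilde\sigma F_j\tilde\sigma^{-1})(x)}\!\tilde\sigma(B)$, which by the base case is the vertex operator associated to $\tilde\sigma$ of the corresponding state. Alternatively, one can bypass the iterate formula by invoking Corollary \ref{Ydef} directly: the compatible action of $\hat\sigma$ on each tensor factor of a coinvariant $[\atp u A \otimes \atp x B \otimes \cdots]$ descends, via Proposition \ref{prop: LG embed} and the $\Gamma$-equivariance of the rational-function constructions in \S\ref{sec: mtc}, to an action on the space of coinvariants that commutes with the Laurent expansion defining $Y$; the required identity then follows by reading off coefficients in $u-x$. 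No step presents a serious obstacle; the only care required is keeping track of the compatibility of $\sigma$ with $\D$, the central element $\cent$, and the grading, all of which are guaranteed by the definition of a vertex Lie algebra automorphism.
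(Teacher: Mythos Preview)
Your proposal is correct and follows essentially the same approach as the paper: extend $\sigma$ to an automorphism of $\U(\vla)$ via $a(n)\mapsto(\sigma a)(n)$, check compatibility with the relation \eqref{D rel VLA} using $\sigma\D=\D\sigma$, and then push this to the induced module $\ueva$ by the explicit monomial formula. The paper's proof is terser than yours---it simply asserts that the resulting map ``clearly defines an automorphism of $\ueva$ as a vertex algebra''---whereas you actually sketch the verification of \eqref{VA hom} via the iterate formula \eqref{iterate}, which is a reasonable way to unpack that claim.
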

\begin{proof}
First, $\sigma$ extends to an automorphism of $\U(\vla)$ by letting $\sigma(a(n)) := (\sigma a)(n)$ for any $a \in \vla$ and $n \in \Z$. 
This assignment, $\sigma(a(n)) := (\sigma a)(n)$, is compatible with the relation \eqref{D rel VLA} in $\U(\vla)$ since $\sigma \in \Aut \vla$ commutes with the operator $\D$ by definition.
Then $\sigma$ extends to a well-defined linear map $\ueva\to \ueva$ acting as
\begin{equation*}
\sigma\big( a_1(- n_1) \ldots a_j(- n_j) \vac \big) := (\sigma a_1)(- n_1) \ldots (\sigma a_j)(- n_j) \vac
\end{equation*}
(so in particular $\sigma\vac = \vac$) which is manifestly degree-preserving. Moreover, it clearly defines an automorphism of $\ueva$ as a vertex algebra.
\end{proof}

Given an automorphism $\sigma$ of $\ueva$, let $R_\alpha$ be the map of \eqref{Gamma act VA}. 
\begin{comment}
\begin{lem} \label{lem: alpha u}
Let ${\bm m} \in \bigotimes_{i=1}^N \M_{z_i}$, $\bm m'\in \bigotimes_{j=1}^{P-N} \M_{y_j}$, $m_0 \in \M_0$, $\alpha \in \Gamma$ and $A \in \ueva$. Then
\begin{equation} \label{insertion point}
  [\atp u A \otimes \atp{\bm z}{\bm m} \otimes\at{\bm y}{\bm m'}\otimes \atp 0{m_0}] 
= [\atp{\alpha u}{R_{\alpha} A}\otimes  \atp{\bm z}{\bm m} \otimes\at{\bm y}{\bm m'}\otimes \atp 0{m_0}] ,
\end{equation}
where by equality we mean equality with respect to the canonical vector-space isomorphisms
\begin{align} \left(\ueva \otimes \bigotimes_{i=1}^P \M_{x_i} \otimes \M_0\right)\bigg/\U_{u,\bm z,0}^\Gamma(\vla)
 &\cong_\C \left( \bigotimes_{i=1}^P \M_{x_i} \otimes \M_0 \right) \bigg/ \U_{\bm z,0}^\Gamma(\vla)\nn\\
&\cong_\C \left(\ueva \otimes \bigotimes_{i=1}^P \M_{x_i} \otimes \M_0\right)\bigg/\U_{\alpha u,\bm z,0}^\Gamma(\vla)
\nn\end{align}
following from Proposition \ref{prop: VL remove}.
\end{lem}
\end{comment}
\begin{lem}\label{lem: alpha u}
Let $\alpha \in \Gamma$ and $A \in \ueva$. Then for each $u \in \Cx \setminus \Gamma \bm z$ we have
\begin{equation} \label{insertion point}
[\atp u A \otimes \atp {\bm z}{\bm m} ] 
= [R_{\alpha} \atp{\alpha u} A \otimes \atp {\bm z}{\bm m} ] ,
\end{equation}
where by equality we mean equality with respect to the canonical vector-space isomorphisms
\begin{equation*}
\left(\ueva \otimes \bigotimes_{i=1}^N M_{z_i} \right)\bigg/\U_{u,\bm z}^\Gamma(\vla) \cong_\C \bigotimes_{i=1}^N M_{z_i} \bigg/ \U_{\bm z}^\Gamma(\vla) \cong_\C \left(\ueva \otimes \bigotimes_{i=1}^N M_{z_i} \right)\bigg/\U_{\alpha u,\bm z}^\Gamma(\vla)
\end{equation*}
following from Proposition \ref{prop: VL remove} (together with the obvious analogs with a module at the origin).
\end{lem}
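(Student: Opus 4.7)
The plan is to proceed by induction on the depth of $A\in\ueva$, the ascending filtration on $\ueva\cong U(\U^-(\vla))\vac$ recalled in \S\ref{sec: ueva}, exploiting the $\Gamma$-equivariance of the rational functions comprising $\U^\Gamma_{u,\bm z}(\vla)$. The base case $A=\vac$ is immediate: $R_\alpha\vac=\vac$, and the isomorphism of Proposition \ref{prop: VL remove} is realized in its proof by projecting $U(\U^\Gamma_\ast(\vla))\vac\otimes M$ onto the direct summand $\vac\otimes M$. Since this projection sends $\vac\otimes\bm m$ to $[\bm m]$ regardless of whether one takes $\ast=u$ or $\ast=\alpha u$, both sides of \eqref{insertion point} project to $[\bm m]$.

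For the inductive step, assume the identity for all states of depth $<j$ and take $A=a(-n)B$ with $a\in\vla^o$, $n\geq 1$, and $B$ of depth $j-1$. The idea is to consider the $\Gamma$-equivariant function
\begin{equation*}
f_u(t)\,:=\,\sum_{\beta\in\Gamma}\beta\cdot\frac{a}{(t-u)^n}\,=\,\sum_{\beta\in\Gamma}\frac{\beta^{n-1}R_\beta a}{(t-\beta u)^n}\,\in\,\U^\Gamma_{u,\bm z}(\vla),
\end{equation*}
which acts as zero on the class $[\atp uB\otimes\atp{\bm z}{\bm m}]$. Laurent-expanding $f_u$ at $u$ and at each $z_i$ yields an identity expressing $[a(-n)B\otimes\bm m]_u$ as a linear combination of classes $[(R_\beta a)(k)B\otimes\bm m]_u$ (for $\beta\neq 1$, $k\geq 0$) plus a single class $[\atp uB\otimes\Theta(u)\bm m]$ whose $\Theta(u)\in\End M$ collects the local modes $\iota_{t-z_i}f_u$ acting on the respective $z_i$-tensor factors. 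The crucial point is that every class on the right has depth at most $j-1$: either it is $B$ itself, or it is $(R_\beta a)(k)B$ with $k\geq 0$, in which case moving the non-negative mode past the $j-1$ negative modes of $B$ to annihilate $\vac$ leaves only commutator terms, each of depth $\leq j-1$. Thus the induction hypothesis applies to every term on the right.

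Running the analogous computation at $\alpha u$ for $R_\alpha A=\alpha^{n-1}(R_\alpha a)(-n)R_\alpha B$ (a direct consequence of $R_\alpha=\alpha^{L(0)}\check R_\alpha$) with the corresponding $\Gamma$-invariant function $f^{R_\alpha}_{\alpha u}(t)=\sum_\beta\beta^{n-1}(R_\beta R_\alpha a)/(t-\beta\alpha u)^n$, the reindexing $\gamma=\beta\alpha$ yields the scalar relation $f^{R_\alpha}_{\alpha u}=\alpha^{1-n}f_u$. The factor $\alpha^{n-1}$ in $R_\alpha A$ thus cancels precisely the $\alpha^{1-n}$ relating the two functions, and the two expansions match term-by-term after applying the induction hypothesis to each depth-$(j-1)$ class. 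The main obstacle is the careful bookkeeping of these $\alpha^{\pm(n-1)}$ scalars and the verification that the $\Theta$-term at $u$ agrees with its counterpart at $\alpha u$; the latter is automatic, since $f_u$ and $f^{R_\alpha}_{\alpha u}$ are proportional as rational functions of $t$, hence so are their Laurent expansions at each $z_i$.
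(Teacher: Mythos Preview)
Your proposal is correct and follows essentially the same strategy as the paper: induction on depth, with the inductive step carried out by swapping via a $\Gamma$-equivariant rational function whose singular part at the insertion point realizes the outermost negative mode. The paper differs only cosmetically: it first reduces to the case $A=a(-1)B$ via $a(-n-1)=\tfrac{1}{n!}(\D^n a)(-1)$, and rather than constructing a second function $f^{R_\alpha}_{\alpha u}$ and noting it is $\alpha^{1-n}f_u$, it simply observes that the \emph{same} $f_u$ already has singular part $(R_\alpha a)(-1)$ at $t=\alpha u$ and uses it for both swaps. The matching of the cross terms $(R_\beta a)(k)B$ versus $(R_{\beta'} R_\alpha a)(k)R_\alpha B$ is then made explicit via the identity $R_\alpha(a(n)B)=\alpha^{-n-1}(R_\alpha a)(n)(R_\alpha B)$, which is exactly the relation underlying your scalar cancellation; your ``term-by-term after applying the induction hypothesis'' is the paper's equation \eqref{rel R} together with the substitution $\gamma=\beta\alpha^{-1}$.
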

\begin{proof} 
The proof is given in \S\ref{proof: alpha u}.
\end{proof}

\begin{cor} For all $\alpha \in \Gamma$ and all $A\in \ueva$ and $u\in \Cx$,
$Y_W(R_{\alpha} A, \alpha u) = Y_W(A,u)$. 
\end{cor}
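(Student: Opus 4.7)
The plan is to combine the global characterisation of $Y_W$ given by Proposition \ref{prop: YMYW}(\ref{YWdef}) with the equivariance of insertion points established in Lemma \ref{lem: alpha u}, and then to invoke Proposition \ref{prop: wiiz}(2) to transfer the conclusion from spaces of coinvariants back to $\M_0$ itself.

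First I would apply the analog of Lemma \ref{lem: alpha u} with a module $\M_0$ inserted at the origin (one of the ``obvious analogs with a module at the origin'' indicated in the statement of that lemma). For any collection of smooth $\U(\vla)$-modules $\M_{z_i}$ attached to nonzero marked points $\bm z=\{z_i\}_{i=1}^N$ with disjoint $\Gamma$-orbits, and any $\bm m\in \bigotimes_i \M_{z_i}$ and $m_0\in\M_0$, this yields the equality
\[ \big[\atp u A \otimes \atp{\bm z}{\bm m} \otimes \atp 0 {m_0}\big] \;=\; \big[R_\alpha \atp{\alpha u} A \otimes \atp{\bm z}{\bm m} \otimes \atp 0 {m_0}\big] \]
as rational functions of $u\in\Cx\setminus\Gamma\bm z$, valued in $\bigotimes_i \M_{z_i}\otimes\M_0 \big/ \U^\Gamma_{\bm z,0}(\vla)$.

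Next I would Laurent expand both sides around $u=0$. By the defining property of $Y_W$ (Proposition \ref{prop: YMYW}(\ref{YWdef})), the left-hand side becomes $\big[\atp{\bm z}{\bm m}\otimes Y_W(A,u)\,m_0\big]$. Setting $H(v):=\big[\atp v{R_\alpha A}\otimes \atp{\bm z}{\bm m}\otimes\atp 0{m_0}\big]$, the right-hand side is $H(\alpha u)$, whose Laurent expansion about $u=0$ is obtained from that of $H(v)$ about $v=0$ by the substitution $v=\alpha u$ (which is well-defined because $\alpha\in\Cx$ is a nonzero scalar). Applying Proposition \ref{prop: YMYW}(\ref{YWdef}) once more, this equals $\big[\atp{\bm z}{\bm m}\otimes Y_W(R_\alpha A,\alpha u)\,m_0\big]$. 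Comparing coefficients of each power of $u$, we conclude that for every $n\in\Z$ the $u^{-n-1}$-coefficient of $\bigl(Y_W(A,u) - Y_W(R_\alpha A,\alpha u)\bigr)m_0\in\M_0$ vanishes in every space of coinvariants of the form $\bigotimes_i \M_{z_i}\otimes \M_0 \big/ \U^\Gamma_{\bm z,0}(\vla)$.

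Finally, Proposition \ref{prop: wiiz}(2) asserts that an element of $\M_0$ which vanishes in every such space of coinvariants must itself be zero. Applied mode by mode, this gives $Y_W(A,u)m_0 = Y_W(R_\alpha A,\alpha u)m_0$ for all $m_0\in\M_0$, which is the corollary. The argument involves no substantial obstacle; the only point worth flagging is that substituting $v=\alpha u$ in a formal Laurent series is legitimate precisely because $\alpha$ is a nonzero scalar (indeed a root of unity).
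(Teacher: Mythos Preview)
Your proof is correct and follows essentially the same route as the paper's: the paper's one-line proof simply invokes Lemma \ref{lem: alpha u} together with the defining property of $Y_W$ in Proposition \ref{prop: YMYW}(\ref{YWdef}), and you have made explicit the steps (the Laurent expansion at $u=0$, the substitution $v=\alpha u$, and the appeal to Proposition \ref{prop: wiiz}(2)) that this entails. The use of Proposition \ref{prop: wiiz}(2) is already implicit in the uniqueness clause of Proposition \ref{prop: YMYW}(\ref{YWdef}), so there is no real difference in approach.
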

\begin{proof} This follows immediately, in view of the definition of $Y_W$ in part $(b)$ of Proposition \ref{prop: YMYW}. \end{proof}

\subsection{`Big' and `little' Lie algebras}
Recall from \S\ref{sec: vaLA} that vertex algebras are in particular vertex Lie algebras. It follows that
the universal enveloping vertex algebra $\ueva$ of a given vertex Lie algebra $\vla$ is \emph{itself} a vertex Lie algebra.\footnote{Alternatively, this follows just by checking that the map $Y(\cdot,x)_-$ obeys the axioms of a vertex Lie algebra.}
Given any commutative associative algebra $\mc A$, we thus have two Lie algebras associated to $\vla$, namely 
\be \Lie_{\mathcal{A}} \vla \quad\text{and}\quad \Lie_{\mathcal{A}} \ueva.\nn\ee 
We think of these as respectively the `little' and `big' Lie algebras associated to the pair $(\vla,\mc A)$. The following lemma supports this intuition. 

\begin{lem} \label{lem: gN UN}
The inclusion map of \eqref{vla in VV} is an embedding of vertex Lie algebras. Hence there is a natural embedding of Lie algebras
\be\label{Lie VLA VA}
\Lie_{\mathcal{A}} \vla \hookrightarrow \Lie_{\mathcal{A}} \ueva; 
\qquad \rho(a \otimes f) \mapsto \rho(a(-1) \vac \otimes f).
\ee
Under this embedding, the central element $\rho(\cent \otimes f)$, $f \in \mathcal{A}$ in $\Lie_{\mathcal{A}} \vla$ is sent to $\rho(\vac \otimes f)$.
\begin{proof}
We must check that $(a(-1)\vac)_{(n)}(b(-1)\vac) = (a_{(n)}b)(-1)\vac$ for all $n\in \Z_{\geq 0}$. And indeed $(a(-1)\vac)_{(n)}(b(-1)\vac)$ is equal to
\be  a(n) b(-1) \vac = [a(n),b(-1)]\vac = 
 \sum_{k=0}^\8 \binom n k (a_{(k)}b)(n-1-k)\vac = (a_{(n)}b)(-1) \vac \nn\ee
as required, where in the final step we used the fact that all non-negative modes of $a_{(k)}b$ annihilate the vacuum, while $\binom n k=0$ for $k>n$, so that $k=n$ is the only term in the sum that can contribute. 
The second part follows.
\end{proof}
\end{lem}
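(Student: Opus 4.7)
The plan is to verify the statement in three stages: (1) show that $a \mapsto a(-1)\vac$ defines a homomorphism of vertex Lie algebras $\vla \to \ueva$; (2) invoke functoriality of the construction $\mc A \mapsto \Lie_{\mc A}(-)$ from Lemma \ref{lem: VLA} to obtain a Lie algebra homomorphism $\Lie_\mc A \vla \to \Lie_\mc A \ueva$, and argue it is injective; (3) track the image of $\cent$.

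For (1), injectivity of the underlying map $a \mapsto a(-1)\vac$ is already built into the setup: by \eqref{Vvla vect} we have $\ueva \cong_\C U(\U^-(\vla))\vac$, and by Lemma \ref{vla Lm iso} the map $a \mapsto a(-1)$ is an isomorphism onto $\U^-(\vla) \oplus \C \cent(-1)$, whose action on $\vac$ is injective. Commutation with $\D$ is the content of \eqref{D act V 1}: $\D(a(-1)\vac) = a(-2)\vac = (\D a)(-1)\vac$. The main calculation is to check the $n^{\rm th}$-products for $n \geq 0$, namely
\[
(a(-1)\vac)_{(n)}(b(-1)\vac) \;=\; (a_{(n)} b)(-1)\vac.
\]
Using the provisional formula $Y(a(-1)\vac, x) = \sum_n a(n) x^{-n-1}$ on $\ueva$ (which follows from the $\U(\vla)$-module structure of $\ueva$), the left side equals $a(n) b(-1) \vac$. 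Then the Lie bracket \eqref{VLA bracket loop} in $\U(\vla)$ gives $[a(n), b(-1)] = \sum_{k \geq 0} \binom{n}{k} (a_{(k)} b)(n-1-k)$, and since $a(n)\vac = 0$ for $n \geq 0$ and $(a_{(k)}b)(n-1-k)\vac = 0$ unless $n-1-k < 0$ (i.e.\ $k \geq n$), only the term $k=n$ survives, yielding $(a_{(n)} b)(-1)\vac$.

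For (2), functoriality with respect to $\mc A$ is automatic: any vertex Lie algebra homomorphism $\varphi : \vla \to \ueva$ commuting with $\D$ induces a well-defined linear map $\vla \otimes \mc A \to \ueva \otimes \mc A$ that intertwines the operators $\partial = \D \otimes 1 + 1 \otimes \delta$ on the two sides, hence descends to a map of quotients, and the bracket formula \eqref{VLA bracket} shows this descended map is a Lie algebra homomorphism. For injectivity, choose a complement $\mc A^o$ to $\im\delta$ and apply Lemma \ref{lem: VLA central} to both $\vla$ and $\ueva$: writing $\Lie_\mc A \vla \cong \vla^o \otimes \mc A \oplus \cent \otimes \mc A^o$, we can extend $\vla^o \hookrightarrow \ueva$ to a choice of complement $\ueva^o$ in $\ueva$ (so that the image of $\vla^o$ lies in $\ueva^o$), and the induced map sends $\vla^o \otimes \mc A$ into $\ueva^o \otimes \mc A$ injectively and $\cent \otimes \mc A^o$ into $\vac \otimes \mc A^o$ injectively. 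Since the two summands of the target are independent under the analogous decomposition of $\Lie_\mc A \ueva$, the induced map is injective.

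Step (3) is immediate from the construction: the image of $\cent$ under $a \mapsto a(-1)\vac$ is $\cent(-1)\vac = \vac$ (using \eqref{vacdef}), so $\rho(\cent \otimes f) \mapsto \rho(\vac \otimes f)$. The only genuinely substantive step is the $n^{\rm th}$-product check in stage (1); I expect no serious obstacle beyond careful bookkeeping with the two roles of ``$a(-1)$'' (as element of $\U^-(\vla)$ acting on $\ueva$, and as mode $(a(-1)\vac)_{(-1)}$ of the corresponding field), but this is exactly what \eqref{Y def VA} reconciles.
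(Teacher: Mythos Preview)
Your proof is correct and follows the same approach as the paper: the key step in both is the mode computation $(a(-1)\vac)_{(n)}(b(-1)\vac) = a(n)b(-1)\vac = [a(n),b(-1)]\vac = (a_{(n)}b)(-1)\vac$, using that only the $k=n$ term survives. You are more careful than the paper in stage~(2), where the paper simply writes ``The second part follows'' without addressing injectivity of the induced map $\Lie_{\mc A}\vla \to \Lie_{\mc A}\ueva$; your argument via Lemma~\ref{lem: VLA central} is the right one, and the needed fact that one can choose $\ueva^o$ to contain the image of $\vla^o$ (equivalently, that $i(\vla) \cap \im_{\ueva}\D = i(\im_\vla \D)$) is exactly what the paper later spells out in the proof of Proposition~\ref{prop: gz Uz}.
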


The constructions of \S\ref{sec: vaLA} all hold in particular when we take $\va$ to be the universal enveloping vertex algebra $\ueva$ of a vertex Lie algebra $\vla$, and further take the automorphism $\sigma \in \Aut \va$ to be the unique extension of an underlying automorphism $\sigma$ of $\vla$, as in Proposition \ref{prop: Aut V}. That is, all the `little' Lie algebras introduced in \S\ref{sec: LtoLie}--\ref{sec: tgla} have their `big' analogs. The next proposition makes precise the commutative diagram \eqref{se} from the introduction.
\begin{prop} \label{prop: gz Uz}
We have the following commutative diagrams of embeddings of Lie algebras:
\be
\begin{tikzpicture}    
\matrix (m) [matrix of math nodes, row sep=2em, column sep=3em]    
{
\U(\vla)_{\bm z}  & \U(\ueva)_{\bm z} \\
\U_{\bm z}^\Gamma(\vla) & \U^{\Gamma}_{\bm z}(\ueva) \\
};
\path[right hook->]
(m-1-1) edge (m-1-2)
(m-2-1) edge (m-2-2)
(m-2-1) edge (m-1-1)
(m-2-2) edge (m-1-2);
\end{tikzpicture}\qquad 
\begin{tikzpicture}    
\matrix (m) [matrix of math nodes, row sep=2em, column sep=3em]    
{
\U(\vla)_{\bm z,0}  & \U(\ueva)_{\bm z,0} \\
\U_{\bm z,0}^\Gamma(\vla) & \U^{\Gamma}_{\bm z,0}(\ueva) \\
};
\path[right hook->]
(m-1-1) edge (m-1-2)
(m-2-1) edge (m-2-2)
(m-2-1) edge (m-1-1)
(m-2-2) edge (m-1-2);
\end{tikzpicture}
\nn\ee
\end{prop}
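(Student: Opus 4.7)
The plan is to deduce every arrow from the single underlying inclusion of vertex Lie algebras $\vla\hookrightarrow\ueva$ provided by Lemma \ref{lem: gN UN}, by specialising the commutative algebra $\mathcal A$ of that lemma to the four relevant choices and then pushing through the quotient and $\Gamma$-invariant constructions already used in Propositions \ref{prop: LG embed} and \ref{prop: LG embed prime}.

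First, take $\mathcal A = \C((t-z_i))$ to obtain local embeddings $\U(\vla)_{z_i}\hookrightarrow\U(\ueva)_{z_i}$ sending $\cent(-1)_{z_i}$ to $\vac(-1)_{z_i}$. Their direct sum maps the generators of $I_N$ in \eqref{LN ideal I} to the generators of $J_N$ in \eqref{UNV ideal J}; in particular $J_N\subseteq j(U)$, so $j(U)\cap J_N = J_N = j(I_N)$ and Lemma \ref{simple lem} yields the top horizontal embedding $\U(\vla)_{\bm z}\hookrightarrow\U(\ueva)_{\bm z}$ of the left diagram. For the top of the right diagram, combine the same local embeddings with $\U(\vla)^\Gamma\hookrightarrow\U(\ueva)^\Gamma$, obtained by applying Lemma \ref{lem: gN UN} with $\mathcal A = \C((t))$ and passing to $\Gamma$-invariants. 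The latter step is legitimate because the embedding $\U(\vla)\hookrightarrow\U(\ueva)$ is $\Gamma$-equivariant: on depth-one states one checks using \eqref{Gamma act VLA}, \eqref{Gamma act VA} and the degree formula \eqref{deg Lvla} that $R_\alpha(a(-1)\vac) = \alpha^{\deg a}(\check R_\alpha a)(-1)\vac = (R_\alpha a)(-1)\vac$.

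For the bottom embeddings, apply Lemma \ref{lem: gN UN} with $\mathcal A = \C^\infty_{\Gamma\bm z}(t)$ and respectively $\mathcal A = \C^\infty_{\Gamma\bm z\cup\{0\}}(t)$. The resulting map $\Lie_{\mathcal A}\vla\hookrightarrow\Lie_{\mathcal A}\ueva$ is $\Gamma$-equivariant, since \eqref{Gamma act LCz} and \eqref{Gamma act LVz} are defined by identical formulae and $\sigma\in\Aut\ueva$ restricts to $\sigma\in\Aut\vla$ by Proposition \ref{prop: Aut V}. The ideals $I_{\Gamma\bm z}$ and $J_{\Gamma\bm z}$ (and their $0$-versions) match generator-by-generator under $\cent\mapsto\vac$, so Lemma \ref{simple lem} again provides an embedding on the quotients. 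To reach $\U^\Gamma_{\bm z}(\vla)\hookrightarrow\U^\Gamma_{\bm z}(\ueva)$, restrict to the subspace $\rho(\vla^o\otimes\mathcal A)/I_{\Gamma\bm z}$ and take $\Gamma$-invariants; the restriction lands in $\U_{\Gamma\bm z}(\ueva) = \rho(\ueva^o\otimes\mathcal A)/J_{\Gamma\bm z}$ provided the complement $\ueva^o$ of $\C\vac\oplus\im\D$ in $\ueva$ is chosen to contain the depth-one subspace $\{a(-1)\vac : a\in\vla^o\}$. Commutativity of both diagrams follows since all four arrows descend from the same embedding $\vla\hookrightarrow\ueva$, and the vertical embeddings of Propositions \ref{prop: LG embed} and \ref{prop: LG embed prime} are given by Laurent expansion at the marked points, which commutes with this inclusion.

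The main obstacle is justifying the choice of $\ueva^o$: one must show that $\{a(-1)\vac : a\in\vla^o\}$ is linearly independent in $\ueva$ modulo $\C\vac\oplus\im\D$. Independence modulo $\C\vac$ is immediate from the $\Z_{\geq 0}$-grading together with the injection $\vla\hookrightarrow\U^-(\vla)\oplus\C\cent(-1)$ of Lemma \ref{vla Lm iso}. For independence modulo $\im\D$, one observes that the translation operator on $\ueva$ is a derivation with $\D\vac = 0$, hence preserves the depth filtration, so that $\im\D$ intersects the depth-one subspace exactly in $\{(\D c)(-1)\vac : c\in\vla\}$; by Lemma \ref{vla Lm iso} and the decomposition $\vla = \vla^o\oplus\C\cent\oplus\im\D$ this has zero intersection with $\{a(-1)\vac : a\in\vla^o\}$, completing the argument.
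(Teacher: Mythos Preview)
Your proof is correct and follows essentially the same approach as the paper's: both deduce every arrow from the embedding of Lemma \ref{lem: gN UN} specialised to the relevant commutative algebras, pass to quotients by matching the ideals $I_\bullet \leftrightarrow J_\bullet$ via Lemma \ref{simple lem}, and then restrict to $\Gamma$-invariants, with the one delicate point being the legitimacy of choosing $\ueva^o$ to contain $i(\vla^o)$. The paper handles that last point by invoking \eqref{D act V 1} to identify $i(\im\D_\vla)$ with $i(\vla)\cap\im\D_\ueva$, which is equivalent in content to your depth-filtration argument; in both versions the step ``$\im\D_\ueva\cap\F_1=\D_\ueva(\F_1)$'' tacitly uses that $\D$ is injective on each associated-graded piece $\mathrm{gr}_j\ueva\cong S^j(\U^-(\vla))$ for $j\geq 1$, which is an easy polynomial-ring check.
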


\begin{proof}
We consider the first diagram. The second is similar.

The vertical embeddings follow from Proposition \ref{prop: LG embed}, applied to the vertex Lie algebras $\vla$ and $\ueva$ respectively. (For the latter, one replaces $\cent$ by $\vac$ and $I_{\Gamma\bm z}$ by $J_{\Gamma\bm z}$ as appropriate).

Turning to the horizontal embeddings, if we apply Lemma \ref{lem: gN UN} with $\mathcal{A} = \C((t))$ then we have an embedding of the local Lie algebras
\begin{equation*}
\U(\vla) \hookrightarrow \U(\ueva)\label{lge}
\end{equation*}
and hence an embedding
\begin{equation} \label{VLAi VAi}
j : \bigoplus_{i = 1}^N \U(\vla)_{z_i}  \hookrightarrow
 \bigoplus_{i = 1}^N \U(\ueva)_{z_i} .
\end{equation}
Recall the definition \eqref{LN def} of $\U(\vla)_{\bm z}$ as the quotient of $ \bigoplus_{i = 1}^N \U(\vla)_{z_i}$ by the ideal $I_N$ defined in \eqref{LN ideal I}. We have $j(I_N) = J_N$, where $J_N$ is the ideal defined in \eqref{UNV ideal J}.
We therefore deduce from Lemma \ref{simple lem} that there is an embedding of Lie algebras
\begin{equation} \label{LN in UVN}
\U(\vla)_{\bm z} \hookrightarrow \U(\ueva)_{\bm z}.
\end{equation}
Finally, applying Lemma \ref{lem: gN UN} to the commutative algebra $\mathcal{A} = \C^{\infty}_{\Gamma {\bm z}}(t)$ we obtain an embedding of Lie algebras $\text{Lie}_{\C^{\infty}_{\Gamma {\bm z}}(t)} \vla \hookrightarrow \text{Lie}_{\C^{\infty}_{\Gamma {\bm z}}(t)} \ueva$. Under this embedding, the image of the ideal $I_{\Gamma {\bm z}}$ in $\text{Lie}_{\C^{\infty}_{\Gamma {\bm z}}(t)} \vla$ defined by \eqref{LGz ideal I} coincides with the ideal $J_{\Gamma {\bm z}}$ in $\text{Lie}_{\C^{\infty}_{\Gamma {\bm z}}(t)} \ueva$ defined by \eqref{UGzV ideal J}. It follows from Lemma \ref{simple lem} that we have an embedding
\begin{equation} \label{emb1}
\big( \text{Lie}_{\C^{\infty}_{\Gamma {\bm z}}(t)} \vla \big) \big/ I_{\Gamma {\bm z}} \hookrightarrow \big( \text{Lie}_{\C^{\infty}_{\Gamma {\bm z}}(t)} \ueva \big) \big/ J_{\Gamma {\bm z}}.
\end{equation}

Let $i : \vla \hookrightarrow \ueva$ denote the embedding of vertex Lie algebras defined in \eqref{vla in VV}. 
By virtue of the relation \eqref{D act V 1}, the image under $i$ of the subspace $\im \D$ of $\vla$ is the intersection of $i(\vla)$ with the subspace $\im \D$ of $\ueva$. We may therefore choose our complement $(\ueva)^o$ of $\im \D \oplus \C \vac$ in $\ueva$ to contain $\vla^o$.
Hence, by restricting \eqref{emb1} to the subspace $\rho \big( \vla^o \otimes \C^{\infty}_{\Gamma {\bm z}} \big) \big/ I_{\Gamma {\bm z}}$ we obtain an embedding $\U_{\Gamma\bm z}(\vla) \hookrightarrow U_{\Gamma {\bm z}}(\ueva)$. Since this map commutes with the action of $\Gamma$ on both Lie algebras, its restriction to the subalgebra $\U_{\bm z}^\Gamma(\vla)$ of $\Gamma$-invariants yields the desired embedding.

By construction the diagram commutes.
\end{proof}

\subsection{Modules over the `big' Lie algebras $\U(\ueva)$ and $\U(\ueva)^\Gamma$} \label{sec: bigmod}
With the embedding of Lie algebras
\begin{equation*}
\U(\vla)\longhookrightarrow\U(\ueva),
\end{equation*}
from \eqref{lge} in hand, we certainly have that all modules over the big Lie algebra $\U(\ueva)$ pull back to modules over the little Lie algebra $\U(\vla)$. 
But, less obviously, it turns out that one can always go in the other direction too: every smooth module over $\U(\vla)$ has the structure of a smooth module over $\U(\ueva)$. 
Similarly, restricting \eqref{lge} to $\Gamma$-invariants yields an embedding
\begin{equation*}
\U(\vla)^\Gamma \longhookrightarrow\U(\ueva)^\Gamma,
\end{equation*}
so that modules over $\U(\ueva)^\Gamma$ pull back to modules over $\U(\vla)^\Gamma$. And again it turns out that in fact every smooth module over $\U(\vla)^\Gamma$ has the structure of a smooth module over $\U(\ueva)^\Gamma$.

Let us show that this is true. We consider the case of $\U(\vla)^\Gamma$.
Suppose, indeed, that $\M_0$ is any smooth module over $\U(\vla)^\Gamma$. To turn it into a module over $\U(\ueva)^\Gamma$ we need to specify how the latter acts. Now, $\U(\ueva)^\Gamma$ is spanned by formal sums $\sum_{n \geq N} f_n \, A^{\Gamma}(n)$ with $f_n \in \C$, $N \in \Z$ and where\footnote{ 
Recall that the action of $\alpha\in \Gamma$ on $\U(\ueva)$ is defined by $\alpha \on A(n) \equiv \alpha\on \rho(A\otimes t^n) := \rho(\alpha^{-1} R_\alpha A\otimes (\alpha^{-1}t)^n) = (R_\alpha A)(n)\alpha^{-n-1}$. See \S\ref{sec: tgla} and set `$\vla = \ueva$'.
} 
\begin{equation*}
A^{\Gamma}(n) := \sum_{\alpha\in \Gamma} \alpha\on(A(n)) = \sum_{\alpha\in \Gamma} \alpha^{-n-1} (R_\alpha A)(n)
\end{equation*}
for any $A\in \ueva$ and $n\in \Z$.
In particular, we have a natural surjective map
\begin{equation*}
\U(\ueva) \longtwoheadrightarrow \U(\ueva)^\Gamma.
\end{equation*}
(Of course, this is not a homomorphism of Lie algebras in general.)

The Lie bracket of $\U(\ueva)^\Gamma$ is therefore completely specified by the Lie brackets of such projected modes. 
Let us compute the Lie bracket of $A^{\Gamma}(m)$ with $B^{\Gamma}(n)$.
We have  
\begin{multline} \label{commutationrels}
\left[ A^{\Gamma}(m), B^{\Gamma}(n) \right] = \sum_{\beta \in \Gamma} \beta . \left[ A(m), B^{\Gamma}(n) \right] 
= \sum_{\alpha , \beta \in \Gamma} \beta . \left[ A(m), (R_\alpha B)(n)\right] \alpha^{-n-1}\\
= \sum_{\alpha\in \Gamma} \sum_{k\geq 0} \binom m k \alpha^{-n-1} \left( A_{(k)} (R_\alpha B)\right)^{\Gamma}(m+n-k).
\end{multline}

Now we have, as claimed, the following proposition. Recall the definition of $Y_W(A,x)$ from  Proposition \ref{prop: YMYW}. 

\begin{prop}\label{prop: com}
Let $\M_0$ be a smooth module over $\U(\vla)^\Gamma$. Then there is a well-defined smooth $\U(\ueva)^\Gamma$-module structure on $\M_0$ given by
\begin{equation} \label{action}
A^{\Gamma}(n) \on m_0 = A^W_{(n)} m_0,
\end{equation}
for all $m_0\in \M_0$, where $A^W_{(n)}$ are the elements of $\End\M_0$ defined by 
\be Y_W(A,x) =: \sum_{n\in \Z} A^W_{(n)} x^{-n-1} .\nn\ee
\end{prop}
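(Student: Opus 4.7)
I would verify three claims in turn: (i) the prescription $A^\Gamma(n)\cdot m_0 := A^W_{(n)} m_0$ is well-defined, (ii) it respects the Lie bracket \eqref{commutationrels}, and (iii) it is smooth. The main tools are the global characterisation of $Y_W$ from Proposition \ref{prop: YMYW}(b), Lemma \ref{lem: alpha u}, the residue theorem, and Proposition \ref{prop: wiiz}, which lets one deduce identities on $\M_0$ from identities in spaces of coinvariants.

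For well-definedness, the spanning elements $A^\Gamma(n) = \sum_{\alpha\in\Gamma}\alpha^{-n-1}(R_\alpha A)(n)$ satisfy, beyond linearity, exactly three families of relations: $(\D A)^\Gamma(n) = -n\,A^\Gamma(n-1)$, $(R_\alpha A)^\Gamma(n) = \alpha^{n+1}A^\Gamma(n)$, and the central-charge normalisation $\vac^\Gamma(-1) = T\,\vac(-1)$. Under the prescription these translate to: $Y_W(\D A,x) = \partial_x Y_W(A,x)$; the corollary to Lemma \ref{lem: alpha u}, namely $Y_W(R_\alpha A,\alpha x) = Y_W(A,x)$; and $Y_W(\vac,x) = \mathrm{id}_{\M_0}$ (which follows from Proposition \ref{prop: VL remove}, since inserting $\vac$ at $u$ leaves the coinvariant unchanged). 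The first follows from the global definition by differentiating $[\atp u A\otimes\cdots\otimes\atp 0{m_0}]$ in $u$ and using that $\D$ implements infinitesimal translation of the insertion point (\emph{cf.} \eqref{T rel}). The level-$1/T$ hypothesis \eqref{l0} makes $\vac^\Gamma(-1)$ act as $T\cdot(1/T) = 1 = \vac^W_{(-1)}$, as required. Smoothness (iii) is immediate: since $Y_W(A,x)m_0 \in \M_0((x))$, only finitely many $A^W_{(n)}m_0$ with $n$ large are non-zero.

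The heart of the proof is (ii). By Proposition \ref{prop: wiiz}(2) it suffices to establish the identity $[A^W_{(m)},B^W_{(n)}]m_0 = \sum_{\alpha,k}\binom{m}{k}\alpha^{-n-1}(A_{(k)} R_\alpha B)^W_{(m+n-k)}m_0$ inside an arbitrary space of coinvariants containing $\M_0$ at the origin. Consider therefore
\[
g(u,v) := \bigl[\atp u A\otimes\atp v B\otimes\cdots\otimes\atp 0{m_0}\bigr],
\]
which by Proposition \ref{prop: VL rat} is rational in $u$ (for fixed $v$) with poles confined to $\{0\}\cup\Gamma v$, and vanishing at $u=\infty$. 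The three relevant Laurent expansions are: around $u=0$, $\iota_u g = \sum_p u^{-p-1}\bigl[\atp v B\otimes\cdots\otimes\atp 0{A^W_{(p)} m_0}\bigr]$ by Proposition \ref{prop: YMYW}(b); around $u=v$, $\iota_{u-v} g = \sum_{k\geq 0}(u-v)^{-k-1}\bigl[\atp v{A_{(k)} B}\otimes\cdots\otimes\atp 0{m_0}\bigr]$ by Corollary \ref{Ydef}; and around $u=\alpha v$ for $\alpha\neq 1$, using Lemma \ref{lem: alpha u} to move the insertion point, $\iota_{u-\alpha v} g = \sum_{k\geq 0}(\alpha^{-1}u-v)^{-k-1}\bigl[\atp v{(R_{\alpha^{-1}} A)_{(k)} B}\otimes\cdots\otimes\atp 0{m_0}\bigr]$. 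Applying the residue theorem to $u^m g(u,v)\,du$ gives $\res_{u=0} + \sum_\alpha \res_{u=\alpha v} + \res_{u=\infty} = 0$. Direct computation of these residues, followed by $\iota_v$ and extraction of the coefficient of $v^{-n-1}$, produces the required identity; the sum over $\alpha\in\Gamma$ in the target formula arises precisely from the contributions at the poles $u=\alpha v$. The algebraic rearrangement uses the $\Gamma$-equivariance identity $(R_{\alpha^{-1}}A)_{(k)} B = \alpha^{k+1} R_{\alpha^{-1}}(A_{(k)} R_\alpha B)$ valid in the $\Gamma$-vertex algebra $\ueva$, together with a relabelling $\alpha \mapsto \alpha^{-1}$.

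The main obstacle is the bookkeeping in the last step: correctly combining the factor $\alpha^{m+1}$ produced by the change of variable $u \leftrightarrow \alpha^{-1}u$ in Lemma \ref{lem: alpha u} with the factor $\alpha^{k+1}$ from $\Gamma$-equivariance of $k^{\rm th}$-products, so as to reproduce the prefactor $\alpha^{-n-1}$ appearing in \eqref{commutationrels}. A secondary technicality is the residue at $u=\infty$, which is most easily handled by first establishing the identity for $m$ sufficiently negative (where $\res_{u=\infty} u^m g\,du$ vanishes) and then bootstrapping to arbitrary $m\in\Z$ via the translation relation $(\D A)^\Gamma(n) = -n A^\Gamma(n-1)$ already verified in Step 1.
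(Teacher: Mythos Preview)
Your overall strategy---apply the residue theorem to the rational function $u\mapsto u^m g(u,v)$ with $g(u,v)=[\atp uA\otimes\atp vB\otimes\cdots\otimes\atp0{m_0}]$, then expand in $v$---is exactly the paper's. But there is a genuine gap in your execution.

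You claim the poles of $g(u,v)$ in $u$ are confined to $\{0\}\cup\Gamma v$. This is false: Proposition~\ref{prop: VL rat} says the poles lie at \emph{all} marked points, so every point hidden in your ``$\cdots$'' contributes a $\Gamma$-orbit of poles in $u$. These residues are not a nuisance to be bootstrapped away; they are where the term $A^W_{(m)}B^W_{(n)}$ comes from. With only your three residues, $\res_{u=0}$ produces (after $\iota_v$) the term $B^W_{(n)}A^W_{(m)}m_0$, and $\sum_\alpha\res_{u=\alpha v}$ produces the right-hand side of \eqref{Wcom}. Nothing in your list yields $A^W_{(m)}B^W_{(n)}m_0$, so the identity you would derive is simply wrong.

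The paper handles this as follows. One works with a single extra marked point $z$ (carrying an arbitrary module $\M_z$, as needed for Proposition~\ref{prop: wiiz}), collects the residues at $\Gamma z$ and at $\infty$ into a remainder $R$, and then---crucially---applies $\iota_y$ and uses Lemma~\ref{lem: iotacom} to swap the order of expansions, after which a \emph{second} residue theorem (now applied to $x\mapsto x^m[\atp xA\otimes\atp zm\otimes Y_W(B,y)\atp0{m_0}]$, which has no pole at $x\in\Gamma y$) shows that $\iota_yR$ equals exactly $-\res_x x^m[\atp zm\otimes Y_W(A,x)Y_W(B,y)\atp0{m_0}]$. This two-step residue argument, parallel to the proof of the quasi-Borcherds identity in \S\ref{sec: borcherdsproof}, is the missing ingredient. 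Your ``bootstrap via $(\D A)^\Gamma(n)=-nA^\Gamma(n-1)$'' for the $\infty$-residue is a red herring: the $\infty$ contribution is part of $R$ and is absorbed into the $A^WB^W$ term along with the residues at $\Gamma z$.
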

\begin{proof}
We have $Y_W(A,x)\in \Hom(\M_0, \M_0((x)))$. That means for all $m_0\in \M_0$, $Y(A,x)m_0$ is a  Laurent series with coefficients in  $\M_0$. Hence $A^W_{(n)} m_0= 0$ for $n\gg 0$. So if this is an action at all it is certainly smooth.   
What we have to show is that
\be \left[ A^W_{(m)}, B^W_{(n)} \right]
= \sum_{k=0}^\8 \binom m k   \sum_{\alpha\in \Gamma} \alpha^{-n-1}    \left( A_{(k)} (R_\alpha B) \right)^W_{(m+n-k)},
\label{Wcom}\ee
in agreement with \eqref{commutationrels}. We do so in \S\ref{sec: proofofcommutator}. 
\end{proof}

This construction of modules for $\U(\ueva)^\Gamma$ appears not to be in the literature for the general vertex Lie algebra $\vla$, but cf. \cite[Proposition 7.4]{Li4} and \cite[Theorem 3]{Szc}.
For clarity let us also state explicitly the following, which is standard, see e.g. \cite{FB} (and which can be recovered by setting $\Gamma=\{1\}$ in the preceding result). 

\begin{prop}\label{prop: com2}
Let $\M$ be a smooth module over $\U(\vla)$. Then there is a well-defined smooth $\U(\ueva)$- module structure on $\M$ given by
\be A(n)\on m =  A^M_{(n)} m, \label{action}\ee
for all $m \in \M$, where $A^M_{(n)}$ are the elements of $\End\M$ defined by 
\be Y_M(A,x) =: \sum_{n\in \Z} A^M_{(n)} x^{-n-1}.\nn\ee
\qed\end{prop}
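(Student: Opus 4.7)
The plan is to verify that the assignment $A(n)\mapsto A^M_{(n)}$ descends to a well-defined Lie algebra homomorphism $\U(\ueva)\to \End \M$, and that the resulting module is smooth. Smoothness is immediate from the defining property $Y_M(A,x)\in \Hom(\M,\M((x)))$ in Proposition \ref{prop: YMYW}: for each $m\in \M$ one has $A^M_{(n)}m=0$ for $n\gg 0$. Well-definedness requires two relations in $\U(\ueva)$ to be respected: the translation relation $(\D A)(n)=-nA(n-1)$ (\emph{cf.} \eqref{D rel VLA}) and the centrality of $\vac(-1)$ acting as $1$ (the level-$1$ condition).

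For the vacuum/centrality relation, I would apply the global definition of $Y_M$ together with Proposition \ref{prop: VL remove} to the class $[\atp u \vac \otimes \atp x m\otimes \cdots]$: removing the vacuum insertion gives $[\atp x m\otimes \cdots]$, which is independent of $u$, so $\iota_{u-x}[\atp u \vac\otimes \atp x m\otimes \cdots]=[\atp x m\otimes\cdots]$ and hence $Y_M(\vac,v)=\id_{\M}$, giving $\vac^M_{(-1)}=\id_\M$ and $\vac^M_{(n)}=0$ otherwise. For the translation relation, I would establish $Y_M(\D A,v)=\partial_v Y_M(A,v)$, which is equivalent to $(\D A)^M_{(n)}=-nA^M_{(n-1)}$. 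This can be obtained directly from the global definition by differentiating the coinvariant $[\atp u A\otimes \atp x m\otimes \cdots]$ in $u$ and identifying $\partial_u[\atp u A\otimes\cdots]$ with $[\atp u {\D A}\otimes\cdots]$ (using $\D A=A_{(-2)}\vac$ and the fact that the coinvariant depends on $u$ only through the insertion point of $A$, in the same manner as the calculation leading to \eqref{Y def VA}).

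For the Lie bracket, the key step is to apply Proposition \ref{prop: borcherds}(1) with $f(x,y)=x^m$ for $m\in \Z$. Expanding $\iota_{y,x-y}x^m = \iota_{y,x-y}(y+(x-y))^m = \sum_{k\geq 0}\binom{m}{k}y^{m-k}(x-y)^k$ converts the left-hand side into
\begin{equation*}
\sum_{k\geq 0}\binom{m}{k}y^{m-k}\, Y_M(A_{(k)}B,y)\,m,
\end{equation*}
since the residue in $x-y$ of $(x-y)^k Y_M(Y(A,x-y)B,y)m$ extracts the $k^{\rm th}$-product, and only finitely many $k$ contribute because $A_{(k)}B=0$ for $k\gg 0$ in the vertex Lie algebra $\ueva$. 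The right-hand side, after taking residues in $x$, becomes $A^M_{(m)}Y_M(B,y)m - Y_M(B,y)A^M_{(m)}m = [A^M_{(m)},Y_M(B,y)]m$. Extracting the coefficient of $y^{-n-1}$ on both sides yields
\begin{equation*}
\bigl[A^M_{(m)},B^M_{(n)}\bigr] = \sum_{k\geq 0}\binom{m}{k}\bigl(A_{(k)}B\bigr)^M_{(m+n-k)},
\end{equation*}
which matches \eqref{VLA bracket loop} for $\U(\ueva)$ exactly.

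The argument is essentially the $\Gamma=\{1\}$ specialization of the proof of Proposition \ref{prop: com}, and indeed no genuine obstacle arises: the subtle point in Proposition \ref{prop: com} was the $\Gamma$-average producing the extra sum in \eqref{Wcom}, which here is absent. The only care needed is in the expansion conventions $\iota_{y,x-y}$ versus $\iota_{x,y}$, $\iota_{y,x}$ when $m<0$, but this is handled uniformly by Proposition \ref{prop: borcherds}(1).
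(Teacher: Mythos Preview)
Your proposal is correct and follows essentially the same approach as the paper. The paper simply records this as the $\Gamma=\{1\}$ specialization of Proposition \ref{prop: com} (whose commutator relation is established in \S\ref{sec: proofofcommutator} by a direct residue-theorem argument on coinvariants); your route via Proposition \ref{prop: borcherds}(1) with $f(x,y)=x^m$ is exactly the $\Gamma=\{1\}$ version of that computation, just invoking the already-proven Borcherds identity rather than redoing the residue argument, and you additionally spell out the vacuum and translation relations for well-definedness that the paper leaves implicit.
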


\begin{rem} 
It is important to avoid one possible confusion here. Let $\M$ be a smooth module over $\U(\vla)$.
Then $\M$ is in particular an $\U(\vla)^\Gamma$-module, by pulling back the action by the natural embedding $\U(\vla)^\Gamma\hookrightarrow \U(\vla)$ of Lie algebras. By Proposition \ref{prop: com2}, $\M$ carries an action of $\U(\ueva)$. Let us use (here only) $\triangleright$ to denote this action: $A(n) \triangleright m = A_{(n)}^M m$.
By the natural embedding $\U(\ueva)^\Gamma\hookrightarrow \U(\ueva)$ of the big Lie algebras, $\triangleright$ pulls back to give an action of $\U(\ueva)^\Gamma$ on $\M$. The $\triangleright$-action of the element $A^{\Gamma}(n)\in \U(\ueva)^\Gamma$ is, by definition,
\begin{equation*}
A^{\Gamma}(n) \triangleright m = \sum_{\alpha\in \Gamma} \alpha^{-n-1} (R_\alpha A)(n) \triangleright m
= \sum_{\alpha\in \Gamma} \alpha^{-n-1} (R_\alpha A)^M_{(n)} m.
\end{equation*}
\emph{However} this action of $\U(\ueva)^\Gamma$ on $\M$ does \emph{not} coincide with that of Proposition \ref{prop: com} in general. That is, 
\be A^W_{(n)} \quad\text{is not in general equal to}\quad  \sum_{\alpha\in \Gamma} \alpha^{-n-1} (R_\alpha A)^M_{(n)}.
\nn  \ee
(They are equal in the special case $A=a(-1)\vac$, $a\in \vla$.)

Of course, in general a module $\M_0$ over $\U(\vla)^\Gamma$ need not be a pullback of any module over $\U(\vla)$, and if it is not then $\sum_{\alpha\in \Gamma} \alpha^{-n-1} (R_\alpha A)^M_{(n)}$ is not even defined and there is no scope for confusion.
\end{rem}

\subsection{The quasi-iterate formula for $Y_W$}\label{sec: quasiiterate}
At this stage, the module and quasi-module maps $Y_M$ and $Y_W$ are defined by Proposition \ref{prop: YMYW}, which will be very helpful in proofs (see \S\ref{sec: proofs}) but is less useful in concrete computations. As mentioned in Remark \ref{rem: tostandard}, the module map $Y_M$ can be defined more explicitly by an iterate formula which is structurally the same as that for the state-field correspondence $Y$ itself, \eqref{iterate}. That is, for all $a\in \vla$,
\begin{subequations}
\label{ymdeff}
\be Y_M(a(-1)\vac,u) = \sum_{n\in \Z} a^{M}_{(n)} u^{-n-1},\ee
while for all $A,B\in \ueva$  -- and in particular, when $A= a(-1)\vac$ --
\be Y_M( A(-1) B, u) = \; \nord{Y_M( A, u) Y_M( B, u)}. \label{ynord}\ee
\end{subequations}
The following result gives a similarly explicit recursive definition of the quasi-module map $Y_W$. 

\begin{thm}[Quasi-iterate formula] \label{thm: quasi rec}
Given any smooth $\U(\vla)^\Gamma$-module $\M_0$, let $Y_W(\cdot,x):\ueva\to \Hom(\M_0,\M_0((x)))$ be the map defined in part (\ref{YWdef}) of Proposition \ref{prop: YMYW}. Then $Y_W(\vac,u)=\id$, while for all $a\in \vla$, 
\begin{equation*}
Y_W(a(-1)\vac, u) =
\sum_{\alpha \in \Gamma} \sum_{n \in \mathbb{Z}} (R_{\alpha} a)(n) (\alpha u)^{-n-1} 
\end{equation*}
and for all $a\in \vla$ and all $B\in \ueva$,
\begin{equation} \label{YW rec}
Y_W( a(-1) B, u) = \; \nord{Y_W( a(-1) \vac, u) Y_W( B, u)} + \sum_{\substack{\alpha\in \Gamma\\\alpha \neq 1}} \sum_{n \geq 0} \frac{1}{( (\alpha - 1) u)^{n+1}} Y_W\big( (R_{\alpha} a)(n) B, u \big).
\end{equation}
\end{thm}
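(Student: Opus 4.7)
The three assertions will be proved separately, each by comparing the global definition of $Y_W$ from Proposition~\ref{prop: YMYW}(b) with explicit formulas obtained by letting carefully chosen $\Gamma$-equivariant rational functions in $\U^\Gamma_{\cdots}(\vla)$ act on the appropriate coinvariant (and hence as zero). In each case, Proposition~\ref{prop: wiiz} converts the resulting coinvariant identity into a vector-level identity in $\M_0((u))$.

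\textbf{The formulas $Y_W(\vac, u) = \id$ and for $Y_W(a(-1)\vac, u)$.} The first is immediate from Proposition~\ref{prop: VL remove}: inserting $\vac$ at $u$ does not alter the coinvariant, hence $\iota_u[\atp u \vac \otimes \cdots \otimes \atp 0 {m_0}] = [\cdots \otimes m_0]$. For $Y_W(a(-1)\vac, u)$, I apply the equivariant function $f_{u,a}(t) := \sum_{\alpha \in \Gamma} R_\alpha a/(t-\alpha u) \in \U^\Gamma_{u,\bm z,0}(\vla)$ to $[\atp u {a(-1)\vac} \otimes \cdots \otimes \atp 0 {m_0}]$. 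Its Laurent expansion at $u$ isolates $a(-1)\vac$ (all other modes annihilate $\vac$); at the origin it produces $-\sum_{n \geq 0} u^{-n-1} a^\Gamma(n)$ acting on $m_0$; and at each $z_i$ it produces $\sum_{n \geq 0,\alpha} \tfrac{(-1)^n}{(z_i-\alpha u)^{n+1}}$ acting as $(R_\alpha a)(n)$ on $m_i$. The origin term already matches $Y_W(a(-1)\vac, u)_-$. To identify the $\iota_u$-expansion of the $z_i$-sum with $Y_W(a(-1)\vac, u)_+ m_0 = \sum_{m \geq 0} u^m a^\Gamma(-m-1) m_0$, I introduce for each $m \geq 0$ the auxiliary equivariant function $g_m(t) := \sum_{\alpha} \alpha^m R_\alpha a \cdot t^{-m-1} \in \U^\Gamma_{\bm z, 0}(\vla)$, whose only pole is at the origin where it evaluates to the mode $a^\Gamma(-m-1)$. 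The resulting vanishing identity, summed with weight $u^m$ and combined with $\sum_{m \geq 0}\binom{m+n}{n}(\alpha u/z_i)^m = (1 - \alpha u/z_i)^{-n-1}$, reproduces the $z_i$-sum exactly.

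\textbf{The recursion.} I insert an extra marked point $x$ carrying $a(-1)\vac$ and regard $\Phi(x,u) := [\atp x{a(-1)\vac} \otimes \atp u B \otimes \cdots \otimes \atp 0 {m_0}]$ as a rational function of $(x,u)$. By Corollary~\ref{Ydef}, the $(x-u)^0$-coefficient of $\iota_{x-u}\Phi(x,u)$ is $[\atp u{a(-1)B} \otimes \cdots \otimes m_0]$, whose $\iota_u$-image is $[\cdots \otimes Y_W(a(-1)B, u)m_0]$. Alternatively, I compute $\Phi(x,u)$ by applying $\sum_\alpha R_\alpha a/(t - \alpha x) \in \U^\Gamma_{x,u,\bm z,0}(\vla)$, collecting residue contributions at $u$, each $z_i$, and $0$. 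Taking the $(x-u)^0$-coefficient and then applying $\iota_u$: (a) the $\alpha=1$ piece at $u$ is purely singular in $x-u$ and contributes nothing; (b) the $\alpha \neq 1$ pieces at $u$ give the cyclotomic correction $\sum_{\alpha \neq 1, n \geq 0}((\alpha-1)u)^{-n-1}Y_W((R_\alpha a)(n)B, u)m_0$; (c) the $z_i$-pieces, via the identity from the previous paragraph applied termwise with $\tilde m_0 := Y_W(B,u)m_0$ (in its Laurent expansion), yield $Y_W(a(-1)\vac, u)_+ Y_W(B,u) m_0$; (d) the origin piece yields $Y_W(B,u) Y_W(a(-1)\vac, u)_- m_0$. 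The sum of (c) and (d) is precisely $\nord{Y_W(a(-1)\vac, u) Y_W(B, u)}m_0$, and Proposition~\ref{prop: wiiz} extracts the stated vector-level identity.

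\textbf{Main obstacle.} The principal difficulty lies in step (c): the $z_i$-dependent residue contributions involve the arbitrary spectator data $(z_i, m_i)$, whereas the positive-mode operator $Y_W(a(-1)\vac, u)_+$ does not. The auxiliary cyclotomic functions $g_m$ are engineered precisely to effect this passage, by relating the action of $(R_\alpha a)(n)$ on $m_i$ at $z_i$ to the action of $a^\Gamma(-m-1)$ on $m_0$ at the origin. Their very existence relies on the presence of a marked point at the origin, which mirrors (and indeed is the mechanism underlying) the key role of the origin in Theorem~\ref{thm: big=little} and the failure of its analog in Example~\ref{exmp: need0}.
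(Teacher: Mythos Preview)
Your argument is correct and rests on the same two computational devices as the paper's proof: the $\Gamma$-equivariant function $\sum_{\alpha}R_\alpha a/(t-\alpha u)$ used to swap the mode $a(-1)$, and the auxiliary functions $g_m(t)=\sum_{\alpha}\alpha^m R_\alpha a\cdot t^{-m-1}$ used to convert the spectator $z_i$-residues into the positive part $Y_W(a(-1)\vac,u)_+$ acting at the origin. The structural difference is one of direction. The paper \emph{defines} $Y_W$ by the recursive formulas of the Theorem and then proves, by induction on depth, that this map satisfies the global characterisation in Proposition~\ref{prop: YMYW}(\ref{YWdef}); this establishes Proposition~\ref{prop: YMYW}(\ref{YWdef}) and Theorem~\ref{thm: quasi rec} simultaneously. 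You instead take Proposition~\ref{prop: YMYW}(\ref{YWdef}) as already proved and derive the recursion from it directly, so no induction is needed --- the global property for $B$ and for $(R_\alpha a)(n)B$ is simply invoked. Your insertion of an auxiliary marked point $x$ carrying $a(-1)\vac$, followed by extraction of the $(x-u)^0$-coefficient of $\iota_{x-u}$, is an extra layer that the paper avoids by working directly with $a(-1)B$ at $u$ and swapping with the function centred at $u$; it is harmless and produces exactly the same residue contributions.
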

\begin{proof}
See \S\ref{proof: YMYW} below. 
\end{proof}
See also Corollary \ref{cor: quasi rec} below. 

\begin{rem}
From one perspective, the reason that $Y_M$ obeys the usual iterate formula is that $Y_M$ obeys the same Borcherds identity as $Y$. This fact also suffices to ensure that the modes of $Y_M$ obey the correct commutation relations, \ie the same as those of the modes of $Y$; see \eqref{com Am Bn} and \eqref{VLA bracket loop}. 

In constrast, $Y_W$ obeys only the quasi-Borcherds identity. It is not clear, at least to the authors, how  one could derive the quasi-iterate formula \eqref{YW rec}, or the commutator formula \eqref{Wcom} for the modes of $Y_W$, from the quasi-Borcherds identity alone. The proofs in \S\ref{sec: proofs} below use rather the global definition, Proposition \ref{prop: YMYW}, of $Y_W$, together with the residue theorem.
\end{rem}

\begin{rem}
A somewhat subtle effect of the pole term in $u$ on the right of \eqref{YW rec} is that, while every smooth $\U^+(\vla)$-module becomes an $\U^+(\ueva)$-module, it is \emph{not} true that every smooth $\U^+(\vla)^\Gamma$-module becomes an $\U^+(\ueva)^\Gamma$-module.

To explain this statement, suppose that $\M$ is any smooth module over $\U^+(\vla)$. Then we have the induced $\U(\vla)$-module $\MM:= \Ind_{\U^+(\vla)}^{\U(\vla)} \M := U(\U(\vla))\otimes_{U(\U^+(\vla))} M$. By Proposition \ref{prop: com2}, $\MM$ becomes an $\U(\ueva)$-module and hence, pulling back by the embedding $\U^+(\ueva)\hookrightarrow \U(\ueva)$,  an $\U^+(\ueva)$-module. But more than that, this $\U^+(\ueva)$-module $\MM$ has $\M$ as a submodule. To see this, note that, in the notation of \S\ref{sec: cofb}, 
\be(:F(x)G(x):)_- = F(x)_- G(x)_-.\nn\ee 
That implies that the action on $\MM$ of any non-negative mode $A(n)$ of any state $A\in \ueva$ can be expressed in terms of non-negative modes of states in $\vla\hookrightarrow \ueva$. And $\M$ closes under the latter, by definition. 

On the other hand, suppose $W$ is a smooth module over $\U^+(\vla)^\Gamma$. Then the induced module $\WW := \Ind_{\U^+(\vla)^\Gamma}^{\U(\vla)^\Gamma} W$ becomes a module over $\U^+(\ueva)^\Gamma$ by the same argument. \emph{But} the pole term in $u$ on the right of \eqref{YW rec} means that the action of $A^\Gamma(n)$, $n\geq 0$, on $\WW$ cannot in general be expressed in terms only of non-negative modes of states in $\vla$ (it may also involve negative modes). So in general $A^\Gamma(n) \on W \not\subset W$. 
\end{rem}

\section{Main results} \label{sec: mr}
\subsection{Big and little cyclotomic coinvariants}\label{sec: big=little}
We now return to our main objects of interest, cyclotomic coinvariants. 

As in \S\ref{sec: indg}, let us once again take $\M_{z_i}$ to be a smooth module over $\U(\vla)_{z_i}$ for each $i = 1, \ldots, N$. 
By Proposition \ref{prop: com2}, these are also smooth modules over the big Lie algebras $\U(\ueva)_{z_i}$.
Let $\M_0$ be a smooth module over $\U(\vla)^\Gamma$. By Proposition \ref{prop: com}, $\M_0$ is also a smooth module over the big Lie algebra $\U(\ueva)^\Gamma$.
In view of the `vertical' embeddings of big Lie algebras from Proposition \ref{prop: gz Uz}, we can form the spaces of coinvariants with respect to the big Lie algebras, simply replacing $\vla$ by $\ueva$ in \eqref{tc1} and \eqref{tc2}. For example in place of \eqref{tc2} we can form  the space of coinvariants
\be  \left.\bigotimes_{i=1}^N\M_{z_i} \otimes \M_0\right/ \U^{\Gamma}_{\bm z,0}(\ueva).\nn\ee
The natural question is then how such spaces of coinvariants with respect to big Lie algebras are related to the spaces of coinvariants with respect to the little Lie algebras. In view of the `horizontal' embeddings of Lie  
algebras in Proposition \ref{prop: gz Uz}, we certainly have surjective linear maps
\begin{alignat}{3} &\left.\bigotimes_{i=1}^N\M_{z_i}  \right/ \U_{\bm z}^\Gamma(\vla) 
 &&\longtwoheadrightarrow 
&&\left.\bigotimes_{i=1}^N\M_{z_i}  \right/ \U_{\bm z}^\Gamma(\ueva) \label{btc1}\\
 &\left.\bigotimes_{i=1}^N\M_{z_i}  \otimes \M_{0}\right/ \U^\Gamma_{\bm z,0}(\vla) 
 &&\longtwoheadrightarrow 
&&\left.\bigotimes_{i=1}^N\M_{z_i} \otimes \M_{0}\right/ \U^\Gamma_{\bm z,0}(\ueva) 
\label{btc2}\end{alignat}
as a direct application of the following elementary lemma.
\begin{lem}\label{lem: ab} 
For any Lie algebras $\mf a$ and $\mf b$, any homomorphism $\phi:\mf a\to \mf b$ and any $\mf b$-module $V$, there is a surjective linear map $V/\mf a \twoheadrightarrow V/\mf b$ sending $v+\phi(\mf a)\on V$ to $v+ \mf b \on V$.
\end{lem}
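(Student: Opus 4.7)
The statement is a routine quotient construction, so the plan will be short. First I would unpack the notation: because $\phi:\mf a \to \mf b$ is a Lie algebra homomorphism, any $\mf b$-module $V$ pulls back to an $\mf a$-module via $x \cdot v := \phi(x) \on v$ for $x\in \mf a$, $v\in V$. The subspace $\phi(\mf a)\on V$ generated by this pulled-back action is then literally $\mathrm{span}_\C\{\phi(x)\on v : x\in \mf a,\, v\in V\}$, and by construction this is contained in $\mathrm{span}_\C\{y\on v : y\in \mf b,\, v\in V\} = \mf b \on V$.

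Next, I would consider the identity map $\id_V:V\to V$ composed with the quotient projection $V \twoheadrightarrow V/\mf b\on V$. Since this composite sends every element of $\phi(\mf a)\on V \subseteq \mf b\on V$ to zero, it descends by the universal property of quotient vector spaces to a well-defined linear map $V/\phi(\mf a)\on V \longrightarrow V/\mf b\on V$ which acts as $v + \phi(\mf a)\on V \mapsto v + \mf b \on V$. Surjectivity is immediate, since the composite $V \twoheadrightarrow V/\phi(\mf a)\on V \to V/\mf b \on V$ is just the surjective quotient map $V \twoheadrightarrow V/\mf b\on V$.

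The only conceivable point of care is the mild clash of notation: the symbol $V/\mf a$ really means $V/(\phi(\mf a)\on V)$, where $\mf a$ acts through $\phi$; with this reading there is essentially nothing to prove. There is no genuine obstacle, which is why the authors flag the lemma as elementary.
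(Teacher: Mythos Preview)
Your proof is correct and follows exactly the same reasoning as the paper: the key point is that $V/\mf a$ means $V/\phi(\mf a)\on V$ with the pulled-back action, and since $\phi(\mf a)\subseteq \mf b$ the map is well-defined and surjective. The paper's proof simply declares this ``obvious'' while noting the notational point you also flagged.
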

\begin{proof} This is obvious, though it is worth emphasizing that the map is only well-defined by virtue of the fact that the $\mf a$-module structure of $V$ is the pull-back via $\phi$, \ie  $V/\mf a:= V/\mf a\on V := V/\phi(\mf a)\on V$, and $\phi(\mf a)\subseteq \mf b$.
\end{proof}

The remarkable fact is that the second of these surjections is actually a bijection (though the first is \emph{not}, in general: see Example \ref{exmp: need0}). That is, we have the following.

\begin{thm} \label{thm: big=little}
There is a linear isomorphism
\begin{align}\label{big swapping}
          \left.\bigotimes_{i=1}^N\M_{z_i}  \otimes \M_0\right/ \U^{\Gamma}_{\bm z, 0}(\ueva) 
&\cong_{\C} \left.\bigotimes_{i=1}^N\M_{z_i}  \otimes \M_0\right/ \U^{\Gamma}_{\bm z, 0}(\vla). 
\end{align}
\end{thm}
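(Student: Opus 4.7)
The surjective linear map
\[
W / \U^\Gamma_{\bm z, 0}(\vla) \twoheadrightarrow W / \U^\Gamma_{\bm z, 0}(\ueva), \qquad W := \bigotimes_{i=1}^N \M_{z_i} \otimes \M_0,
\]
arises from the embedding $\U^\Gamma_{\bm z, 0}(\vla) \hookrightarrow \U^\Gamma_{\bm z, 0}(\ueva)$ of Proposition \ref{prop: gz Uz} via Lemma \ref{lem: ab}. The plan is to prove injectivity by establishing the reverse inclusion $\U^\Gamma_{\bm z, 0}(\ueva) \cdot W \subseteq \U^\Gamma_{\bm z, 0}(\vla) \cdot W$. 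Since $\U^\Gamma_{\bm z, 0}(\ueva)$ is spanned by $\Gamma$-symmetrizations $\tilde F_{A, g} := \sum_{\alpha \in \Gamma} \alpha \on (A \otimes g)$ with $A \in \ueva$ and $g \in \C^\infty_{\Gamma\bm z \cup \{0\}}(t)$, it suffices to show $\tilde F_{A, g}$ annihilates the class of $\bm m \otimes m_0$ in $W / \U^\Gamma_{\bm z, 0}(\vla)$ for every such pair.

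The key step will be to introduce an auxiliary point $u \in \Cx \setminus (\Gamma\bm z \cup \{0\})$ carrying a copy of $\ueva$. By Propositions \ref{prop: VL remove} and \ref{prop: VL rat}, the class
\[
\phi(u) := [\atp u A \otimes \bm m \otimes m_0] \in W / \U^\Gamma_{\bm z, 0}(\vla)
\]
depends rationally on $u$ with poles at most in $\Gamma\bm z \cup \{0\}$. Hence $g(u) \phi(u)$ is rational in $u$ with poles in this set, and (provided the product is regular at infinity) the residue theorem gives
\[
\sum_{i=1}^N \sum_{\alpha \in \Gamma} \res_{u = \alpha z_i} g(u) \phi(u) \;+\; \res_{u = 0} g(u) \phi(u) \;=\; 0
\]
in $W / \U^\Gamma_{\bm z, 0}(\vla)$. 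The plan is to identify this residue sum with the action of $\tilde F_{A, g}$. At $u = \alpha z_i$, one uses Lemma \ref{lem: alpha u} to rewrite $\phi(u) = [\bm m' \otimes Y_M(R_{\alpha^{-1}} A, \alpha^{-1}(u - \alpha z_i))m_i \otimes \bm m'' \otimes m_0]$ via Proposition \ref{prop: YMYW}(a), expressing the residue in terms of modes $(R_{\alpha^{-1}} A)(n)_{z_i}$ acting on $m_i$ (via Proposition \ref{prop: com2}) with coefficients determined by the Laurent expansion of $g$ at $\alpha z_i$. At $u = 0$, Proposition \ref{prop: YMYW}(b) expands $\phi(u)$ via $Y_W(A, u)$, whose modes act on $m_0$ as $A^\Gamma(n) \in \U(\ueva)^\Gamma$ by Proposition \ref{prop: com}. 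A direct calculation, summing the contributions across $\alpha \in \Gamma$ at each $z_i$ and collecting the $0$ contribution, will show that the residue sum coincides precisely with the action on $[\bm m \otimes m_0]$ of the Laurent expansions of $\tilde F_{A, g} = \sum_\beta (\beta^{-1} R_\beta A) \otimes g(\beta^{-1} t)$ at $z_1, \dots, z_N, 0$, i.e.\ of the image of $\tilde F_{A, g}$ under the embedding $\U^\Gamma_{\bm z, 0}(\ueva) \hookrightarrow \U(\ueva)_{\bm z, 0}$ of Proposition \ref{prop: LG embed prime}.

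The main technical obstacle will be the bookkeeping needed in the second bullet of the previous paragraph: the Laurent expansion of the $\Gamma$-invariant function $\sum_\beta g(\beta^{-1} t)$ at a single point $z_i$ receives contributions from every $\beta$, and one must verify (using the orbit disjointness condition $\Gamma z_i \cap \Gamma z_j = \emptyset$) that these contributions reassemble, term by term, from the residues at the distinct points $\{\alpha z_i : \alpha \in \Gamma\}$ after applying Lemma \ref{lem: alpha u}. A secondary issue is to secure the validity of the residue theorem by controlling the behavior of $\phi(u)$ at $u = \infty$. This can be handled by analyzing the explicit recursive construction of the rational function $\phi(u)$ given in the proof of Proposition \ref{prop: VL rat}, in which each iterated ``swap'' introduces only factors $((\alpha - 1) u)^{-n-1}$ vanishing at infinity, together with terms depending on $u$ only through $\iota_{t-u}$-type series whose finite-place behaviour at $\infty$ can be bounded; alternatively one may reduce to the case where $g$ carries sufficient polynomial decay at infinity and then extend to general $g$ by multiplication against polynomial factors that lie in $\U^\Gamma_{\bm z, 0}(\vla)$ and therefore act trivially modulo that subalgebra.
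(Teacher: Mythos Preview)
Your proposal is correct and follows essentially the same route as the paper's own proof: reduce injectivity to showing that each $\Gamma$-symmetrized element $\tilde F_{A,g}$ kills classes in $W/\U^\Gamma_{\bm z,0}(\vla)$, insert a copy of $\ueva$ at an auxiliary point $u$, regard the resulting class as a rational function of $u$, and apply the residue theorem together with Lemma~\ref{lem: alpha u} and Proposition~\ref{prop: YMYW} to identify the sum of residues with the action of $\tilde F_{A,g}$.

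The one place you over-complicate matters is the behaviour at infinity. The paper handles this in one line: since $A\in\ueva^o$ (recall $\U^\Gamma_{\bm z,0}(\ueva)$ is built from $\ueva^o$, not all of $\ueva$), the class $\phi(u)=[\atp u A\otimes\bm m\otimes m_0]$ already has at least a simple zero at $u=\infty$, and since $g\in\C^\infty_{\Gamma\bm z,0}(u)$ also vanishes there, the product $g(u)\phi(u)$ has at least a double zero, so there is no residue at infinity to worry about. The vanishing of $\phi(u)$ at infinity follows by the same inductive swap argument underlying the proof of Proposition~\ref{prop: VL rat}: for $A=\vac$ the class is constant, and each application of the swap identity to pass from $B$ to $a(-1)B$ multiplies by factors of the form $((\alpha-1)u)^{-n-1}$, $(\alpha u-z_i)^{-n-1}$, or $(\alpha u)^{-n-1}$, all of which decay at infinity. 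So for $A$ of positive depth one gets $\phi(u)=O(1/u)$. Your first suggested workaround (tracing through the recursive construction) is exactly this; your second (bootstrapping from decaying $g$) is unnecessary.
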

\begin{proof} 
We must show that \eqref{btc2} is injective. This amounts to showing that
\begin{equation*}
\U^\Gamma_{\bm z, 0}(\ueva) \on \left(\bigotimes_{i=1}^N \M_{z_i} \otimes \M_0 \right)
\subseteq \U^{\Gamma}_{{\bm z}, 0}(\vla) \on \left(\bigotimes_{i=1}^N \M_{z_i} \otimes \M_0 \right).
\end{equation*}
(The inclusion the other way is the obvious one, as in Lemma \ref{lem: ab}.)
Now, $\U^\Gamma_{\bm z,0}(\ueva)$ is spanned by elements of the form $\sum_{\alpha\in \Gamma} \alpha\on(A f)$ with $A\in \ueva$ and $f\in \C^\8_{\bm z,0}(t)$. 
So what we must show is that the class of $\big( \sum_{\alpha \in \Gamma} \alpha . (A f) \big). ({\bm m} \otimes m_0)$
in $\left.\bigotimes_{i=1}^N \M_{z_i} \otimes \M_0 \right/ \U^\Gamma_{\bm z,0}(\vla)$ is the zero class,
\begin{equation} \label{big swapping 2}
\bigg[ \bigg( \sum_{\alpha \in \Gamma} \alpha\on (A f) \bigg)\on 
(\atp{\bm z}{\bm m} \otimes \atp{0}{m_0}) \bigg] = 0.
\end{equation}
We have 
\be \sum_{\alpha\in\Gamma}\alpha\on(A f) = \sum_{\alpha \in \Gamma} \alpha^{-1} R_{\alpha}A \otimes f(\alpha^{-1} t)\nn\ee 
as in \S\ref{sec: tgla}. Recall how this element acts. According to Proposition \ref{prop: LG embed prime} (applied to the vertex Lie algebra $\ueva$ rather than $\vla$) we are first to take the Laurent  expansions about the points $z_i$, $i=1,\dots,N$, and about $0$, to obtain an element of $\U(\ueva)_{\bm z,0}$. The latter then acts, summand by summand, according to Propositions \ref{prop: com} and \ref{prop: com2}. This amounts to
\begin{multline} \label{big Lie action}
\left( \sum_{\alpha \in \Gamma} \alpha^{-1} R_\alpha A \otimes f(\alpha^{-1} t) \right) . (\bm m \otimes m_0)\\
= \sum_{i=1}^N \sum_{\alpha \in \Gamma} \res_{t - z_i} \big( \iota_{t - z_i} \alpha^{-1} f(\alpha^{-1} t) \big) (Y_M(R_{\alpha} A, t - z_i)_{z_i} \bm m
\otimes m_0)\\
 + \res_t \big(\iota_t f(t)\big) (\bm m \otimes Y_W(A,t) m_0). 
\end{multline}

Now to establish \eqref{big swapping 2} consider the space of coinvariants $\ueva \otimes  \bigotimes_{i=1}^N\M_{z_i} \otimes \M_0 \big/ \U^\Gamma_{u,\bm z,0}(\vla)$. By Proposition \ref{prop: VL remove}  we have the linear isomorphism 
\be 
\left.\ueva \otimes  \bigotimes_{i=1}^N\M_{z_i} \otimes \M_0\right/ \U^\Gamma_{u,\bm z,0}(\vla)
\cong_\C 
\left.\bigotimes_{i=1}^N\M_{z_i} \otimes \M_0\right/ \U^\Gamma_{\bm z,0}(\vla).
\nn\ee
An element, say 
\be [\atp uA \otimes \atp{\bm z}{\bm m}\otimes \atp{0}{m_0}],\nn\ee 
of this space of coinvariants depends rationally on $u$, with poles at most at the points $\Gamma \bm z$ and $0$, and at least a simple zero at infinity. Given any $f(u) \in \C^{\infty}_{\Gamma {\bm z}, 0}(u)$, 
\be f(u)[\atp uA \otimes \atp{\bm z}{\bm m}\otimes \atp{0}{m_0}],\nn\ee 
has at least a double zero at infinity. Therefore, by the residue theorem we have
\be
0 =  \sum_{i=1}^N \sum_{\alpha \in \Gamma}\res_{u - \alpha^{-1} z_i} \iota_{u - \alpha^{-1} z_i} f(u)  [\atp uA \otimes \atp{\bm z}{\bm m}\otimes \atp{0}{m_0}]
 + \res_u \iota_u f(u) 
[\atp uA \otimes \atp{\bm z}{\bm m} \otimes \atp{0}{m_0}].\nn
\ee
Introducing the new variable $t = \alpha u$ in the sum and renaming the variable $u$ to $t$ in the final term, we may rewrite this as
\be
0 =  \sum_{i=1}^N \sum_{\alpha \in \Gamma}\res_{t - z_i} \iota_{t - z_i} \alpha^{-1} f(\alpha^{-1}t)  [\atp{\alpha^{-1}t}A \otimes \atp{\bm z}{\bm m}\otimes \atp{0}{m_0}] 
 + \res_t \iota_t f(t) 
[\atp tA \otimes \atp{\bm z}{\bm m}\otimes \atp{0}{m_0}].\nn
\ee
Applying Lemma \ref{lem: alpha u} and then Proposition \ref{prop: YMYW}, we obtain
\begin{equation*}
0 =  \sum_{i=1}^N \sum_{\alpha \in \Gamma}\res_{t - z_i} \iota_{t - z_i} \alpha^{-1}f(\alpha^{-1}t)  [Y_M(R_\alpha A,t-z_i)_{z_i} \atp{\bm z}{\bm m}\otimes \atp{0}{m_0}] + \res_t \iota_t f(t) 
[\atp{\bm z}{\bm m}\otimes Y_W(A,t) \atp{0}{m_0}].
\end{equation*}
which, given \eqref{big Lie action}, is \eqref{big swapping 2}, as required.
\end{proof}

Intuitively, it is helpful to think about Theorem \ref{thm: big=little} in terms of `swapping' -- see \cite[\S2.7]{VY} and \cite[\S3]{FFR} for the meaning of `swapping', which is also illustrated in Example \ref{exmp: need0} below. The content of the theorem is that `little swapping implies big swapping', i.e. whenever one is allowed to `swap' using elements of the global little Lie algebra $\U_{\bm z,0}^\Gamma(\vla)$ then `for free' one is also allowed to `swap' using elements of the global big Lie algebra $\U_{\bm z,0}^\Gamma(\ueva)$.

The presence of the module $\M_0$ at the origin is crucial. That is, generically it will be the case that  
\be \left.\bigotimes_{i=1}^N\M_{z_i}  \right/ \U^{\Gamma}_{\bm z}(\ueva) 
\not\cong_{\C} \left.\bigotimes_{i=1}^N\M_{z_i}  \right/ \U^{\Gamma}_{\bm z}(\vla).\nn\ee 
The following example illustrates this.

\begin{exmp}[On the need for the module at the origin in Theorem \ref{thm: big=little}]\label{exmp: need0}
Take $N=1$, $z_1=u\in \Cx$ and $\M_{z_1} = \ueva$. 
Consider the class
\be [ a(-1)b(-1) \atp u \vac ] \quad\text{with}\quad a,b\in \vla\nn\ee
in the space of coinvariants $\left.\ueva\right/\U_{u}^\Gamma(\vla)$.  We have
\begin{equation*}
\iota_{t-u} \sum_{\alpha \in \Gamma} \frac{\alpha^{-1} R_{\alpha} a}{\alpha^{-1} t - u}
= a(-1) 
- \sum_{n\geq 0} \sum_{\alpha\in \Gamma\setminus \{1\}} \frac{(R_\alpha a)(n)}{\left((\alpha-1)u\right)^{n+1}}
\end{equation*}
and hence by swapping using the $\Gamma$-equivariant rational function $\sum_{\alpha \in \Gamma} \frac{\alpha^{-1} R_{\alpha} a}{\alpha^{-1} t - u}$ we find
\be [ a(-1)b(-1) \atp u \vac ] = 
 \sum_{n\geq 0} \sum_{\alpha\in \Gamma\setminus \{1\}} \frac{1}{\left((\alpha-1)u\right)^{n+1}}
  [ (R_\alpha a)(n)b(-1) \atp u \vac ]. \nn\ee 
In general, this need not be the zero class. 
To be concrete, let us take $\vla$ to be the vertex Lie algebra of Example \ref{ex: Heis VLA}, and let $a\in \mf n$, $b\in \mf n^*$ be such that $(a,b) =1 $. Suppose for simplicity that $\sigma = \id$ (but take $T> 1$).
Then $a(0) b(-1) \vac = \vac $, $a(n) b(-1) \vac = 0$ for all $n\geq 1$, and  
\be  [ a(-1)b(-1) \atp u \vac ] = [ \atp u \vac]  \sum_{\alpha\in \Gamma\setminus \{1\}} \frac{1}{(\alpha-1)u},\label{ssr}\ee
which is not zero in $\ueva\big/ \U^{\Gamma}_{u}(\vla)$ by Proposition \ref{prop: VL remove} in the case $N=0$.

On the other hand, in $\ueva\big/ \U^{\Gamma}_{u}(\ueva)$ we are allowed to `swap' using 
\be  \sum_{\alpha \in \Gamma} \alpha \on \left( \frac{a(-1)b(-1) \vac} {t-u} \right) 
   = \sum_{\alpha \in \Gamma} \frac{\alpha^{-1} R_{\alpha} a(-1)b(-1)\vac}{\alpha^{-1} t - u}
\nn,\ee
whose Laurent expansion is
\be \iota_{t-u} \sum_{\alpha \in \Gamma} \alpha \on \left( \frac{a(-1)b(-1) \vac} {t-u} \right)
= \left(a(-1)b(-1)\vac\right)(-1)
- \sum_{n\geq 0} \sum_{\alpha\in \Gamma\setminus \{1\}} 
\frac{(R_\alpha a(-1)b(-1)\vac)(n)}{\left((\alpha-1)u\right)^{n+1}}. \nn\ee
By the creation axiom we have $(R_\alpha a(-1)b(-1)\vac)(n) \vac = 0$ for all $n \geq 0$ and hence 
\be \left( \sum_{\alpha \in \Gamma} \alpha \on \left( \frac{a(-1)b(-1) \vac} {t-u} \right) \right)\on \vac  = \big(a(-1)b(-1)\vac\big)(-1) \vac = a(-1)b(-1)\vac , \nn\ee
which says that in $\ueva\big/ \U^{\Gamma}_{u}(\ueva)$, the class of $a(-1)b(-1)\vac$ vanishes. Therefore -- since the relation \eqref{ssr} still holds in $\ueva\big/ \U^{\Gamma}_{u}(\ueva)$ -- we have here that
\be [ \atp u \vac ] = 0 \quad \text{in} \quad \ueva\big/ \U^{\Gamma}_{u}(\ueva),\nn\ee
demonstrating that in this case the surjection 
\be\ueva\big/ \U^{\Gamma}_{u}(\vla)\longtwoheadrightarrow\ueva\big/ \U^{\Gamma}_{u}(\ueva)\nn\ee
has a non-trivial kernel. 
\end{exmp}

Recall the quasi-iterate formula, Theorem \ref{thm: quasi rec}, for computing $Y_W$ for composite states. We now have the following minor addition.
\begin{cor}\label{cor: quasi rec}
Given any $\U(\vla)^\Gamma$-module $\M_0$, let $Y_W(\cdot,x):\ueva\to \Hom(\M_0,\M_0((x)))$ be the map defined in Proposition \ref{prop: YMYW}. Then for all $A,B\in \ueva$,
\begin{equation} \label{YW rec2}
Y_W( A(-1) B, u) = \; \nord{Y_W( A(-1) \vac, u) Y_W( B, u)} + \sum_{\substack{\alpha\in \Gamma\\\alpha \neq 1}} \sum_{n \geq 0} \frac{1}{( (\alpha - 1) u)^{n+1}} Y_W\big( (R_{\alpha} A)(n) B, u \big).
\end{equation}
\end{cor}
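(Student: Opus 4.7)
The plan is to run the same argument as in the proof of Theorem \ref{thm: quasi rec} (which is deferred to \S\ref{proof: YMYW}), with $a \in \vla$ replaced by the more general $A \in \ueva$. The sole nontrivial substitution is the replacement of the little global Lie algebra $\U^{\Gamma}_{v,u,0}(\vla)$ by its big analog $\U^{\Gamma}_{v,u,0}(\ueva)$; this replacement is justified by Theorem \ref{thm: big=little}, which asserts that the two yield canonically isomorphic coinvariant spaces whenever a module is attached to the origin.

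Concretely, I would consider the global coinvariant
$$\mathcal C(v,u) = \big[\,\atp v A \otimes \atp u B \otimes \atp 0 {m_0}\,\big] \in \big(\ueva \otimes \ueva \otimes \M_0\big) \big/ \U^{\Gamma}_{v,u,0}(\ueva)$$
and exploit its rational structure in $v$ (via Proposition \ref{prop: VL rat}, applied after Theorem \ref{thm: big=little}) by analysing its Laurent expansion at each of the three kinds of poles $\Gamma u \cup \{0\}$. The expansion at $v=u$, via Corollary \ref{Ydef}, produces $Y(A,v-u)B$ at $u$, whose $(v-u)^0$ coefficient $A_{(-1)}B = A(-1)B$ yields the left-hand side $Y_W(A(-1)B, u)$ after a further Laurent expansion in $u$ (Proposition \ref{prop: YMYW}(b)). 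The expansion at $v=\alpha u$ for $\alpha\neq 1$, via Lemma \ref{lem: alpha u} and Corollary \ref{Ydef}, gives the pole terms $\sum_{n\geq 0} ((\alpha-1)u)^{-n-1} Y_W((R_\alpha A)(n)B, u)$ (after substituting $\beta = \alpha^{-1}$ in the summation). The expansion at $v=0$, via Proposition \ref{prop: YMYW}(b) and the quasi-module structure of Proposition \ref{prop: com}, contributes the normal-ordered term $\nord{Y_W(A(-1)\vac, u) Y_W(B, u)}$ (using that $A(-1)\vac = A_{(-1)}\vac = A$ by the creation axiom, so that $Y_W(A(-1)\vac,u) = Y_W(A,u)$). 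Combining these contributions via the residue theorem applied to $\mathcal{C}(v,u)/(v-u)$, and then invoking Proposition \ref{prop: wiiz} after reinstating arbitrary auxiliary marked points $\bm z$ with arbitrary smooth modules, upgrades the resulting equality of coinvariant classes to an equality in $\Hom(\M_0, \M_0((u)))$.

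The hard part will be the careful bookkeeping needed to confirm that the residue at $v=0$ really produces the \emph{full} normal-ordered product $\nord{Y_W(A,u)Y_W(B,u)} = Y_W(A,u)_+ Y_W(B,u) + Y_W(B,u) Y_W(A,u)_-$ rather than just one of its halves: a naive single residue at $v=0$ of the multiplier $1/(v-u)$ times $\mathcal C$ naturally produces only the $Y_W(B,u)Y_W(A,u)_-$ piece, so the $Y_W(A,u)_+Y_W(B,u)$ piece must be accounted for through residues at the auxiliary points $\Gamma\bm z$ (which are reinstated precisely to apply Proposition \ref{prop: wiiz}). This is exactly where Theorem \ref{thm: big=little} is indispensable: the ``positive-mode'' modes $A^W_{(n)}$ for $n<0$, which are necessary to constitute the $Y_W(A,u)_+$ factor, arise naturally from the big $\U(\ueva)^\Gamma$-action on $\M_0$ furnished by Proposition \ref{prop: com}, and have no direct counterpart on the little side; the big$=$little isomorphism is what forces consistency and allows the full normal-ordered product to appear in the final identity \eqref{YW rec2}.
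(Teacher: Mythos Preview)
Your opening paragraph is exactly the paper's proof: rerun the \S\ref{proof: YMYW} argument with $a\in\vla$ replaced by $A\in\ueva$, the big swaps being justified by Theorem \ref{thm: big=little}. That is the whole content of the paper's one-line proof, and you have it.

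Your ``Concretely'' elaboration, however, drifts into a slightly different implementation. The actual \S\ref{proof: YMYW} argument works with a \emph{single} insertion point $u$ carrying the composite state $A(-1)B$, and uses two explicit $\Gamma$-equivariant rational functions
\[
F(t)=\sum_{\alpha\in\Gamma}\frac{\alpha^{-1}R_\alpha A}{\alpha^{-1}t-u},
\qquad
G_m(t)=\sum_{\alpha\in\Gamma}\frac{\alpha^{-1}R_\alpha A}{(\alpha^{-1}t)^{m+1}},
\]
now living in $\U^\Gamma_{u,\bm z,0}(\ueva)$, to swap. Theorem \ref{thm: big=little} is invoked at \emph{both} swaps, not just the one producing $Y_W(A,u)_+$. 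Your two-point residue picture with $\mathcal C(v,u)/(v-u)$ is morally equivalent (swapping \emph{is} the residue theorem in disguise) and would also work, but it is closer in spirit to the \S\ref{sec: borcherdsproof} proof of Proposition \ref{prop: borcherds} than to \S\ref{proof: YMYW}. In particular your ``hard part'' --- recovering $Y_W(A,u)_+Y_W(B,u)$ from the residues at $\Gamma\bm z$ --- is precisely the step that, in the paper's intended implementation, is handled cleanly by the second big swap with $G_m(t)$ rather than by further residue bookkeeping.

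So: correct idea, correct invocation of Theorem \ref{thm: big=little}, but your concrete sketch reinvents the mechanism rather than literally rerunning \S\ref{proof: YMYW}; and your diagnosis of \emph{where} Theorem \ref{thm: big=little} enters is slightly off --- it is needed for every big swap, not specifically for manufacturing the negative-mode part $Y_W(A,u)_+$.
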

\begin{proof}
This follows by the same argument as in the proof of Theorem \ref{thm: quasi rec} -- see \S\ref{proof: YMYW} -- now that we know that Theorem \ref{thm: big=little} holds. (That is, informally speaking, now that we know  we are allowed to `swap big states'.)
\end{proof}

\subsection{Functoriality of cyclotomic coinvariants}
Let $\vla$ and $\vma$ be $\Gamma$-vertex Lie algebras and suppose that we have a homomorphism of their universal enveloping vertex algebras,
\begin{equation} \label{hom VA}
\rho : \ueva \rightarrow \ueval,
\end{equation}
which commutes with the action of $\Gamma$ (\ie a homomorphism of $\Gamma$-vertex algebras).

Note that this homomorphism need not restrict to a homomorphism of vertex Lie algebras $\vla\to\vma$, because the image of $\vla\subset \ueva$ need not lie in $\vma\subset \ueval$.

Checking the definitions, one straightforwardly verifies that $\rho$ induces homomorphisms of the various big Lie algebras: schematically
\be \U^\bullet_\bullet(\ueva)^\bullet \to \U^\bullet_\bullet(\ueval)^\bullet \label{bighoms}\ee
(for example $\U(\ueva) \to \U(\ueval)$, $\U^{\Gamma}_{\bm z}(\ueva) \to \U^{\Gamma}_{\bm z}(\ueval)$, $\U^+(\ueva)^\Gamma \to \U^+(\ueval)^\Gamma$).

Now let $\M_{z_i}$ and $\M_0$ be as before, but now with respect to $\vma$ rather than $\vla$. That is, suppose $\M_{z_i}$ is a module over $\U(\vma)_{z_i}$ for each $i = 1, \ldots, N$ and suppose $M_0$ is a module over $\U(\vma)^\Gamma$. They become modules over the corresponding big local Lie algebras, $\U(\ueval)_{z_i}$ and $\U(\ueval)^\Gamma$, just as discussed in \S\ref{sec: bigmod}. Pulling back by the homomorphisms \eqref{bighoms}, they become modules over $\U(\ueva)_{z_i}$ and $\U(\ueva)^\Gamma$. The following is then immediate from Lemma \ref{lem: ab}.
\begin{prop} \label{prop: functoriality}
There are surjective linear maps
\begin{alignat}{3} &\left.\bigotimes_{i=1}^N\M_{z_i}  \right/ \U_{\bm z}^\Gamma(\ueva) 
 &&\longtwoheadrightarrow 
&&\left.\bigotimes_{i=1}^N\M_{z_i}  \right/ \U_{\bm z}^\Gamma(\ueval) \label{f1}\\
 &\left.\bigotimes_{i=1}^N\M_{z_i}  \otimes \M_0\right/ \U^\Gamma_{\bm z,0}(\ueva) 
 &&\longtwoheadrightarrow 
&&\left.\bigotimes_{i=1}^N\M_{z_i} \otimes \M_0\right/ \U^\Gamma_{\bm z,0}(\ueval) 
\label{f2}\end{alignat}
\qed\end{prop}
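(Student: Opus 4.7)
The plan is to deduce both surjections directly from Lemma \ref{lem: ab}; essentially all the content is in verifying the hypotheses schematically announced in \eqref{bighoms} and then invoking the universal-property-style pushforward provided by that lemma.

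First I would spell out the Lie algebra homomorphisms induced by $\rho: \ueva \to \ueval$. Because $\rho$ is a homomorphism of vertex algebras it commutes with the translation operator $\D$ and satisfies $\rho(A_{(n)} B) = \rho(A)_{(n)} \rho(B)$ for every $n \in \Z$, so in particular for $n \geq 0$. Applying Lemma \ref{lem: VLA} with $\mc A$ taken successively to be $\C((t-z_i))$, $\C^{\infty}_{\Gamma \bm z}(t)$, $\C^{\infty}_{\Gamma \bm z \cup \{0\}}(t)$ and $\C((t))$, the bracket formula \eqref{VLA bracket} shows that $A \otimes f \mapsto \rho(A) \otimes f$ descends to a Lie algebra homomorphism at each level. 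Since $\rho$ is moreover $\Gamma$-equivariant by hypothesis, it preserves the ideals \eqref{LGz ideal I}, \eqref{LGz ideal Ip} and the corresponding vertex algebra ones, and commutes with the $\Gamma$-action \eqref{Gamma act LVz}. Restricting to $\Gamma$-invariants therefore yields homomorphisms
\begin{equation*}
\phi : \U^\Gamma_{\bm z}(\ueva) \longrightarrow \U^\Gamma_{\bm z}(\ueval), \qquad
\phi_0 : \U^\Gamma_{\bm z,0}(\ueva) \longrightarrow \U^\Gamma_{\bm z,0}(\ueval),
\end{equation*}
and analogously for all the `big' local Lie algebras $\U(\ueva)_{z_i} \to \U(\ueval)_{z_i}$ and $\U(\ueva)^\Gamma \to \U(\ueval)^\Gamma$.

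Next I would check that the modules are compatible with these pullbacks. By Proposition \ref{prop: com2}, each smooth $\U(\vma)_{z_i}$-module $\M_{z_i}$ carries a canonical $\U(\ueval)_{z_i}$-action, and by Proposition \ref{prop: com} the smooth $\U(\vma)^\Gamma$-module $\M_0$ carries a canonical $\U(\ueval)^\Gamma$-action. Pulling back through the local homomorphisms above endows $\M_{z_i}$ with a $\U(\ueva)_{z_i}$-structure and $\M_0$ with a $\U(\ueva)^\Gamma$-structure, and these are exactly the structures used to form the left-hand coinvariants in \eqref{f1}, \eqref{f2}. In particular, the action of $\phi$ (resp.\ $\phi_0$) on the tensor product agrees with the pullback of the action of $\U^\Gamma_{\bm z}(\ueval)$ (resp.\ $\U^\Gamma_{\bm z,0}(\ueval)$). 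Applying Lemma \ref{lem: ab} with $\mf a = \U^\Gamma_{\bm z}(\ueva)$, $\mf b = \U^\Gamma_{\bm z}(\ueval)$, $\phi$ as above and $V = \bigotimes_{i=1}^N \M_{z_i}$ yields \eqref{f1}; using $\phi_0$ and $V = \bigotimes_{i=1}^N \M_{z_i} \otimes \M_0$ yields \eqref{f2}.

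I do not anticipate any genuine obstacle. The one point that warrants a moment's care is the compatibility claim in the second paragraph: that the $\U(\ueva)_{z_i}$-action on $\M_{z_i}$ obtained by pullback through $\rho$ really coincides with the one obtained by first regarding $\M_{z_i}$ as a $\U(\vma)_{z_i}$-module via $\U(\vla) \to \U(\vma)$ (implicit in the data), then promoting it via $Y_M$. This transparently holds from the global definition of $Y_M$ in Proposition \ref{prop: YMYW}: both actions are read off from Laurent expansions of the same coinvariant class, and a vertex algebra homomorphism $\rho$ intertwines these expansions. The same observation handles $Y_W$ and $\M_0$, and at that point the lemma takes care of the rest.
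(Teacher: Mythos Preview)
Your approach is correct and matches the paper's: the proposition is stated with a \qed and the surrounding text simply says it is immediate from Lemma \ref{lem: ab} once one has the homomorphisms \eqref{bighoms} and the pullback module structures. You have expanded the verification of those homomorphisms and module structures in exactly the way the paper leaves implicit.

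One small remark: in your final paragraph you refer to a map $\U(\vla) \to \U(\vma)$ ``implicit in the data''. No such map exists in general here (indeed the paper stresses that $\rho$ need not restrict to $\vla \to \vma$), and none is needed: the $\U(\ueva)_{z_i}$-action on $\M_{z_i}$ is \emph{defined} as the pullback through $\rho$ of the $\U(\ueval)_{z_i}$-action, so the compatibility you worry about is automatic, not something to be checked via $Y_M$. The only genuine compatibility to note is that the global-into-local embeddings commute with the $\rho$-induced maps, which is immediate from the definitions.
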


Combining Theorem \ref{thm: big=little} with Proposition \ref{prop: functoriality}, one has an important corollary.

\begin{cor} \label{cor: functoriality}
There is a surjective linear map
\begin{alignat}{3} 
 &\left.\bigotimes_{i=1}^N\M_{z_i}  \otimes \M_0 \right/ \U^\Gamma_{\bm z,0}(\vla) 
 &&\longtwoheadrightarrow 
&&\left.\bigotimes_{i=1}^N\M_{z_i} \otimes \M_0 \right/ \U^\Gamma_{\bm z,0}(\vma) 
\label{f2}\end{alignat}
\qed\end{cor}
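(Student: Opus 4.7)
The plan is a short diagram chase: combine Theorem \ref{thm: big=little} (applied to both $\vla$ and $\vma$) with the second surjection of Proposition \ref{prop: functoriality}.

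First I would carefully record the module structures in play. By assumption $\M_{z_i}$ is a smooth $\U(\vma)_{z_i}$-module for each $i$, and $\M_0$ is a smooth $\U(\vma)^\Gamma$-module. By Propositions \ref{prop: com2} and \ref{prop: com} they acquire the structure of smooth modules over the big local Lie algebras $\U(\ueval)_{z_i}$ and $\U(\ueval)^\Gamma$. Pulling back along the homomorphisms \eqref{bighoms} induced by $\rho:\ueva\to\ueval$, they become modules over $\U(\ueva)_{z_i}$ and $\U(\ueva)^\Gamma$; restricting along the embeddings of Proposition \ref{prop: gz Uz} they become modules over $\U(\vla)_{z_i}$ and $\U(\vla)^\Gamma$. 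Smoothness over $\U(\vla)$ is automatic: for fixed $a\in\vla$ the mode $a(n)_{z_i}$ acts via some fixed state in $\ueval$ whose modes eventually annihilate any given $v\in\M_{z_i}$ by smoothness of $\M_{z_i}$ as a $\U(\ueval)_{z_i}$-module.

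Next I would assemble the chain
\begin{align*}
\left.\bigotimes_{i=1}^N\M_{z_i} \otimes \M_0 \right/ \U^\Gamma_{\bm z,0}(\vla)
  &\;\cong_\C\; \left.\bigotimes_{i=1}^N\M_{z_i} \otimes \M_0 \right/ \U^\Gamma_{\bm z,0}(\ueva) \\
  &\;\longtwoheadrightarrow\; \left.\bigotimes_{i=1}^N\M_{z_i} \otimes \M_0 \right/ \U^\Gamma_{\bm z,0}(\ueval) \\
  &\;\cong_\C\; \left.\bigotimes_{i=1}^N\M_{z_i} \otimes \M_0 \right/ \U^\Gamma_{\bm z,0}(\vma).
\end{align*}
The outer isomorphisms are Theorem \ref{thm: big=little} applied to $\vla$ and to $\vma$ respectively; the middle arrow is the second surjection of Proposition \ref{prop: functoriality}. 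Composing gives the desired surjective linear map.

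The only point that warrants care -- and which I take to be the ``main obstacle'', though in truth it is rather minor -- is checking that the various module structures are mutually compatible, \emph{i.e.}~that the $\U(\vla)_{z_i}$- and $\U(\vla)^\Gamma$-module structures on $\M_{z_i}$ and $\M_0$ obtained by restricting along Proposition \ref{prop: gz Uz} are the ones fed into Theorem \ref{thm: big=little}, and that the big $\vla$-structures are the pullbacks along \eqref{bighoms} of those used in Proposition \ref{prop: functoriality}. This is a formal consequence of the commutativity of the squares in Proposition \ref{prop: gz Uz}, the functoriality of the constructions in \S\ref{sec: bigmod} in the state $A$ (since $\rho$ is a homomorphism of $\Gamma$-vertex algebras, it intertwines the state-field maps $Y_M$ and $Y_W$ on $\ueva$ and $\ueval$), and the evident naturality of the big-Lie-algebra homomorphisms induced by $\rho$. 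No essentially new computation is required.
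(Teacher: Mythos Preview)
Your proposal is correct and follows essentially the same approach as the paper, which simply says ``Combining Theorem \ref{thm: big=little} with Proposition \ref{prop: functoriality}'' --- you have spelled out the chain $/\vla \cong /\ueva \twoheadrightarrow /\ueval \cong /\vma$ explicitly and flagged the compatibility of module structures, which is indeed routine. One small simplification: since the composite only needs to be surjective, you could replace the first isomorphism by the automatic surjection $/\U^\Gamma_{\bm z,0}(\vla)\twoheadrightarrow/\U^\Gamma_{\bm z,0}(\ueva)$ from \eqref{btc2}, invoking Theorem \ref{thm: big=little} only for $\vma$ and thereby sidestepping the compatibility check on the $\vla$-side entirely.
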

Again, it should be stressed that this is not in general true if the module $\M_0$ is omitted. However, it may happen that one is really interested in a space of coinvariants of the form $\left.\bigotimes_{i=1}^N\M_{z_i}  \right/ \U^{\Gamma}_{\bm z}(\vla)$, not $\left.\bigotimes_{i=1}^N\M_{z_i}  \otimes \M_0 \right/ \U^{\Gamma}_{\bm z, 0}(\vla)$. This is so in the case of the cyclotomic Gaudin models in \cite{VY}, for example. To deal with this, we now prove Theorem \ref{thm: Gaudin}, below, which says that under certain conditions we can improve on Corollary \ref{cor: functoriality}.

First, define 
\be \uevaG := \Ind_{\U^+(\vla)^\Gamma}^{\U(\vla)^\Gamma} \C\vac, \label{uevaGdef}\ee
cf. \eqref{Vvla def}. The proof of the following is very similar to that of Proposition \ref{prop: VL remove}. 
\begin{prop}\label{prop: VL remove zero}
There is a linear isomorphism
\begin{equation*}
\left.\bigotimes_{i = 1}^N \M_{z_i} \otimes \uevaG \right/ \U^\Gamma_{\bm z,0}(\vla) \xrightarrow{\sim_\C} \bigotimes_{i = 1}^N \M_{z_i} \bigg/ \U^\Gamma_{\bm z}(\vla)
\end{equation*}
sending the class of $\bm m \otimes \vac$ to the class of $\bm m$.
\qed\end{prop}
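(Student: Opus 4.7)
The proof parallels that of Proposition \ref{prop: VL remove} with the substitutions $\ueva \leadsto \uevaG$, the extra marked point $u \leadsto 0$, and the local Lie algebra $\U(\vla)_u \leadsto \U(\vla)^\Gamma$. All that is genuinely new is setting up the analog of the decomposition \eqref{L LG decomp}; once this is in hand, the rest of the argument \eqref{visoms}--\eqref{summ} goes through essentially verbatim. First I would introduce the Lie subalgebra $\mf a \subset \U^\Gamma_{\bm z,0}(\vla)$ consisting of classes represented by $\sum_{\alpha\in \Gamma} \alpha\on(a f)$ with $a\in \vla^o$ and $f\in \C^\infty_{\{0\}}(t)$, that is, $\Gamma$-equivariant rational functions with poles at most at the origin. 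The global Lie algebra $\mf b := \U^\Gamma_{\bm z}(\vla)$ embeds naturally into $\U^\Gamma_{\bm z,0}(\vla)$ via $\C^\infty_{\Gamma\bm z}(t)\hookrightarrow \C^\infty_{\Gamma\bm z\cup \{0\}}(t)$ (using that $I_{\Gamma\bm z}\subset I_{\Gamma\bm z,0}$), and $\Gamma$-equivariant partial fraction expansion yields the vector-space direct sum $\U^\Gamma_{\bm z,0}(\vla) = \mf a \dotplus \mf b$, with directness holding since any element of the intersection $\mf a\cap \mf b$ would be a polynomial vanishing at infinity, hence zero. Both $\mf a$ and $\mf b$ are Lie subalgebras by the same residue-theorem calculation that appears in the proof of Proposition \ref{prop: LG embed prime}, now applied to functions with poles at most at $0$ and at $\Gamma\bm z$ respectively.

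Next I would establish the analog of \eqref{vli}, namely $\uevaG \cong_\C U(\mf a)\vac$. Under the embedding of Proposition \ref{prop: LG embed prime}, $\mf a$ maps isomorphically to $\U^-(\vla)^\Gamma$ in the $\U(\vla)^\Gamma$-component of $\U(\vla)_{\bm z,0}$, since elements of $\mf a$ are already Laurent polynomials in $t^{-1}$ valued in $\vla^o$ and $\Gamma$-invariance is preserved. Combined with the $\Gamma$-equivariant polar decomposition $\U(\vla)^\Gamma = \U^-(\vla)^\Gamma \dotplus \U^+(\vla)^\Gamma$, the Poincar\'e--Birkhoff--Witt theorem applied to the definition \eqref{uevaGdef} of $\uevaG$ gives $\uevaG \cong_\C U(\U^-(\vla)^\Gamma)\vac \cong_\C U(\mf a)\vac$.

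With this in hand, set $V := \bigotimes_{i=1}^N \M_{z_i} \otimes \uevaG$, $M := \bigotimes_{i=1}^N \M_{z_i}$ and $\mf c := \U^\Gamma_{\bm z,0}(\vla) = \mf a \dotplus \mf b$. One then verifies the two decompositions
\be V = \mf a\on V \dotplus M \otimes \vac, \qquad \mf a\on V + \mf b\on V = \mf a\on V \dotplus (\mf b\on M) \otimes \vac \nn\ee
exactly as in \eqref{c1}--\eqref{summ}: the first from the freeness of $V \cong_\C M \otimes U(\mf a)\vac$ as a $U(\mf a)$-module, and the second from $\mf b\on \vac = 0$ together with the inductive filtration argument given there. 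The third isomorphism theorem then delivers the desired linear isomorphism $V/\mf c\on V \cong_\C M/\mf b \on M$, and it is immediate that $[\bm m \otimes \vac] \mapsto [\bm m]$.

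The one new wrinkle compared to the original proof is verifying $\mf b \on \vac = 0$ in $\uevaG$. Unlike the situation in Proposition \ref{prop: VL remove}, the vacuum of $\uevaG$ is \emph{not} annihilated by all of $\U^+(\vla)^\Gamma$: one has $\cent(-1)\vac = \tfrac{1}{T}\vac \neq 0$ by the level condition \eqref{l0}. The image of $\mf b$ under Laurent expansion at $0$ nonetheless avoids the mode $\cent(-1)$, because elements of $\mf b$ are represented by $\rho(a\otimes f)$ with $a\in \vla^o$ (not $\cent$) and $f\in \C^\infty_{\bm z}(t)$ regular at the origin; their expansions therefore lie in $\rho(\vla^o\otimes \C[[t]])^\Gamma$, which acts trivially on $\vac$ by the vacuum axiom \eqref{vacdef}.
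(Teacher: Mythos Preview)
Your proposal is correct and follows exactly the approach the paper indicates: it is the proof of Proposition~\ref{prop: VL remove} with $u\leadsto 0$, $\ueva\leadsto\uevaG$, and $\U(\vla)_u\leadsto\U(\vla)^\Gamma$, the key step being the decomposition $\U(\vla)^\Gamma = \U^-(\vla)^\Gamma \dotplus \U^+(\vla)^\Gamma$ in place of \eqref{L LG decomp}. One minor remark: the ``wrinkle'' you flag about $\cent(-1)\vac\neq 0$ is not actually new relative to Proposition~\ref{prop: VL remove}---there too $\cent(-1)\vac=\vac\neq 0$ in $\ueva$, and the same observation (that elements of $\mf b$ are represented by $a\otimes f$ with $a\in\vla^o$, hence their Taylor expansions avoid $\cent(-1)$) is what makes $\mf b\on\vac=0$ hold in both cases.
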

We also need the following trivial fact.
\begin{lem}\label{tlem} Let $V$ and $W$ be modules over a Lie algebra $\g$ and $\phi:V\to W$ a map of $\g$-modules. Then there is a well-defined map $V/\g\to W/\g$ sending $[v]\mapsto [\phi(v)]$. 
\end{lem}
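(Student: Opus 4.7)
The plan is to construct the map $\tilde\phi : V/\g \to W/\g$ by the obvious formula $[v] \mapsto [\phi(v)]$, and to verify that this is well-defined. The only point requiring any verification is that $\phi$ sends the subspace $\g \on V \subset V$ into $\g \on W \subset W$, for then the induced map on quotients exists by the universal property of quotients. Linearity is then automatic from linearity of $\phi$.

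To check that $\phi(\g \on V) \subseteq \g \on W$, I take a typical generator $x \on v$ of $\g \on V$, with $x \in \g$ and $v \in V$, and use the fact that $\phi$ is a homomorphism of $\g$-modules to write $\phi(x \on v) = x \on \phi(v) \in \g \on W$. Since $\g \on V$ is spanned by such elements, the claim follows.

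Concretely, if $v_1, v_2 \in V$ represent the same class in $V/\g$, then $v_1 - v_2 \in \g \on V$, so $\phi(v_1) - \phi(v_2) = \phi(v_1 - v_2) \in \g \on W$, which gives $[\phi(v_1)] = [\phi(v_2)]$ in $W/\g$. There is no obstacle, since the statement is really just a direct application of the universal property of quotient vector spaces together with the $\g$-equivariance of $\phi$.
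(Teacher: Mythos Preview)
Your proof is correct and follows essentially the same approach as the paper: both verify well-definedness by using $\g$-equivariance of $\phi$ to show that $\phi(\g\on V)\subseteq \g\on W$, whence the map on quotients is well-defined.
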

\begin{proof} If $[v'] = [v]$ then $v'=v+ \sum_{i} A_i \on x_i$ for some finite collection of $A_i\in \g$ and $x_i\in V$. Then $\phi(v') = \phi(v) + \sum_i\phi(A_i\on x_i) = \phi(v) +\sum_i A_i\on \phi(x_i)$ so $[\phi(v')] = [\phi(v)]$ as required.
\end{proof}

\begin{thm}\label{thm: Gaudin}
Suppose that there exists a non-zero vector $m_0\in \M_0$ with the property  that, 
for all $a\in \vla$,
$\iota_u [ \atp u {\rho(a(-1) \vac)} \otimes \dots \otimes \atp 0 {m_0} ]$
is a Taylor series in $u$.
Then there is a well-defined linear map
\be \left.\bigotimes_{i=1}^N\M_{z_i}  \right/ \U^{\Gamma}_{\bm z}(\vla) \longrightarrow \left.\bigotimes_{i=1}^N\M_{z_i}  \otimes M_0 \right/ \U^{\Gamma}_{\bm z,0}(\vma) \nn\ee
which sends the class of any $\bm m \in \bigotimes_{i=1}^N\M_{z_i} $ in $\left.\bigotimes_{i=1}^N\M_{z_i}  \right/ \U^{\Gamma}_{\bm z}(\vla)$ to the class of $\bm m\otimes m_0$ in $\left.\bigotimes_{i=1}^N\M_{z_i}  \otimes \M_0 \right/ \U^{\Gamma}_{\bm z,0}(\vma)$. 
\end{thm}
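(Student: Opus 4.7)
The map $[\bm m]\mapsto [\bm m\otimes m_0]$ is manifestly linear, so the content of the theorem is well-definedness: for any $X\in \U^\Gamma_{\bm z}(\vla)$ and any $\widetilde{\bm m}\in \bigotimes_i \M_{z_i}$, the class $[(X\on \widetilde{\bm m})\otimes m_0]$ must vanish in the target. By linearity and the spanning description of $\U^\Gamma_{\bm z}(\vla)$, it suffices to treat $X=\sum_{\alpha\in\Gamma}\alpha\on(af)$ with $a\in \vla^o$ and $f\in \C^\infty_{\bm z}(t)$. The plan is to \emph{lift} $X$ to an element of the \emph{big} cyclotomic global Lie algebra at $\bm z\cup\{0\}$, namely
\[ X' := \sum_{\alpha\in\Gamma}\alpha\on\big(\rho(a(-1)\vac)\cdot f\big)\;\in\;\U^\Gamma_{\bm z,0}(\ueval),\]
(treating $f\in\C^\infty_{\bm z}(t)\subset \C^\infty_{\Gamma\bm z\cup\{0\}}(t)$), and then to exploit Theorem \ref{thm: big=little} applied to $\vma$: since coinvariants by $\U^\Gamma_{\bm z,0}(\vma)$ and by $\U^\Gamma_{\bm z,0}(\ueval)$ agree, $[X'\on(\widetilde{\bm m}\otimes m_0)]=0$ in the target automatically.

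It then remains to compare $X'\on(\widetilde{\bm m}\otimes m_0)$ with $(X\on\widetilde{\bm m})\otimes m_0$. Unfolding the action of $X'$ via Proposition \ref{prop: LG embed prime} (for $\ueval$) together with the module-structure formulas of Propositions \ref{prop: com}, \ref{prop: com2} produces exactly the decomposition that already appears in the proof of Theorem \ref{thm: big=little}: a sum of local residues at each $z_i$, built from $Y_M^{\vma}(R_\alpha\rho(a(-1)\vac),t-z_i)$, plus a single local residue at $0$, built from $Y_W^{\vma}(\rho(a(-1)\vac),t)$. The residues at the points $z_i$ reassemble into $(X\on\widetilde{\bm m})\otimes m_0$: by construction the $\U(\vla)_{z_i}$-module structure on $\M_{z_i}$ is the pullback through $\rho$ (via the diagram of Proposition \ref{prop: gz Uz}), under which $a(n)$ acts as the mode $(\rho(a(-1)\vac))^M_{(n)}$ of $Y_M^{\vma}(\rho(a(-1)\vac),u)$; and $\Gamma$-equivariance of $\rho$ gives $\rho((R_\alpha a)(-1)\vac)=R_\alpha\rho(a(-1)\vac)$. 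Thus the only possible discrepancy between $X'\on(\widetilde{\bm m}\otimes m_0)$ and $(X\on\widetilde{\bm m})\otimes m_0$ is the residual term $\widetilde{\bm m}\otimes \res_t\iota_t f(t)\, Y_W^{\vma}(\rho(a(-1)\vac),t)m_0$.

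The hypothesis is tailored precisely to annihilate this residue. By Proposition \ref{prop: YMYW}(b) for $\vma$, the Laurent expansion in the hypothesis is $[\dots\otimes Y_W^{\vma}(\rho(a(-1)\vac),u)m_0]$, and the assumption that this is a Taylor series for every allowed choice of remaining marked data, combined with Proposition \ref{prop: wiiz}(2), forces $Y_W^{\vma}(\rho(a(-1)\vac),t)m_0\in \M_0[[t]]$. Since $f$ has no pole at the origin, $\iota_t f(t)\in \C[[t]]$, so the product is itself a Taylor series in $t$ and its residue vanishes. Therefore $X'\on(\widetilde{\bm m}\otimes m_0)=(X\on \widetilde{\bm m})\otimes m_0$ in $\bigotimes_i\M_{z_i}\otimes\M_0$, and combined with the observation from Theorem \ref{thm: big=little} above this gives $[(X\on\widetilde{\bm m})\otimes m_0]=0$, as required. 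There is no deep obstacle: the main delicacy is the bookkeeping of how $X\in \U^\Gamma_{\bm z}(\vla)$ acts on $\bigotimes_i\M_{z_i}$ under the pullback through $\rho$, so that the lifted element $X'$ recovers $(X\on\widetilde{\bm m})\otimes m_0$ cleanly modulo the single residue at the origin, which is the one the hypothesis is designed to kill.
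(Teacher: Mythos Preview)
Your proof is correct and takes a genuinely different route from the paper's. The paper first uses the hypothesis (via Proposition~\ref{prop: YMYW}(b) and Proposition~\ref{prop: wiiz}) to deduce that $\U^+(\vla)^\Gamma\on m_0=0$, so that there is a $\U(\vla)^\Gamma$-module map $\uevaG\twoheadrightarrow \U(\vla)^\Gamma\on m_0\hookrightarrow \M_0$; it then builds the required map as a composite, using Proposition~\ref{prop: VL remove zero} to insert $\uevaG$ at the origin, Lemma~\ref{tlem} to pass to $\M_0$, and finally Corollary~\ref{cor: functoriality} (hence Theorem~\ref{thm: big=little}) to descend from $\vla$ to $\vma$. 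Your argument bypasses the auxiliary module $\uevaG$ entirely: you check well-definedness directly by lifting each generator $X=\sum_\alpha\alpha\on(af)$ of $\U^\Gamma_{\bm z}(\vla)$ to $X'=\sum_\alpha\alpha\on(\rho(a(-1)\vac)\,f)$ in the big algebra for $\vma$, invoke Theorem~\ref{thm: big=little} (for $\vma$) to make $[X'\on(\widetilde{\bm m}\otimes m_0)]$ vanish, and then match $X'\on(\widetilde{\bm m}\otimes m_0)$ with $(X\on\widetilde{\bm m})\otimes m_0$ up to the residue at the origin, which the hypothesis kills since $f\in\C^\infty_{\bm z}(t)$ is regular at $0$. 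Both routes extract the same information from the hypothesis and both rest on Theorem~\ref{thm: big=little}; the paper's argument is more modular (and makes the role of the universal object $\uevaG$ explicit), while yours is more hands-on and reuses the computation \eqref{big Lie action} already carried out in the proof of Theorem~\ref{thm: big=little}.
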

\begin{proof}
In view of Proposition \ref{prop: YMYW} part (b) and 
Proposition \ref{prop: wiiz}, the given property of $m_0$ implies that $Y_W(\rho(a(-1) \vac), u)_- m_0 = 0$. That is, $\rho(a(-1)\vac)^W_{(n)} m_0 =0$ for all $n\geq 0$, in the notation of Proposition \ref{prop: com2}, or in other words $\U^+(\vla)^\Gamma\on m_0 = 0$. It follows that the $\U(\vla)^\Gamma$-submodule of $M_0$ through $m_0$ is isomorphic to a quotient of the induced module $\uevaG$,
\ie there are $\U(\vla)^\Gamma$-module maps
\be \uevaG \twoheadrightarrow \U(\vla)^\Gamma\on m_0  \hookrightarrow M_0.\label{gmm}\ee
Thus we have the following linear map which indeed sends the class of $\bm m$ to the class of $\bm m\otimes m_0$, as required:
\begin{multline*} \left.\bigotimes_{i=1}^N\M_{z_i}  \right/ \U^{\Gamma}_{\bm z}(\vla) 
\to \left.\bigotimes_{i = 1}^N \M_{z_i} \otimes \uevaG \right/ \U^\Gamma_{\bm z,0}(\vla)\\
\to \left.\bigotimes_{i = 1}^N \M_{z_i} \otimes M_0 \right/ \U^\Gamma_{\bm z,0}(\vla) 
\to \left.\bigotimes_{i=1}^N\M_{z_i} \otimes M_0 \right/ \U^{\Gamma}_{\bm z,0}(\vma).\end{multline*}
Here the first map is the inverse of the linear isomorphism of Proposition \ref{prop: VL remove zero}, the second is by Lemma \ref{tlem} and \eqref{gmm} and the final map is from Corollary \ref{cor: functoriality}.
\end{proof}

\section{Proofs}\label{sec: proofs}

\subsection{Proof of Lemma \ref{lem: VLA central}}\label{sec: proofVLAcentral}
Using \eqref{D lie alg}, any element in $\Lie_{\mathcal{A}} \vla$ can always be written as a finite sum of terms of the form $\rho(a \otimes f)$ with $a \in \vla^o \oplus \ker \D$ and $f \in \mathcal{A}$. Moreover, if $a \in \ker \D$ then the relation \eqref{D lie alg} also implies that $\rho(a \otimes f) = 0$ for any $f \in \im\, \delta$. It follows that the restriction of the quotient map $\rho$ to the subspace $\vla^o \otimes \mathcal{A} \oplus \ker \D \otimes \mathcal{A}^o$ of $\vla \otimes \mathcal{A}$ is a surjection onto $\Lie_{\mathcal{A}} \vla$.

By definition, the linear map $\rho$ has kernel $\im \partial$. Showing its restriction to $\vla^o \otimes \mathcal{A} \oplus \ker \D \otimes \mathcal{A}^o$ is injective thus amounts to showing that
\begin{equation*}
\im \partial \cap \big( \vla^o \otimes \mathcal{A} \oplus \ker \D \otimes \mathcal{A}^o \big) = 0.
\end{equation*}
To show this we shall suppose there is a non-zero vector in this intersection and obtain a contradiction. So consider $\sum_{i \in I} \big( \D a_i \otimes f_i + a_i \otimes \delta f_i \big) \in \im \partial$ with $I$ a non-empy finite set, and non-zero $a_i \in \genvla'$, $f_i \in \mathcal{A}$. 
By reorganising terms in this sum if necessary, we may assume that the $a_i \in \genvla'$ are homogeneous vectors. The sum may then be broken up further as
\begin{equation} \label{dsum af}
\sum_n \sum_{i \in I_n} \big( \D a_i \otimes f_i + a_i \otimes \delta f_i \big),
\end{equation}
where $\deg a_i = n$ for each $i \in I_n$.
Furthermore, we may assume without loss of generality that the sets $\{ f_i \}_{i \in I_n}$ are linearly independent for each $n$ (if there is a linear relation among the $f_i$ we may use it to eliminate one of them. This process may be repeated until the remaining $f_i$'s are linearly independent).
Suppose now that \eqref{dsum af} also belongs to $\vla^o \otimes \mathcal{A} \oplus \ker \D \otimes \mathcal{A}^o$, that is
\begin{equation} \label{sum af}
\sum_n \sum_{i \in I_n} \big( \D a_i \otimes f_i + a_i \otimes \delta f_i \big) \in \vla^o \otimes \im \delta \oplus \vla^o \otimes \mathcal{A}^o \oplus \ker \D \otimes \mathcal{A}^o,
\end{equation}
where we have used the decomposition $\mathcal{A} = \mathcal{A}^o \oplus \im \delta$.
Let us also decompose
$f_i = g_i + \delta h_i$ for some unique $g_i \in \mathcal{A}^o$ and $\delta h_i \in \im \delta$. We then have
\begin{equation} \label{sum daf}
\sum_n \sum_{i \in I_n} \big( \D a_i \otimes f_i + a_i \otimes \delta f_i \big) =
\sum_n \sum_{i \in I_n} \D a_i \otimes g_i +
\sum_n \sum_{i \in I_n} \big( \D a_i \otimes \delta h_i + a_i \otimes \delta f_i \big).
\end{equation}
The first double sum on the right belongs to $\genvla \otimes \mathcal{A}^o$ and the second to $\genvla \otimes \im \delta$. More precisely, the first sum belongs to $\im \D \otimes \mathcal{A}^o$. But according to \eqref{sum af} it also belongs to $\vla^o \otimes \mathcal{A}^o \oplus \ker \D \otimes \mathcal{A}^o$, which has vanishing intersection with $\im \D \otimes \mathcal{A}^o$. It follows that this first double sum vanishes. In fact, each term in the sum over $n$ must separately vanish by comparing degrees of the first tensor factor, so for each $n$ we have
\begin{equation} \label{sum Dag}
\sum_{i \in I_n} \D a_i \otimes g_i = 0.
\end{equation}

Consider now the second sum in \eqref{sum daf}, namely
\begin{equation} \label{sum Imd}
\sum_n \sum_{i \in I_n} \big( \D a_i \otimes \delta h_i + a_i \otimes \delta f_i \big),
\end{equation}
which according to \eqref{sum af} belongs to $\vla^o \otimes \im \delta$. Now let $n_{\rm max}$ denote the upper bound in the sum over $n$. Since the operator $\D$ is of degree $1$, we have $\deg (\D a_i) = n_{\rm max} + 1$ for any $i \in I_{n_{\rm max}}$. We conclude that $\sum_{i \in I_{n_{\rm max}}} \D a_i \otimes \delta h_i$ must separately belong to $\vla^o \otimes \im \delta$. It now follows that this sum vanishes since it also belongs to $\im \D \otimes \im \delta$ which has vanishing intersection with $\vla^o \otimes \im \delta$. On the other hand, from \eqref{sum Dag} we also know that $\sum_{i \in I_{n_{\rm max}}} \D a_i \otimes g_i = 0$. Therefore, by definition of $g_i$ and $\delta h_i$ we obtain
\begin{equation*}
\sum_{i \in I_{n_{\rm max}}} \D a_i \otimes f_i = \sum_{i \in I_{n_{\rm max}}} \D a_i \otimes g_i + \sum_{i \in I_{n_{\rm max}}} \D a_i \otimes \delta h_i = 0.
\end{equation*}
Since the $f_i$'s are linearly independent, we conclude that $\D a_i = 0$, or in other words $a_i \in \ker \D$, for each $i \in I_{n_{\rm max}}$. Finally, consider the sum of terms in \eqref{sum Imd} for which the first tensor factor is of degree $n_{\rm max}$, namely
\begin{equation*}
\sum_{i \in I_{n_{\rm max} - 1}} \D a_i \otimes \delta h_i + \sum_{i \in I_{n_{\rm max}}}  a_i \otimes \delta f_i.
\end{equation*}
These two sums respectively belong to $\im \D \otimes \im \delta$ and $\ker \D \otimes \im \delta$, contradicting the fact that \eqref{sum Imd} belongs to $\vla^o \otimes \im \delta$, which has vanishing intersection with $\im \D \otimes \im \delta \oplus \ker \D \otimes \im \delta$.

\subsection{Proof of Proposition \ref{prop: YMYW}}\label{proof: YMYW}
We consider first the map $Y_W$. We shall prove together Theorem \ref{thm: quasi rec} and part (\ref{YWdef}) of Proposition \ref{prop: YMYW}. So consider the map $Y_W$ defined recursively by the conditions in Theorem \ref{thm: quasi rec}. We shall show that it does obey the stated conditions in Proposition \ref{prop: YMYW}. Uniqueness then follows from Proposition \ref{prop: wiiz}. 

We want to show that for all $A\in \ueva$ and all $m_0\in \M_0$,
\begin{equation} \label{Y-map swap 0}
\iota_u [\atp u A \otimes \atp{\bm z}{\bm m} \otimes \atp 0 {m_0}] 
= \big[ \atp{\bm z}{\bm m} \otimes Y_W(A, u) \atp 0 {m_0} \big].
\end{equation}

We proceed by induction on the depth of the state $A$. When $A = \vac$ the result follows from Proposition \ref{prop: VL remove}. For the inductive step, we  assume that \eqref{Y-map swap 0} holds for states of depth strictly less than that of $A$. Without loss of generality we can take the state $A$ to be of the form $A = a(-1) B$ for some $a \in \vla$ and $B \in \ueva$. Indeed, for any $n \in \mathbb{Z}_{\geq 0}$ we have $a(-n-1) = \frac{1}{n!} (\D^n a)(-1)$. By definition of coinvariants we have
\be \big[ f(t) \on (\atp u B \otimes \atp{\bm z}{\bm m} \otimes \atp 0 {m_0}) \big] =0, \quad\text{where}\quad 
f(t) = \sum_{\alpha \in \Gamma} \frac{\alpha^{-1} R_{\alpha} a}{\alpha^{-1} t - u}.
\nn\ee
Thus the left hand side of \eqref{Y-map swap 0} may be written as
\begin{align*}
\iota_u &[\atp u{a(-1) B} \otimes\atp{\bm z}{\bm m} \otimes \atp 0{m_0}]
= \iota_u \bigg[  {\sum_{\alpha \neq 1} \sum_{n \geq 0} \frac{(R_{\alpha} a)(n)}{((\alpha - 1) u)^{n+1}} \atp uB} \otimes \atp{\bm z}{\bm m} \otimes \atp 0 {m_0} \bigg]\\
&+ \iota_u \bigg[ \atp uB \otimes \sum_{i=1}^N \sum_{\alpha \in \Gamma} \sum_{n \geq 0} \frac{(R_{\alpha} a)(n)_{z_i}}{(\alpha u - z_i)^{n+1}} \atp{\bm z}{\bm m} \otimes \atp 0 {m_0} \bigg]
+ \iota_u \bigg[ \atp uB \otimes \atp{\bm z}{\bm m} \otimes \sum_{\alpha \in \Gamma} \sum_{n \geq 0} \frac{(R_{\alpha} a)(n)}{(\alpha u)^{n+1}} \atp 0 {m_0} \bigg].
\end{align*}
By the inductive hypothesis we may `swap' the remaining states at $u$. That is, we have
\begin{align*}
&\iota_u [ a(-1) \atp u B \otimes \atp{\bm z}{\bm m} \otimes \atp 0 {m_0}] = \bigg[ \atp{\bm z}{\bm m} \otimes \sum_{\alpha \neq 1} \sum_{n \geq 0} \frac{1}{((\alpha - 1) u)^{n+1}} Y_W\big( (R_{\alpha} a)(n) B, u \big) \atp 0 {m_0} \bigg]\\
&+ \iota_u \bigg[ \sum_{i=1}^N \sum_{\alpha \in \Gamma} \sum_{n \geq 0} \frac{(R_{\alpha} a)(n)_{z_i}}{(\alpha u - z_i)^{n+1}} \atp{\bm z}{\bm m} \otimes Y_W(B, u) \atp 0 {m_0} \bigg]
+ \bigg[ \atp{\bm z}{\bm m} \otimes Y_W(B, u) Y_W(a(-1)\vac, u)_- \atp 0 {m_0} \bigg].
\end{align*}
The first and last term on the right hand side are already in the desired form. Consider the second term. Taking the $\iota_u$ map explicitly, we may rewrite it as follows
\begin{align} \label{swap 0 xi}
\iota_u &\bigg[ \sum_{i=1}^N \sum_{\alpha \in \Gamma} \sum_{n \geq 0} \frac{(R_{\alpha} a)(n)_{z_i} }{(\alpha u - z_i)^{n+1}}\atp{\bm z}{\bm m} \otimes Y_W(B, u) \atp 0 {m_0} \bigg] \notag\\
&\qquad = - \bigg[ \sum_{i=1}^N \sum_{\alpha \in \Gamma} \sum_{n \geq 0} \sum_{m \geq 0} \bigg( \!\!\! \begin{array}{c} m+n\\ n \end{array} \!\!\! \bigg) \frac{(-1)^n (\alpha u)^m}{z_i^{m+n+1}} (R_{\alpha} a)(n)_{z_i} \atp{\bm z}{\bm m} \otimes Y_W(B, u) \atp 0 {m_0} \bigg].
\end{align}
Now, we have 
\be
\sum_{m \geq 0} u^m \big[ g_m(t) \on (\atp{\bm z}{\bm m} \otimes Y_W(B, u) \atp 0 {m_0}) \big] = 0, \quad\text{where}\quad
g_m(t) = \sum_{\alpha \in \Gamma} \frac{\alpha^{-1} R_{\alpha} a}{(\alpha^{-1} t)^{m+1}}.\nn
\ee
Using this we may rewrite \eqref{swap 0 xi} simply as
\begin{equation*}
\iota_u \bigg[ \sum_{i=1}^N \sum_{\alpha \in \Gamma} \sum_{n \geq 0} \frac{(R_{\alpha} a)(n)_{z_i} }{(\alpha u - z_i)^{n+1}}\atp{\bm z}{\bm m} \otimes Y_W(B, u) \atp 0 {m_0} \bigg] \notag\\
= \bigg[ \atp{\bm z}{\bm m} \otimes Y_W(a(-1)\vac, u)_+ Y_W(B, u) \atp 0 {m_0} \bigg].
\end{equation*}
Putting the above together and using the recursive definition \eqref{YW rec} of the $Y_W$ map we obtain
\begin{equation*}
\iota_u [ a(-1) \atp u B \otimes \atp{\bm z}{\bm m} \otimes \atp 0 {m_0}] = [\atp{\bm z}{\bm m} \otimes Y_W(a(-1) B, u) \atp 0 {m_0}],
\end{equation*}
as required.

\medskip

We now turn to the map $Y_M$. We want to show that the map $Y_M$ defined recursively by \eqref{ymdeff} obeys the stated conditions in Proposition \ref{prop: YMYW}; again, uniqueness follows by Proposition \ref{prop: wiiz}. Indeed, let us show that
\begin{equation} \label{Y-map swap}
\iota_{u - z_i} [\atp u A \otimes \atp{\bm z}{\bm m} \otimes \atp 0 {m_0}]
= \big[ Y_M(A, u - z_i)_{z_i} \atp{\bm z}{\bm m} \otimes \atp 0 {m_0} \big].
\end{equation}
Since the role of the origin in the following proof will be completely analogous to that of the points $z_j$ with $j \neq i$, to alleviate the notational clutter we shall omit this point and simply regard it as one of the points $z_j$, $j \neq i$.

We shall prove \eqref{Y-map swap} by induction on the depth of the state $A$. The result is obvious in the case where $A = \vac$. So let $A \in \ueva$ be a state of strictly positive depth and suppose that \eqref{Y-map swap} holds for any states of depth strictly less than that of $A$.

We can write $A$ in the form $A = a(-1) B$ for some $a \in \vla$ and $B \in \ueva$. Using the $\Gamma$-equivariant rational function
\begin{equation*}
f(t) = \sum_{\alpha \in \Gamma} \frac{\alpha^{-1} R_{\alpha} a}{\alpha^{-1} t - u}
\end{equation*}
to swap the operator $a(-1)$ we obtain
\begin{align} \label{swap2}
&\iota_{u - z_i} [ a(-1) \atp u B \otimes \atp {\bm z}{\bm m}] = \iota_{u - z_i} \bigg[ \atp u B \otimes \sum_{\alpha \in \Gamma} \sum_{n \geq 0} \frac{(R_{\alpha} a)(n)_{z_i}}{(\alpha u - z_i)^{n+1}} \atp {\bm z}{\bm m} \bigg] \notag\\
&+ \iota_{u - z_i} \sum_{j \neq i} \bigg[ \atp u B \otimes \sum_{\alpha \in \Gamma} \sum_{n \geq 0} \frac{(R_{\alpha} a)(n)_{z_j}}{(\alpha u - z_j)^{n+1}} \atp {\bm z}{\bm m} \bigg]
+ \iota_{u - z_i} \bigg[ \sum_{\alpha \neq 1} \sum_{n \geq 0} \frac{(R_{\alpha} a)(n)}{\big( (\alpha - 1) u\big)^{n+1}} \atp u B \otimes \atp {\bm z}{\bm m} \bigg].
\end{align}
To each term on the right we may now apply the inductive hypothesis.
Consider to begin with the first term on the right of \eqref{swap2}. Using the inductive hypothesis for $B$ and separating the $\alpha = 1$ term in the sum over $\alpha$ we may write this term as
\begin{equation*}
\big[ Y_M(B, u - z_i)_{z_i} Y_M(a(-1)\vac, u - z_i)_{z_i}{}_- \atp {\bm z}{\bm m} \big]
+ \iota_{u - z_i} \bigg[ Y_M(B, u - z_i)_{z_i} \sum_{\alpha \neq 1} \sum_{n \geq 0} \frac{(R_{\alpha} a)(n)_{z_i}}{(\alpha u - z_i)^{n+1}} \atp {\bm z}{\bm m} \bigg].
\end{equation*}
Taking the $\iota$-map explicitly in the second term we may rewrite the latter more explicitly as
\begin{equation} \label{Ba i}
- \bigg[ \sum_k \sum_{\alpha \neq 1} \sum_{n \geq 0} \sum_{m \geq 0} \bigg( \!\!\! \begin{array}{c} m+n\\ n \end{array} \!\!\! \bigg) \frac{(-1)^n \alpha^{-n-1} (u - z_i)^{m-k-1}}{(\alpha^{-1} - 1)^{m+n+1} z_i^{m+n+1}} B(k)_{z_i} (R_{\alpha} a)(n)_{z_i} \atp {\bm z}{\bm m} \bigg].
\end{equation}
To reverse the order of the two operators acting on ${\bm m}$ we make use of the commutator relation
\begin{equation*}
\big[ (R_{\alpha} a)(n), B(k) \big] = \sum_{p \geq 0} \bigg( \!\! \begin{array}{c} n\\ p \end{array} \!\! \bigg) \big( (R_{\alpha} a)(p) B \big)(n+k-p).
\end{equation*}
The expression \eqref{Ba i} now takes the form
\begin{align} \label{RaB}
&- \bigg[ \sum_{\alpha \neq 1} \sum_{n \geq 0} \sum_{m \geq 0} \bigg( \!\!\! \begin{array}{c} m+n\\ n \end{array} \!\!\! \bigg) \frac{(-1)^n \alpha^{-n-1} (u - z_i)^m}{(\alpha^{-1} - 1)^{m+n+1} z_i^{m+n+1}} (R_{\alpha} a)(n)_{z_i} Y(B, u - z_i)_{z_i} \atp {\bm z}{\bm m} \bigg]\\
&+ \bigg[ \sum_k \sum_{\alpha \neq 1} \sum_{n \geq 0} \sum_{m \geq 0} \sum_{p=0}^n \bigg( \!\! \begin{array}{c} n\\ p \end{array} \!\! \bigg) \bigg( \!\!\! \begin{array}{c} m+n\\ n \end{array} \!\!\! \bigg) \frac{(-1)^n \alpha^{-n-1} (u - z_i)^{m-k-1}}{(\alpha^{-1} - 1)^{m+n+1} z_i^{m+n+1}} \big( (R_{\alpha} a)(p) B \big)(n+k-p)_{z_i} \atp {\bm z}{\bm m} \bigg]. \notag
\end{align}
For later convenience we shall rewrite the second term in \eqref{RaB} more simply by evaluating one of the sums. Explicitly, we start by replacing the sum over $n \geq 0$ and $0 \leq p \leq n$, with a sum over $p \geq 0$ and the new variable $q = n - p \geq 0$. This yields
\begin{equation*}
\bigg[ \sum_k \sum_{\alpha \neq 1} \sum_{q \geq 0} \sum_{m \geq 0} \sum_{p \geq 0} \frac{(m+p+q)!}{m! \, p! \, q!} \frac{(-1)^{m+1} \alpha^m (u - z_i)^{m-k-1}}{(\alpha - 1)^{m+q+p+1} z_i^{m+q+p+1}} \big( (R_{\alpha} a)(p) B \big)(q+k)_{z_i} \atp {\bm z}{\bm m} \bigg].
\end{equation*}
Performing the change of variable $k \to k - q$ for each term in the sum over $q$ and then replacing the sum over $q, m \geq 0$, by a sum over $r = m + q \geq 0$ and $0 \leq m \leq r$ we find
\begin{equation*}
\bigg[ \sum_k \sum_{\alpha \neq 1} \sum_{r \geq 0} \sum_{m=0}^r \sum_{p \geq 0} \bigg( \!\! \begin{array}{c} r\\ m \end{array} \!\! \bigg) \bigg( \!\!\! \begin{array}{c} r+p\\ r \end{array} \!\!\! \bigg) \frac{(-1)^{m+1} \alpha^m (u - z_i)^{r-k-1}}{(\alpha - 1)^{r+p+1} z_i^{r+p+1}} \big( (R_{\alpha} a)(p) B \big)(k)_{z_i} \atp {\bm z}{\bm m} \bigg].
\end{equation*}
Finally, taking the sum over $m$ explicitly, the second term in \eqref{RaB} can be rewritten more simply as
\begin{equation} \label{RaBsimp}
\bigg[ \sum_k \sum_{\alpha \neq 1} \sum_{r \geq 0} \sum_{p \geq 0} \bigg( \!\!\! \begin{array}{c} r+p\\ p \end{array} \!\!\! \bigg) \frac{(-1)^{r+1} (u - z_i)^{r-k-1}}{(\alpha - 1)^{p+1} z_i^{r+p+1}} \big( (R_{\alpha} a)(p) B \big)(k)_{z_i} \atp {\bm z}{\bm m} \bigg].
\end{equation}

Next, consider the second term on the right hand side of \eqref{swap2}. Using the inductive hypothesis for the state $B$ it reads
\begin{align*}
\iota_{u - z_i} &\sum_{j \neq i} \bigg[ \sum_{\alpha \in \Gamma} \sum_{n \geq 0} \frac{(R_{\alpha} a)(n)_{z_j}}{(\alpha u - z_j)^{n+1}} Y_M(B, u - z_i)_{z_i} \atp {\bm z}{\bm m} \bigg]\\
&= - \sum_{j \neq i} \bigg[ \sum_{\alpha \in \Gamma} \sum_{n \geq 0} \sum_{m \geq 0} \bigg( \!\!\! \begin{array}{c} m+n\\ n \end{array} \!\!\! \bigg) \frac{(-1)^n \alpha^{-n-1} (u - z_i)^m}{(\alpha^{-1} z_j - z_i)^{m+n+1}} (R_{\alpha} a)(n)_{z_j} Y_M(B, u - z_i)_{z_i} \atp {\bm z}{\bm m} \bigg].
\end{align*}
Here we have used the fact that operators acting at sites $i$ and $j$, with $i \neq j$, commute.

Finally, consider the last term on the right hand side of \eqref{swap2}. Since $n \geq 0$, the depth of the state $(R_{\alpha} a)(n) B$ in $U(\U^-(\vla)) \vac$ is at most equal to that of $B$. Therefore, we can apply the inductive hypothesis to the last term in \eqref{swap2} as well, which therefore reads
\begin{align*}
\iota_{u - z_i} &\bigg[ \sum_{\alpha \neq 1} \sum_{p \geq 0} \frac{1}{\big( (\alpha - 1) u\big)^{p+1}} Y_M\big( (R_{\alpha} a)(p) B, u - z_i \big)_{z_i} \atp {\bm z}{\bm m} \bigg]\\
&= - \bigg[ \sum_k \sum_{\alpha \neq 1} \sum_{p \geq 0} \sum_{m \geq 0} \bigg( \!\!\! \begin{array}{c} m+p\\ p \end{array} \!\!\! \bigg) \frac{(-1)^{m+1} (u - z_i)^{m-k-1}}{(\alpha - 1)^{p+1} z_i^{m+p+1}} \big( (R_{\alpha} a)(p) B \big)(k)_{z_i} \atp {\bm z}{\bm m} \bigg].
\end{align*}
However, this is exactly the opposite of the expression \eqref{RaBsimp} which corresponds to the second term in \eqref{RaB}. After cancelling these terms and putting all of the above together, we obtain
\begin{align} \label{swap3}
&\iota_{u - z_i} [ a(-1) \atp u B \otimes \atp{\bm z}{\bm m}] = \big[ Y_M(B, u - z_i)_{z_i} Y_M(a(-1)\vac, u - z_i)_{z_i}{}_- \atp {\bm z}{\bm m} \big] \notag\\
&- \bigg[ \sum_{\alpha \neq 1} \sum_{n \geq 0} \sum_{m \geq 0} \bigg( \!\!\! \begin{array}{c} m+n\\ n \end{array} \!\!\! \bigg) \frac{(-1)^n \alpha^{-n-1} (u - z_i)^m}{(\alpha^{-1} - 1)^{m+n+1} z_i^{m+n+1}} (R_{\alpha} a)(n)_{z_i} Y_M(B, u - z_i)_{z_i} \atp {\bm z}{\bm m} \bigg] \notag\\
&- \sum_{j \neq i} \bigg[ \sum_{\alpha \in \Gamma} \sum_{n \geq 0} \sum_{m \geq 0} \bigg( \!\!\! \begin{array}{c} m+n\\ n \end{array} \!\!\! \bigg) \frac{(-1)^n \alpha^{-n-1} (u - z_i)^m}{(\alpha^{-1} z_j - z_i)^{m+n+1}} (R_{\alpha} a)(n)_{z_j} Y_M(B, u - z_i)_{z_i} \atp {\bm z}{\bm m} \bigg].
\end{align}

Now consider the following rational function for $m \geq 0$,
\begin{equation*}
g_m(t) = \sum_{\alpha \in \Gamma} \frac{\alpha^{-1}  R_{\alpha} a}{(\alpha^{-1} t - z_i)^{m+1}}.
\end{equation*}
Its expansion in $t - z_i$ reads
\begin{align*}
\iota_{t - z_i} g_m(t)
&= a(-m-1)_{z_i} + \sum_{\alpha \neq 1} \sum_{n \geq 0} \bigg(\!\!\! \begin{array}{c} m+n\\ n \end{array} \!\!\!\bigg) \frac{(-1)^n \alpha^{-n-1} (R_{\alpha} a)(n)_{z_i}}{(\alpha^{-1} - 1)^{m+n+1} z_i^{m+n+1}}.
\end{align*}
Likewise, the expansion of $g_m(t)$ in $t - z_j$, for $j \neq i$, takes the form
\begin{align*}
\iota_{t - z_j} g_m(t)
&= \sum_{\alpha \in \Gamma} \sum_{n \geq 0} \bigg(\!\!\! \begin{array}{c} m+n\\ n \end{array} \!\!\!\bigg) \frac{(-1)^n \alpha^{-n-1} (R_{\alpha} a)(n)_{z_j}}{(\alpha^{-1} z_j - z_i)^{m+n+1}}.
\end{align*}
The invariance of the class of ${\bm m}$ in $\otimes_{i=1}^N M_{(i)} \big/ \U^{\Gamma}_{\bm z}(\vla)$ under the rational function $g_m(t)$ for each $m \geq 0$ implies
\begin{equation*}
\sum_{m \geq 0} (u - z_i)^m \big[ g_m(t) \cdot \atp{\bm z}{\bm m} \big] = 0.
\end{equation*}
It follows that the last two terms in \eqref{swap3} can be obtained by swapping from
\begin{equation*}
\sum_{m \geq 0} (u - z_i)^m \big[ a(-m-1)_{z_i} Y_M(B, u - z_i)_{z_i} \atp{\bm z}{\bm m} \big].
\end{equation*}
Finally, using the notation $F(x)_+ = \sum_{n < 0} F_n \, x^{-n-1}$ for any field $F(x) = \sum_{n \in \mathbb{Z}} F_n \, x^{-n-1}$, we can now rewrite \eqref{swap3} simply as
\begin{align*}
\iota_{u - z_i} [ a(-1) \atp u B \otimes \atp{\bm z}{\bm m}] &= \big[ Y_M(B, u - z_i)_{z_i} Y_M(a(-1)\vac, u - z_i)_{z_i}{}_- \atp{\bm z}{\bm m} \big]\\ 
&\qquad\qquad + \big[ Y_M(a(-1)\vac, u - z_i)_{z_i}{}_+ Y_M(B, u - z_i)_{z_i} \atp{\bm z}{\bm m} \big].
\end{align*}
The result now follows from the expression \eqref{ynord} for $Y_M(a(-1)B, u - z_i)$ in terms of the normal ordered product of $Y_M(a(-1)\vac, u - z_i)$ and $Y_M(B, u - z_i)$.

\subsection{Proof of Lemma \ref{lem: alpha u}}\label{proof: alpha u}
To minimize notational clutter, we shall show that 
\begin{equation} \label{insertion point}
[\atp u A \otimes \atp z m ] = [ R_{\alpha} \atp {\alpha u}A \otimes \atp z m ]
\end{equation}
but it will be clear that the argument runs in the same way if one includes more marked points, including a marked point at the origin.

We use induction on the depth of the state $A$ in $\ueva \cong_{\C} U(\U^-(\vla))\vac$. The result is trivial when $A = \vac$, so consider a state $A \in \ueva$ not proportional to the vacuum and suppose the result \eqref{insertion point} holds for all states of depth strictly less than that of $A$.

It is enough to consider states of the form  $A= a(-1) B$ for some $a \in \vla$ and $B \in \ueva$. We compute both sides of \eqref{insertion point}. On the one hand we have
\begin{equation} \label{swap u}
[ a(-1) \atp u B \otimes \atp z m] = \bigg[ \atp u B \otimes \sum_{\beta \in \Gamma} \sum_{n \geq 0} \frac{(R_{\beta} a)(n)}{(\beta u - z)^{n+1}} \atp z m \bigg]
+ \bigg[ \sum_{\beta \neq 1} \sum_{n \geq 0} \frac{(R_{\beta} a)(n)}{((\beta - 1) u)^{n+1}} \atp u B \otimes \atp z m \bigg],
\end{equation}
where to `swap' $a(-1)$ we used the following $\Gamma$-equivariant rational function
\begin{equation*}
f(t) = \sum_{\beta \in \Gamma} \frac{\beta^{-1} R_{\beta} a}{\beta^{-1} t - u}.
\end{equation*}
On the other hand, the singular term in the expansion of this rational function in $t - \alpha u$ takes the form $f(t) \sim \frac{R_{\alpha} a}{t - \alpha u}$, which represents the operator $(R_{\alpha} a)(-1)$ acting on the module at $\alpha u$. Therefore, using the \emph{same} rational function we also have
\begin{align} \label{swap au}
[(R_{\alpha} a)(-1) (R_{\alpha} \atp{\alpha u}B) \otimes \atp z m] = \bigg[ R_{\alpha} \atp {\alpha u}B \otimes \sum_{\beta \in \Gamma} \sum_{n \geq 0} \frac{(R_{\beta} a)(n)}{(\beta u - z)^{n+1}} \atp z m \bigg] + \bigg[ \sum_{\beta \neq \alpha} \sum_{n \geq 0} \frac{(R_{\beta} a)(n)}{((\beta - \alpha) u)^{n+1}} R_{\alpha} \atp{\alpha u}B \otimes \atp z m \bigg],
\end{align}
Now for any two states $A, B \in \ueva$ the map $R_{\alpha}$ has the property \eqref{Gamma eq VA}.
In particular, it follows from this that
\begin{align} \label{rel R}
R_{\alpha} \big( a(n) B \big) &= R_{\alpha} \big( (a(-1)\vac)_{(n)} B \big) = \alpha^{-n-1} \big(R_{\alpha}(a(-1)\vac) \big)_{(n)} (R_{\alpha} B) \notag\\
&= \alpha^{-n-1} \big( (R_{\alpha} a)(-1)\vac \big)_{(n)} (R_{\alpha} B) = \alpha^{-n-1} (R_{\alpha} a)(n) (R_{\alpha} B),
\end{align}
where in the second last equality we have used the fact that $\deg (a(-1)\vac) = \deg a$.
Replacing $a$ by $R_{\beta \alpha^{-1}} a$ in this relation we may use it to rewrite \eqref{swap au} as
\begin{align*}
[(R_{\alpha} a)(-1) (R_{\alpha} \atp{\alpha u}B) \otimes \atp z m] 
&= \bigg[ R_{\alpha} \atp {\alpha u}B \otimes \sum_{\beta \in \Gamma} \sum_{n \geq 0} \frac{(R_{\beta} a)(n)}{(\beta u - z)^{n+1}} \atp z m \bigg] \notag\\
&\qquad\qquad\qquad + \bigg[ \sum_{\gamma \neq 1} \sum_{n \geq 0} \frac{1}{((\gamma - 1) u)^{n+1}}R_{\alpha} \big( (R_{\gamma} a)(n) \atp{\alpha u}B \big) \otimes \atp z m \bigg],
\end{align*}
where $\gamma = \beta \alpha^{-1}$. Now, using the inductive hypothesis, the right hand side of the above is clearly equal to the right hand side of \eqref{swap u}. The result thus follows using the relation \eqref{rel R} with $n = -1$ which implies that $(R_{\alpha} a)(-1) (R_{\alpha} B) = R_{\alpha} \big( a(-1) B \big)$.

\subsection{On formal variables}
Let $x$ and $y$ be formal variables and consider the ring of polynomials $\C[x, y]$. In formulating Proposition \ref{prop: borcherds} we make use of the localisation of $\C[x,y]$  by the multiplicative subset consisting of elements of the form $x^m (x-y)^n y^k$ for $m, n, k \in \Z_{\geq 0}$. We denote the resulting ring by $\C[x^{\pm 1}, y^{\pm 1}, (x - y)^{-1}]$.

Furthermore, in the proofs of Propositions \ref{prop: borcherds} and \ref{prop: com} we also make use of the following lemma concerning the localisation $S^{-1} \C[x, y]$ of $\C[x, y]$ by the multiplicative subset $S = \C[x] \C[y] \setminus \{ 0 \}$.
\begin{lem}\label{lem: iotacom}
For any $F \in S^{-1} \C[x, y]$ we have $\iota_{x, y} F = \iota_{y, x} F$.
\end{lem}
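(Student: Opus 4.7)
The plan is to exploit the fact that elements of $S^{-1}\C[x,y]$ have denominators that factor as a product of a polynomial in $x$ alone times a polynomial in $y$ alone, with no $(x-y)$ factor or any other mixing of the two variables. Precisely because there is no mixing, the ambiguity that normally distinguishes $\iota_{x,y}$ from $\iota_{y,x}$ (namely, the two possible expansions of $(x-y)^{-1}$) never arises.

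Concretely, I would first write any $F\in S^{-1}\C[x,y]$ in the form $F = P(x,y)/\bigl(f(x)g(y)\bigr)$ with $P\in\C[x,y]$, $f\in\C[x]\setminus\{0\}$, $g\in\C[y]\setminus\{0\}$. Next I would note that $\iota_{x,y}:S^{-1}\C[x,y]\to\C((x))((y))$ and $\iota_{y,x}:S^{-1}\C[x,y]\to\C((y))((x))$ are $\C$-algebra homomorphisms, so it suffices to check the asserted equality on the generating elements $P(x,y)$, $1/f(x)$ and $1/g(y)$. On polynomials the two maps manifestly agree. For $1/f(x)$, writing $f(x) = x^k h(x)$ with $h(0)\neq 0$, we have the canonical Laurent expansion $\iota_x\bigl(1/f\bigr) = x^{-k}\sum_{n\geq 0}d_n x^n \in \C((x))$, where $\sum d_n x^n$ is the Taylor series at $0$ of $1/h$. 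Both $\iota_{x,y}$ and $\iota_{y,x}$ send $1/f(x)$ to this same series, regarded as a $y$-independent element of either $\C((x))((y))$ or $\C((y))((x))$. The analogous statement holds for $1/g(y)$.

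Multiplying these contributions, both maps send $F$ to the common element $P(x,y)\cdot\iota_x\!\bigl(1/f\bigr)\cdot\iota_y\!\bigl(1/g\bigr)$, which lies in the subring $\C((x))\otimes_\C\C((y))$ naturally embedded in both $\C((x))((y))$ and $\C((y))((x))$. Hence $\iota_{x,y}F = \iota_{y,x}F$, as required. There is no real obstacle here; the only point that needs articulating carefully is that a single-variable rational function admits an unambiguous Laurent expansion in its own variable, so that the order in which one carries out the two expansions is immaterial whenever the denominator does not couple $x$ and $y$.
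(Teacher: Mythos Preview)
Your proof is correct and follows essentially the same approach as the paper's. Both arguments exploit that any $F\in S^{-1}\C[x,y]$ can be written with denominator $f(x)g(y)$ that separates the variables, then reduce to the fact that a single-variable rational function has an unambiguous Laurent expansion; the paper does this via linearity (expanding the numerator into monomials to get a finite sum of products $f(x)g(y)$), while you do it via multiplicativity of the expansion maps, which is a trivially different packaging of the same idea.
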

\begin{proof}
Any element of $S^{-1} \C[x, y]$ takes the form $r^{-1} s^{-1} q$ with $r \in \C[x]$, $s \in \C[y]$ and $q \in \C[x, y]$. By writing $q$ as a finite linear combination of monomials $x^m y^n$, $m, n \in \Z_{\geq 0}$, we can express $r^{-1} s^{-1} q$ as a finite sum of products $f g$ for some $f \in \C(x)$ and $g \in \C(y)$, where $\C(x)$ and $\C(y)$ denote the field of fractions of $\C[x]$ and $\C[y]$ respectively. Therefore, by linearity of the maps $\iota_x$ and $\iota_y$, it suffices to prove the statement for such products $F = f g$. But in this case the result is obvious.
\end{proof}

\subsection{Proof of Proposition \ref{prop: borcherds}}\label{sec: borcherdsproof}
We prove part (2), making use of Proposition \ref{prop: wiiz}. The proof of part (1) is similar, and more standard since it essentially does not involve the twisting by $\Gamma$. 

We begin by considering the class
\begin{equation*}
[\atp x A \otimes \atp y B \otimes \atp z m \otimes \atp 0{m_0}] \in \ueva \otimes \ueva \otimes  \M_z\otimes \M_0\Big/ \U^\Gamma_{x,y,z,0}(\vla).
\end{equation*}
Let $f$ be any rational function of $x$ and $y$ with poles at most at  $x = 0$, $y = 0$ and $x = y$. By virtue of Proposition \ref{prop: VL rat} we can regard 
\begin{equation*}
f(x, y) [\atp x A \otimes \atp y B \otimes \atp z m \otimes \atp 0 {m_0}]
\end{equation*}
as a rational function in $x$ valued in the space of coinvariants $\ueva \otimes  \M_z\otimes \M_0\big/ \U^\Gamma_{y,z,0}(\vla)$. It has poles at most at the points $0$, $\alpha y$, $\alpha\in \Gamma$, and $\alpha z$, $\alpha \in \Gamma$. 
If we let
\be p(x,y) := \prod_{\alpha \in \Gamma \setminus \{ 1 \}} (x - \alpha y) = (x^T - y^T)/(x - y) \nn\ee 
then for some sufficiently large $k \in \Z_{\geq 0}$, the rational function $p(x, y)^k f(x, y) [A \otimes B \otimes m \otimes m_0]$ in $x$ will have poles only at $0$, $\8$, $y$ and $\alpha z$, $\alpha\in \Gamma$. 
So by the residue theorem  we obtain
\begin{multline*}
0 = \res_{x - y} p(x,y)^k \iota_{x - y} f(x,y) [ \atp x A \otimes \atp y B \otimes \atp z m \otimes \atp 0 {m_0}]\\
+ \res_x p(x,y)^k \iota_x f(x,y) [ \atp x A \otimes \atp y B \otimes \atp z m \otimes \atp 0 {m_0}]\\
+ \left(\sum_{\alpha\in \Gamma}\res_{x-\alpha z} \iota_{x-\alpha z}-\res_{x^{-1}}x^2\iota_{x^{-1}}\right) p(x,y)^k  f(x,y) [ \atp x A \otimes \atp y B \otimes \atp z m \otimes \atp 0 {m_0}].
\end{multline*}
Hence, by Proposition \ref{prop: YMYW} and Corollary \ref{Ydef}, we have the equality
\begin{multline*}
0= \big[ \big(\res_{x - y} p(x,y)^k \iota_{x - y} f(x,y) Y(A, x - y) \atp y B\big) \otimes \atp z m \otimes \atp 0 {m_0}\big]\\
+ \big[\atp y B\otimes \atp z m \otimes \big(\res_x p(x,y)^k \iota_x f(x,y) \otimes Y_W(A, x) \atp 0 {m_0}\big)\big]\\
+  \left(\sum_{\alpha\in \Gamma}\res_{x-\alpha z} \iota_{x-\alpha z}-\res_{x^{-1}}x^2\iota_{x^{-1}}\right) p(x,y)^k  f(x,y) [\atp x A \otimes \atp y B \otimes \atp z m \otimes \atp 0 {m_0}].
\end{multline*} 
Next we may apply $\iota_y$ and then use Proposition \ref{prop: YMYW} and Corollary \ref{Ydef} once more, to find the equality
\begin{multline*}
0 = \big[ \atp z m \otimes \big( \res_{x - y} p(x,y)^k \iota_{y, x - y} f(x,y) Y_W(Y(A, x - y) B, y) \atp 0 {m_0}\big)\big]\\
+ \big[ \atp z m\otimes \big(\res_x p(x,y)^k \iota_{y, x} f(x,y) Y_W(B,y) Y_W(A, x) \atp 0 {m_0}\big)\big]\\
+  \left(\sum_{\alpha\in \Gamma}\res_{x-\alpha z} \iota_{y,x-\alpha z}-\res_{x^{-1}}x^2\iota_{y,x^{-1}}\right) p(x,y)^k  f(x,y) [ \atp x A \otimes \atp y B \otimes \atp z m \otimes \atp 0 {m_0}].
\end{multline*}
of formal Laurent series in $y$. Consider the final line here, and note the following fact:  whenever $F(u,v)$ is a rational function with no pole at $u=v$ then $\iota_{u,v}F(u,v) = \iota_{v,u}F(u,v)$. (See Lemma \ref{lem: iotacom} above.)  Now the function $p(x,y)^k f(x,y) [A \otimes B \otimes m \otimes m_0]$ has no pole at $y=x-\alpha z$ for any $\alpha\in \Gamma$ and no pole at $y=x^{-1}$. So the final line above is equal to
\begin{multline}\label{ltt}
\left(\sum_{\alpha\in \Gamma}\res_{x-\alpha z} \iota_{x-\alpha z}-\res_{x^{-1}}x^2\iota_{x^{-1}}\right)\iota_y p(x,y)^k  f(x,y) [\atp x A \otimes \atp y B \otimes \atp z m \otimes \atp 0 {m_0}]\\
= \left(\sum_{\alpha\in \Gamma}\res_{x-\alpha z} \iota_{x-\alpha z}-\res_{x^{-1}}x^2\iota_{x^{-1}}\right)\iota_y p(x,y)^k  f(x,y) [ \atp x A \otimes \atp z m \otimes Y_W(B,y) \atp 0 {m_0}]
\end{multline}
Now consider the following  formal Laurent series in $y$ with coefficients in $\ueva \otimes \M_z \otimes \M_0 \big/ \U^\Gamma_{x,z,0}(\vla)$:
\begin{equation*}
\iota_y  p(x,y)^k f(x,y) \big[ \atp x A \otimes \atp z m \otimes Y_W(B,y) \atp 0 {m_0} \big].
\end{equation*}
At each order in $y$ it is a rational function of $x$ with poles at most at $0$, $\8$, and $\alpha z$, $\alpha\in \Gamma$. On applying the residue theorem and Proposition \ref{prop: YMYW} order-by-order in $y$ one has
\begin{multline*}
0= \left(\sum_{\alpha\in \Gamma}\res_{x-\alpha z} \iota_{x-\alpha z}-\res_{x^{-1}}x^2\iota_{x^{-1}}\right)\iota_y p(x,y)^k  f(x,y) [\atp x A \otimes \atp z m \otimes Y_W(B,y) \atp y {m_0}]\\
+ \res_{x} p(x,y)^k \iota_{x,y} f(x,y) \big[\atp z m \otimes Y_W(A,x) Y_W(B,y) \atp 0 {m_0} \big]. 
\end{multline*}
Putting the above equations together, we have shown that
\begin{multline*} 
0 = \big[ \atp z m\otimes \big( \res_{x - y} p(x,y)^k \iota_{y, x - y} f(x,y) Y_W(Y(A, x - y) B, y) \atp 0 {m_0}\big)\big]\\
+ \big[ \atp z m\otimes \big(\res_x p(x,y)^k \iota_{y, x} f(x,y) Y_W(B,y) Y_W(A, x) \atp 0 {m_0}\big)\big]\\
- \big[ \atp z m\otimes \big(\res_x p(x,y)^k \iota_{x, y} f(x,y) Y_W(A,x) Y_W(B, y) \atp 0 {m_0}\big)\big].
\end{multline*}
By Proposition \ref{prop: wiiz} this is enough to establish the result.

\subsection{Proof of Proposition \ref{prop: com}}\label{sec: proofofcommutator}
In the space of coinvariants
\be \left(\ueva \otimes \ueva \otimes  \M_{(z)}\otimes \M_0\right)\bigg/ \U^\Gamma_{x,y,z,0}(\vla),\nn\ee
we consider the element
\begin{equation*}
x^m [ \atp x A \otimes \atp y B \otimes \atp z m \otimes \atp 0 {m_0}]
\end{equation*}
for any $m\in \Z$. Viewed as a function of $x$, this is rational and has poles at most at $\alpha y$, $\alpha\in \Gamma$, at $\alpha z$, $\alpha\in \Gamma$, and at $0$ and $\8$. 
Thus, by the residue theorem,
\be
0= \sum_{\alpha\in \Gamma} \res_{x-\alpha y} \iota_{x-\alpha y} x^m [ \atp x A \otimes \atp y B \otimes \atp z m \otimes \atp 0 {m_0}]
+ \res_{x}  x^m \iota_{x} [ \atp x A \otimes \atp y B \otimes \atp z m \otimes \atp 0 {m_0}] + R  \nn
\ee
where 
\begin{equation*}
R = \left(\sum_{\alpha\in \Gamma} \res_{x-\alpha z} \iota_{x-\alpha z}- \res_{x^{-1}} x^2\iota_{x^{-1}}\right) x^m [ \atp x A \otimes \atp y B \otimes \atp z m \otimes \atp 0 {m_0}]
\end{equation*}
We have, by Lemma \ref{lem: alpha u} and then Proposition \ref{prop: YMYW},
\begin{align*}
\iota_{x-\alpha y} [ \atp x A \otimes \atp y B \otimes \atp z m \otimes \atp 0 {m_0}]
  &= \iota_{x-\alpha y} [ \atp x A \otimes R_\alpha \atp {\alpha y}B \otimes \atp z m \otimes \atp 0 {m_0}]\\ 
  &= [ Y(A,x-\alpha y) R_\alpha \atp {\alpha y}B \otimes \atp z m \otimes \atp 0 {m_0}],
\end{align*}
and by definition $\iota_{x-\alpha y} x^m = \sum_{k=0}^\8 \binom m k (\alpha y)^{m-k} (x-\alpha y)^k$. 
Therefore
\begin{multline*}
0= \sum_{k=0}^\8 \binom m k \sum_{\alpha\in \Gamma}  (\alpha y)^{m-k} \res_{x-\alpha y} (x-\alpha y)^k [ Y(A,x-\alpha y) R_\alpha \atp {\alpha y}B \otimes \atp z m \otimes \atp 0 {m_0}] \\
  + \res_{x}  x^m [ \atp y B \otimes \atp z m \otimes Y_W(A,x) \atp 0 {m_0}] + R.
\end{multline*}
We use Proposition \ref{prop: YMYW} here, and again in the following, to find, on taking $\iota_y$,
\begin{multline*}
0= \sum_{k=0}^\8 \binom m k \sum_{\alpha\in \Gamma} (\alpha y)^{m-k}  \res_{x-\alpha y} (x-\alpha y)^k [ \atp z m \otimes Y_W(Y(A,x-\alpha y) R_\alpha B, \alpha y) \atp 0 {m_0}] \\
  + \res_{x}  x^m  [ \atp z m \otimes Y_W(B,y) Y_W(A,x) \atp 0 {m_0}]
  + \iota_yR.
\end{multline*}
Arguing as in the previous proof, cf. \eqref{ltt}, we have by the residue theorem that
\be
\iota_y R 
= -\res_x x^m [ \atp z m \otimes Y_W(A,x) Y_W(B,y) \atp 0 {m_0}]. \nn
\ee
Hence 
\begin{multline*}
0= \sum_{k=0}^\8 \binom m k  \sum_{\alpha\in \Gamma} (\alpha y)^{m-k} \res_{x-\alpha y} (x-\alpha y)^k [ \atp z m \otimes Y_W(Y(A,x-\alpha y) R_\alpha B, \alpha y) \atp 0 {m_0}] \\
  + \res_{x}  x^m  [ \atp z m \otimes Y_W(B,y) Y_W(A,x) \atp 0 {m_0}]   - \res_{x}  x^m [ \atp z m \otimes Y_W(A,x) Y_W(B,y) \atp 0 {m_0}]
\end{multline*}
and so, in view of Proposition \ref{prop: wiiz},
\be \res_x x^m \left[  Y_W(A,x) ,Y_W(B,y) \right] = 
\sum_{k=0}^\8 \binom m k (\alpha y)^{m-k} \sum_{\alpha\in \Gamma} \res_{x-\alpha y} (x-\alpha y)^k Y_W(Y(A,x-\alpha y) R_\alpha B, \alpha y).\nn
\ee
Taking $\res_y y^n$ of this equality, we have finally
\be \left[ A^W_{(m)}, B^W_{(n)} \right]
= \sum_{k=0}^\8 \binom m k    \sum_{\alpha\in \Gamma}\alpha^{-n-1}   \left( A_{(k)} (R_\alpha B) \right)^W_{(m+n-k) }.\nn
\ee

\def\cprime{$'$}
\providecommand{\bysame}{\leavevmode\hbox to3em{\hrulefill}\thinspace}
\providecommand{\MR}{\relax\ifhmode\unskip\space\fi MR }
% \MRhref is called by the amsart/book/proc definition of \MR.
\providecommand{\MRhref}[2]{%
  \href{http://www.ams.org/mathscinet-getitem?mr=#1}{#2}
}
\providecommand{\href}[2]{#2}


\begin{thebibliography}{MY12b}

\bibitem[Bor86]{Borcherds}
R.~Borcherds, 
\emph{Vertex operator algebras, Kac-Moody algebras and the monster}, 
Proc. Nat. Acad. Sci. \textbf{83} (1986) no. 10, 3068--71.

\bibitem[BPZ84]{BPZ}
  A.~A.~Belavin, A.~M.~Polyakov and A.~B.~Zamolodchikov,
  \emph{Infinite conformal symmetry in two-dimensional quantum field theory},
  Nucl. Phys. B \textbf{241} (1984) no. 2, 333--380.

%\bibitem[DK98]{DK}
%  A.~D'Andrea and V.~G.~Kac,
%  \emph{Structure theory of finite conformal algebras},
%  Selecta Math.(N.S.). 4 (1998), 377--418.

%\bibitem[Di74]{Dixmier}
%  J.~Dixmier,
%  \emph{Alg\`ebres Enveloppantes}, 
%  Gauthier-Villars, Paris, 1974.

\bibitem[DLM02]{Dong02vertexlie}
  C.~Dong, H.~Li and G.~Mason,
  \emph{Vertex Lie algebras, vertex Poisson algebras and vertex algebras},
   In \emph{Recent developments in infinite-dimensional Lie algebras and conformal field theory},
   of {Contemp. Math}, 2002.

\bibitem[EFK07]{EFKbook}
  P.~I.~Etingof, I.~B.~Frenkel, A.~A.~Kirillov, Jr.,
  \emph{Lectures on representation theory and Knizhnik-Zamolodchikov equations},
  Mathematical Surveys and Monographs \textbf{58} (1998).

\bibitem[FFR94]{FFR}
  B.~Feigin, E.~Frenkel and N.~Reshetikhin, \emph{
  Gaudin model, Bethe ansatz and critical level},
  Comm. Math. Phys. \textbf{166} no. 1 (1994), 27--62.

\bibitem[Fre07]{Fre07}
  E.~Frenkel,
  \emph{Langlands Correspondence for Loop Groups},
  (Cambridge studies in advanced mathematics 103)
  Cambridge University Press, 2007.

\bibitem[FB04]{FB}
  E.~Frenkel and D.~Ben-Zvi,
  \emph{Vertex algebras and algebraic curves},
  (Mathematical surveys and monographs. vol 88) 
  American Mathematical Society, 2004 (Second Edition).
  
\bibitem[FS04]{FS}
  E.~Frenkel and M.~Szczesny,
  \emph{Twisted modules over vertex algebras on algebraic curves},
Adv. Math. \textbf{187} (2004) 195--227.

\bibitem[FHL93]{FHL}
I.~B.~Frenkel, Y.~Z.~Huang, and J.~Lepowsky,
\emph{On axiomatic approaches to vertex operator algebras and modules}, 
Memoirs of the AMS \textbf{104} (1993), no. 494, viii+64.

\bibitem[FLM88]{FLM} 
I.~B.~Frenkel, J.~Lepowsky and A.~Meurman,
\emph{Vertex operator algebras and the Monster}, 
Pure and Applied Mathematics \textbf{134} (1988), Academic Press.

%\bibitem[Hu97]{Hu}
%J.~Humphreys,
%\emph{Introduction to Lie Algebras and Representation Theory}
%Springer, 1972.

%\bibitem[Ka85]{Kash}
%M.~Kashiwara,
%\emph{The Universal Verma Module and the b-Function},
%Adv. Studies in Pure Math. \textbf{6} (1985), 67--81.

%\bibitem[Kac83]{KacBook}
%  V.~G.~Kac,
%\emph{Infinite Dimensional Lie Algebras: An Introduction},
%Prog. in Math. vol. \textbf{44}, Birka\"user, 1983.

\bibitem[Kac98]{KacVertex}
  V.~G.~Kac,
  \emph{Vertex Algebras for Beginners},
  volume 10 of University Lecture Series. AMS, Providence, RI, second edition, 1998.

\bibitem[KZ84]{KZ}
  V.~G.~Knizhnik and A.~B.~Zamolodchikov,
  \emph{Current Algebra and Wess-Zumino Model in Two-Dimensions},
  Nucl. Phys. B \textbf{247} (1984), 83--103.

\bibitem[Li06a]{Li4}
  H.~Li,
 \emph{A new construction of vertex algebras and quasi modules for vertex algebras},
Adv. Math. \textbf{202} (2006), no 1, 232–-286. 

\bibitem[Li06]{Li3}
  H.~Li,
 \emph{On certain generalisations of twisted affine Lie algebras and quasi modules for $\Gamma$-vertex algebras},
J. Pure and Appl. Algebra \textbf{209} (2007), no 3., 853–-871.

\bibitem[Li06b]{Li5}
  H.~Li,
  \emph{Twisted modules and quasi-modules for vertex operator algebras},
in Proceedings of the International Conference in honor of Professors
James Lepowsky and Robert Wilson, Contemp. Math. \textbf{422} (2007),
389--400.

\bibitem[LL04]{LLbook}
  J.~Lepowsky and H.~Li,
  \emph{Introduction to Vertex Operator Algebras and Their Representations},
  Progress in mathematics,
  Birkh\"auser, 2004.

\bibitem[Pr99]{Primc}
  M.~Primc,
 \emph{Vertex algebras generated by Lie algebras},
  J. Pure Appl. Algebra, \textbf{135} (1999) no. 3, 253--293.

\bibitem[Rot09]{Rotman}
  J.~J.~Rotman,
 \emph{An Introduction to Homological Algebra} (Second Edition),
Springer, 2009

\bibitem[Sz02]{Szc}
M.~Szczesny,
\emph{Wakimoto modules for twisted affine Lie algebras}, Math. Res. Lett. \textbf 9 (2002), 433--448

\bibitem[TUY89]{TUY}
A.~Tsuchiya, K.~Ueno and Y.~Yamada,
\emph{Conformal field theory on universal family of stable curves with gauge symmetries},
Adv. Studies in Pure Math. 19, 459--566, Academic Press (1989).

\bibitem[VY14]{VY}
B.~Vicedo and C.~Young,
\emph{Cyclotomic Gaudin models: construction and Bethe ansatz},
preprint [arXiv:1409.6937]. 

\end{thebibliography}
\end{document}